\numberwithin{equation}{section}
\newlist{notation}{description}{1}
\tikzset{invclip/.style={clip,insert path={{[reset cm]
      (-16383.99999pt,-16383.99999pt) rectangle (16383.99999pt,16383.99999pt)
    }}}}
\theoremstyle{plain}
\newtheorem{theorem}{Theorem}[section]
\newtheorem{corollary}[theorem]{Corollary}
\newtheorem{lemma}[theorem]{Lemma}
\newtheorem{proposition}[theorem]{Proposition}
\providecommand{\customgenericname}{}
\newcommand{\newcustomtheorem}[2]{%
  \newenvironment{#1}[1]
  {%
   \ifdefined\crefalias\crefalias{innercustomgeneric}{#2}\fi
   \renewcommand\customgenericname{#2}%
   \renewcommand\theinnercustomgeneric{##1}%
   \innercustomgeneric
  }
  {\endinnercustomgeneric}%
  \ifdefined\crefname\crefname{#2}{#2}{#2s}\fi
}
\theoremstyle{definition}
\newtheorem{definition}[theorem]{Definition}
\newtheorem{exmpl}[theorem]{Example}
\newtheorem{remark}[theorem]{Remark}
\newlist{notation-list}{description}{1}
\setlist[notation-list]{%
  labelindent=0.6cm,
  labelwidth=2.5cm,
  labelsep=0.5cm,
  leftmargin=!,
  font=\normalfont,
  align=left,
  itemsep=0.5ex
}
\newlist{definitions}{description}{1}
\setlist[definitions]{%
  labelindent=0.6cm,
  labelwidth=5.7cm,
  labelsep=0.5cm,
  leftmargin=!,
  font=\normalfont,
  align=left,
  itemsep=0.5ex
}
\newcommand{\bbone}{\mathbbm{1}}
\newcommand{\C}{\mathbb{C}}
\newcommand{\Cbar}{\overline{\mathbb{C}}}
\newcommand{\D}{\mathbb{D}}
\renewcommand{\Dbar}{\overline{\mathbb{D}}}
\newcommand{\N}{\mathbb{N}}
\newcommand{\Q}{\mathbb{Q}}
\newcommand{\R}{\mathbb{R}}
\newcommand{\Rd}{\mathbb{R}^{d}}
\renewcommand{\S}{\mathbb{S}}
\newcommand{\T}{\mathbb{T}}
\newcommand{\Z}{\mathbb{Z}}
\newcommand{\Zd}{\mathbb{Z}^{d}}
\newcommand{\im}{\mathrm{i}} 
\renewcommand{\Im}{\mathrm{Im}}
\renewcommand{\Re}{\mathrm{Re}}
\newcommand{\mfC}{\mathfrak{C}}
\newcommand{\mfD}{\mathfrak{D}}
\newcommand{\mfR}{\mathfrak{R}}
\newcommand{\borel}{\mathcal{B}}
\newcommand{\effros}[1]{\mathcal{F}(#1)}
\newcommand{\effrosdis}[1]{\mathcal{F}_{\textrm{dis}}(#1)}
\newcommand{\vietoris}[2][]{\mathcal{K}_{#1}(#2)}
\newcommand{\vietorisfin}[1]{\mathcal{K}_{\textrm{fin}}(#1)}
\newcommand{\contcs}[1]{C_{c}(#1)}
\newcommand{\smooth}[2][]{C^{\infty}_{#1}(#2)}
\newcommand{\smoothcs}[1]{C^{\infty}_{c}(#1)}
\newcommand{\smoothCC}{\smooth{\C,\C}}
\newcommand{\meas}[2][]{\mathcal{M}_{#1}(#2)}
\newcommand{\measp}[2][]{\mathcal{M}_{#1}^+(#2)}
\newcommand{\distrib}[1]{\mathscr{D}'(#1)}
\newcommand{\merom}[1][]{\mathcal{MR}_{#1}}
\newcommand{\meromnz}{\merom^{\times}}
\newcommand{\holomnz}[1][]{\mathcal{E}^{\times}_{#1}}
\newcommand{\holom}[1][]{\mathcal{E}_{#1}}
\newcommand{\divis}[1][]{\mathcal{D}_{#1}}
\newcommand{\divisp}[1][]{\divis[#1]^{+}}
\newcommand{\ppart}{\mathcal{P}}
\newcommand{\harm}[2][]{\mathcal{H}_{#1}(#2)}
\newcommand{\sharm}[2][]{\mathcal{SH}_{#1}(#2)}
\newcommand{\lloc}{\mathrm{L^{1}_{\mathrm{loc}}}}
\newcommand{\dm}{\mathrm{div}}      
\newcommand{\ppm}{\mathrm{pp}}      
\newcommand{\unifm}{\mathsf{D}}
\newcommand{\unifn}{\mathsf{N}}
\renewcommand{\tilde}{\widetilde}
\DeclareMathOperator{\lprod}{\boxdot}
\DeclareMathOperator{\dom}{\mathrm{dom}}
\DeclareMathOperator{\ran}{\mathrm{ran}}
\DeclareMathOperator{\proj}{\mathrm{proj}}
\DeclareMathOperator{\stab}{\mathrm{Stab}}
\DeclareMathOperator{\inter}{\mathrm{int}}
\DeclareMathOperator*{\supp}{\mathrm{supp}}
\DeclareMathOperator*{\diam}{\mathrm{diam}}
\DeclareMathOperator{\hull}{\mathrm{hull}}
\DeclareMathOperator{\shull}{\mathrm{s-hull}}
\newcommand{\acts}{\curvearrowright}
\newcommand{\norm}[1]{||#1||}
\newcommand{\concat}{{}^{\frown}}
\newcommand{\free}{\mathrm{Free}}
\newcommand{\symd}{\vartriangle}
\newcommand{\dist}{\textrm{dist}}
\newcommand{\freeab}{\mathcal{A}}
\newcommand{\say}[1]{``#1''}
\renewcommand{\epsilon}{\varepsilon}
\renewcommand{\phi}{\varphi}
\begin{document}

\title{ \vspace{-2.2cm} Equivariant Borel liftings in complex analysis and PDE}

\author{Konstantin Slutsky \and Mikhail Sodin \and Aron Wennman}

\maketitle

\begin{abstract}
  We establish Borel equivariant analogues of several classical theorems from complex analysis and
  PDE.  The starting point is an equivariant Weierstrass theorem for entire functions: there exists
  a Borel mapping which assigns to each non-periodic positive divisor \(d\) an entire function
  \(f_d\) with divisor of zeros \(\dm(f_d)=d\) and which commutes with translation,
  \(f_{d-w}(z)=f_d(z+w)\).  We also examine the existence of equivariant Borel right inverses for
  the distributional Laplacian, the heat operator, and the \(\bar{\partial}\)-operator on the space
  of smooth functions.  We demonstrate that Borel equivariant inverses for these maps exist on the
  free part of the range. In general, the freeness assumptions cannot be
  omitted and Borelness cannot be strengthened to continuity.  Our positive results follow from a
  theorem establishing sufficient conditions for the existence of equivariant Borel liftings.  Two
  key ingredients are Runge-type approximation theorems and the existence of Borel toasts, which are
  Borel analogues of Rokhlin towers from ergodic theory.
\end{abstract}

\tableofcontents

\section{Introduction}
\label{sec:introduction}
\subsection{Equivariant analysis and Borel entire functions}
\label{sec:what-is-equivariant-analysis}
This paper contributes to the field of equivariant analysis, which dates back to the notable 1997
paper~\cite{MR1422707} of Benjamin Weiss. His work was motivated by a question posed by George
Mackey, who asked whether the space \(\holom\) of entire functions admits any non-trivial
translation-invariant probability measures.  Weiss answered this question in the affirmative by
introducing the concept of a measurable entire function.  Specifically, given a free probability
measure-preserving (p.m.p.\ for short) action \(\C \acts X\), \((z,x)\mapsto z\cdot x\), of the
additive group of complex numbers on a standard probability space \((X, \mu)\), a measurable entire
function on \(X\) is a measurable map \(F : X \to \C\) such that, for \(\mu\)-almost every
\(x \in X\), the function \(F_{x}\), defined by \(\C\ni z \mapsto F(z\cdot x)\in \C\), is entire.
Weiss showed that non-constant measurable entire functions exist for all p.m.p.\ actions
\(\C \acts X\).  The push-forward of \(\mu\) along the map \(x \mapsto F_{x}\) produces the desired
non-trivial invariant measure on \(\holom\). Very recently, Glücksam and Weiss \cite{GlucksamWeiss}
extended this result to free actions of \(\C^d\). A related phenomenon was studied by
Tsirelson~\cite{tsirelson2007}, who constructed similarly \say{paradoxical} stationary random vector
fields on \(\Rd\) with constant non-zero divergence.

One can view measurable entire functions as those entire functions that can be constructed without
the choice of an origin. The perspective of ``mathematics without an origin'' has recently received
significant attention within the descriptive set-theoretic community.  For instance, the field of
descriptive combinatorics has emerged as a central area, see~\cite{KechrisMarks} for an overview.
In a sense, equivariant analysis stands in the same relation to classical analysis as descriptive
combinatorics does to its classical counterpart.  In this paper, we explore the validity of
equivariant analogues of several classical results from complex analysis and partial differential
equations.

Weiss's work is set in the framework of ergodic theory.  An alternative approach, which we choose
here, is to consider standard Borel spaces and Borel actions.  For ``positive'' results---those that
establish the existence of desired objects---Borel formulations are stronger, and in this paper, we
adopt the perspective of Borel dynamics.  This choice is motivated by two factors: first, our main
results are of this positive nature, and second, the primary examples of actions to which we apply
our results naturally carry a Borel structure but lack any distinguished measures to highlight.
As shown by Ramsay~\cite{ramsayMeasurableGroupActions1985}, every measurable action of a locally
compact group on a standard Lebesgue space can be realized as a Borel action on an invariant subset
of full measure. Consequently, ``from the measure theoretic viewpoint, no generality is lost by
studying only Borel actions.'' (\cite{ramsayMeasurableGroupActions1985}*{p.~340}).

The proof of the existence of measurable entire functions from~\cite{MR1422707} adapts to the Borel
category with a single modification (cf.~Theorem~\ref{thm:Weiss-Borel}): Instead of relying
on a Rokhlin-type lemma to cover orbits with coherent rectangular regions---a tool generally
unavailable in Borel dynamics---one employs the concept of Borel toasts (following the terminology
introduced in~\cite{gaoForcingConstructionsCountable2022}) that cover orbits by compact sets with
connected complements, see Definition~\ref{def:toast} and Figure~\ref{fig:toast}.

A slight shift in perspective allows one to think of Borel entire functions as \emph{factor maps},
i.e., Borel measurable equivariant maps, into the space \(\holom\) of entire functions.  Indeed, the
space of entire functions is naturally equipped with the Borel \(\sigma\)-algebra generated by the
topology of uniform convergence on compact sets. Starting from a Borel entire \(F:X\to\C\), we
obtain a Borel \(\C\)-equivariant map \(\psi : X \to \holom\), where \(\psi\) assigns to each
\(x \in X\) the function \(F_x\).  Conversely, given such a map \(\psi\), \(F\) can be reconstructed
by evaluating \(\psi(x)\) at the origin. This perspective motivates the term \emph{equivariant
  analysis}.

As a side note, let us illustrate the difference between the Borel and ergodic viewpoints with the
following example concerning the growth rates of entire functions.  As shown
in~\cites{MR4041104,MR3905607}, in the context of ergodic theory, every free p.m.p.\ action of \(\C\)
admits non-constant measurable entire functions with a specific bound on their growth.  However,
this property does not generally hold for Borel actions, as demonstrated in
Appendix~\ref{sec:growth-introduction}: there are free Borel actions \(\C \acts X\) for which any
Borel entire \(F : X \to \C\) that is non-constant on all orbits must have unbounded growth
(Corollary~\ref{cor:borel-entire-unbounded-growth}) in the sense that for \emph{any} rate function
\(f\), there are \(x\in X\) for which \(\displaystyle{\max_{|z|\le R}|F(z\cdot x)|= O(f(R))}\) fails.

\subsection{The equivariant Weierstrass theorem}
\label{sec:motiv-example}
Before describing our main results, we begin with an example that captures the essence of the
questions we are interested in. It was also the starting point of this paper.

Consider the space \(\holom[\ne 0]\) of entire functions which do not vanish identically.  To each
element \(f \in \holom[\ne 0]\), one can associate its divisor of zeros \(\dm(f)\)---a discrete
multiset in the complex plane.  All such divisors form the space \(\divisp\), which carries natural
Borel and topological structures, as discussed in detail in Section~\ref{sec:divisors}.  The
classical Weierstrass theorem guarantees the existence of entire functions with a prescribed set of
zeros, which is equivalent to the surjectivity of the map \(\dm : \holom[\ne 0] \to \divisp\).  In
particular, this implies the existence of right-inverses \(\psi : \divisp \to \holom[\ne 0]\)
satisfying \(\dm(\psi(d)) = d\) for all \(d \in \divisp\).

The spaces \(\holom[\ne 0]\) and \(\divisp\) have natural actions of the additive group of complex
numbers \(\C\) through argument shifts, and the divisor map \(\dm\) is equivariant.  A natural
question is then whether the right-inverse map \(\psi\) can also be chosen to be equivariant. An
immediate obstruction arises: such a map \(\psi\) must preserve stabilizers (i.e., the group of
periods). In particular, the image of a non-trivial doubly-periodic divisor must be a
doubly-periodic entire function.  However, such functions are necessarily constant, and thus their
divisors are trivial.  Apart from this issue, the axiom of choice allows us to select a
representative from each \(\C\)-orbit in the remaining part \(\divisp\setminus \divisp[2]\), where
\(\divisp[2]\) denotes the space of doubly-periodic divisors.  An equivariant map \(\psi\) can then
be constructed without difficulty.

In practice, it is natural to ask for a map \(\psi\) with certain regularity properties.  Let us for
the moment restrict the discussion to the free parts of the actions, \(\free(\holom[\ne 0])\) and
\(\free(\divisp)\), which correspond to the elements with trivial stabilizers. (The significance of
this restriction will be clarified shortly.)  Both of these spaces are equipped with natural Polish
topologies: the topology of uniform convergence on compact sets for \(\holom[\ne 0]\), and the
topology of weak convergence (sometimes also called vague convergence) of measures, when elements of
\(\divisp\) are viewed as atomic Radon measures.  Interestingly, while \emph{continuous} equivariant
inverses to the divisor map do not exist even on the free part
(Theorem~\ref{thm:no-continuous-equivariant}),
we have the following positive result. We denote by \(\divis\) the space of signed divisors and by
\(\meromnz\) the space of meromorphic functions which do not vanish identically.

\begin{customthm}{\ref*{thm:equivariant-weierstrass}}
  There exists a Borel \(\C\)-equivariant map \(\psi : \free(\divisp) \to \holom[\ne 0]\) such that
  \(\dm\circ \psi={\rm id}_{\free(\divisp)}\).  Furthermore, there exists a Borel \(\C\)-equivariant
  map \(\psi : \free(\divis) \to \meromnz\) such that \(\dm\circ\psi={\rm id}_{\free(\divis)}\).
\end{customthm}

A rather curious corollary of Theorem~\ref{thm:equivariant-weierstrass} is that any
translation-invariant non-periodic point process on \(\C\) lifts, via the map \(\psi\), to a
translation-invariant random entire function with zeros at the point process\footnote{This corollary
  was proven earlier in an unpublished work with Oren Yakir, cf.~\cite{MR4658613}*{p.~3}.}. This
implies, for instance, that one can realize the 2D Poisson process as the zero set of a
translation-invariant random entire function.

Let us outline a general proof strategy one might follow to obtain equivariant liftings.  Each orbit
of a free \(\C\)-action can be identified with a copy of the acting group itself, allowing us to
speak of the geometries and shapes of various regions within the phase space.  The space
\(X = \free(\divisp)\) can be exhausted by Borel sets, which, on each orbit of the action, decompose
into disjoint unions of regions diffeomorphic to disks. These regions are coherently aligned with
one another.  This formal concept, known as a Borel toast, is discussed in detail in
Definition~\ref{def:toast}.  The construction of the required equivariant map
\(\psi : \free(\divisp) \to \free(\holom[\ne 0])\) proceeds inductively across these regions.
Intermediate steps are only ``partially'' equivariant, that is, they are equivariant only on parts
of the orbits, but these parts eventually exhaust the whole space and full equivariance is achieved
in the limit.

The key technical step of this construction relies on the following version of the classical Runge
theorem: \emph{Given \(\epsilon > 0\), a compact set \(K \subseteq \C\) with connected complement,
  and a holomorphic \(g\) on a neighborhood of \(K\), there exists an entire \(f\) such that
  \(\sup_{z \in K}|f(z) - g(z)| < \epsilon\) and, for \(z \in K\), \(f(z) = 0\) if and only if
  \(g(z) = 0\).  Moreover, the function \(f\) can be constructed in a Borel manner with respect to
  the compact sets \(K\) and functions \(g\).}

The primary part of this statement can be easily derived from the standard Runge theorem. However,
verifying the Borelness of the construction is a tedious task (see, for instance,
\cites{MR4025019,MR994977,MR236341}), which may get even more cumbersome for other variants of
Runge-type theorems.  For example, consider the distributional Poisson equation \(\Delta u = \mu\),
where \(\mu \in \measp{\Rd}\) is a positive Radon measure on \(\Rd\).  Similar to the case of
divisors, there exists a Borel equivariant inverse to \(\Delta\) defined on the free part of the
range, see Theorem~\ref{thm:equivariant-poisson}.  The corresponding Runge-type result involves
approximating a subharmonic function on a compact set \(K\) with a given Riesz measure \(\mu|_{K}\)
by a subharmonic function \(u\) defined on all of \(\Rd\), such that
\(\Delta u|_{K} = \mu|_{K}\).  Additionally, the construction must be Borel in all parameters
involved.  Ensuring Borelness in such cases often requires meticulous bookkeeping.  We have
therefore developed a framework that allows the direct application of standard analytic versions of
Runge-type theorems as black boxes, eliminating the need to establish the Borelness of their
constructions.  This is particularly advantageous because proofs of Runge-type results oftentimes
rely on the Hahn–Banach theorem (see, for instance, \cite{MR1070713}*{Section~III.8} and
\cite{Hormander2004-vz}*{II.3.4}), which is not inherently Borel.

In addition to the argument shift action of \(\C\), there is another action of a different group
that plays a significant role.  Let us revisit the map \(\dm : \holom[\ne 0] \to \divisp\).  Its
kernel \(\holomnz\), consisting precisely of entire functions without zeros, forms a \(G_{\delta}\)
subset of \(\holom\) and is a Polish group in the topology induced from \(\holom\). This group
\(H = \holomnz\) acts on \(\holom[\ne 0]\) via multiplication, and the equality
\(\dm(f_{1}) = \dm(f_{2})\) holds if and only if \(f_{1}/f_{2} \in H\).

For the inductive construction of a Borel equivariant \(\psi : \free(\divisp) \to \free(\holomnz)\),
it suffices to apply the Runge theorem exclusively to elements of the group~\(H\).  Specifically,
for a given holomorphic function \(f\) with no zeros on a compact set \(K\) with connected
complement, there exists an entire function \(g \in H\) that approximates \(f\) on \(K\).
Crucially, we do not initially require \(g\) to depend on \(f\) and \(K\) in a Borel manner; the
mere existence of such a \(g\) is sufficient.  The construction in the main theorem
(see Theorem~\ref{thm:main-theorem}) then automatically produces a Borel version that accommodates
functions \(f\) with zeros in \(K\) and ensures Borel measurability in the relevant parameters.  We
emphasize that in this variant, we do not need to preserve a prescribed set of zeros; instead, we
work with holomorphic functions that have no zeros.  This simplification is key to achieving Borel
measurability with ease.  Intuitively, transitioning from \(\holom[\ne 0]\) to \(\holomnz\)
transforms the Runge-type condition into an \emph{open} condition in the topology of \(\holomnz\).
Obtaining a Borel version then reduces to finding a Borel uniformization\footnote{ By a Borel
  uniformization of a set \(P\subseteq X\times Z\), we mean a Borel function
  \(f : \proj_X(P) \to Z\) such that \((x, f(x)) \in P\) for all \(x \in \proj_X(P)\).} of an open
set with non-empty slices, which is accomplished using standard descriptive set-theoretic
techniques.

\section{Our results}
\subsection{The main theorem}
\label{sec:main-theorem-description}
The scenario described above for the divisor map \(\dm\) is quite representative, and many problems
in complex analysis and PDE exhibit a similar structure.  For example, in the case of the Poisson
equation \(\Delta u = \mu\), the relevant group \(H\), in which we apply the Runge-type theorem, is the
group of harmonic functions \(\harm{\Rd}\).  Our main theorem captures this structure in an abstract
setting, and gives rather general sufficient conditions for the existence of Borel liftings.

\subsubsection{Standing assumptions and setting of the main theorem}
\label{sec:standing_assumptions}

Here is an overview of the setting of our main result. The precise definitions are given in
Sections~\ref{sec:preliminaries} and~\ref{sec:equivariant-borel-liftings}.  We suppose we are given
the following:

\paragraph{1.} Standard Borel spaces \(X\), \(Y\), and \(Z\), and a locally compact Polish group
\(G\) acting on each of them in a Borel way.  The action \(G\acts X\) is always assumed to be free.
In our main applications, \(G\) is either of the groups \(\Rd\) or \(\Rd\times \T^p\) of the
appropriate dimension, \(X\), \(Y\), and \(Z\) are spaces of functions, measures, or distributions on
\(G\), and \(G\) acts on all these spaces by argument shifts.  Also, \(X\) is often taken to be
\(\free(Y)\).

\paragraph{2.} An \emph{equation} \(\pi\circ\psi=\phi\), where \(\pi:Z\to Y\) is an equivariant Borel
surjection and where \(\phi:X\to Y\) is a Borel equivariant map, to which we seek a Borel
equivariant solution \(\psi:X\to Z\). In practice, we often take \(X=\free(Y)\) and
\(\phi={\rm id}_X\). For the equation \(\dm f=d\), \(Y\) would be the space \(\divisp\), \(Z\) is
the space of entire functions \(\holom[\ne 0]\), and \(\pi\) is the divisor map.  Our applications
to the Poisson equation correspond to taking \(\pi\) to be the Laplacian.

\paragraph{3.} A Polish group \(H\) and a Borel action \(H\acts Z\), whose orbit equivalence
relation \(E_H\) is classified by \(\pi\). That is, \(\pi(x)=\pi(y)\) if and only if \(x\) and \(y\)
lie in the same \(H\)-orbit.  In the aforementioned examples, this condition is automatically
satisfied since \(H\) is taken to be the kernel of \(\pi\), that is, \(H\) is either the
multiplicative group of zero-free entire functions or the additive group of harmonic functions.

\paragraph{4.} A continuous action \(\tau : G \acts H\) by automorphisms, yielding a Polish
semidirect product \(H \rtimes_{\tau} G\) equipped with the product topology and group operations
\((h_{1},g_{1})(h_{2},g_{2}) = (h_{1}\tau^{g_{1}}(h_{2}), g_{1}g_{2})\). The semidirect product is
assumed to act on \(Z\) in a Borel way, compatible with the given actions \(G\acts Z\) and
\(H\acts Z\).  That is, we have \(g\cdot z=(e_H, g)\cdot z\), \(h\cdot z=(h, e_G)\cdot z\) where
\(e_H\) and \(e_G\) are the units in \(H\) and \(G\), respectively.  In applications, \(G\) acts on
\(H\) by shifting the argument.

\paragraph{5.} A cofinal\footnote{i.e., exhaustive: for any compact \(K \subseteq G\), there exists
  some \(K'\in \mfR\) such that \(K \subseteq K'\).}  \(G\)-invariant class of compact sets
\(\mfR \subseteq \vietoris{G}\).  In applications, \(\mfR\) is a (sub)set of Runge domains for the
class of functions \(H\).  Oftentimes, \(\mfR\) is the class of compact sets in \(\C\) or \(\Rd\)
diffeomorphic to the closed unit ball.

\bigskip

This completes the general setup.  To show the existence of an equivariant Borel lifting of
\(\phi\), we need two additional key assumptions: the \emph{Runge approximation property} and the
existence of \emph{Borel toasts}.

\paragraph{6.} Let \(\unifn = (\norm{\cdot}_{K})_{K \in \vietoris{G}}\) be a \(\tau\)-family of
seminorms on \(H\).  This notion is formalized in Definition~\ref{def:U-family-seminorms} below, but
the reader may keep in mind that for our applications we usually use
\(\lVert f\rVert_K=\max_K|f|\) or
\(\lVert f\rVert_K=\max_K|\log|f|\hspace{1pt}|\).\\
\phantom{ }\quad We say that \(\unifn\) satisfies the \emph{\(\mfR\)-Runge property}
(Definition~\ref{def:weak-runge-property}) if, for any pairwise disjoint compact sets
\(K_1,\ldots,K_m\in\mfR\), any \(h_1,\ldots,h_m\in H\), and any given \(\epsilon>0\), there exists
\(h\in H\) such that \(\max_{1\le i\le m}\lVert h h_i^{-1}\rVert_{K_i}<\epsilon\).

\paragraph{7.} The final assumption is that the free action \(G\acts X\) admits a \emph{Borel
  \(\mfR\)-toast}, Definition~\ref{def:toast}.  A Borel toast is a Borel version of a Rokhlin tower
from ergodic theory, and consists of a sequence \((\mathcal{C}_{n})_n\) of Borel sets
\(\mathcal{C}_n\) together with Borel functions \(\lambda_n:\mathcal{C}_n\to \mfR\), such that the
regions \(R_n(c) = \lambda_n(c)\cdot c\) cover the orbits in a tree-like coherent way.  The central
toast axioms entail that for any \(x\in X\) and any compact \(K \subseteq G\) there is some region
such that \(K\cdot x\subseteq R_n(c)\).  Regions in the same ``generation'' are moreover disjoint,
and two regions \(R_m(c)\) and \(R_n(c')\), \(m<n\), are either disjoint
or \(R_m(c)\) is contained in \(R_n(c')\).\\
\phantom{ }\quad Any free \(\Rd\)-flow admits a Borel \(\mfR\)-toast with
\(\mfR = \mfD_{d}\) being the class of compact sets diffeomorphic to the closed unit ball, whereas
\(\R^{p}\times\T^{q}\)-flows admit toasts for the class of sets of the form \(K \times \T^{q}\), \(K \in
\mfD_{d}\) (see Section~\ref{sec:toast}).

\subsubsection{The main result}
In this framework, we establish the existence of Borel liftings.

\begin{customthm}{{\ref*{thm:main-theorem}}}
  Assume that the free action \(G \acts X\) admits a Borel \(\mfR\)-toast, and that the
  \(\tau\)-family \(\unifn\) on \(H\) satisfies the \(\mfR\)-Runge property.  Then, for any
  \(G\)-equivariant Borel map \(\phi : X \to Y\), there exists a \(G\)-equivariant Borel map
  \(\psi : X \to Z\) such that \(\pi \circ \psi = \phi\), making the following diagram commute:
  \begin{displaymath}
    \begin{tikzcd}[ampersand replacement=\&]
      G \acts X \arrow[r, dashed, "\psi"] \arrow[dr, "\phi"]
      \& G \acts Z \arrow[d, "\pi"]\\
      \& G \acts Y
    \end{tikzcd}
  \end{displaymath}
\end{customthm}

Specializing to \(X=\free(G\acts Y)\) and taking \(\phi\) to be the identity map \({\rm id}_X\), we
get the following corollary.

\begin{customcor}{\ref*{cor:main-corollary}}
  Suppose that the free action \(G \acts \free(Y)\) admits a Borel \(\mfR\)-toast, and that the
  \(\tau\)-family \(\unifn\) on \(H\) satisfies the \(\mfR\)-Runge property.  Then there exists a
  \(G\)-equivariant Borel map \(\psi : \free(Y) \to Z\) satisfying \((\pi \circ \psi)(y) = y\) for
  all \(y \in \free(Y)\).
\end{customcor}

\subsection{Applications of the main theorem}
\noindent We apply Theorem~\ref{thm:main-theorem} and Corollary~\ref{cor:main-corollary} to the
following maps \(\pi\):
\begin{enumerate}[leftmargin=.6cm, label={\sf \arabic*}., ref={\sf \arabic*}]
\item The divisor maps \(\dm : \holom[\ne 0] \to \divisp\) and \(\dm : \meromnz \to \divis\) that
  associate with a non-trivial entire or meromorphic function its divisor.
\item The quotient map \(\holom[\ne 0] \times \holom[\ne 0] \ni (f_{1}, f_{2}) \mapsto f_{1}/f_{2}
  \in \meromnz\).
\item The principal part map \(\ppm : \merom \to \ppart\) that associates with a meromorphic
  function its principal parts at each of its poles.
\item The d-bar operator \(\bar{\partial} : \smoothCC \to \smoothCC\), where
  \(\bar{\partial} = \frac{1}{2}(\frac{\partial}{\partial x} + \im \frac{\partial}{\partial y})\)
  and \(\smoothCC\) is the space of smooth complex-valued functions on \(\C\).
\item The distributional Laplacian \( \Delta \), which we will consider in three settings:
  \[ \Delta: \distrib{\Rd} \to \distrib{\Rd} ,\ \Delta\colon C^\infty(\Rd) \to C^\infty(\Rd) , \
    \text{and} \ \Delta : \sharm{\Rd} \to \measp{\Rd},\]
  where \(\sharm{\Rd}\) is the space of subharmonic functions and \(\measp{\Rd}\) is the space of
  Radon measures on \(\Rd\).
\item The boundary trace map \( R: \mathcal H (\R^{d+1}_+) \to C(\Rd) \), \( (Rh)(x)=h(x, 0) \),
  where \( \mathcal H(\R^{d+1}_+) \) is the space of functions harmonic in the upper half-space
  \[ \R^{d+1}_+ = \{(x, y): x\in\Rd, y>0 \} \] and continuous up to the boundary.
\item The heat operator
  \((\frac{\partial}{\partial t} - \Delta) : \smooth{\R^{d+1}} \to \smooth{\R^{d+1}}\).
\end{enumerate}

In all these cases, the domain and co-domain can be endowed with the structure of a standard Borel
space.  The corresponding maps between these spaces are Borel and surjective.  Additionally, in each
case, there is a natural argument-shift action of \(\Rd\) (for the appropriate value of \(d\)),
and the maps \(\pi\) are all equivariant with respect to these actions.

Section~\ref{sec:appl-main-theor} applies the main theorem to the cases described above.
Specifically, taking \(X = \free(Y)\) and \(\phi\) as the identity map,
Corollary~\ref{cor:main-corollary} is applied to \(\pi : \meromnz \to \divis\) and
\(\pi : \merom \to \ppart\). This yields the existence of Borel equivariant right-inverses
\(\psi : \free(\divis) \to \free(\meromnz)\) and \(\psi : \free(\ppart) \to \free(\merom)\), as stated
in Theorems~\ref{thm:equivariant-weierstrass} and~\ref{thm:mittag-leffler}, respectively.  These
results represent Borel equivariant versions of the Weierstrass and Mittag-Leffler theorems,
respectively. The restriction to the free part is essential, as no such map \(\psi\) exists on the
space \(\divis[1]\) of \(1\)-periodic divisors
(Theorem~\ref{thm:no-borel-equivariant-inverse-1-periodic}).

Theorem~\ref{thm:merormophic-quotient-entire} shows that the quotient map, taking a pair \((f_1,f_2)\)
of entire functions with disjoint divisors to their quotient \(f_1/f_2\),
admits a Borel equivariant right inverse on the space \(\free(\meromnz)\).

Similarly, Corollary~\ref{cor:main-corollary} applies to the Laplacian. For instance, it yields a
Borel equivariant map \(\psi : \free(\measp{\Rd}) \to \free(\sharm{\Rd})\) satisfying
\(\Delta \psi(\mu) = \mu\). In other words, we have an equivariant Brelot--Weierstrass theorem for
subharmonic functions.  In contrast to the Weierstrass theorem, the situation for the periodic part
is more nuanced.  Let \(\Gamma\) be a closed subgroup of \(\Rd\), and let \(\measp[\Gamma]{\Rd}\)
denote the space of Radon measures \(\mu\) with \(\stab(\mu) = \Gamma\), and \(\sharm[\Gamma]{\Rd}\)
denote the set of subharmonic functions with stabilizer \(\Gamma\).  A Borel equivariant map
\(\psi : \measp[\Gamma]{\Rd} \to \sharm[\Gamma]{\Rd}\) satisfying \(\Delta \psi(\mu) = \mu\) exists
if and only if \(\dim \Gamma \le d-2\) (Theorems~\ref{thm:equivariant-poisson-not-free}
and~\ref{thm:subharmonic-large-stabilizers}).

The Dirichlet problem in half-spaces with continuous boundary trace can likewise be solved in
a Borel equivariant way, see Theorem~\ref{thm:Dirichlet}.

Theorem~\ref{thm:d-bar-problem} provides an application of Theorem~\ref{thm:main-theorem} to the
d-bar equation \(\bar{\partial} f = g\).  It establishes the existence of Borel equivariant
right-inverses \[\psi : \free(\smoothCC) \to \smoothCC.\]

Theorem~\ref{thm:equivariant-heat} deals with the heat operator
\((\frac{\partial}{\partial t} - \Delta) : \smooth{\R^{d+1}} \to \smooth{\R^{d+1}}\),
which presents a distinct case compared to other applications.  Typically, the class \(\mfR\) is
chosen to consist of arbitrary sets diffeomorphic to the unit ball, since this is sufficiently
restrictive to ensure that the Runge theorem applies to finite disjoint unions of elements of
\(\mfR\).  However, this choice is insufficient for the heat operator.  The characterization of
the Runge domains for the heat operator is known~\cite{jonesApproximationTheoremRunge1975}. Here, the
class \(\mfR\) is defined as the collection of compact sets \(K \in \vietoris{\R^{d+1}}\) for which
\(P \setminus K\) is connected for every hyperplane \(P\) orthogonal to the time axis.  The
existence of Borel \(\mfR\)-toasts for arbitrary free Borel \(\R^{d+1}\)-actions is established in
Theorem~\ref{thm:existence-of-slice-filled-toasts}.

\subsection{Lack of equivariant inverses}
The remaining sections show that the statement of Theorem~\ref{thm:main-theorem} is, in many
respects, optimal. For instance, Section~\ref{sec:continuous-inverses} shows that the existence of
Borel equivariant inverses on the free part generally cannot be strengthened to the existence of
continuous such maps.  Here, we focus specifically on the maps \(\dm : \holom[\ne 0] \to \divisp\)
(Theorem~\ref{thm:no-continuous-equivariant}) and \(\bar{\partial} : \smoothCC \to \smoothCC\)
(Theorem~\ref{thm:no-dbar-inverse}), primarily because these cases involve spaces endowed with
natural Polish topologies.

The restriction to the free part of the range is essential to ensure the existence of Borel
equivariant inverses.  Such inverses generally fail to exist on the periodic parts of the range.  A
descriptive set-theoretic perspective is particularly useful here, as it provides a framework to
articulate the underlying reasons.  For example, we demonstrate that the argument-shift action of
\(\C\) on the space of entire functions with period 1 admits a Borel transversal
(Proposition~\ref{prop:action-on-H1-smooth}), whereas the action of \(\C\) on the space of divisors
with period 1 does not (Lemma~\ref{lem:D11-no-transversal}). This evidently precludes the existence
of a Borel equivariant inverse to the divisor map \(\dm\).  A similar argument rules out the
existence of a Borel equivariant map \(\psi : \meas[\Gamma]{\Rd} \to \sharm[\Gamma]{\Rd}\)
when \(\dim \Gamma = d-1\).

The non-existence of Borel equivariant inverses on the \(1\)-periodic part also applies to the
\(\bar{\partial}\)-problem (Theorem~\ref{thm:non-exist-dbar}), though proving it requires a
different approach.  Our proof hinges on the existence of non-stationary \(1\)-periodic random
\(\smoothCC\)-functions \(f\) for which \(\bar{\partial}f\) is stationary.

Section~\ref{sec:lack-equiv-invers} treats another natural case that can be framed in terms of the
existence of a lifting \(\psi\) in the diagram of Theorem~\ref{thm:main-theorem}.  This is the
question of whether a given Borel entire function admits a Borel entire antiderivative.  While this
is always true for classical entire functions in complex analysis, there exist Borel entire
functions that lack Borel antiderivatives (Theorem~\ref{thm:sufficient-condition} and
Section~\ref{subsect:applications}).  Juxtaposing with the setting of
Theorem~\ref{thm:main-theorem}, we have \(G=\C\) acting on the spaces \(Z = Y = \holom \) by
argument shifts, the group \(H = \C\) is identified with the constant functions, and the action
\(G\acts H\) is trivial. The issue which prevents application of the corollary arises from the
failure of the Runge property among the constants.  Consequently, the main theorem does not apply,
and indeed the corresponding equivariant Borel liftings may fail to exist. This non-existence proof
relies on Birkhoff's classical observation that \(\C\acts \holom\) has dense orbits.

The exponential map, the absolute value function, the logarithmic derivative, and a few other maps
discussed in Section~\ref{subsect:applications}, also fail to admit equivariant inverses in
general. The latter has some interesting consequences for the relation between the Mittag-Leffler
and Weierstrass theorems in the equivariant world. The classical Weierstrass theorem is often
deduced from Mittag-Leffler's theorem by finding a meromorphic function \(g\) with principal part
\(m(w)/(z-w)\) for each desired zero \(w\) of multiplicity \(m(w)\).  A holomorphic function \(f\)
with logarithmic derivative \(\frac{f'}{f} = g\) has zeros at the poles of \(g\) and their
multiplicities are precisely \(m(w)\). However, this approach fails in equivariant analysis, as the
logarithmic derivative does not admit Borel equivariant right-inverses.

\subsection{The appendices}
The paper includes several appendices.

Appendix~\ref{sec:growth-introduction} demonstrates that, unlike in ergodic theory, there exist free
Borel actions \(\C \acts X\) for which every nowhere constant entire function exhibits unbounded
growth (Theorem~\ref{thm:growth-rate-limsup}).  The proof relies on a result by Gao, Jackson, Krohne,
and Seward on the vanishing rate of sequences of markers for
\(\Zd\)-actions~\cite{gaoForcingConstructionsCountable2022}*{Thm.~1.1}.

Appendix~\ref{topol-divis-merom} demonstrates that the multiplicative group of meromorphic functions
\(\meromnz\) admits no Polish group topologies.  In particular, the map
\(\dm : \meromnz \to \divis\) cannot be interpreted as a continuous homomorphism between Polish
groups.  Our results also resolve a question posed
in~\cite{grosse-erdmannLocallyConvexTopology1995}, proving that there are no metrizable complete
algebra topologies on \(\merom\) (Theorem~\ref{thm:no-completely-metrizable-topological-algebra}).
The question of whether non-metrizable such topologies exist, also raised in
\cite{grosse-erdmannLocallyConvexTopology1995}, is not addressed by our methods.

Appendix~\ref{sec:rung-theor-peri} establishes a version of the Runge theorem for periodic
harmonic functions, which is needed for proving the existence of Borel equivariant inverses to the
Laplacian \(\Delta\) on \(\meas[\Gamma]{\Rd}\) when \(\dim \Gamma \le d-2\).  While this result is a
special case of the Lax--Malgrange approximation theorem and is certainly well-known to the experts,
we provide a self-contained proof for the reader's convenience.

Appendix~\ref{sec:borel-toasts-heat} establishes the existence of Borel \(\mfR\)-toasts for free
\(\R^{d+1}\)-flows for the class \(\mfR\) of compact sets that are Runge domains for the heat
operator.

The results in Appendices~\ref{sec:growth-introduction}, \ref{topol-divis-merom}, and
\ref{sec:borel-toasts-heat} are, to the best of our knowledge, new. However, as they lie outside the
main scope of this work, we have included them in the appendices.

\subsection*{Frequently used notation}

\begin{notation-list}
\item[\(X,Y,Z\)] Standard Borel spaces (see Section~\ref{sec:preliminaries} for the definition).

\item[\(G,H\)] Polish groups (Section~\ref{sec:preliminaries}).  Typically, \(G\) is assumed locally
  compact.

\item[\(G\acts X\) etc.] A Borel action (Section~\ref{sec:preliminaries}). For \(g\in G\) and \(x\in X\),
the result of the action is denoted by \( g\cdot x\).

\item[\(a_Z,a_X,\tau\)] Notation for Borel actions. The action \(\tau\) is always by automorphisms
  \(G\acts H\).

\item[\(\free (X)\)] The free part of \(X\) w.r.t.\ the action of \(G\), that is,
  \[ \free (X) =\{x\in X\colon g\cdot x \ne x,\ g\in G\setminus \{e\}\} ,\]
  where \(e\) is the unit of \(G\).

\item[\(\pi,\phi,\psi\)] Equivariant Borel maps.  Typically, \(\pi:Z\to Y\) and \(\phi:X\to Y\) are
  given, and we seek an equivariant lifting \(\psi\) satisfying \(\pi\circ\psi=\phi\) (see
  Section~\ref{sec:standing_assumptions}).

\item[\(\effros{G},\vietoris{G}\)] Effros and Vietoris Borel spaces of the group \(G\)
  (Section~\ref{sec:effros-vietoris}).

\item[\(\mfD_d,\mfC_d\)] Classes of all compact subsets of \(\Rd\) diffeomorphic to the unit ball,
  and with connected complements, respectively.

\item[\(\mfR\)] A \(G\)-invariant cofinal family of compact subsets of \(G\)
  (Section~\ref{sec:equiv-unif}).

\item[\((\mathcal{C}_n,\lambda_n)_n\)] A Borel toast (Section~\ref{sec:toast}) on a standard Borel
  space \(X\).

\item[\(R_n(c)\)] Regions \(R_n(c)=\lambda_n(c)\cdot c\), \(c\in\mathcal{C}_n\), on the orbits,
  forming part of a Borel toast.

\item[\(\unifm\)] A family of pseudometrics on \(Z\), satisfying the axioms of an \(a_Z\)-family
  for an action \(a_Z:G\acts Z\) (Definition~\ref{def:U-family}).

\item[\(\unifn\)] A family of seminorms on the group \(H\) satisfying the axioms of a
  \(\tau\)-family (Definition~\ref{def:U-family-seminorms}) for an action \(\tau:G\acts H\). Often
  required to satisfy the \(\mfR\)-Runge property (Definition~\ref{def:weak-runge-property}).

\item[\(\holom\), {\(\holom[\ne 0]\)}, \(\holomnz\)] Spaces of entire functions, of entire functions
  which do not vanish identically, and of entire functions without zeros, respectively
  (Section~\ref{sec:merom-entire-functions}).

\item[\(\merom, \meromnz\)] Space of all meromorphic functions, and space of those which do not
  vanish identically (i.e., invertible elements) (Section~\ref{sec:merom-entire-functions}).

\item[\(\divis,\divisp\)] Spaces of all (signed) divisors and positive divisors, respectively
  (Section~\ref{sec:divisors}).

\item[\(\ppart\)] The space of principal parts (Section~\ref{sec:mitt-leffl}).

\item[\(\dm\)] The divisor map \(\dm:\merom\to\divis\) (Section~\ref{sec:divisors}), assigning to
  each meromorphic function its divisor of zeros and poles.

\item[\(\ppm\)] The principal part map \(\ppm:\merom\to\ppart\)
  (Section~\ref{sec:mitt-leffl}), assigning to a meromorphic function its principal part.

\item[\(L^1_{\rm loc}(\Rd)\)] Space of locally Lebesgue integrable real-valued functions on
  \(\Rd\) (Section~\ref{sec:subh-funct}).

\item[\(\mathcal{SH}\), \(\mathcal{H}\)] The spaces of subharmonic and harmonic functions,
  respectively (Section~\ref{sec:subh-funct}).

\item[\(\mathcal{M},\mathcal{M}^+\)] The spaces of signed and positive Radon measures, respectively
  (Section~\ref{sec:subh-funct}).
\end{notation-list}

\subsection*{Central definitions}

\begin{definitions}

\item[Borel toast \((\mathcal{C}_n,\lambda_n)_n\)] -- \qquad Definition~\ref{def:toast}

\item[\(\mfR\)-Runge property] -- \qquad Definition~\ref{def:weak-runge-property}

\item[\((\mfR,P)\)-Runge property] -- \qquad Definition~\ref{def:runge-property}

\item[\(a_Z\)-family of pseudometrics] -- \qquad Definition~\ref{def:U-family}

\item[\(\tau\)-family of seminorms] -- \qquad Definition~\ref{def:U-family-seminorms}

\item[Concrete classifiability] -- \qquad Section~\ref{sec:polish-borel-spaces}

\item[Borel transversal] -- \qquad Section~\ref{sec:polish-borel-spaces}
\end{definitions}

\bigskip

\noindent \textbf{A note concerning the exposition.} This paper uses tools from a number of areas,
including descriptive set theory, Borel dynamics, topological vector spaces, complex analysis, and
PDE.  We have therefore tried to provide explanations of several standard and well-known arguments,
hopefully, to the benefit of the reader.

\bigskip

\noindent \textbf{Acknowledgment.} We express our gratitude to Benjamin Weiss and Oren Yakir for
several very interesting discussions during the work on this paper. We also thank Mark Agranovsky,
Alberto Enciso, and Daniel Peralta-Salas for helpful discussions on the intricacies of PDE and
Stephen Gardiner for helping with references.

\bigskip

\noindent \textbf{Funding.}  Konstantin Slutsky was partially supported by NSF grant
DMS-2153981. The research of Mikhail Sodin was supported by ISF Grants 1288/21, 2319/25 and by BSF
Grants 202019, 2024142.  Aron Wennman was supported by Odysseus Grant G0DDD23N from Research
Foundation Flanders (FWO).

\section{Descriptive set-theoretic preliminaries}
\label{sec:preliminaries}

\subsection{Polish and Borel spaces}
\label{sec:polish-borel-spaces}

A \emph{Polish space} is a separable completely metrizable topological space. A \emph{Polish group}
is a topological group whose underlying topology is Polish.  For a topological space, its
\emph{Borel \(\sigma\)-algebra} is the \(\sigma\)-algebra generated by the open sets. A
\emph{standard Borel space} is a pair \((X, \borel)\), where \(\borel\) is a \(\sigma\)-algebra on
\(X\) that coincides with the Borel \(\sigma\)-algebra for some Polish topology on \(X\).  All
uncountable standard Borel spaces are isomorphic. Two standard references for the theory of Polish
and standard Borel spaces are the books by Kechris \cites{Kechris} and Srivastava \cite{MR1619545}
(when readily available, we try to give references to both).

An action \(G \acts X\) of a Polish group on a standard Borel space \(X\) is \emph{Borel} if the map
\(G \times X \ni (g,x) \mapsto g\cdot x \in X\)
is Borel measurable.  A continuous action of \(G\)
on a Polish space is defined similarly.  (When no confusion should occur, we sometimes simplify
notation and write \(g x\) for the action.)  An important result due to Douglas Miller
(see~\cite{Kechris}*{9.17} or~\cite{MR1619545}*{Thm.~4.8.4}) states that for any Borel action
\(G \acts X\) of a Polish group and any \(x \in X\), the stabilizer
\(\stab(x) = \{g \in G : g\cdot x = x\}\) is necessarily a closed subgroup of \(G\).

For an action \(G \acts X\), the \emph{free part} of the action, denoted \(\free(G \acts X)\) or
simply \(\free(X)\), is defined as:
\[\free(X) = \{x \in X : g\cdot x \ne x \textrm{ for all } g \ne e\}
  = \{x \in X : \stab(x) = \{e\}\}.\]
Recall that a subset of a Polish space is \(G_{\delta}\) if it is a countable intersection of open
sets.  The following proposition is well-known, and its proof is included for the reader's
convenience.

\begin{proposition}
  \label{prop:free-part-g-delta}
  Let \(G\) be a locally compact Polish group, and let \(G \acts X\) be a continuous action on a
  Polish space \(X\).  Then the set \(\free(G \acts X)\) is \(G_{\delta}\).
\end{proposition}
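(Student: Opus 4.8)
The plan is to show that the complement $X \setminus \free(X) = \{x \in X : \stab(x) \neq \{e\}\}$ is $F_\sigma$, which is equivalent to $\free(X)$ being $G_\delta$. The starting observation is the rewriting
\[
X \setminus \free(X) = \{x \in X : \exists\, g \in G \setminus \{e\} \text{ with } g\cdot x = x\} = \proj_X\bigl(R \cap ((G\setminus\{e\}) \times X)\bigr),
\]
where $R = \{(g,x) \in G \times X : g\cdot x = x\}$ and $\proj_X : G \times X \to X$ is the coordinate projection. To exhibit this as a countable union of closed sets, I would first exhaust $G \setminus \{e\}$ by compact sets: since $G$ is locally compact and Polish (hence second countable), it is $\sigma$-compact, and $G \setminus \{e\}$, being open in a locally compact Hausdorff space, is again locally compact and second countable, hence $\sigma$-compact; write $G \setminus \{e\} = \bigcup_n K_n$ with each $K_n$ compact. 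Then
\[
X \setminus \free(X) = \bigcup_n \proj_X\bigl(R \cap (K_n \times X)\bigr).
\]

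Next I would check that each set $F_n := R \cap (K_n \times X) = \{(g,x) \in K_n \times X : g \cdot x = x\}$ is closed. This follows from continuity of the action: the map $K_n \times X \ni (g,x) \mapsto (g\cdot x, x) \in X \times X$ is continuous, $F_n$ is the preimage of the diagonal of $X \times X$, and the diagonal is closed because $X$ is Hausdorff. Hence $F_n$ is closed in $K_n \times X$ (and, as $K_n$ is compact and thus closed in $G$, also closed in $G\times X$).

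Finally I would invoke the fact that the projection $\proj_X$ restricted to $K_n \times X$ is a closed map, since projection along a compact factor is closed (the tube lemma, sometimes attributed to Kuratowski). Therefore $\proj_X(F_n)$ is closed in $X$ for each $n$, so $X \setminus \free(X) = \bigcup_n \proj_X(F_n)$ is $F_\sigma$ and $\free(X)$ is $G_\delta$, as claimed. The proof is short and the only points worth spelling out are the two places where the hypotheses are genuinely used: local compactness together with second countability to get $\sigma$-compactness of $G\setminus\{e\}$, and local compactness again (via compactness of the $K_n$) for closedness of the projection — I would regard the latter, the tube-lemma step, as the crux, since it is exactly what can fail when $G$ is not locally compact.
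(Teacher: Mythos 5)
Your proof is correct and follows essentially the same route as the paper: decompose \(G\setminus\{e\}\) into compact pieces \(K_n\) and show that, for each \(n\), the set of points fixed by some element of \(K_n\) is closed. The only cosmetic difference is in how this closedness is established — the paper argues directly with sequences, extracting a convergent subsequence from \(K_n\) by compactness, whereas you package the same compactness input as ``\(F_n\) is closed as a preimage of the diagonal, and projection along a compact factor is a closed map''; both are the standard tube-lemma mechanism in different clothing.
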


\begin{proof}
Let \(K \subseteq G\) be compact, and define
\[
  Y_{K} = \{y \in X : g\cdot y = y \textrm{ for some } g \in K\}.
\]
Note that \(Y_{K}\) is closed. Indeed, suppose \(y_{n} \in Y_{K}\), \(n \in \N\), converges to
some \(y \in X\).  Let \(g_{n}\in K\) satisfy \(g_{n}\cdot y_{n} = y_{n}\).  By passing to a
subsequence if necessary, we may assume \(g_{n} \to g\) for some \(g \in K\).  By continuity of
the action, \(g_{n}\cdot y_{n} \to g\cdot y\), but \(g_{n}\cdot y_{n} = y_{n} \to y\), so
\(g\cdot y = y\) and thus \(y \in Y_{K}\).

Let \((K_{n})_{n}\) be a countable sequence of compact sets such that
\(\bigcup_{n} K_{n} = G \setminus \{e\}\).  Then
\(\free(G \acts X) = X \setminus \bigcup_{n} Y_{K_{n}} = \bigcap_{n} (X\setminus Y_{K_{n}})\),
which is \(G_{\delta}\).
\end{proof}

A \emph{Borel equivalence relation} on a standard Borel space \(X\) is a Borel subset
\(E \subseteq X \times X\) that is reflexive, symmetric, and transitive: \((x,x) \in E\),
\((x,y) \in E\) implies \((y,x) \in E\), and \((x,y), (y,z) \in E \) implies \((x,z) \in E\) for all
\(x, y, z \in X\). The notation \(x E y\) is equivalent to \((x,y) \in E\).

An action \(a : G \acts X\) generates the \emph{orbit equivalence relation} \(E_{a}\) on \(X\),
defined by \(x E_{a} y\) whenever \(G \cdot x = G \cdot y\).  When the action is clear from the
context, we may write \(E_{G}\) instead of \(E_{a}\).  Orbit equivalence relations of Borel actions
of locally compact Polish groups are always Borel \cite{beckerDescriptiveSetTheory1996}*{p.~109}.

A map \(\rho : E_{G} \to G\) is called a \emph{cocycle} if it satisfies the \emph{cocycle identity}:
\[\rho(x,y) = \rho(z,y) \rho(x,z) \quad \textrm{for all } x,y,z \in X \textrm{ such that } x
  E_{G} y E_{G} z.\]
For example, if the action is free, then for each pair of \(E_{G}\)-equivalent elements \(x,y\),
there corresponds a unique \(g \in G\) such that \(g \cdot x = y\); we denote this element by
\(\rho_{G}(x,y)\), or simply \(\rho(x,y)\) when there is no danger of confusion.

A Borel equivalence relation \(E\) is \emph{concretely classifiable}\footnote{The terms
  \emph{smooth} and \emph{tame} are also commonly used in the literature.} if there exists a
standard Borel space \(Y\) and a Borel map \(f : X \to Y\) such that \(x E y\) if and only if
\(f(x) = f(y)\), for all \(x, y \in X\).  A Borel \emph{transversal} for \(E\) is a Borel set
\(T \subseteq X\) that intersects each \(E\)-class in exactly one point.

It is worth mentioning that an orbit equivalence relation, given by a Borel action of a Polish
group, is concretely classifiable if and only if it admits a Borel transversal. One direction
(\emph{existence of a transversal implies concrete classifiability}) is straightforward. The other
direction is due to Burgess, see \cite{Kechris}*{Exercise~18.20, cf.\ p.~360}, though we will not
use this hereafter.

We will need several times the standard fact that any Borel action \(G\acts X\) of a compact Polish
group \(G\) on a standard Borel space \(X\) has a Borel transversal.  For continuous actions on
compact spaces this can be seen by noting that the map
\[
X \ni x \mapsto G \cdot x \in \vietoris{X}
\]
is continuous with respect to the Vietoris topology
on the space of compact subsets of \(X\) (Section~\ref{sec:effros-vietoris}).  If
\(s : \vietoris{X} \to X\) is a Borel selector given by the Kuratowski--Ryll-Nardzewski theorem (see
Section~\ref{sec:effros-vietoris}), then \(\{s(G \cdot x) : x \in X\}\) is a Borel transversal for
the action.  The general case follows by embedding an arbitrary Borel action \(G \acts X\) into a
continuous action on a compact set.  Pick a Haar measure on \(G\) and consider the action
\(G \acts L^{\infty}(G,\mu)\) given by \((g \cdot f)(h) = f(g^{-1}h)\).  This is a continuous action
when \(L^{\infty}(G,\mu)\) is endowed with the weak*-topology.  Furthermore, the unit ball \(B\) of
\(L^{\infty}(G,\mu)\) is a \(G\)-invariant compact set (Alaoglu's theorem).  Any Borel action
\(G \acts X\) can be embedded into \(G \acts B\).  Indeed, if the standard Borel space \(X\) is
identified with \([0,1]\), then an embedding is given by \(G \ni x \mapsto f_{x} \in B\),
\(f_{x}(g) = g^{-1}\cdot x\).  Further details, as well as an alternative argument, can be found
in~\cite{beckerDescriptiveSetTheory1996}*{pp.~27--28}.

Let \(E_{1}\) and \(E_{2}\) be Borel equivalence relations on spaces \(X_{1}\) and \(X_{2}\),
respectively. A Borel \emph{reduction} from \(E_{1}\) to \(E_{2}\) is a Borel map
\(f : X_{1} \to X_{2}\) such that \(x E_{1} y\) if and only if \(f(x) E_{2} f(y)\), for all
\(x,y \in X_{1}\).  An equivalence relation is concretely classifiable if and only if it is Borel
reducible to the equality relation on some standard Borel space.  If \(E_{1}\) is Borel reducible to
\(E_{2}\) and \(E_{2}\) is concretely classifiable, then so is \(E_{1}\).

Throughout this paper, \(G\) denotes a locally compact Polish group, and unless stated otherwise,
\(G \acts X\) is a free Borel action on a standard Borel space.  (Actions of \(G\) on spaces other
than \(X\) are generally not assumed to be free.)  An \emph{exhaustion} of a locally compact Polish
group \(G\) by compact sets is a sequence of compact sets \((K_{n})_{n}\) such that
\(K_{n} \subseteq \inter K_{n+1}\), \(n \in \N\), and \(G = \bigcup_{n} K_{n}\).  Here \(\inter K\)
denotes the interior of \(K\).

\subsection{Effros and Vietoris spaces}
\label{sec:effros-vietoris}

Let \(X\) be a locally compact Polish space. The space \(\vietoris{X}\) of compact subsets of \(X\)
is equipped with the \emph{Vietoris topology}~\cite{Kechris}*{4.F}, \cite{MR1619545}*{p.~66}, whose
basis is parametrized by open sets \(U_{0}, U_{1}, \ldots, U_{n} \subseteq X\) and is given by
\[\{K \in \vietoris{X} : K \subseteq U_{0}, K \cap U_{1} \ne \varnothing, \ldots, K \cap U_{n} \ne
  \varnothing\}.\]
The space \(\vietoris{X}\) is Polish~\cite{Kechris}*{4.25}, \cite{MR1619545}*{Cor.~2.4.16}.

The \emph{Effros Borel space} \(\effros{X}\) consists of all closed subsets of \(X\) and is endowed
with the \(\sigma\)-algebra generated by the sets
\(\{F \in \effros{X} : F \cap U \ne \varnothing\}\), where \(U \subseteq X\) is
open~\cite{Kechris}*{12.C}, \cite{MR1619545}*{p.~97}.  This is a standard Borel
space~\cite{Kechris}*{12.6}, \cite{MR1619545}*{Thm.~3.3.10}, and the Borel structure of
\(\vietoris{X}\) coincides with the one induced from \(\effros{X}\)~\cite{Kechris}*{12.11i}.

An important fact about Effros Borel spaces is the Kuratowski--Ryll-Nardzewski
theorem~\cite{Kechris}*{12.13}, \cite{MR1619545}*{Thm.~5.2.1}, which guarantees the existence of
Borel selectors \(s_{n} : \effros{X} \to X\), \(n \in \N\), i.e., Borel functions such that
\(\{s_{n}(F)\}_{n}\) is dense in \(F\) whenever \(F \in \effros{X}\) is non-empty.

\subsection{Borel toasts}
\label{sec:toast}
\begin{figure}
  \centering
  \includegraphics[width=.6\linewidth]{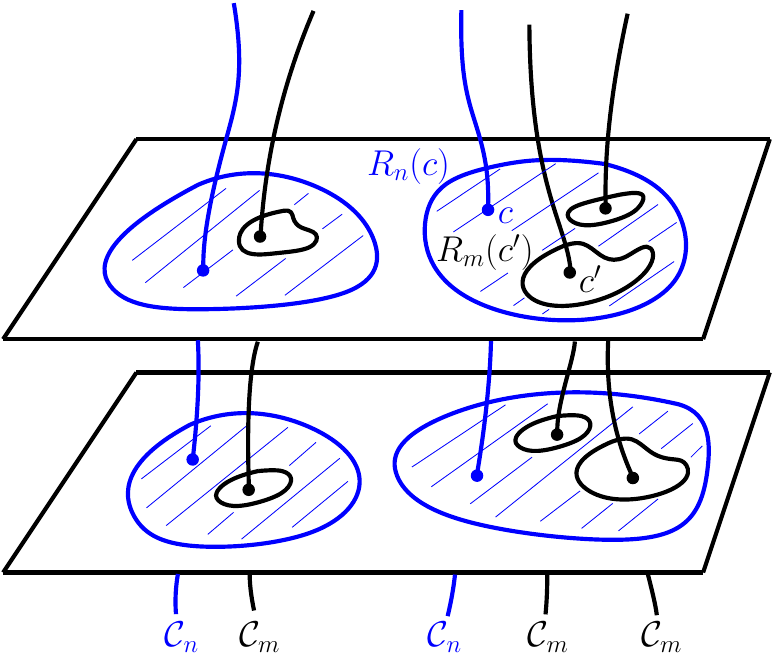}
\caption{Illustration of a Borel toast, showing two cross-sections \(\mathcal{C}_m\)
and \(\mathcal{C}_n\), \(m<n\). Two points \(c\in \mathcal{C}_n\) and
\(c'\in \mathcal{C}_m\) with regions \(R_m(c')\) and \(R_n(c)\) around them
are highlighted. The two planes correspond to distinct \(G\)-orbits.
\label{fig:toast}}
\end{figure}

A key tool in many proofs within Borel dynamics is the concept termed ``Borel toast'' by
Gao--Jackson--Krohne--Seward in~\cite{gaoForcingConstructionsCountable2022}.  It is the descriptive
set-theoretic counterpart of Rokhlin towers in ergodic theory.  Specific properties that one chooses
to require in the definition of a toast vary depending on the application.  In its simplest form,
this concept is closely related to the notion of hyperfiniteness. Borel actions of \(\Z^{d}\) were
shown to be hyperfinite by Weiss (unpublished) (see
Jackson--Kechris--Louveau~\cite{jacksonCountableBorelEquivalence2002}*{Thm.~1.16} for a more general
result for groups of polynomial growth).  In a more refined setting, the definition of a toast puts
restrictions on the shape of the regions or on the mutual location of regions across different
levels.  A powerful way of constructing toasts is based on the cross-section construction from
Marks--Unger \cite{marksBorelCircleSquaring2017}*{Appendix~A} based on the ideas of
Boykin--Jackson~\cite{boykinBorelBoundednessLattice2007}; we provide some brief comments in
Appendix~\ref{sec:borel-toasts-heat}.  Other applications of the toast idea can be found in
Gao--Jackson--Krohne--Seward~\cites{gaoForcingConstructionsCountable2022,gaoBorelCombinatoricsAbelian2024,
  gaoContinuousCombinatoricsAbelian2023} and Slutsky~\cite{MR4215747}.

The following definition is tailored to the needs of
Lemma~\ref{lem:runge-equivariant-uniformization}, and is essentially from \cite{MR4215747} (see,
however, Remark~\ref{rem:layered-regions}).  We let \(\vietoris[*]{G}\) denote the collection of
compact subsets of \(G\) with non-empty interior.

\begin{definition}
  \label{def:toast}
  Let \(a_{X} : G \acts X\) be a free Borel action of a locally compact Polish group on a standard
  Borel space. A Borel \emph{toast} for \(a_{X}\) is a sequence
  \((\mathcal{C}_{n}, \lambda_{n})_{n}\) of Borel sets \(\mathcal{C}_{n} \subseteq X\) and Borel
  functions \(\lambda : \mathcal{C}_{n} \to \vietoris[*]{G}\) satisfying the following
  conditions. For each \(n\), define \(R_{n}(c) = \lambda_{n}(c)\cdot c\) for
  \(c \in \mathcal{C}_{n}\), and let \(X_{n} = \bigcup_{c \in \mathcal{C}_{n}} R_{n}(c)\). Then:
  \begin{enumerate}[leftmargin=1cm, label={\sf \arabic*}., ref={\sf \arabic*}]
  \item\label{item:toast-disjoint} \(R_{n}(c_{n}) \cap R_{n}(c'_{n}) = \varnothing\) for all
    distinct \(c_{n}, c'_{n} \in \mathcal{C}_{n}\).
  \item\label{item:toast-coherent} For all \(m < n\), \(c_{m} \in \mathcal{C}_{m}\), and
    \(c_{n} \in \mathcal{C}_{n}\), either \(R_{m}(c_{m}) \cap R_{n}(c_{n}) = \varnothing\) or
    \(R_{m}(c_{m}) \subseteq R_{n}(c_{n})\).
  \item\label{item:toast-layered} For all \(c_{n} \in \mathcal{C}_{n}\) there exists
    \(c_{n+1} \in \mathcal{C}_{n+1}\) such that \(R_{n}(c_{n}) \subseteq R_{n+1}(c_{n+1})\).
  \item\label{item:toast-directed} For all \(c_{m_{1}} \in \mathcal{C}_{m_{1}}\) and
    \(c_{m_{2}} \in \mathcal{C}_{m_{2}}\), there exist \(n > m_{1}, m_{2}\) and an element
    \(c_{n} \in \mathcal{C}_{n}\) such that \(R_{m_{i}}(c_{m_{i}}) \subseteq \inter R_{n}(c_{n})\)
    for \(i = 1,2\), where \(\inter R_{n}(c_{n}) = (\inter \lambda_{n}(c_{n})) \cdot c_{n}\).
  \item\label{item:lacunary} There exists a neighborhood of the identity \(U \subseteq G\) such that
    \(U \subseteq \lambda_{n}(c_{n})\) for all \(c_{n} \in \mathcal{C}_{n}\) and all \(n\).
  \item\label{item:toast-exhaustive} \(\bigcup_{n} X_{n} = X\).
  \item\label{item:toast-rational} The range \(\ran \lambda_{n}\) is countable for each \(n\) and
    \[\{\rho(c_{m},c_{n}) : c_{m} \in \mathcal{C}_{m}, c_{n} \in \mathcal{C}_{n}, c_{m} E_{a_{X}}
      c_{n}\}\]
    is countable, where \(\rho : E_{a_{X}} \to G\) is the cocycle defined by the condition
    \(\rho(x_{1},x_{2})x_{1} = x_{2}\).
  \end{enumerate}
  We say that \((\mathcal{C}_{n}, \lambda_{n})_{n}\) is an \(\mfR\)-toast, where
  \(\mfR \subseteq \vietoris{G}\), if \(\ran \lambda_{n} \subseteq \mfR\) for all \(n\).
\end{definition}

Here are a couple of remarks and consequences of this definition.

\begin{remark}
  \label{rem:layered-regions}
  Item~\eqref{item:toast-layered} should not be confused with the so-called layeredness condition.
  In some descriptive set-theoretic applications, it is important to know that containment
  \(R_{m}(c_{m}) \subseteq R_{n}(c_{n})\), \(m < n\), implies that the region \(R_{m}(c_{m})\) is
  far from the boundary of \(R_{n}(c_{n})\).  In this case, one generally cannot guarantee that for
  each \(c_{n} \in \mathcal{C}_{n}\) there is some \(c_{n+1} \in \mathcal{C}_{n+1}\) satisfying
  \(R_{n}(c_{n}) \subseteq R_{n+1}(c_{n+1})\).  However, for the purposes of this section, we allow
  regions at different levels to coincide, which is why item~\eqref{item:toast-layered} can be
  easily achieved.

  Indeed, if \((\mathcal{C}'_{n}, \lambda'_{n})_{n}\) satisfies all the items in the definition of
  the toast except possibly for item~\eqref{item:toast-layered}, then we can set
  \(X'_{n} = \bigcup_{c_{n} \in \mathcal{C}'_{n}} R'_{n}(c_{n})\) and
  \begin{displaymath}
    \begin{aligned}
      &\mathcal{C}_{n}
        = \mathcal{C}'_{n} \cup \bigcup_{k < n}\{c_{k} \in \mathcal{C}'_{k} : c_{k}
        \not \in X'_{m} \textrm{ for all } k < m \le n\}, \\
      &\lambda_{n}(c_{n})
        = \lambda'_{k}(c_{k}) \quad \textrm{for \(k\) such that } c_{n} = c_{k} \in \mathcal{C}'_{k}.
    \end{aligned}
  \end{displaymath}
  In plain words, we can repeat regions \(R_n(c_n)\) to ensure that \(X_{n+1}\) contains \(X_{n}\).
\end{remark}

\begin{remark}
  \label{rem:cross-section-re-indexing}
  Definition~\ref{def:toast} does not require sets \(\mathcal{C}_n\) to be complete, i.e.,
  \(G \cdot \mathcal{C}_{n}\) may be a proper subset of \(X\).  If needed, completeness can be
  achieved by ``re-indexing'' the points to ensure that \(\mathcal{C}_{0}\) intersects every orbit.
  Items~\eqref{item:toast-disjoint}, \eqref{item:lacunary} and~\eqref{item:toast-exhaustive} will
  then ensure that each \(\mathcal{C}_n\) is a \emph{Borel cross-section}---a Borel set that
  intersects each orbit in a lacunary set.

  As Kechris showed, cross-sections exist for arbitrary Borel actions of locally compact
  groups~\cite{kechrisCountableSectionsLocally1992}. On the other hand, the existence of a Borel
  toast is a considerably stronger condition.

  In our applications, we work with the groups \(G=\Rd\) and \(G=\Rd\times \T^p\), and, except
  for the section studying the heat operator, it suffices to take the class
  \(\mfR \subseteq \vietoris{G}\) to be either the class \(\mfD_{d}\) of compact sets diffeomorphic
  to the \(d\)-dimensional ball or the products of such sets with \(\T^p\).  Free Borel actions of
  \(\Rd\) admit Borel \(\mfD_{d}\)-toasts in the sense of Definition~\ref{def:toast}.  This is
  essentially \cite{MR4215747}*{Thm.~5}.  The formulation therein does not include
  item~\eqref{item:toast-layered}, but as we explained in Remark~\ref{rem:layered-regions}, this
  property can be easily achieved.  Borel actions of \(\Rd \times \T^{p}\) admit Borel
  \(\mfD_{d} \times \T^{p}\)-toasts, as will follow from Lemma~\ref{lem:toasts-for-direct-products}.
\end{remark}

\begin{lemma}
  \label{lem:exhaustive-taust}
  Let \((\mathcal{C}_{n},\lambda_{n})_{n}\) be a Borel toast for \(G \acts X\).  For all \(x \in X\)
  and \(K \in \vietoris{G}\), there exist \(n\) and \(c_{n} \in \mathcal{C}_{n}\) such that
  \(K \cdot x \subseteq \inter R_{n}(c_{n})\).
\end{lemma}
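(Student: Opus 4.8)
The plan is to transport the whole picture back to the acting group via the orbit map $G\ni g\mapsto g\cdot x\in X$, under which the interiors of the toast regions become honest open subsets of $G$, so that the ordinary compactness of $K$ can be applied directly. Fix $x\in X$ and $K\in\vietoris{G}$. For every $n$ and every $c\in\mathcal{C}_n$ lying in the orbit $G\cdot x$, I would set $O_{n,c}=\{g\in G: g\cdot x\in\inter R_n(c)\}$. Writing $\rho(c,x)$ for the unique group element with $\rho(c,x)\cdot c=x$ (freeness of $a_X$), the action identity gives $g\cdot x=(g\,\rho(c,x))\cdot c$; since $\inter R_n(c)=(\inter\lambda_n(c))c$ in the notation of item~\eqref{item:toast-directed}, and $g'\cdot c\in(\inter\lambda_n(c))c$ iff $g'\in\inter\lambda_n(c)$ (again by freeness), one obtains $O_{n,c}=(\inter\lambda_n(c))\,\rho(c,x)^{-1}$. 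This is open in $G$, being the image of the open set $\inter\lambda_n(c)$ under the right-translation homeomorphism $g'\mapsto g'\rho(c,x)^{-1}$.

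Next I would check that these sets cover $G$. Given any $g\in G$, item~\eqref{item:toast-exhaustive} places $g\cdot x$ in some region $R_m(c_m)$ with $c_m\in\mathcal{C}_m$, and necessarily $c_m\in G\cdot x$ since $R_m(c_m)\subseteq G\cdot c_m$. Applying item~\eqref{item:toast-directed} with $m_1=m_2=m$ and $c_{m_1}=c_{m_2}=c_m$ yields $n>m$ and $c_n\in\mathcal{C}_n$ with $R_m(c_m)\subseteq\inter R_n(c_n)$; hence $g\cdot x\in\inter R_n(c_n)$ and $g\in O_{n,c_n}$. Thus $\{O_{n,c}\}$ is an open cover of $G$, in particular of $K$, so by compactness of $K$ there are finitely many pairs with $K\subseteq O_{n_1,c_1}\cup\dots\cup O_{n_\ell,c_\ell}$. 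Translating back along the orbit map, every $k\in K$ lands in some $\inter R_{n_i}(c_i)$, so $K\cdot x\subseteq\bigcup_{i=1}^\ell\inter R_{n_i}(c_i)\subseteq\bigcup_{i=1}^\ell R_{n_i}(c_i)$.

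It remains to merge the finitely many regions $R_{n_1}(c_1),\dots,R_{n_\ell}(c_\ell)$ into a single one, which is done by iterating item~\eqref{item:toast-directed}: combine $R_{n_1}(c_1)$ and $R_{n_2}(c_2)$ into the interior of a region $R_{p_2}(d_2)$ at a strictly higher level; then combine $R_{p_2}(d_2)$ with $R_{n_3}(c_3)$ into $\inter R_{p_3}(d_3)$; and so on. After $\ell-1$ applications this produces $n$ and $c_n\in\mathcal{C}_n$ with $R_{n_i}(c_i)\subseteq R_{p_2}(d_2)\subseteq\dots\subseteq\inter R_n(c_n)$ for every $i$ (if $\ell=1$ no merging is needed), and therefore $K\cdot x\subseteq\inter R_n(c_n)$, which is the assertion.

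I do not expect a real obstacle here; the only points needing care are bookkeeping. One must track the side on which group elements act: for non-abelian $G$ both the openness of $O_{n,c}$ and the translating-back step rest on the identity $g\cdot x=(g\,\rho(c,x))\cdot c$ together with freeness. One must also remember that a point of $R_m(c_m)$ may lie on its boundary, so the passage from ``$g\cdot x$ lies in a region'' to ``$g\cdot x$ lies in the interior of a region'' genuinely invokes item~\eqref{item:toast-directed}; by contrast the coherence property~\eqref{item:toast-coherent} and the uniform neighborhood property~\eqref{item:lacunary} play no role in this particular lemma.
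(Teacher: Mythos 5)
Your proof is correct and follows essentially the same route as the paper: both use item~\eqref{item:toast-exhaustive} together with item~\eqref{item:toast-directed} to produce, for each point of $K$, an open neighborhood carried into the interior of some region, then invoke compactness of $K$ and iterate item~\eqref{item:toast-directed} to merge the finitely many resulting regions into one. The only difference is cosmetic — you pull the open sets back to $G$ via the orbit map and write them as right-translates of $\inter\lambda_n(c)$, while the paper works directly with the neighborhoods $U_h h$ in $K$ — and that reorganization does not change the argument.
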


\begin{proof}
  Fix \(x\in X\), \(K\in \vietoris{G}\), and \(h \in K\). By item~\eqref{item:toast-exhaustive} of
  the toast definition, there exists \(c_{m} \in \mathcal{C}_{m}\) and \(g_{h} \in \lambda(c_{m})\)
  such that \(h\cdot x = g_{h}\cdot c_{m}\).  Without loss of generality, by
  item~\eqref{item:toast-directed}, increasing \(m\) if necessary, we may assume
  \(g_{h} \in \inter \lambda(c_{m})\).  Thus, there exists an open neighborhood of the identity
  \(U_{h}\) such that \(U_{h}g_{h} \subseteq \inter \lambda(c_{m})\), and consequently,
  \(U_{h}h \cdot x \subseteq \inter R_{m}(c_{m})\).

  The sets \((U_{h}h)_{h \in K}\) form an open cover of \(K\).  By compactness, we may pass to a
  finite sub-cover, yielding finitely many elements \(h_{i} \in K\), open sets
  \(U_{i} \subseteq G\), indices \(m_{i}\), and elements \(c_{m_{i}} \in \mathcal{C}_{m_{i}}\) such
  that \(K \subseteq \bigcup_{i}U_{i}h_{i}\) and
  \(U_{i}h_{i} \cdot x \subseteq \inter R_{m_{i}}(c_{m_{i}})\).  Iterating the property from
  item~\eqref{item:toast-directed}, we can find a single \(c_{n} \in \mathcal{C}_{n}\) such that
  \(R_{m_{i}}(c_{m_{i}}) \subseteq \inter R_{n}(c_{n})\) for all \(i\).  This \(c_{n}\) satisfies
  \(\bigcup_{i}U_{i}h_{i} \cdot x \subseteq \inter R_{n}(c_{n})\), and thus
  \(K \cdot x \subseteq \inter R_{n}(c_{n})\), as required.
\end{proof}

\begin{remark}
  \label{rem:lacunary-finitely-many-predecessors}
  Item~\eqref{item:lacunary} of Definition~\ref{def:toast} ensures that for any
  \(c_{n} \in \mathcal{C}_{n}\) there are only finitely many \(c_{m} \in \mathcal{C}_{m}\),
  \(m < n\), satisfying \(R_{m}(c_{m}) \subseteq R_{n}(c_{n})\).  This follows easily by identifying
  the orbit of \(c_{n}\) with \(G\) and using the Haar measure to limit the number of pairwise
  disjoint regions \(R_{m}(c_{m})\) inside \(R_{n}(c_{n})\).
\end{remark}

Finally, we argue that if every free Borel action of \(\Rd\) admits a Borel \(\mfR\)-toast, then
free actions of \(\Rd \times \T^p\) admit Borel \(\mfR \times \T^p\)-toasts.  Let \(T\) be a compact
Polish group and let, as before, \(G\) be a locally compact Polish group.  Given a cofinal
\(G\)-invariant class of compact sets \(\mfR \subseteq \vietoris{G}\), let \(\mfR \times T\) denote
the class \(\{L \times T : L \in \mfR\}\).  Note that the cofinality of \(\mfR\) in \(\vietoris{G}\)
implies the cofinality of \(\mfR \times T\) in \(\vietoris{G \times T}\).

\begin{lemma}
  \label{lem:toasts-for-direct-products}
  If every (free) Borel \(G\)-action admits a Borel \(\mfR\)-toast, then every (free) Borel
  \( G \times T\)-action admits a Borel \((\mfR \times T)\)-toast.
\end{lemma}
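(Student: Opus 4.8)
The plan is to reduce the statement for $G \times T$-actions to the given statement for $G$-actions by exploiting the compactness of $T$. The key observation is that a free Borel action $G \times T \acts X$ restricts to a Borel action $T \acts X$ of the compact group $T$, which (by the standard fact recalled in Section~\ref{sec:polish-borel-spaces}) admits a Borel transversal $S \subseteq X$ meeting each $T$-orbit in exactly one point. Since $T$ is compact and normal in $G \times T$, the set $S$ is naturally acted on by $G$ — more precisely, for $g \in G$ and $s \in S$ the point $g \cdot s$ lies in some $T$-orbit, whose unique representative in $S$ we call $g \ast s$; one checks that $\ast$ defines a free Borel action $G \acts S$. (Freeness of $G \acts S$ follows from freeness of $G \times T \acts X$: if $g \ast s = s$ then $g \cdot s \in T \cdot s$, so $(g, t) \cdot s = s$ for some $t \in T$, forcing $g = e$ and $t = e_T$.)

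First I would apply the hypothesis to the free Borel $G$-action $G \acts S$, obtaining a Borel $\mfR$-toast $(\mathcal{C}'_n, \lambda'_n)_n$ on $S$, with regions $R'_n(c) = \lambda'_n(c) \ast c \subseteq S$. Then I would push this structure back up to $X$: define $\mathcal{C}_n = \mathcal{C}'_n \subseteq S \subseteq X$, and define $\lambda_n : \mathcal{C}_n \to \vietoris{G \times T}$ by $\lambda_n(c) = \lambda'_n(c) \times T$, so that $\lambda_n(c) \in \mfR \times T$. The resulting region in $X$ is
\[
  R_n(c) = \lambda_n(c) \cdot c = (\lambda'_n(c) \times T) \cdot c = T \cdot \bigl( \lambda'_n(c) \cdot c \bigr),
\]
which is the $T$-saturation of the image of $R'_n(c)$ under the $G$-action on $X$; equivalently, it is the preimage of $R'_n(c)$ under the retraction $X \to S$ sending a point to the representative of its $T$-orbit.

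Next I would verify the seven toast axioms for $(\mathcal{C}_n, \lambda_n)_n$ on $X$ one at a time, each reducing to the corresponding axiom for $(\mathcal{C}'_n, \lambda'_n)_n$ on $S$. Disjointness~\eqref{item:toast-disjoint} and coherence~\eqref{item:toast-coherent} follow because two $T$-saturated sets are disjoint (resp.\ nested) iff their intersections with $S$ are, and $R_n(c) \cap S = R'_n(c)$; the same bookkeeping handles \eqref{item:toast-layered} and~\eqref{item:toast-directed}, using $\inter(\lambda'_n(c) \times T) = (\inter \lambda'_n(c)) \times T$ since $T$ is open in itself. For~\eqref{item:lacunary}, if $U' \subseteq G$ witnesses the property on $S$, then $U' \times T \subseteq G \times T$ witnesses it on $X$. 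Exhaustiveness~\eqref{item:toast-exhaustive} holds because $\bigcup_n X_n$ is $T$-saturated and meets $S$ in $\bigcup_n X'_n = S$, hence equals $X$. For the countability condition~\eqref{item:toast-rational}, $\ran \lambda_n = \{ L \times T : L \in \ran \lambda'_n\}$ is countable, and the cocycle $\rho_{G \times T}$ evaluated on pairs $c_m, c_n \in S$ takes values $(\rho_S(c_m, c_n), e_T)$ — indeed $c_m, c_n$ lie in $S$, so the group element carrying one to the other has trivial $T$-component — giving a countable set. Borel measurability of $c \mapsto \lambda_n(c)$ follows from that of $\lambda'_n$ together with the continuity of $L \mapsto L \times T$.

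**The main obstacle** will be setting up the reduced $G$-action $G \acts S$ cleanly and checking it is Borel: one must argue that the map $(g, s) \mapsto g \ast s$ is Borel, which amounts to composing the given Borel action $G \times T \acts X$ with the Borel retraction $r : X \to S$ (itself Borel because $S$ is a Borel transversal for a Borel action of a compact group, so $x \mapsto r(x)$ is a Borel selector of the $T$-orbit map). A secondary point requiring care is the bookkeeping that identifies $R_n(c) \cap S$ with $R'_n(c)$ under the two a priori different actions ($\cdot$ on $X$ versus $\ast$ on $S$) — the key identity is $r(g \cdot s) = g \ast s$ for $s \in S$, from which everything else follows formally. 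Once these two foundational facts are in place, the verification of the axioms is routine.
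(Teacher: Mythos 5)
Your argument takes the same route as the paper's: pass to a Borel transversal $S$ for the restricted compact-group action $T \acts X$, induce a free Borel $G$-action on $S$, apply the hypothesis to obtain a Borel $\mfR$-toast on $S$, and transfer it to $X$ via $\lambda_n(c) = \lambda'_n(c) \times T$. (The paper simply asserts that the toast axioms are then ``straightforward to verify.'') Your treatment of axioms~\eqref{item:toast-disjoint}--\eqref{item:toast-exhaustive} is correct; the problem is with axiom~\eqref{item:toast-rational}.

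You claim that for $c_m, c_n \in S$ the cocycle $\rho_{G\times T}(c_m,c_n)$ has trivial $T$-component ``since $c_m, c_n$ lie in $S$.'' This does not follow: $S$ is a transversal for $T \acts X$, not a $G$-invariant set, so $g\cdot c_m$ generally leaves $S$; only its $T$-orbit representative $r(g\cdot c_m)$ equals $c_n$. The unique $(g,t)$ with $(g,t)\cdot c_m = c_n$ has $g = \rho_S(c_m,c_n)$, but $t$ is the element carrying $g\cdot c_m$ to $c_n$, which varies with the pair and need not range over a countable set even when the $G$-component does. To fill the gap you could either show that $S$ may be chosen $G$-invariant (equivalently, that the $T$-valued Mackey cocycle of $X \to X/T$ is a Borel coboundary), or, more in the spirit of the paper, observe that where axiom~\eqref{item:toast-rational} is actually invoked, in Lemma~\ref{lem:runge-equivariant-uniformization}, what is needed is countability of the family of sets $\lambda_m(c_m)\,\rho(c_m,c_n)$, and since $(\lambda'_m(c_m)\times T)\cdot(g,t) = \lambda'_m(c_m)\,g \times T$ is independent of $t$, that countability follows from the $\mfR$-toast on $S$ alone.
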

\begin{proof}
  Let \(G \times T \acts X\) be a Borel action and consider its restriction to the action
  \(T \acts X\).  Since \(T\) is compact, there exists a Borel transversal \(Y \subseteq X\) for
  \(E_{T}\).  Define a Borel action \(a : G \acts Y\) by setting \(a(g,y)\) to be the unique
  \(y' \in Y\) such that \(gy E_{T} y'\).  Equivalently, if \(s : X \to Y\) is the Borel selector
  for \(E_{T}\) corresponding to the chosen transversal, i.e., \(s(x) = y\) for the unique
  \(y \in Y\) such that \(x E_{T} y\), then \(a(g,y) = s(g \cdot y)\).  Note that \(a\) is free if
  the original action \(G \times T \acts X\) is free, since \(a(g,y) = y\) implies
  \(g \cdot y = t \cdot y\) for some \(t \in T\), which necessitates \(g = e\).

  By assumption, \(a\) admits a Borel \(\mfR\)-toast \((\mathcal{C}_{n}, \lambda_{n})_{n}\).  We
  transform it into a Borel \((\mfR \times T)\)-toast \((\mathcal{C}_{n}, \mu_{n})_{n}\) by setting
  \(\mu_{n}(c_{n}) = \lambda_{n}(c_{n}) \times T \).  The toast axioms are straightforward to
  verify.
\end{proof}

\section{Equivariant Borel liftings}
\label{sec:equivariant-borel-liftings}

The primary goal of this section is to establish Theorem~\ref{thm:main-theorem}.  Let \(G\) be a
locally compact Polish group acting in a Borel way on standard Borel spaces \(Z\) and \(Y\), and let
\(\pi:Z\to Y\) be a Borel \(G\)-equivariant surjection. The theorem provides sufficient conditions
under which, for a free Borel \(G\)-action \(G\acts X\) and a \(G\)-equivariant map \(\phi:X\to Y\),
there exists a Borel \(G\)-equivariant lifting \(\psi:X\to Z\) satisfying \(\pi\circ \psi=\phi\):
\begin{equation}
  \label{diagram:lift}
  \begin{tikzcd}[ampersand replacement=\&]
    G \acts X \arrow[r, dashed, "\psi"] \arrow[dr, "\phi"]
    \& G \acts Z \arrow[d, "\pi"]\\
    \& G \acts Y
  \end{tikzcd}
\end{equation}

We begin in Section~\ref{sec:equiv-unif} with a preliminary result
(Lemma~\ref{lem:runge-equivariant-uniformization}), which gives sufficient conditions under which a
Borel set \(P \subseteq X \times Z\) admits an \emph{equivariant Borel uniformization}, which we will
define a few lines below.  This result requires a measurable version of the Runge property
(Definition~\ref{def:runge-property}) in the space \(Z\).

As outlined in the introduction, we wish to refrain from establishing such measurable Runge theorems
in our applications.  For that reason, we develop a framework to transfer the application of the
Runge property from \(Z\) to the Polish group \(H\), where an a priori weaker Runge-type property
suffices (Definition~\ref{def:weak-runge-property}). The reader may wish to keep in mind the
relation between \(H\) and \(Z\) in the motivating examples from the introduction: in the case of
entire functions we take \(Z\) to be the space \(\holom[\ne 0]\) of entire functions which do not
vanish identically, while \(H\) is the multiplicative group \(\holomnz\) of zero-free entire
functions.  For our application to the Poisson equation, \(Z\) is the space of subharmonic functions
on \(\Rd\), while \(H\) is the additive group of harmonic functions.  The framework to accomplish
the transfer between \(H\) and \(Z\) is laid out in Section~\ref{sec:semidirect-products}.  The key
result is Lemma~\ref{lem:invariant-uniformizes}, which shows that a certain family of seminorms on
\(H\) induces a family of pseudometrics on \(Z\) with desirable properties.

In Section~\ref{sec:equivariant-borel-liftings-semiproduct}, we state and prove our main
result. This will be derived rather directly from the equivariant uniformization of
Lemma~\ref{lem:runge-equivariant-uniformization}.  The key technical step is to show that the
\(\mfR\)-Runge property in \(H\) implies the a priori stronger measurable Runge property in the
space \(Z\).

\subsection{Equivariant uniformizations}
\label{sec:equiv-unif}

Consider standard Borel spaces \(X\) and \(Z\) equipped with Borel \(G\)-actions \(a_X : G \acts X\)
and \(a_Z : G \acts Z\) of a locally compact Polish group \(G\). Let \(P \subseteq X \times Z\) be a
Borel subset. We say that \(P\) is \emph{\((a_X, a_Z)\)-equivariant} if for all \((x, z) \in P\) and
\(g \in G\), the pair \((gx, gz)\) also lies in \(P\). When the actions \(a_X\) and \(a_Z\) are
clear from the context, we may simply say that \(P\) is \emph{equivariant}. A \emph{uniformization}
of \(P\) is a function \(f : \proj_X(P) \to Z\) such that \((x, f(x)) \in P\) for all
\(x \in \proj_X(P)\). A uniformization \(f\) is said to be \emph{\((a_X, a_Z)\)-equivariant} if its
graph is equivariant, or equivalently, if \(f(g\cdot x) = g\cdot f(x)\) for all \(x \in \proj_X(P)\)
and \(g \in G\).

Conditions ensuring the existence of Borel uniformizations for a Borel set
\(P \subseteq X \times Z\) are well established (see \cite{Kechris}*{Sec.~18}). Our goal is to
provide an instance of conditions under which a set \(P \subseteq X \times Z\) admits an
\((a_{X}, a_{Z})\)-equivariant Borel uniformization
(Lemma~\ref{lem:runge-equivariant-uniformization} and Theorem~\ref{thm:main-theorem}). Prior work in
this area includes the recent work of Kechris and Wolman~\cite{2405.15111}, which, when restricted
to the framework of orbit equivalence relations, examines the existence of equivariant
uniformizations with the trivial action \(a_{Z}\).

Let us look at an example before we proceed:

\begin{exmpl}
  Let \(P=\{(x,z)\in X\times Z:\phi(x)=\pi(z)\}\), where \(X,Z,\phi\) and \(\pi\) are as in
  Diagram~\ref{diagram:lift}.  By the equivariance of \(\pi\) and \(\phi\), \(P\) is readily seen to
  be an equivariant Borel subset of \(X\times Z\).  The set \(P\) consists of all candidate pairs
  \((x,z)\) for which we could have \(\psi(x)=z\) for the lifting \(\psi:X\to Z\) in
  Diagram~\ref{diagram:lift}. An equivariant Borel uniformization of \(P\) is then nothing but the
  desired equivariant lifting.
\end{exmpl}

Throughout the rest of this section we assume that the action \(a_{X} : G \acts X\) is
free. (Actions of \(G\) on other spaces are generally not assumed to be free.)

We recall that a \emph{pseudometric} on a set \(Z\) is a map
\(d : Z \times Z \to \R^{\ge0} \cup \{\infty\}\)
that is symmetric, satisfies the triangle
inequality, and is zero on the diagonal, \(d(z,z) = 0\) for all \(z \in Z\).  Unlike a genuine
metric, a pseudometric may assign distance \(0\) to distinct points.  We also remind that
\(\vietoris{G}\) denotes the space of compact subsets of \(G\).

\begin{definition}
  \label{def:U-family}
  Let \(a_{Z} : G \acts Z\) be a Borel \(G\)-action.  A family
  \(\unifm = (d_{K})_{K \in \vietoris{G}}\) of pseudometrics on \(Z\) is said to be an
  \emph{\(a_{Z}\)-family} if it satisfies the following conditions:
  \begin{enumerate}[leftmargin=1cm, label={\sf \arabic*}., ref={\sf \arabic*}]
  \item\label{item:unif-monotone} \(d_{K}(z_{1},z_{2}) \le d_{K'}(z_{1}, z_{2})\) for all
    \(z_{1}, z_{2} \in Z\) and \(K, K' \in \vietoris{G}\) satisfying \(K \subseteq K'\).
  \item\label{item:unif-shift} \(d_{K}(gz_{1}, gz_{2}) = d_{Kg}(z_{1},z_{2})\) for all
    \(z_{1},z_{2} \in Z\), \(g \in G\), and \(K \in \vietoris{G}\).
  \item\label{item:unif-hausdorff} The family \(\unifm\) separates points: for any distinct
    \(z_{1}, z_{2} \in Z\) there exists \(K \in \vietoris{G}\) such that
    \(d_{K}(z_{1}, z_{2}) > 0\).
  \item\label{item:unif-complete} The uniformity generated by \( \unifm \) is complete: if
    \((z_{n})_{n}\) is \(\unifm\)-Cauchy (i.e., \(d_{K}\)-Cauchy for every \(K \in \vietoris{G}\)),
    then there exists some \(z \in Z\) such that \(d_{K}(z_{n},z) \to 0\) for all
    \(K \in \vietoris{G}\).
  \end{enumerate}
  An \(a_{Z}\)-family is \emph{Borel} if each \(d_{K}\) is Borel as a map
  \(d_{K} : Z \times Z \to \R^{\ge 0} \cup \{\infty\}\).
\end{definition}

In applications, \(Z\) is often a space of functions on the acting group \(G\), and the relevant
families of pseudometrics will oftentimes be of the form \(d_K(f,g)=\sup_{x\in K}|f(x)-g(x)|\).

\begin{definition}
  \label{def:confinale-invariant-class}
  Let \(\mfR \subseteq \vietoris{G}\) be a class of compact subsets of \(G\).  We say that \(\mfR\)
  is
  \begin{itemize}
  \item \emph{cofinal} if for all \(K \in \vietoris{G}\) there exists \(K' \in \mfR\) such that
    \(K \subseteq K'\);
  \item \emph{invariant}\footnote{More precisely, such a class is \emph{right-invariant}, but
      left-invariance will not be used in this section.} if \(Kg \in \mfR\) whenever \(K \in \mfR\)
    and \(g \in G\).
  \end{itemize}
\end{definition}
Given a set \(P \subseteq X \times Z\) and \(m \in \N\), we use the notation \(P^{(m)}\) for
the set
\[P^{(m)} = \{(x,z_{1}, \ldots, z_{m}) : (x,z_{i}) \in P \textrm{ for } 1 \le i \le m\} \subseteq X
  \times Z^{m}.\] In particular, \(P^{(1)} = P\).

Given a set \(P \subseteq X \times Z\) and \(x \in X\), we let \(P_{x}\) stand for
\(\{z \in Z : (x,z) \in P\}\).

\begin{definition}
\label{def:runge-property}
Let \(\mfR \subseteq \vietoris{G}\) be a cofinal invariant class of compact sets and let
\(P \subseteq X \times Z\) be an \((a_{X}, a_{Z})\)-equivariant Borel set.  An \(a_{Z}\)-family
\(\unifm = (d_{K})_{K \in \vietoris{G}}\) of pseudometrics on \(Z\) is said to satisfy the
\emph{\((\mfR, P)\)-Runge property} if for all pairwise disjoint \(K_{1}, \ldots, K_{m} \in \mfR\)
and all \(\epsilon > 0\) there exists a Borel map \(f : P^{(m)} \to Z\) such that
\(d_{K_{i}}(z_{i}, f(x, z_{1}, \ldots, z_{m})) < \epsilon\)
for all \(1 \le i \le m\) and all
\((x, z_{1}, \ldots, z_{m}) \in P^{(m)}\).
\end{definition}

On its face, the \((\mfR, P)\)-Runge property of Definition~\ref{def:runge-property} is a stronger
condition than the \(\mfR\)-Runge property which appears in the main theorem.  Indeed, the former
requires measurability of the \say{Runge map} \(f : P^{(m)} \to Z\) while the latter merely requires
the existence of approximating elements.  However, in the proof of Theorem~\ref{thm:main-theorem} we
show that the \(\mfR\)-Runge property (in the Polish group \(H\)) which appears in
Theorem~\ref{thm:main-theorem} implies the \((\mfR, P)\)-Runge property for a particular choice of
\(P\).

The following lemma provides the key technical statement that ensures the existence of an
equivariant uniformization.  For its formulation, we fix Borel actions \(a_{X} : G \acts X\) and
\(a_{Z} : G \acts Z\) of a locally compact Polish group \(G\).  Suppose that \(a_{X}\) is free.  Let
\(P \subseteq X \times Z\) be a Borel \((a_{X}, a_{Z})\)-equivariant set that admits a Borel
uniformization.  Let \(\mfR \subseteq \vietoris{G}\) be a cofinal invariant class and
\(\unifm = (d_{K})_{K \in \vietoris{G}}\) be an \(a_{Z}\)-family of pseudometrics on \(Z\).  Recall
the notion of a Borel \(\mfR\)-toast from Definition~\ref{def:toast} from the previous section.

\begin{lemma}
  \label{lem:runge-equivariant-uniformization}
  Suppose that \(\unifm\) satisfies the \((\mfR, P)\)-Runge property and assume that \(a_{X}\)
  admits a Borel \(\mfR\)-toast.  If each fiber \(P_{x}\), \(x \in \proj_{X}(P)\), is
  \(\unifm\)-closed, then \(P\) admits an \((a_{X}, a_{Z})\)-equivariant Borel uniformization.
\end{lemma}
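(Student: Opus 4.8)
The plan is to build the equivariant uniformization $\psi:\proj_X(P)\to Z$ by induction over the levels of a Borel $\mfR$-toast $(\mathcal{C}_n,\lambda_n)_n$ for $a_X$, constructing at stage $n$ a map that is equivariant on the regions $R_n(c)$, $c\in\mathcal{C}_n$, and landing in $P$ there, in such a way that the stages form a $\unifm$-Cauchy sequence whose limit is globally equivariant and lands in each $\unifm$-closed fiber $P_x$. First I would fix, once and for all, a Borel uniformization $f_0:\proj_X(P)\to Z$ of $P$ (which exists by hypothesis) to seed the construction, and a summable sequence of tolerances $\epsilon_n\to 0$ matched to an exhaustion of $G$ by compacta. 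The base case uses the equivariance to transport $f_0$: on each region $R_0(c)$ one wants a value that is genuinely equivariant, so one picks a representative point in each $\mathcal{C}_0$-class and pushes $f_0$ of that point around by the action; item~\eqref{item:toast-rational} of the toast (countability of the cocycle values $\rho(c_m,c_n)$) and the Borel selectors for the $\mathcal{C}_n$'s make this assignment Borel.

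The inductive step is the heart of the argument and is where the $(\mfR,P)$-Runge property enters. Suppose $\psi_n$ is defined, Borel, equivariant on $X_n=\bigcup_{c}R_n(c)$, and $(x,\psi_n(x))\in P$ for $x\in X_n\cap\proj_X(P)$. Given $c_{n+1}\in\mathcal{C}_{n+1}$, by Remark~\ref{rem:lacunary-finitely-many-predecessors} there are only finitely many $c^{(1)}_n,\dots,c^{(m)}_n\in\mathcal{C}_n$ with $R_n(c^{(i)}_n)\subseteq R_{n+1}(c_{n+1})$, and by item~\eqref{item:toast-disjoint} these regions are pairwise disjoint; pulling back to the identity of $G$ via the cocycle $\rho$, they correspond to pairwise disjoint compact sets $K_1,\dots,K_m\in\mfR$ in $G$. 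We want to replace the current values on $\bigcup_i R_n(c^{(i)}_n)$ by a single value $z^*\in Z$, defined on a representative of the $\mathcal{C}_{n+1}$-class and transported equivariantly, which (a) lies in $P_{c_{n+1}}$ and (b) is $d_{K_i}$-within $\epsilon_{n+1}$ of the old value $\psi_n$ on $R_n(c^{(i)}_n)$ for each $i$. This is exactly the output of the $(\mfR,P)$-Runge property applied to the tuple $(c_{n+1},z_1,\dots,z_m)\in P^{(m)}$, where $z_i$ is the old value transported back to $c_{n+1}$; the $\epsilon_{n+1}$-closeness in $d_{K_i}$, combined with item~\eqref{item:unif-shift}, translates into closeness in $d_{K}$ on the actual region for every compact $K$ eventually, since the regions grow to exhaust each orbit by Lemma~\ref{lem:exhaustive-taust}. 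Equivariance of the new stage on all of $X_{n+1}$ follows because we defined $z^*$ on class representatives and spread it by the action, and Borelness follows from Borelness of the Runge map together with the countable data in item~\eqref{item:toast-rational}. One must also check consistency: on regions of $\mathcal{C}_n$ not contained in any region of $\mathcal{C}_{n+1}$ we simply keep $\psi_n$ (item~\eqref{item:toast-layered} guarantees every region of $\mathcal{C}_n$ sits inside one of $\mathcal{C}_{n+1}$, so in fact this case can be arranged away, but one should say a word about it).

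Finally I would pass to the limit. By item~\eqref{item:toast-exhaustive} every $x$ eventually lies in $X_n$, and for $x\in X_n$ the values $\psi_n(x),\psi_{n+1}(x),\dots$ change by at most $\epsilon_k$ in $d_K$ for all sufficiently large $k$ relative to any fixed $K$ (using monotonicity, item~\eqref{item:unif-monotone}, and the shift relation, item~\eqref{item:unif-shift}, to compare the seminorm on the region with $d_{K_i}$); hence $(\psi_n(x))_n$ is $\unifm$-Cauchy and, by completeness (item~\eqref{item:unif-complete}), converges to some $\psi(x)\in Z$. The map $\psi$ is Borel as a pointwise limit of Borel maps, it is equivariant because each $\psi_n$ is equivariant on $X_n$ and equivariance is a closed condition under $\unifm$-convergence (using that the action is continuous in the relevant sense and item~\eqref{item:unif-shift}), and $(x,\psi(x))\in P$ because $\psi_n(x)\in P_x$ for large $n$ and $P_x$ is $\unifm$-closed by hypothesis. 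The main obstacle I anticipate is bookkeeping the equivariant Borelness through the inductive redefinition — specifically, ensuring that the choice of class representatives, the transport by the cocycle, and the finitely-many-predecessors structure all assemble into a genuinely Borel map at each stage — rather than any single analytic estimate; the $\unifm$-Cauchy estimate itself is routine once the tolerances and the exhaustion of orbits by regions are lined up via Lemma~\ref{lem:exhaustive-taust}.
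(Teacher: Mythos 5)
Your proposal follows the same route as the paper's proof: seed with a Borel uniformization, inductively construct maps on the toast cross-sections $\mathcal{C}_n$ by applying the $(\mfR,P)$-Runge property to the (finitely many) predecessor regions pulled back to $G$ via the cocycle, spread by the action to get stage maps $\psi_n$ on $X_n$, verify the $\unifm$-Cauchy estimate via Lemma~\ref{lem:exhaustive-taust}, and pass to the limit using completeness, item~\eqref{item:unif-shift} for equivariance, and $\unifm$-closedness of the fibers to stay in $P$. The details you flag as needing care (Borel bookkeeping via item~\eqref{item:toast-rational}, comparing pseudometrics through the cocycle shift, and noting that item~\eqref{item:toast-layered} makes the consistency worry vacuous) are precisely the points the paper's proof handles, so your sketch is a correct outline of that argument.
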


\begin{proof}
  By the assumption, the set \(P\) admits a Borel uniformization, which implies that
  \(\proj_{X}(P)\) is a Borel subset of \(X\). Additionally, \(\proj_{X}(P)\) is
  \(a_{X}\)-invariant. Without loss of generality, we may substitute \(\proj_{X}(P)\) for \(X\),
  thereby assuming that \(\proj_{X}(P) = X\).

  Recall that \(a_{X}\) is assumed to be free, which allows us to define the cocycle map
  \(\rho : E_{G}^{X} \to G\) by the condition \(\rho(x, y)x = y\) for \(x E_{G}^{X} y\).  Let
  \((\mathcal{C}_{n}, \lambda_{n})_{n}\) be a Borel \(\mfR\)-toast for \(a_{X}\).

  We inductively construct a sequence of Borel maps \(\xi_{n} : \mathcal{C}_{n} \to Z\) such that
  \((c_{n}, \xi_{n}(c_{n})) \in P\) and the following condition holds:
  \begin{equation}
    \label{eq:xi-condition}
    d_{\lambda_{n-1}(c_{n-1})\rho(c_{n},c_{n-1})}(\xi_{n}(c_{n}),
    \rho(c_{n-1},c_{n})\cdot \xi_{n-1}(c_{n-1})) < 2^{-n}
  \end{equation}
  for all \(c_{n-1} \in \mathcal{C}_{n-1}\) and \(c_{n} \in \mathcal{C}_{n}\) such that \(c_{n-1} \)
  is a predecessor of \(c_{n}\), i.e., \(R_{n-1}(c_{n-1}) \subseteq R_{n}(c_{n})\).

  For the base case, define \(\xi_{0} : \mathcal{C}_{0} \to Z\) to be the restriction of any Borel
  uniformization of \(P\) to \(\mathcal{C}_{0}\). Now, assume that \(\xi_{n-1}\) has already been
  constructed. Choose an element \(c_{n} \in \mathcal{C}_{n}\), and let
  \(c_{n-1}^{1}, \ldots, c_{n-1}^{m} \in \mathcal{C}_{n-1}\) denote all the elements of
  \(\mathcal{C}_{n-1}\) satisfying \(R_{n-1}(c_{n-1}^{i}) \subseteq R_{n}(c_{n})\).  Define compact
  sets \(K_{i} = \lambda_{n-1}(c_{n-1}^{i})\rho(c_{n},c_{n-1}^{i})\) for \(1 \le i \le m\). By the
  invariance of \(\mfR\), each \(K_{i}\) lies in \(\mfR\), and the definition of the toast ensures
  that the sets \(K_{i}\), \(1 \le i \le m\), are pairwise disjoint.

  Let \(f : P^{(m)} \to Z\) be the map provided by the \((\mfR, P)\)-Runge property for the sets
  \(K_{1}, \ldots, K_{m}\) and \(\epsilon = 2^{-n}\). Observe that, for each \(1 \le i \le m\),
  \((c_{n-1}^{i}, \xi_{n-1}(c_{n-1}^{i})) \in P\) implies
  \((c_{n}, \rho(c_{n-1}^{i},c_{n})\cdot \xi_{n-1}(c_{n-1}^{i})) \in P\). We can thus define
  \[
    \xi_{n}(c_{n}) = f(c_{n}, \rho(c_{n-1}^{1},c_{n})\cdot \xi_{n-1}(c_{n-1}^{1}), \ldots,
    \rho(c_{n-1}^{m},c_{n})\cdot \xi_{n-1}(c_{n-1}^{m})).
  \]
  The defining property of \(f\) ensures that
  \[
    d_{K_{i}}(\xi_{n}(c_{n}), \rho(c_{n-1}^{i},c_{n})\cdot \xi_{n-1}(c_{n-1}^{i})) < 2^{-n}
  \]
  for all \(1 \le i \le m\).  Thus, \(\xi_{n}\) satisfies Eq.~\eqref{eq:xi-condition}.  Note that
  item~\eqref{item:toast-rational} of the definition of a Borel toast ensures that there are only
  countably many distinct possibilities for the sets \(K_{i}\), which allows us to perform the
  construction above in a Borel way over all \(\mathcal{C}_{n}\).

  Let \(X_{n} = \bigcup_{c_{n} \in \mathcal{C}_{n}} R_{n}(c_{n})\), and define functions
  \(\beta_{n} : X_{n} \to \mathcal{C}_{n}\) by the condition \(x \in R_{n}(\beta_{n}(x))\) for all
  \(x \in X_{n}\). Next, define functions \(\psi_{n} : X_{n} \to Z\) as
  \begin{equation}
    \label{eq:psi-n-definition}
      \psi_{n}(x) = \rho(\beta_{n}(x), x)\cdot \xi_{n}(\beta_{n}(x)).
  \end{equation}
  Since \((c_{n}, \xi_{n}(c_{n})) \in P\) for all \(c_{n} \in \mathcal{C}_{n}\) and using the
  \((a_{X},a_{Z})\)-equivariance of \(P\), it follows that
  \[
    P \ni \big(\rho(\beta_{n}(x), x)\cdot\beta_{n}(x), \rho(\beta_{n}(x),
    x)\cdot\xi_{n}(\beta_{n}(x))\big) = (x, \psi_{n}(x)).
  \]
  Thus, each \(\psi_{n}\) serves as a Borel uniformization of \(P \cap (X_{n} \times Z)\).

  We claim that the sequence \((\psi_{n}(x))_{n}\) is \(\unifm\)-Cauchy for every \(x \in X\). To
  verify this, it is enough to show that for each \(K \in \vietoris{G}\), every \(x \in X\), and all
  sufficiently large \(n\), the inequality \(d_{K}(\psi_{n}(x), \psi_{n-1}(x)) < 2^{-n}\) holds. Fix
  \(K \in \vietoris{G}\) and define \(c_{n-1} = \beta_{n-1}(x)\) and \(c_{n} = \beta_{n}(x)\). For
  sufficiently large \(n\), Lemma~\ref{lem:exhaustive-taust} gives
  \[
    K\cdot x \subseteq \lambda_{n-1}(c_{n-1})\cdot c_{n-1} \subseteq \lambda_{n}(c_{n})\cdot c_{n}.
  \]
  Since the action \(a_{X}\) is free, it follows that
  \begin{equation}
    \label{eq:containment}
    K\rho(c_{n},x) = K\rho(c_{n-1},x)\rho(c_{n},c_{n-1}) \subseteq
    \lambda_{n-1}(c_{n-1})\rho(c_{n},c_{n-1}).
  \end{equation}
  We now have the following equalities
  \begin{displaymath}
    \begin{aligned}
      d_{K}(\psi_{n}(x), \psi_{n-1}(x))
      &= d_{K}(\rho(c_{n},x)\cdot\xi_{n}(c_{n}), \rho(c_{n-1},x)\cdot\xi_{n-1}(c_{n-1})) \\
      &= d_{K\rho(c_{n}, x)}(\xi_{n}(c_{n}), \rho(x,c_{n})\rho(c_{n-1},x)\cdot\xi_{n-1}(c_{n-1})) \\
      &= d_{K\rho(c_{n}, x)}(\xi_{n}(c_{n}), \rho(c_{n-1},c_{n})\cdot\xi_{n-1}(c_{n-1})) \\
    \end{aligned}
  \end{displaymath}
  which yields the estimate
  \begin{equation}
    \label{eq:psi-n-cauchy}
    \begin{aligned}
      d_{K}(\psi_{n}(x), \psi_{n-1}(x))
      & = d_{K\rho(c_{n}, x)}(\xi_{n}(c_{n}),
        \rho(c_{n-1},c_{n})\cdot\xi_{n-1}(c_{n-1})) \\
      &\le
        d_{\lambda_{n-1}(c_{n-1})\rho(c_{n},c_{n-1})}(\xi_{n}(c_{n}),
        \rho(c_{n-1},c_{n})\cdot\xi_{n-1}(c_{n-1})) \\
      &< 2^{-n},
    \end{aligned}
  \end{equation}
  where the first inequality is due to Eq.~\eqref{eq:containment} and the second follows from
  Eq.~\eqref{eq:xi-condition}.

  We have established that \((\psi_{n}(x))_{n}\) is \(\unifm\)-Cauchy for every \(x \in X\). By
  item~\eqref{item:unif-complete} of Definition~\ref{def:U-family}, the family of pseudometrics
  \(\unifm\) is complete. Consequently, for each \(x \in X\), the limit
  \(\psi(x) = \textrm{\(\unifm\)-}\lim_{n}\psi_{n}(x)\) exists and is unique, as \(\unifm\)
  separates points. Furthermore, \(\psi\) is a Borel map because \(\unifm\) is Borel. To verify
  this, take an exhaustion \((K_{n})_{n}\) of \(G\) by compact sets. The graph of \(\psi\) can be
  written as
  \[
    \Bigl\{(x,y) : \forall k\ \forall m\ \exists N\ (\forall n\ge N)\ [d_{K_{m}}(\psi_{n}(x),y) <
    1/k]\Bigr\},
  \]
  which is Borel since the pseudometrics \(d_{K_{m}}\) and the functions \(\psi_{n}\) are Borel.

  We claim that \(\psi\) is \((a_{X}, a_{Z})\)-equivariant.  Observe that
  \(\rho(y,g\cdot x) = g \rho(y,x)\) for all \(x,y \in X\) and \(g \in G\).  Moreover, for all
  sufficiently large \(n\), we have \(\beta_{n}(g\cdot x) = \beta_{n}(x)\) and consequently
  \begin{displaymath}
    \psi_{n}(g\cdot x) = \rho(\beta_{n}(g\cdot x), g\cdot x) \cdot \xi_{n}(\beta_{n}(g\cdot x)) =
    g\rho(\beta_{n}(x), x)\cdot \xi_{n}(\beta_{n}(x)) = g\cdot \psi_{n}(x).
  \end{displaymath}
  Taking the limit as \(n \to \infty\), we obtain
  \[\psi(g\cdot x) = \unifm\textrm{-}\lim_{n}\psi_{n}(g\cdot x)
    = \unifm\textrm{-}\lim_{n}g\cdot \psi_{n}(x) = g\cdot (\unifm\textrm{-}\lim_{n}\psi_{n}(x)) =
    g\cdot \psi(x),\]
  where the penultimate equality follows from the continuity of the map \(z \mapsto g\cdot z\) in
  the topology of \(\unifm\), given by item~\eqref{item:unif-shift} of
  Definition~\ref{def:U-family}.  Thus, \(\psi \) is \(G\)-equivariant.

  Recall that each \(\psi_{n}\) is a Borel uniformization of \(P \cap (X_{n} \times Z)\).  Since
  slices \(P_{x}\) are assumed to be \(\unifm\)-closed, we conclude that \((x,\psi(x)) \in P\) holds
  for all \( x \in X\) and \(\psi\) is therefore a Borel \((a_{X}, a_{Z})\)-equivariant
  uniformization of \(P\).
\end{proof}

\begin{remark}
  \label{rem:close-to-uniformization}
  The proof of Lemma~\ref{lem:runge-equivariant-uniformization} yields a more refined result that
  can be useful in applications. Given an \(\mfR\)-toast \((\mathcal{C}_{n}, \lambda_{n})_{n}\) for
  \(a_{X}\), the initial map \(\xi_{0} \colon \mathcal{C}_{0} \to Z\) in the construction can be any
  Borel map satisfying \((c_{0}, \xi_{0}(c_{0})) \in P\) for all \(c_{0} \in \mathcal{C}_{0}\). The
  map \(\psi_{0} \colon X_{0} \to Z\), defined by Eq.~\eqref{eq:psi-n-definition}, agrees with
  \(\xi_{0}\) on \(\mathcal{C}_{0}\). If \(K \subseteq \lambda_{0}(c_{0})\) for all
  \(c_{0} \in \mathcal{C}_{0}\) and \(\epsilon > 0\), we can replace the \(2^{-n}\) bound with
  \(\epsilon 2^{-n}\) in the Cauchy estimates ensuring
  \(d_{K}(\psi_{0}(c_{0}), \psi(c_{0})) < \epsilon\) for all \(c_{0} \in
  \mathcal{C}_{0}\). Consequently, the resulting \((a_{X}, a_{Z})\)-equivariant uniformization of
  \(P\) remains \(\epsilon\)-close (with respect to the pseudometric \(d_{K}\)) to the original
  (non-equivariant) map \(\xi_{0}\) on \(\mathcal{C}_{0}\). This observation will be useful below
  in Section~\ref{s:weierstrass-liftings}, where we show how to deduce a Borel counterpart
  of Weiss's result on the existence of measurable entire functions.
\end{remark}

\subsection{Orbital action families}
\label{sec:semidirect-products}

We now examine a specific class of examples of \(a_{Z}\)-families. Consider an action \(H \acts Z\)
of some group \(H\).  When the action is free, each orbit in \(Z\) can be identified with an affine
copy of \(H\). More precisely, for any \(x_{0} \in Z\), the map \(H \ni h \mapsto hx_{0} \in Z\)
establishes a bijection between \(H\) and the \(H\)-orbit of \(x_{0}\).

Given a pseudometric \(d\) on \(H\), we can induce a pseudometric on the orbit of \(x_{0}\) by
defining \(d_{x_{0}}(z_{1}, z_{2}) = d(h_{1}, h_{2})\), where \(h_{i}\) is the unique element in
\(H\) such that \(h_{i}x_{0} = z_{i}\). If we select a different point \(x_{1}\) in the same orbit,
the resulting pseudometric \(d_{x_{1}}\) on the orbit may differ from \(d_{x_{0}}\). Specifically,
if \(fx_{1} = x_{0}\) for some \(f \in H\), then \(d_{x_{1}} = d_{x_{0}}\) holds if and only if
\(d(h_{1}f, h_{2}f) = d(h_{1}, h_{2})\) for all \(h_{1}, h_{2} \in H\).  Thus, the pseudometric
\(d_{x_{0}}\) is independent of the choice of the orbit representative \(x_{0}\) precisely when
\(d\) is right-invariant.

Even when the \(H\)-action is not free, a right-invariant pseudometric \(d\) on \(H\) induces a
pseudometric on each orbit of the action. This is defined by the formula
\[
  d(z_{0}, z_{1}) = \inf \{\norm{h} : hz_{0} = z_{1}\},
\]
where \(\norm{h} = d(h, e)\), and the infimum of an empty set is interpreted as
\(+\infty\). Notably, \(d(z_{0}, z_{1}) < \infty\) holds if and only if \(z_{0} E^{Z}_{H} z_{1}\).

Recall that a seminorm on a group \(H\) is a function \(\norm{\cdot} : H \to \R^{\ge 0}\) satisfying
the following properties for all \(h, h_{1}, h_{2} \in H\):
\begin{itemize}
\item \(\norm{e} = 0\),
\item \(\norm{h} = \norm{h^{-1}}\),
\item \(\norm{h_{1} h_{2}} \le \norm{h_{1}} + \norm{h_{2}}\).
\end{itemize}

Now, suppose \(H\) is equipped with the structure of a standard Borel space (for example, \(H\) is a
Polish group), and let \(\tau : G \acts H\) be a Borel action. Consider a \(\tau\)-family
\(\unifm^{H} = (d^{H}_{K})_{K \in \vietoris{G}}\) of pseudometrics on \(H\). We say that
\(\unifm^{H}\) is \emph{right-invariant} if each pseudometric \(d^{H}_{K}\) satisfies
\(d^{H}_{K}(h_{1}f, h_{2}f) = d^{H}_{K}(h_{1},h_{2})\)
for all \(h_{1}, h_{2}, f \in H\).

Note that right-invariant pseudometrics are in one-to-one correspondence with
seminorms. Specifically, if \(d\) is a right-invariant pseudometric on \(H\), then the function
\(\norm{h} = d(h,e)\) defines a seminorm. Conversely, \(d\) can be recovered from the seminorm via
the formula \(d(h_{1},h_{2}) = \norm{h_{1}h_{2}^{-1}}\). As a result, any right-invariant
\(\tau\)-family of pseudometrics on \(H\) (Definition~\ref{def:U-family}) for the action
\(G \acts H\) can be uniquely determined by specifying a \(\tau\)-family
\(\unifn = (\norm{\cdot}_{K})_{K \in \vietoris{G}}\) of seminorms on~\(H\).  The properties of
pseudometrics given in Definition~\ref{def:U-family} translate into the following properties of the
seminorms \(\unifn\).

\begin{definition}
  \label{def:U-family-seminorms}
  A family of seminorms \(\unifn = (\norm{\cdot}_{K})_{K \in \vietoris{G}}\) on \(H\) is a
  \(\tau\)-family if the following holds for all \(h \in H\) and \(g \in G\).
  \begin{enumerate}[leftmargin=1cm, label={\sf \arabic*}., ref={\sf \arabic*}]
  \item\label{item:semi-unif-monotone} \(\norm{h}_{K} \le \norm{h}_{K'}\) for all
    \(K, K' \in \vietoris{G}\) satisfying \(K \subseteq K'\).
  \item\label{item:semi-unif-shift} \(\norm{\tau^{g}(h)}_{K} = \norm{h}_{Kg}\) for all
    \(K \in \vietoris{G}\).
  \item\label{item:semi-unif-hausdorff} If \(h \ne e\) then \(\norm{h}_{K} \ne 0\) for some
    \(K \in \vietoris{G}\).
  \item\label{item:semi-unif-complete} The right uniformity generated by \( \unifn \) is complete:
    if \((h_{n})_{n}\) is \(\unifn\)-Cauchy in the sense that \(\norm{h_{n}h_{m}^{-1}}_{K} \to 0\)
    as \(m,n \to \infty\), then there exists some \(h_{\infty} \in H\) such that
    \(\norm{h_{\infty}h^{-1}_{n}}_{K} \to 0 \) for all \(K \in \vietoris{G}\).
  \end{enumerate}
\end{definition}

Let \(H\) be a Polish group, and let \(\tau: G \acts H\) be a continuous action by
automorphisms. The semidirect product \(H \rtimes_{\tau} G\) is defined by the multiplication rule
\begin{equation}
  \label{eq:semidirect-product}
  (h_{1}, g_{1}) \cdot (h_{2}, g_{2}) = (h_{1} \tau^{g_{1}}(h_{2}), g_{1}g_{2}),
\end{equation}
and it is Polish in the product topology. Consider a Borel action \(H \rtimes_{\tau} G \acts Z\) on
a standard Borel space \(Z\), and let \(a_{Z}: G \acts Z\) and \(H \acts Z\) denote the
corresponding restrictions of this action.

Suppose we are given a right-invariant \(\tau\)-family, defined by seminorms
\(\unifn = (\norm{\cdot}_{K})_{K \in \vietoris{G}}\) on \(H\). Assume further that \(\unifn\)
generates the Polish topology on \(H\). Under these assumptions, we can construct an
\(a_{Z}\)-family \(\unifm = (d_{K})_{K \in \vietoris{G}}\) of pseudometrics on \(Z\) by defining
\[
  d_{K}(z_{1}, z_{2}) = \inf\bigl\{\norm{h}_{K} : h \in H \text{ satisfies } hz_{1} = z_{2}\bigr\},
\]
where, as usual, the infimum over the empty set is taken to be \(+\infty\).

\begin{lemma}
  \label{lem:invariant-uniformizes}
  Let \(\unifn\) and \(\unifm\) be as above.  Then \(\unifm = (d_{K})_{K \in \vietoris{G}}\) is an
  \(a_{Z}\)-family.  Moreover, \(\unifm\) is Borel if and only if the orbit equivalence relation
  \(E^{Z}_{H}\) is Borel.
\end{lemma}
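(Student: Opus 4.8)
The plan is to verify, in turn, that $\unifm=(d_K)_{K\in\vietoris{G}}$ satisfies the four axioms of Definition~\ref{def:U-family}, and then the Borelness equivalence. Axioms~\eqref{item:unif-monotone} and~\eqref{item:unif-shift} are formal. Monotonicity holds because $d_K(z_1,z_2)$ is the infimum over the \emph{same} set $\{h\in H:hz_1=z_2\}$ of the pointwise smaller function $\norm{\cdot}_K\le\norm{\cdot}_{K'}$ (axiom~\eqref{item:semi-unif-monotone} for $\unifn$). For shift-equivariance one unwinds the semidirect-product law~\eqref{eq:semidirect-product} together with the compatibility of the $G$- and $H$-actions on $Z$, obtaining $\{h:h\cdot(g\cdot z_1)=g\cdot z_2\}=\{\tau^g(h'):h'z_1=z_2\}$; then axiom~\eqref{item:semi-unif-shift}, $\norm{\tau^g(h')}_K=\norm{h'}_{Kg}$, gives $d_K(gz_1,gz_2)=d_{Kg}(z_1,z_2)$.

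The substantive axioms are separation~\eqref{item:unif-hausdorff} and completeness~\eqref{item:unif-complete}. For separation take distinct $z_1,z_2$; if they lie in different $H$-orbits then $d_K(z_1,z_2)=+\infty$ for all $K$, so suppose $z_2=h_0z_1$. The set $C:=\{h:hz_1=z_2\}=h_0\stab(z_1)$ is a left coset of $\stab(z_1)$, which is a \emph{closed} subgroup by Miller's theorem; hence $C$ is closed and $e\notin C$. If $d_K(z_1,z_2)=0$ for every $K$, pick along an exhaustion $K_1\subseteq K_2\subseteq\cdots$ of $G$ elements $h_n\in C$ with $\norm{h_n}_{K_n}<1/n$; by monotonicity $\norm{h_n}_{K_m}\to0$ for each fixed $m$, so $h_n\to e$ in the topology generated by $\unifn$, which by hypothesis is the Polish topology of $H$. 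As $C$ is closed this forces $e\in C$, a contradiction, so $d_K(z_1,z_2)>0$ for some $K$. For completeness, let $(z_n)_n$ be $\unifm$-Cauchy. Fixing one compact $K$ shows the tail of $(z_n)$ lies in a single $H$-orbit; pick a base point $z_*$ with stabilizer $N$, so the orbit is identified with $H/N$ and, using right-invariance of the $\norm{\cdot}_K$, one computes $d_K(hz_*,h'z_*)=\inf_{n\in N}d^H_K(h'n,h)$. Writing $z_n=h_nz_*$ for arbitrary representatives, pass along the exhaustion to a subsequence with $d_{K_k}(z_i,z_j)<2^{-k}$ for $i,j\ge j_k$, and inductively choose $g_k\in h_{j_k}N$ (so $g_kz_*=z_{j_k}$) with $d^H_{K_k}(g_{k+1},g_k)<2^{-k}$; this is possible because $d_{K_k}(z_{j_k},z_{j_{k+1}})=\inf_{n\in N}d^H_{K_k}(h_{j_{k+1}}n,g_k)<2^{-k}$. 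Monotonicity then makes $(g_k)_k$ a $\unifn$-Cauchy sequence for the right uniformity, so axiom~\eqref{item:semi-unif-complete} yields $g_\infty\in H$ with $d^H_K(g_\infty,g_k)\to0$ for all $K$; setting $z:=g_\infty z_*$ gives $d_K(z,z_{j_k})\le d^H_K(g_k,g_\infty)\to0$, and a $\unifm$-Cauchy sequence with a convergent subsequence converges, so $z_n\to z$ in $\unifm$. This establishes that $\unifm$ is an $a_Z$-family.

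For the Borelness equivalence, the direction ``$\unifm$ Borel $\Rightarrow E^Z_H$ Borel'' is immediate: seminorms are finite-valued, so $d_K(z_1,z_2)<+\infty$ iff $z_1E^Z_Hz_2$, i.e.\ $E^Z_H=d_K^{-1}([0,+\infty))$ for any fixed $K$. Conversely, assume $E^Z_H$ is Borel. Each $\norm{\cdot}_K$ is $1$-Lipschitz for $d^H_K$ by the reverse triangle inequality, and $\unifn$ generates the Polish topology, so $\norm{\cdot}_K$ is continuous, hence Borel. The key point is that the \emph{realization map} $E^Z_H\ni(z_1,z_2)\mapsto C(z_1,z_2):=\{h\in H:hz_1=z_2\}\in\effros{H}$ is Borel; granting this, $d_K(z_1,z_2)=\inf\{\norm{h}_K:h\in C(z_1,z_2)\}$ is Borel on $E^Z_H$, because the infimum of a fixed continuous function over a Borel-varying closed set is Borel: $\{(z_1,z_2):d_K(z_1,z_2)<r\}=\{(z_1,z_2):C(z_1,z_2)\cap\{\norm{\cdot}_K<r\}\ne\varnothing\}$ and ``$C(z_1,z_2)$ meets a given open set'' is a Borel condition by the very definition of the Effros Borel structure; meanwhile $d_K\equiv+\infty$ on the Borel set $(Z\times Z)\setminus E^Z_H$. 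Thus $\unifm$ is Borel.

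The main obstacle is the Borelness of the realization map $(z_1,z_2)\mapsto C(z_1,z_2)$; this is the only place where the hypothesis ``$E^Z_H$ Borel'' is essential. In the setting of all applications of Theorem~\ref{thm:main-theorem} the action $H\acts Z$ is free (e.g.\ $\holomnz$ acting on $\holom[\ne 0]$ by multiplication, or the harmonic functions acting on the subharmonic ones by addition), so each $C(z_1,z_2)$ is a singleton; then the realization map is a Borel function $E^Z_H\to H$ by the Lusin--Souslin theorem (a Borel set with singleton sections is the graph of a Borel function over its, necessarily Borel, projection), and $d_K(z_1,z_2)=\norm{C(z_1,z_2)}_K$ outright. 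For a general Borel action $H\acts Z$ one instead has to appeal to the descriptive set theory of Borel actions of Polish groups with Borel orbit equivalence relation to obtain the Effros-measurability of the coset map, and I expect this to be the delicate step.
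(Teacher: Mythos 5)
Your proof is essentially correct and follows the same overall strategy as the paper's, with two points worth flagging.

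For the completeness axiom, you organize the argument around the orbit identification $H\cdot z_*\cong H/N$ with $N=\stab(z_*)$ and inductively choose coset representatives $g_k\in h_{j_k}N$; the paper instead fixes maps $h_m$ with $h_mz_m=z_{m+1}$ directly and shows the products $h_n\cdots h_m$ form a $\unifn$-Cauchy sequence. Both arguments work and produce the limit as $g_\infty z_*$ (respectively $f_0z_0$); yours is a bit more bookkeeping-heavy but correct. For the converse Borelness direction, your route via sub-level sets $\{d_K<r\}=\{C(z_1,z_2)\cap\{\norm{\cdot}_K<r\}\neq\varnothing\}$ is actually slightly cleaner than the paper's, which invokes Kuratowski--Ryll-Nardzewski selectors to express $d_K$ through the values $\norm{s_n(H_{z_1,z_2})}_K$.

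The one genuine gap you yourself flag correctly: the Borelness of the realization map $E^Z_H\ni(z_1,z_2)\mapsto H_{z_1,z_2}\in\effros{H}$ when the $H$-action is not free. You only handle the free case (via Lusin--Souslin) and note that the general case needs a result from the descriptive set theory of Polish group actions. This is exactly the content of Becker--Kechris~\cite{beckerDescriptiveSetTheory1996}*{Thm.~7.1.2}, which the paper cites: for a Borel action $H\acts Z$ of a Polish group with Borel orbit equivalence relation, the map $(z_1,z_2)\mapsto H_{z_1,z_2}$ into the Effros Borel space is Borel. Citing this closes the gap, and your proof then goes through in full generality.
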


\begin{proof}
  Item \eqref{item:unif-monotone} of Definition~\ref{def:U-family} follows directly from the
  corresponding property of the seminorms.  For~\eqref{item:unif-shift}, observe that the definition
  of the semidirect product Eq.~\eqref{eq:semidirect-product} gives
  \[hgz_{1} = gz_{2} \iff g\tau^{g^{-1}}(h)z_{1} = gz_{2} \iff \tau^{g^{-1}}(h)z_{1} = z_{2}.\]
  Consequently,
  \begin{displaymath}
    \begin{aligned}
      d_{K}(gz_{1},gz_{2})
      &= \inf\{ \norm{h}_{K} : hgz_{1} = gz_{2}\} \\
      &= \inf\{ \norm{h}_{K} : \tau^{g^{-1}}(h)z_{1} = z_{2}\} \\
      [h' = \tau^{g^{-1}}(h)]
      &= \inf\{ \norm{\tau^{g}(h')}_{K} : h'z_{1} = z_{2}\} \\
      &= \inf\{ \norm{h'}_{Kg} : h'z_{1} =z_{2}\} = d_{Kg}(z_{1},z_{2}). \\
    \end{aligned}
  \end{displaymath}
  The family \(\unifm\) separates points.  To see this, let \((K_{n})_{n}\) be an exhaustion of
  \(G\), and suppose \(d_{K_{n}}(z_{1},z_{2}) = 0 \) for all \(n\).  Then there exist
  \(h_{n} \in H\) such that \(h_{n}z_{1} = z_{2}\) and \(\norm{h_{n}}_{K_{n}} < 2^{-n}\) for all
  \(n\).  The sequence \((h_{n})_{n}\) converges to the identity element \(e\) of \(H\) because
  \(\unifn\) generates the given Polish topology of \(H\).  Since
  \(H_{z_{1},z_{2}} = \{h : hz_{0} = z_{1}\}\) is closed in \(H\) for any Borel action (by Miller's
  theorem~\cite{Kechris}*{9.17} or \cite{MR1619545}*{Thm.~4.8.4}), it follows that
  \(z_{2} = (\lim_{n}h_{n})z_{1} = ez_{1} \), and thus \(z_{1} = z_{2}\).

  To verify completeness (item~\eqref{item:unif-complete} of Definition~\ref{def:U-family}), suppose
  \((z_{n})_{n}\) in \(Z\) is \(d_{K}\)-Cauchy for each \(K \in \vietoris{G}\).  In particular,
  \(z_{m} E^{Z}_{H} z_{n}\) for all sufficiently large \(m, n\).  By passing to a subsequence of
  \((z_{n})_{n}\) if necessary, we may assume that \(d_{K_{m}}(z_{m}, z_{n}) < 2^{-m}\) for all
  \(n \ge m\).  Let \(h_{m} \in H\) satisfy \(\norm{h_{m}}_{K_{m}} < 2^{-m}\) and
  \(h_{m}z_{m} = z_{m+1}\).  The sequence \((h_{n}h_{n-1}\cdots h_{m})_{n}\) is \(\unifn\)-Cauchy.
  Indeed, for all \(m \le k \le n\),
  \begin{displaymath}
    \begin{aligned}
      \norm{(h_{n}h_{n-1}\cdots h_{m}) (h_{k}h_{k-1}\cdots h_{m})^{-1}}_{K}
      &= \norm{h_{n}h_{n-1} \cdots h_{k+1}}_{K} \\
      &\le \sum_{i = k+1}^{n} \norm{h_{i}}_{K} \le \sum_{i=k+1}^{n} 2^{-i} < 2^{-k},
    \end{aligned}
  \end{displaymath}
  where the penultimate inequality holds for \(k\) sufficiently large so that
  \(K \subseteq K_{k+1}\).  Thus, for each \(m\), the limit \(f_{m} = \lim_{n}h_{n}\cdots h_{m}\)
  exists.  Moreover,
  \[\norm{f_{m}}_{K} = \lim_{n}\norm{h_{n} \cdots h_{m}}_{K} \le \sum_{i=m}^{n}\norm{h_{i}}_{K} \le
    2^{-m+1},\]
  provided \(K \subseteq K_{m}\).  In particular, \(\norm{f_{m}}_{K} \to 0\) as \(m \to \infty\) for
  all \(K \in \vietoris{G}\).  Since \(H\) is a topological group in the topology of \(\unifn\),
  \[f_{m}h_{m-1} \cdots h_{0} = (\lim_{n}h_{n} \cdots h_{m})h_{m-1} \cdots h_{0} = \lim_{n}h_{n}
    \cdots h_{0} = f_{0},\]
  and thus \(f_{0}z_{0} = f_{m}h_{m-1}\cdots h_{0}z_{0} = f_{m}z_{m}\).
  The point \(f_{m}z_{m}\) is
  independent of \(m\); let \(z\) denote this common value.  Finally, note that
  \(d_{K}(z_{n},z) \le \norm{f_{n}}_{K} \to 0\) as \(n \to \infty\), thus showing that \(z\) is the
  limit of \((z_{n})_{n}\).

  We have established that \(\unifm\) is an \(a_{Z}\)-family.  If \(\unifm\) is Borel, then
  \[E_{H} = \{(z_{1},z_{2}) : d_{K}(z_{1}, z_{2}) < \infty\}\]
  is also Borel.  Conversely, if \(E_{H}\) is Borel, then by
  Becker--Kechris~\cite{beckerDescriptiveSetTheory1996}*{Thm.~7.1.2}, the map
  \[E_{H} \ni (z_{1},z_{2}) \mapsto H_{z_{1},z_{2}} = \{h \in H : hz_{1} = z_{2}\} \in \effros{H}\]
  is Borel with respect to the Effros Borel structure on \(\effros{H}\).  By the
  Kuratowski--Ryll-Nardzewski selection theorem, there exist Borel functions
  \(s_{n} : \effros{H} \to H\) such that \(\{s_{n}(F)\}_{n}\) is dense in \(F\) for every non-empty
  \(F \in \effros{H}\). For \(z_{1}, z_{2} \in E_{H}\), the pseudometric \(d_{K}\) can then be
  expressed as
  \begin{multline*}
    d_{K}(z_{1},z_{2}) = \alpha \iff \forall n\ \norm{s_{n}(H_{z_{1},z_{2}})}_{K} \ge \alpha
    \textrm{ and }\\ \forall k\ \exists n\ [\, \norm{s_{n}(H_{z_{1},z_{2}})}_{K} < \alpha + 1/k\,].
  \end{multline*}
  This demonstrates that \(d_{K}\) has a Borel graph and is therefore a Borel
  function~\cite{Kechris}*{14.12}, \cite{MR1619545}*{Thm.~4.5.2}.
\end{proof}

\subsection{Equivariant Borel liftings for semidirect product actions}
\label{sec:equivariant-borel-liftings-semiproduct}
We are now ready to state and prove our main theorem. First, let us recall the relevant notation and
assumptions (cf.\ Section~\ref{sec:standing_assumptions}).

Let \(G\) be a locally compact Polish group acting in a Borel way on standard Borel spaces \(Z\) and
\(Y\), and let \(\pi:Z\to Y\) be a Borel \(G\)-equivariant surjection. Recall that our goal is to
provide sufficient conditions under which, for a free Borel \(G\)-action \(G\acts X\) and a
\(G\)-equivariant map \(\phi:X\to Y\), there exists a Borel \(G\)-equivariant lifting
\(\psi:X\to Z\) satisfying \(\pi\circ \psi=\phi\).

Next, let's assume that we have a Polish group action \(H\acts Z\) whose orbit equivalence relation
\(E_H\) is classified by \(\pi\), i.e., \(\pi(z_{1}) = \pi(z_{2})\) if and only if
\(z_{1}E_{H}z_{2}\). We also have a \emph{continuous} action \(\tau : G \acts H\) by automorphisms,
giving rise to a Polish semidirect product \(H \rtimes_{\tau} G\) equipped with the product topology
and group operations
\[
(h_{1},g_{1})(h_{2},g_{2}) = (h_{1}\tau^{g_{1}}(h_{2}), g_{1}g_{2}).
\]
The semidirect product is assumed to act on \(Z\) in a Borel way, inducing actions of the subgroups
\(G\) and \(H\) on \(Z\).  We require that the induced actions coincide with the actions
\(G\acts Z\) and \(H\acts Z\) already present.  (In our applications, this is absolutely automatic.)

We denote by \(\mfR \subseteq \vietoris{G}\) a cofinal \(G\)-invariant class of compact subsets of
\(G\) and let \(\unifn = (\norm{\cdot}_{K})_{K \in \vietoris{G}}\) be a \(\tau\)-family of seminorms
on \(H\) (Definition~\ref{def:U-family-seminorms}).  The following approximation property is a
central requirement for our main theorem.

\begin{definition}
  \label{def:weak-runge-property}
  A \(\tau\)-family \(\unifn\) of seminorms on \(H\) is said to satisfy the \(\mfR\)-Runge property
  if for any pairwise disjoint compact sets \(K_1,\ldots,K_m\in \mfR\), any
  \(h_1,\ldots, h_m\in H\), and any \(\epsilon>0\), there exists an element \(h\in H\) such that
  \(\lVert h h_i^{-1}\rVert_{K_i}<\epsilon\) for \(i=1,\ldots,m\).
\end{definition}

Recall also the definition of a Borel \(\mfR\)-toast from Definition~\ref{def:toast}.  We are now
ready to state our main result.

\begin{theorem}
  \label{thm:main-theorem}

  Assume that the free action \(G \acts X\) admits a Borel \(\mfR\)-toast, and that the
  \(\tau\)-family \(\unifn\) on \(H\) satisfies the \(\mfR\)-Runge property.  Then, for any
  \(G\)-equivariant Borel map \(\phi : X \to Y\), there exists a \(G\)-equivariant Borel map
  \(\psi : X \to Z\) such that \(\pi \circ \psi = \phi\), making the following diagram commute:
  \begin{displaymath}
    \begin{tikzcd}[ampersand replacement=\&]
      G \acts X \arrow[r, dashed, "\psi"] \arrow[dr, "\phi"]
      \& G \acts Z \arrow[d, "\pi"]\\
      \& G \acts Y
    \end{tikzcd}
  \end{displaymath}
\end{theorem}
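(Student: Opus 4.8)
The plan is to reduce Theorem~\ref{thm:main-theorem} to the equivariant uniformization result of Lemma~\ref{lem:runge-equivariant-uniformization}. First I would introduce the natural candidate set
\[
  P = \{(x,z) \in X \times Z : \pi(z) = \phi(x)\}.
\]
Since \(\pi\) and \(\phi\) are Borel and \(G\)-equivariant, \(P\) is a Borel \((a_X,a_Z)\)-equivariant subset of \(X\times Z\), and its projection \(\proj_X(P)\) is all of \(X\) because \(\pi\) is surjective and \(\phi\) is defined on all of \(X\). An equivariant Borel uniformization of \(P\) is exactly the desired lifting \(\psi\). So the task becomes verifying the three hypotheses of Lemma~\ref{lem:runge-equivariant-uniformization}: that \(P\) admits \emph{some} (not necessarily equivariant) Borel uniformization; that the fibers \(P_x\) are \(\unifm\)-closed for a suitable \(a_Z\)-family \(\unifm\); and that \(\unifm\) satisfies the \((\mfR,P)\)-Runge property.

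For the \(a_Z\)-family, I would invoke Section~\ref{sec:semidirect-products}: the given \(\tau\)-family \({\sf N}\) of seminorms on \(H\), which we may assume generates the Polish topology on \(H\) (this is part of the standing setup, as \({\sf N}\) is a \(\tau\)-family generating the topology), induces via Lemma~\ref{lem:invariant-uniformizes} an \(a_Z\)-family \(\unifm = (d_K)_K\) on \(Z\) by \(d_K(z_1,z_2) = \inf\{\|h\|_K : hz_1 = z_2\}\). Since \(E_H = E_H^Z\) is classified by the Borel map \(\pi\), it is a Borel equivalence relation, so Lemma~\ref{lem:invariant-uniformizes} also gives that \(\unifm\) is Borel. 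Now the fibers: \(P_x = \{z : \pi(z) = \phi(x)\}\) is either empty or a single \(H\)-orbit in \(Z\), namely \(\pi^{-1}(\phi(x))\). A single \(H\)-orbit is \(\unifm\)-closed essentially by the argument already used inside the proof of Lemma~\ref{lem:invariant-uniformizes} for separation of points: if \(z_n \to z\) in \(\unifm\) with all \(z_n\) in one orbit, then the connecting elements \(h_n\) (with \(h_n z_1 = z_n\)) form an \({\sf N}\)-Cauchy sequence converging to some \(h_\infty \in H\), and since the stabilizer sets \(H_{z_1,z}\) are \({\sf N}\)-closed (Miller's theorem), the limit \(z\) lies in the same orbit; more directly, \(\unifm\)-convergence forces eventual \(E_H\)-equivalence, and then \(d_K\)-Cauchyness plus completeness of \({\sf N}\) on \(H\) pins the limit down within the orbit. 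A Borel uniformization of \(P\) exists because \(\pi\) is a Borel surjection between standard Borel spaces with the preimages being single \(H\)-orbits — one can produce a Borel section using the Becker--Kechris measurable selection for Borel \(H\)-actions with Borel orbit equivalence relation together with \(\phi\), or, more simply, by Lusin--Novikov type uniformization since the fibers \(\{z : \pi(z) = y\}\) need not be countable but the relevant selection is handled exactly as in the last paragraph of the proof of Lemma~\ref{lem:invariant-uniformizes}.

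The main obstacle — and the technical heart of the proof — is verifying the \((\mfR,P)\)-Runge property for \(\unifm\), i.e., upgrading the mere-existence \(\mfR\)-Runge property of \({\sf N}\) on \(H\) into a \emph{Borel} Runge map \(f : P^{(m)} \to Z\). Given pairwise disjoint \(K_1,\ldots,K_m \in \mfR\) and \(\epsilon>0\), and a tuple \((x,z_1,\ldots,z_m) \in P^{(m)}\), all the \(z_i\) lie in the single orbit \(\pi^{-1}(\phi(x))\), so we may write \(z_i = h_i z_1\) for suitable \(h_i \in H\) (with \(h_1 = e\)). The \(\mfR\)-Runge property applied to \(h_1,\ldots,h_m\) gives \(h \in H\) with \(\|h h_i^{-1}\|_{K_i} < \epsilon\); setting \(z := h z_1\) one computes \(d_{K_i}(z_i, z) \le \|h h_i^{-1}\|_{K_i} < \epsilon\), since \((h h_i^{-1}) z_i = h z_1 = z\). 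So the correct \(z\) exists in every fiber; what remains is to choose it in a Borel way in \((x, z_1, \ldots, z_m)\). I would do this by a standard Borel-uniformization-of-open-sets argument: the set
\[
  Q = \bigl\{(x, z_1, \ldots, z_m, z) \in P^{(m)} \times Z : d_{K_i}(z_i, z) < \epsilon \text{ for all } i,\ \pi(z) = \phi(x)\bigr\}
\]
is Borel (each \(d_{K_i}\) is Borel and \(\pi,\phi\) are Borel), has non-empty fibers over \(P^{(m)}\) by the computation above, and — crucially — each fiber is a \emph{relatively open} non-empty subset of the orbit \(\pi^{-1}(\phi(x))\) in the \(\unifm\)-topology, because the conditions \(d_{K_i}(z_i,\cdot) < \epsilon\) are open. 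Transporting the orbit homeomorphically onto \(H\), which carries the Polish \({\sf N}\)-topology, this says we must Borel-uniformize a Borel set with non-empty open fibers over a standard Borel space; such a uniformization exists by the classical descriptive set-theoretic fact that a Borel set with nonempty \(G_\delta\) (in particular open) sections admits a Borel uniformization (Kechris, \cite{Kechris}*{Sec.~18}), once one checks the fiberwise-topology varies Borel-measurably — which it does, since the \(H\)-action on \(Z\) and the selectors of Kuratowski--Ryll-Nardzewski furnish a Borel family of homeomorphisms. Composing, we get the Borel Runge map \(f\). With all three hypotheses of Lemma~\ref{lem:runge-equivariant-uniformization} in place, it yields an equivariant Borel uniformization \(\psi\) of \(P\), and \(\pi \circ \psi = \phi\) by the definition of \(P\); equivariance of \(\psi\) is exactly \((a_X,a_Z)\)-equivariance of its graph, completing the proof.
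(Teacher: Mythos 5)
Your proposal follows exactly the paper's route: define \(P=\{(x,z):\phi(x)=\pi(z)\}\), obtain the \(a_Z\)-family \(\unifm\) from \(\unifn\) via Lemma~\ref{lem:invariant-uniformizes}, check closedness of fibers, upgrade the \(\mfR\)-Runge property on \(H\) to the \((\mfR,P)\)-Runge property on \(Z\) by a Borel selection over a set with open fibers, and invoke Lemma~\ref{lem:runge-equivariant-uniformization}. The one place where you are slightly less careful than the paper is the Borelness of the Runge map \(f\): since the action \(H\acts Z\) is not assumed free, one cannot literally ``transport the orbit homeomorphically onto \(H\)''; instead one pulls back to \(P^{(m)}\times H\) via the Borel selector \(\beta(x,z)=s(H_{\delta(x),z})\) (as the paper does), after which the set has open nonempty sections in the fixed Polish topology of \(H\) and the minimal-index-in-a-dense-sequence device gives the uniformization directly — this is precisely the content of the \(G_\delta\)-section fact you cite, spelled out; also note that the passing reference to Lusin--Novikov is inapplicable here since the fibers \(\pi^{-1}(y)\) are typically uncountable, but your appeal to Burgess/Becker--Kechris style selection is the correct one.
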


\begin{proof}
  Let \(P \subseteq X \times Z\) be defined as \(P = \{(x,z) : \phi(x) = \pi(z)\}\). Since \(\phi\)
  and \(\pi\) are equivariant, the set \(P\) is also equivariant. An equivariant uniformization of
  \(P\) will yield the desired map \(\psi\). The existence of such uniformizations will be proved
  using Lemma~\ref{lem:runge-equivariant-uniformization}.

  The map \(\pi\) shows that the action \(H \acts Z\) is concretely classifiable, i.e., its orbit
  equivalence relation is Borel reducible to the equality relation. A theorem of Burgess (see, for
  example, \cite{Kechris}*{18.20iii}) guarantees the existence of a Borel inverse
  \(\delta_{Y} : Y \to Z\) satisfying \(\pi(\delta_{Y}(y)) = y\) for every \(y \in Y\).  Setting
  \(\delta(x) =\delta_{Y}(\phi(x))\), we see that \(P\) admits a Borel uniformization.  Let
  \(\unifm\) be the \(a_{Z}\)-family for \(G \acts Z\) as given in
  Lemma~\ref{lem:invariant-uniformizes}.

  It remains to verify that \(\unifm\) satisfies the \((\mfR, P)\)-Runge property and that the
  slices \(P_{x}\) are \(\unifm\)-closed. The latter is straightforward: if \((x, z_{n}) \in P\) and
  \(z = \unifm\text{-}\lim_{n} z_{n}\), then \(z_{n} E_{H}^{Z} z\) holds for all sufficiently large
  \(n\). In particular, \(\pi(z_{n}) = \pi(z)\) eventually, which implies \((x, z) \in P\) by the
  definition of \(P\).

  We now verify the \((\mfR,P)\)-Runge property. Let \(s : \effros{H} \to H\) be a Borel selector
  provided by the Kuratowski--Ryll-Nardzewski theorem. Define \(\beta : P \to H\) by
  \(\beta(x,z) = s(H_{\delta(x), z})\), where \(H_{z_{1}, z_{2}} = \{h \in H : hz_{1} = z_{2}\}\).
  Note that \(H_{\delta(x),z}\) is non-empty since \(\pi(\delta(x)) = \phi(x) = \pi(z)\) and \(\pi\)
  classifies \(H\)-orbits. Recall the notation
  \(P^{(m)}=\{(x,z_1,\ldots,z_m): \forall i\; (x,z_i)\in P\}\).

  Since \(\unifn\) satisfies the \(\mfR\)-Runge property, for any pairwise disjoint compact sets
  \(K_{1}, \ldots, K_{m} \in \mfR\), any \(\epsilon > 0\), and any \(h_{1}, \ldots, h_{m} \in H\),
  there exists \(h \in H\) such that \(\norm{h h^{-1}_{i}}_{K_{i}} < \epsilon\).  Let
  \(({\mathfrak h}_{n})_{n}\) be a dense sequence in \(H\), and set
  \begin{multline*}
    f(x, z_{1}, \ldots, z_{m}) = {\mathfrak h}_{n}\cdot \delta(x) \textrm{ for the minimal \(n\) such that} \\
    \norm{\beta(x,z_{i}){\mathfrak h}_{n}^{-1}}_{K_{i}} < \epsilon
    \textrm{ for all \(1 \le i \le m\)}.
  \end{multline*}
  We finally claim that the index \(n=n(x,z_1,\ldots,z_m)\) is a Borel function of \(x\) and
  \(z_{i}\), which guarantees Borelness of \(f:P^{(m)}\to Z\) and thus shows the \((\mfR, P)\)-Runge
  property for \(\unifm\).

  To see that \(n:P^{(m)}\to H\) is Borel, note first that for each \(k\) and \(i\), the function
  \((x,z_i)\mapsto \lVert \beta(x,z_i){\mathfrak h}_k^{-1}\rVert_{K_i}\)
  is Borel.  This is because
  all the involved ingredients, that is, the map \(\beta:P\to H\), the seminorms
  \(\lVert \cdot\rVert_{K_i}: H\to \R^{\ge 0}\) and the group operations in \(H\), are Borel. We
  define for each \(k\in\N\) the set
  \[
    \begin{aligned}
      A_k&=\big\{(x,z_1,\ldots,z_m)\in P^{(m)}:
           \lVert \beta(x,z_i){\mathfrak h}_k^{-1}\rVert_{K_i}<\epsilon \text{ for }i=1,\ldots,m\big\}\\
         &=\bigcap_{i=1}^m \big\{(x,z_1,\ldots,z_m)\in P^{(m)}:
           \lVert \beta(x,z_i){\mathfrak h}_k^{-1}\rVert_{K_i}<\epsilon\big\},
    \end{aligned}
  \]
  which, in view of the Borelness of
  \((x,z_i)\mapsto \lVert \beta(x,z_i){\mathfrak h}_k^{-1}\rVert_{K_i}\), is a Borel subset of
  \(P^{(m)}\). The minimal index function \(n\) is nothing but
  \[
    n(x,z_1,\ldots,z_m)=\inf\big\{k\in \N: (x,z_1,\ldots,z_m)\in A_k\big\},
  \]
  and hence for each \(\ell\in\N\), the pre-image \(n^{-1}(\{\ell\})\) can be written as
  \[
    n^{-1}(\{\ell\})=A_\ell\setminus \bigcup_{j=1}^{\ell-1} A_j.
  \]
  Since we just established that all the sets \(A_k\) are Borel, so is the minimal index map
  \(n\). This completes the proof.
\end{proof}

The following corollary is obtained by applying Theorem~\ref{thm:main-theorem} to \(X = \free(Y)\)
and taking \(\phi\) to be the identity map \({\rm id} : \free(Y) \hookrightarrow Y\).

\begin{corollary}
  \label{cor:main-corollary}
  Suppose that the free action \(G \acts \free(Y)\) admits a Borel \(\mfR\)-toast, and that the
  \(\tau\)-family \(\unifn\) on \(H\) satisfies the \(\mfR\)-Runge property.  Then there exists a
  \(G\)-equivariant Borel map \(\psi : \free(Y) \to Z\) satisfying \((\pi \circ \psi)(y) = y\) for
  all \(y \in \free(Y)\).
\end{corollary}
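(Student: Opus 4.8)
The plan is to deduce this immediately from Theorem~\ref{thm:main-theorem} by specializing $X$ to the free part $\free(Y)$ and taking $\phi$ to be the inclusion map $\free(Y) \hookrightarrow Y$. First I would record that $\free(Y) = \{y \in Y : \stab(y) = \{e\}\}$ is a Borel subset of $Y$: for a continuous action on a Polish space this is Proposition~\ref{prop:free-part-g-delta}, and for a general Borel action of a locally compact Polish group it follows either from the Borel-measurability of the stabilizer map $y \mapsto \stab(y) \in \effros{G}$ (each $\stab(y)$ being closed by Miller's theorem), or from the Arsenin--Kunugui theorem applied to the analytic set $\{(g,y) : g \ne e,\ gy = y\}$, whose sections are $\sigma$-compact since $G$ is locally compact. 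Hence $\free(Y)$ carries the structure of a standard Borel space, the restricted action $G \acts \free(Y)$ is Borel, and it is free by the very definition of the free part.

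Next I would check that the hypotheses of Theorem~\ref{thm:main-theorem} are met with $X$ taken to be $\free(Y)$. The standing data $G, Z, Y, \pi, H, \tau, \mfR, \unifn$ are untouched; the assumption that $G \acts \free(Y)$ admits a Borel $\mfR$-toast is one of the hypotheses of the corollary, and the $\mfR$-Runge property for $\unifn$ is the other. It remains to observe that the inclusion $\phi : \free(Y) \hookrightarrow Y$ is Borel, because $\free(Y)$ is a Borel subset of $Y$, and that it is trivially $G$-equivariant, since the action on $\free(Y)$ is just the restriction of the action on $Y$. Thus all hypotheses of the theorem hold.

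Applying Theorem~\ref{thm:main-theorem} then produces a $G$-equivariant Borel map $\psi : \free(Y) \to Z$ with $\pi \circ \psi = \phi$. Unwinding $\phi$ as the inclusion, this says precisely that $(\pi \circ \psi)(y) = y$ for all $y \in \free(Y)$, which is the claim. There is no real obstacle here beyond confirming that $\free(Y)$ is a genuine standard Borel space so that the assertion ``$\phi$ is Borel'' is meaningful; the entire substance of the argument is contained in Theorem~\ref{thm:main-theorem}.
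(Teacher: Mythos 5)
Your proposal is correct and matches the paper's own argument: the corollary is obtained by specializing Theorem~\ref{thm:main-theorem} to $X = \free(Y)$ with $\phi$ the inclusion $\free(Y) \hookrightarrow Y$. The additional verification that $\free(Y)$ is a Borel subset of $Y$ (so the inclusion is a Borel map between standard Borel spaces) is standard background that the paper takes for granted, and your citation of Miller's theorem for Borelness of stabilizers is the right ingredient.
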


\section{Applications of the main theorem}
\label{sec:appl-main-theor}

We now specialize Theorem~\ref{thm:main-theorem} and present several applications to complex
analysis and PDEs. Sections~\ref{sec:weierstr-mitt-leffl},~\ref{sec:fract-entire-funct},
and~\ref{sec:mitt-leffl} give several applications to entire and meromorphic functions. But first,
we introduce the relevant spaces and maps and establish their measurability properties.

\subsection{Entire and meromorphic functions}
\label{sec:merom-entire-functions}

Let \(\merom\) denote the space of meromorphic functions on \(\C\) (note that we do not consider the
constant \(\infty\) as a meromorphic function).  This space carries a natural Polish topology
\(\tau_{\merom}\), defined as the topology of uniform convergence on compact sets with respect to,
say, the spherical metric on the Riemann sphere \(\Cbar = \C \cup \{\infty\}\) (see, for
instance,~\cite{conwayFunctionsOneComplex1978}*{VII.3}).  The spherical metric on \(\Cbar\) is given
by
\begin{displaymath}
  \begin{aligned}
    \dist(z_{1}, z_{2})
    &= \frac{2|z_{1} - z_{2}|}{\sqrt{(1+|z_{1}|^{2})(1+|z_{2}|^{2})}} \quad
      \textrm{for } z_{1}, z_{2} \in \C \quad \textrm{and} \\
    \dist(z, \infty) &= \frac{2}{\sqrt{1+|z|^{2}}} \quad \textrm{for } z \in \C.
  \end{aligned}
\end{displaymath}
Let \(\meromnz = \merom \setminus \{0\}\) denote the subspace of meromorphic functions which do not
vanish identically. With pointwise operations, \(\merom\) forms an associative division algebra over
\(\C\), and \(\meromnz\) is its multiplicative group of invertible elements.

As observed by Cima and Schober in~\cite{cimaSpacesMeromorphicFunctions1979}, addition of
meromorphic functions is discontinuous with respect to \(\tau_{\merom}\). Furthermore, no comparable
topology makes \(\merom\) a locally convex topological vector
space~\cite{cimaSpacesMeromorphicFunctions1979}*{Prop.~4}.  Multiplication on \(\merom\) is also
discontinuous. To illustrate this, consider the functions \(f_{n}(z) = z - 1/n\) and
\(h_{n}(z) = 1/(z + 1/n)\).  Sequences \((f_{n})_{n}\) and \((h_{n})_{n}\) converge to the functions
\(z \mapsto z\) and \(z \mapsto 1/z\), respectively. If multiplication were continuous, then
\(f_{n} \cdot h_{n}\) would converge to the constant function \(z \mapsto 1\).  However, any
function in a sufficiently small neighborhood of \(z \mapsto 1\) cannot have zeros or poles in, say,
the closed disk \(\Dbar\), whereas \(f_{n} \cdot h_{n}\) has both a zero and a pole in \(\Dbar\) for
each \(n\).

This phenomenon is not due to the choice of topology on \(\merom\) but rather a consequence of its
algebraic structure. In Appendix~\ref{topol-divis-merom}, we use an automatic continuity result due
to Dudley~\cite{dudleyContinuityHomomorphisms1961} to show in Corollary~\ref{cor:no-polish-topology}
that \(\meromnz\) admits no Polish group topologies.  Incidentally, in
Theorem~\ref{thm:no-completely-metrizable-topological-algebra}, we prove that there is no metrizable
topology on \(\merom\) that makes it a complete topological algebra, answering a question raised by
Grosse-Erdmann in~\cite{grosse-erdmannLocallyConvexTopology1995}.  These results, though tangential
to our main focus, justify the generality of the setup in
Section~\ref{sec:equivariant-borel-liftings}.  Had we had the luxury of working with continuous
homomorphisms between Polish groups, some arguments therein could have been significantly simpler.

Nonetheless, algebraic operations on \(\merom\) are Borel.

\begin{proposition}
  \label{prop:meromnz-standard-Borel-algebra}
  \(\merom\) is a standard Borel algebra in the sense that addition, multiplication, and scalar
  multiplication are Borel maps with respect to the Borel \(\sigma\)-algebra of \(\tau_{\merom}\).
\end{proposition}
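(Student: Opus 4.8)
Recall that $\merom$ is already a standard Borel space, being Polish in $\tau_\merom$; the content of the statement is the Borel measurability of the three operations, and the plan is to reduce this to a computation in a concrete Polish space. Write $\Q[\im] = \Q + \im\Q$ and define the evaluation map $\iota : \merom \to \Cbar^{\Q[\im]}$ by $\iota(f) = \bigl(f(q)\bigr)_{q \in \Q[\im]}$, where $\Cbar$ carries the spherical metric and $\Cbar^{\Q[\im]}$ the product topology, so that $\Cbar^{\Q[\im]}$ is a (compact) Polish space. For each fixed $q$ the map $f \mapsto f(q)$ is continuous from $(\merom,\tau_\merom)$ to $\Cbar$ — uniform convergence on the singleton $\{q\}$ is precisely convergence of the values at $q$ — so $\iota$ is continuous, hence Borel. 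Moreover $\iota$ is injective: if $f \ne g$, then on the connected open set $\C \setminus D$, where $D$ is the (discrete) union of the pole sets of $f$ and $g$, both functions are holomorphic with $f \not\equiv g$, so $f-g$ vanishes only on a discrete subset of $\C\setminus D$; since $\Q[\im]$ is dense while $\C\setminus D$ is open dense and the zero set is nowhere dense therein, some $q \in \Q[\im]$ satisfies $f(q), g(q) \in \C$ with $f(q) \ne g(q)$. By the Lusin--Souslin theorem \cite{Kechris}*{15.1}, a Borel injection between standard Borel spaces has Borel range and is a Borel isomorphism onto it. It therefore suffices to show, for each operation $\mathrm{op}$ among addition, multiplication, and scalar multiplication, that $\iota \circ \mathrm{op}$ is Borel; as the target is a product, this amounts to proving that for every fixed $q \in \Q[\im]$ the coordinate map $(f,g) \mapsto (f+g)(q)$, $(f,g) \mapsto (fg)(q)$, resp.\ $(\lambda, f) \mapsto (\lambda f)(q)$, is a Borel map into $\Cbar$.

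The one genuine point is that the pointwise operations on $\Cbar$ are indeterminate where poles interact ($\infty + \infty$ for addition, $\infty\cdot 0$ for multiplication, $0\cdot\infty$ for scalar multiplication), so $\iota$ does not intertwine the operations naively. This is resolved by a limiting argument. Let $\bar{+}, \bar{\times} : \Cbar \times \Cbar \to \Cbar$ and $\bar{\cdot} : \C \times \Cbar \to \Cbar$ be the extensions of the algebraic operations in which $\infty$ is absorbing, with the finitely many indeterminate pairs assigned arbitrary values; each such map is continuous off a finite set and hence Borel. The key fact is that for all $f, g \in \merom$, all $\lambda \in \C$, and all $q \in \Q[\im]$,
\[
  (f+g)(q) = \lim_{n\to\infty}\bigl[\,f(q+\tfrac1n)\ \bar{+}\ g(q+\tfrac1n)\,\bigr], \qquad
  (\lambda f)(q) = \lim_{n\to\infty}\ \lambda\ \bar{\cdot}\ f(q+\tfrac1n),
\]
and likewise for $fg$ with $\bar{\times}$, all limits taken in $\Cbar$. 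Indeed, the pole sets of $f$ and $g$ are discrete, so $q + \tfrac1n$ avoids them once $n$ is large; for such $n$ the bracketed term is exactly the value at $q + \tfrac1n$ of the meromorphic function in question, and this value tends to its value at $q$ because every meromorphic function is continuous as a map $\C \to \Cbar$. Since the tail of the sequence converges, so does the whole sequence, to the same limit.

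Granting the displayed identities, Borelness is immediate: for each fixed $n$ the map $(f,g) \mapsto \bigl(f(q+\tfrac1n), g(q+\tfrac1n)\bigr)$ is continuous (a pair of evaluations), so its composition with the Borel map $\bar{+}$ (resp.\ $\bar{\times}$, $\bar{\cdot}$) is Borel, and $(f,g)\mapsto (f+g)(q)$ is a pointwise limit of Borel $\Cbar$-valued maps, hence Borel; the multiplication and scalar cases are identical. Running over all $q \in \Q[\im]$ shows $\iota \circ \mathrm{op}$ is Borel, and since $\iota$ is a Borel isomorphism onto its range, $\mathrm{op} = \iota^{-1}\circ(\iota\circ\mathrm{op})$ is Borel. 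The only step that is not routine is the verification of the limit formula — specifically, the observation that discreteness of the pole sets confines the indeterminacy of the extended operations to finitely many, and hence irrelevant, terms of the sequence; everything else is bookkeeping with the Lusin--Souslin theorem and with evaluation maps.
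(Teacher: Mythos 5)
Your proof is correct, and it pursues the same high-level strategy as the paper — reducing Borelness of the operations to a statement about evaluations at a countable dense set of points, with the real content being how to handle the indeterminate forms $\infty+\infty$, $0\cdot\infty$ at interacting poles and zeros — but the technical device you use to resolve the indeterminacy is genuinely different. The paper passes to the quotient $\C^* = \C^\times \sqcup \{*\}$ obtained by identifying $0$ and $\infty$, extends the operations so that $*$ is absorbing, and then writes down a Borel description of the \emph{graph} of each operation in which the defining condition is simply declared vacuous at any test point where one of the arguments evaluates to $*$; measurability then follows from the fact that a function with a Borel graph is Borel \cite{Kechris}*{14.12}. You instead fix the extended operations on $\Cbar$ (Borel since continuous off a finite set), observe that the discreteness of the pole sets means the indeterminate terms in the sequence $q + \tfrac1n$ are eventually absent, and recover the correct value as a pointwise limit; you then conclude via Lusin--Souslin applied to the continuous injection $\iota$. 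Each approach buys something: the paper's quotient trick produces a one-line Borel formula for the graph with no limits to justify, but requires restricting first to $\meromnz$ (for $f\equiv 0$ the ``or $\pi\circ f(\omega_n)=*$'' clause is always triggered, so the graph characterization degenerates) and quietly handling the zero function separately; your limiting argument treats $f\equiv 0$ uniformly with no case split, at the cost of a small continuity argument and an invocation of Lusin--Souslin. Both are clean; yours is slightly more self-contained in its case analysis, the paper's slightly more succinct in its bookkeeping.
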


\begin{proof}
  Since \(\tau_{\merom}\) is finer than the topology of pointwise convergence, the evaluation maps
  \( f \mapsto f(w) \in \Cbar\) are \(\tau_{\merom}\)-continuous for all \(w \in \C\).

  Let \(\C^{*} = \C^{\times} \sqcup \{*\}\) be the quotient of \(\Cbar\), where \(0\) and \(\infty\)
  are identified and denoted by \(*\).  Let \(\pi : \Cbar \to \C^{*}\) be the corresponding
  projection, with \(\pi(0) = * = \pi(\infty)\) and \(\pi(z) = z\) for \(z \in \C^{\times}\).  We
  extend the algebraic operations from \(\C^{\times}\) to all of \(\C^{*}\) by setting
  \( z \cdot * = * = * \cdot z \) and \(z + * = * = * + z\) for all \(z \in \C^{*}\) and
  \(w \cdot * = *\) for all \(w \in \C\).  Choose a countable dense set \(\{\omega_{n}\}_{n} \) of
  complex numbers. Restricted to functions in \(\meromnz\), the graphs of multiplication, addition,
  and scalar multiplication are given by the following expressions:
  \begin{displaymath}
    \begin{aligned}
      &\{(f_{1}, f_{2}, g) : f_{1}f_{2} = g\} = \\
      &\qquad \Bigl\{(f_{1}, f_{2}, g) : \forall n\ \bigl[\textrm{ either }
        (\pi \circ f_{1}(\omega_{n})) \cdot (\pi \circ f_{2}(\omega_{n})) =
        \pi \circ g(\omega_{n}) \textrm{ or } \\
      &\qquad \qquad \pi \circ f_{1}(\omega_{n}) = * \textrm{ or } \pi \circ f_{2}(\omega_{n}) = *
        \bigr]\Bigr\}, \\
      &\{(f_{1}, f_{2}, g) : f_{1} + f_{2} = g\} = \\
      &\qquad \Bigl\{(f_{1}, f_{2}, g) : \forall n\ \bigl[\textrm{ either }
        (\pi \circ f_{1}(\omega_{n}))
        + (\pi \circ f_{2}(\omega_{n})) = \pi \circ g(\omega_{n}) \textrm{ or } \\
      &\qquad \qquad \pi \circ f_{1}(\omega_{n}) = * \textrm{ or } \pi \circ f_{2}(\omega_{n}) = *
        \bigr]\Bigr\}, \\
      &\{(w, f, g) : wf_{2} = g\} =
        \{(w, f, g) : \forall n\ [ w \cdot (\pi \circ f(\omega_{n}))
        = \pi \circ g(\omega_{n})]\}.
    \end{aligned}
  \end{displaymath}
  Since functions with Borel graphs are themselves Borel~\cite{Kechris}*{14.12}, the proposition
  follows.
\end{proof}

The Borelness of the product implies the Borelness of the inverse map on \(\meromnz\), which when
combined with Corollary~\ref{cor:no-polish-topology} leads to the following.

\begin{corollary}
  \label{cor:meromnz-standard-borel-group}
  \(\meromnz\) is a non-Polishable standard Borel group.
\end{corollary}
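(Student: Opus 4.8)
The plan is to first establish that $\meromnz$, equipped with the Borel $\sigma$-algebra inherited from the Polish topology $\tau_{\merom}$, is a standard Borel group, and then to deduce non-Polishability directly from Corollary~\ref{cor:no-polish-topology}. For the standard Borel structure, observe that $\{0\}$ is $\tau_{\merom}$-closed, so $\meromnz=\merom\setminus\{0\}$ is an open, hence Borel, subset of the standard Borel space $\merom$; a Borel subset of a standard Borel space is itself standard Borel. (Equivalently, $\meromnz$ is Polish in the subspace topology, since open subsets of Polish spaces are Polish.)

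It then remains to check that the group operations are Borel. Multiplication on $\meromnz$ is the restriction of the multiplication on $\merom$, which is Borel by Proposition~\ref{prop:meromnz-standard-Borel-algebra}. For the inversion map $\iota:\meromnz\to\meromnz$, $\iota(f)=1/f$, I would argue via its graph: since the multiplicative identity of $\merom$ is the constant function $1$, the graph of $\iota$ equals
\[
  \{(f,g)\in\meromnz\times\meromnz : f\cdot g = 1\},
\]
which is the preimage of the singleton $\{1\}$ under the Borel map $(f,g)\mapsto f\cdot g$, and is therefore Borel. A map between standard Borel spaces with Borel graph is Borel \cite{Kechris}*{14.12}, so $\iota$ is Borel. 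Combined with the previous paragraph, this shows that $\meromnz$ is a standard Borel group.

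Finally, recall that a standard Borel group is \emph{Polishable} if its Borel structure is generated by some Polish group topology. Corollary~\ref{cor:no-polish-topology}, established in Appendix~\ref{topol-divis-merom} via Dudley's automatic continuity theorem, states that $\meromnz$ admits no Polish group topology whatsoever; in particular, there is none compatible with the standard Borel structure just constructed, so $\meromnz$ is non-Polishable. No step here is genuinely difficult — the only point to handle with a little care is that the ``Borel graph implies Borel'' principle is applied to a graph sitting inside $\meromnz\times\meromnz$, which is legitimate precisely because $\meromnz$ has already been verified to be standard Borel; everything else is an immediate consequence of the cited results.
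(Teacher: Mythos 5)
Your proof is correct and follows essentially the same route as the paper: the paper derives Borelness of inversion from Borelness of multiplication (which you make explicit via the graph argument $\{(f,g):fg=1\}$, the preimage of $\{1\}$ under the Borel product map), and then invokes Corollary~\ref{cor:no-polish-topology} for non-Polishability. The only difference is that the paper states the inverse-map Borelness as a one-line remark while you spell out the verification and the standard-Borel status of $\meromnz$ as an open subset of $\merom$; these are routine fillings-in of the same argument.
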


The subspace of entire functions on \(\C\) is denoted by \(\holom\), and \(\holomnz\) denotes the
subspace of entire functions with no zeros.  Note that \(\holomnz\) consists precisely of the entire
functions that admit entire inverses.  The restriction of \(\tau_{\merom}\) to \(\holom\) coincides
with the topology \(\tau_{\holom}\) of uniform convergence on compact subsets of the complex plane.
\(\holom\) is closed in \(\merom\), and \(\tau_{\holom}\) is Polish.  A basis of neighborhoods of an
entire function \(f\) is parametrized by \(K \in \vietoris{\C}\) and \(\epsilon > 0\), and is given
by the sets
\[\{h \in \holom : \sup_{z \in K} |f(z) - h(z)| < \epsilon\}.\]
The subspace \(\holomnz\) is \(G_{\delta}\) in \(\holom\).  Indeed, Hurwitz's
theorem~\cite{conwayFunctionsOneComplex1978}*{VII.2.5} states that if a sequence of entire functions
\((f_{n})_{n}\) converges uniformly on compact sets to some (necessarily entire) \(f\), then either
\(f\) is identically zero, or each zero of \(f\) is a limit of zeros of \(f_{n}\).  Thus
\(\holomnz \cup \{0\}\) is closed in \(\holom\), hence \(\holomnz\) is \(G_{\delta}\) and therefore
Polish in the induced topology~\cite{Kechris}*{3.11}.

Unlike the whole space of meromorphic functions, addition and multiplication are continuous on
\(\holom\).  In fact, \(\holom\) is a separable Fr\'echet space, and \(\holomnz\) is a
multiplicative Polish group.

\subsection{Spaces of divisors}
\label{sec:divisors}

A \emph{(signed) divisor} on \(\C\) is a map \(d : \C \to \Z\) such that the support
\(d^{-1}(\Z \setminus \{0\}) \) is a closed discrete subset of \(\C\).  The space of divisors can be
defined in two equivalent ways.  First, any divisor can be identified with an atomic Radon measure,
thus viewed as an element of \(\meas{\C}\) (see Section~\ref{sec:radon-meas-distrib}).  The space of
divisors \(\divis\) then corresponds to the following Borel subset of \(\meas{\C}\):
\[\divis = \{\mu \in \meas{\C} : \forall n\
  \exists m \in \Z\ [\mu(U_{n}) = m]\},\]
where \((U_{n})_{n}\) is a countable basis of open bounded subsets of \(\C\).

Alternatively, divisors can be viewed as discrete subsets of \(\C\) labeled by non-zero integers
(see Section~\ref{sec:spaces_of_labeled_discrete_subsets} below).  These two perspectives induce the
same Borel structure on \(\divis\).  The space of divisors inherits the structure of an abelian
group from \(\meas{\C}\).  While it is a Borel group, it is not Polishable, as discussed in
Corollary~\ref{cor:no-polish-topology}.

A \emph{positive divisor} on \(\C\) is a map \(d : \C \to \N\) such that
\(d^{-1}(\N \setminus \{0\})\) is a closed discrete subset of \(\C\).  Equivalently, a positive
divisor is an element \(d \in \divis \cap \measp{\C}\).  The space of positive divisors \(\divisp\)
is closed in \(\measp{\C}\), hence Polish in the induced topology.  A specific metric for this
topology is discussed in~\cite{daleyIntroductionTheoryPoint2003}*{p.~403} (see
also~\cite{morariu-patrichiWeakHashMetricBoundednly2018}).

Poles and zeros of a non-zero meromorphic function \(f\) naturally define a divisor \(\dm(f)\) given
by
\[\dm(f)(z) =
  \begin{cases}
    \phantom{-}m  & \textrm{if \(z\) is a zero of order \(m\) for \(f\),} \\
    - m  & \textrm{if \(z\) is a pole of order \(m\) for \(f\),} \\
    \phantom{-}0  & \textrm{otherwise.} \\
  \end{cases}
\]
The map \(\dm : \meromnz \to \divis\) is a homomorphism,
\(\dm(f_{1}f_{2}) = \dm(f_{1}) + \dm(f_{2})\). Its surjectivity follows from the Weierstrass theorem
on the existence of entire functions with prescribed zeros.  The standard proof of this theorem
yields a Borel function \(\xi : \divis \to \meromnz\) such that \(\dm(\xi(d)) = d\) for all
\(d \in \divis\).  We call any such \(\xi\) a \emph{Borel right-inverse} for \(\dm\).  Moreover, the
argument can be adapted to produce a \emph{continuous} right-inverse \(\xi : \divisp \to \holom\)
(see~\cite{201919}).

The homomorphism \(\dm : \meromnz \to \divis\) is itself Borel.  Indeed, its graph
\[\{(f,d) \in \meromnz \times \divis : \dm(f) = d\}\]
is Borel, as it equals \(\{(f,d) : f/\xi(d) \in \holomnz\}\), where \(\xi\) is any Borel
right-inverse to \(\dm\).

\subsection{The Weierstrass theorem}
\label{sec:weierstr-mitt-leffl}

We can now formulate and prove an equivariant version of the Weierstrass theorem. Recall that
\(\divisp\) denotes the space of positive divisors, and that \(\holom[\ne0]\) denotes the space of
holomorphic functions which do not vanish identically. Both of these are Polish spaces on which the
complex plane acts continuously by argument shifts and translation, respectively.

\begin{theorem}
  \label{thm:equivariant-weierstrass}
  There exists a Borel \(\C\)-equivariant map \(\psi : \free(\divisp) \to \holom[\ne 0]\) such that
  \(\dm\circ \psi={\rm id}_{\free(\divisp)}\).  Furthermore, there exists a Borel \(\C\)-equivariant
  map \(\psi : \free(\divis) \to \meromnz\) such that \(\dm\circ\psi={\rm id}_{\free(\divis)}\).
\end{theorem}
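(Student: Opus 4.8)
The plan is to obtain both assertions as instances of Corollary~\ref{cor:main-corollary}. For the first, take $G=\C$ acting by argument shifts, $Y=\divisp$, $Z=\holom[\ne 0]$ and $\pi=\dm:\holom[\ne 0]\to\divisp$, which is Borel, equivariant, and surjective (the latter by the Weierstrass theorem; see also Section~\ref{sec:divisors}). Let $H=\holomnz$ act on $Z$ by multiplication; since two non-vanishing entire functions have the same divisor exactly when their ratio lies in $\holomnz$, the orbit equivalence relation $E_{H}$ on $Z$ is classified by $\pi$. The argument-shift action $\tau:\C\acts\holomnz$, $\tau^{g}(h)=h(\,\cdot-g)$, is continuous and by (multiplicative) automorphisms, and the resulting semidirect product $\holomnz\rtimes_{\tau}\C$ acts on $Z$ via $(h,g)\cdot f=h\cdot f(\,\cdot-g)$, a Borel action that restricts to the given actions of $H$ and of $\C$. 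Finally, let $\mfR=\mfD_{2}$ be the class of compact subsets of $\C$ diffeomorphic to the closed disk; it is cofinal and translation invariant, and any finite disjoint union of its members still has connected complement, so classical Runge approximation applies to such unions.

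Two items remain: a $\tau$-family $\unifn$ of seminorms on $\holomnz$ satisfying the $\mfR$-Runge property, and a Borel $\mfR$-toast for $\C\acts\free(\divisp)$. For $\unifn$ I would pull the standard seminorms of the Fréchet space $\holom$ back along the covering map $\exp:\holom\to\holomnz$, setting
\[
  \lVert h\rVert_{K}=\inf\bigl\{\,\textstyle\max_{z\in K}|g(z)|\;:\;g\in\holom,\ e^{g}=h\,\bigr\},
\]
the infimum over the empty family being $+\infty$. One verifies that this is a seminorm on the multiplicative group $\holomnz$, that it forms a $\tau$-family in the sense of Definition~\ref{def:U-family-seminorms} (monotonicity in $K$ and the identity $\lVert\tau^{g}h\rVert_{K}=\lVert h\rVert_{Kg}$ are immediate; separation of points holds because an entire logarithm of $h$ that is uniformly small on arbitrarily large disks is a constant in $2\pi i\Z$; completeness of the right uniformity follows from $\holomnz$ being an abelian Polish group), and that $\unifn$ generates the topology of $\holomnz$ (the sets $\{h:\lVert h\rVert_{K}<\epsilon\}$ are precisely the $\exp$-images of the basic neighbourhoods $\{g:\max_{K}|g|<\epsilon\}$ of $0$ in $\holom$, and $\exp$ is open). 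The $\mfR$-Runge property then follows from the classical Runge theorem: given pairwise disjoint $K_{1},\dots,K_{m}\in\mfD_{2}$, functions $h_{1},\dots,h_{m}\in\holomnz$ and $\epsilon>0$, pick entire logarithms $\gamma_{i}$ of $h_{i}$, apply Runge's theorem on the compact set $\bigcup_{i}K_{i}$ (connected complement) to find an entire $\gamma$ with $\max_{K_{i}}|\gamma-\gamma_{i}|<\epsilon$ for every $i$, and set $h=e^{\gamma}$; then $\gamma-\gamma_{i}$ is an entire logarithm of $hh_{i}^{-1}$, so $\lVert hh_{i}^{-1}\rVert_{K_{i}}\le\max_{K_{i}}|\gamma-\gamma_{i}|<\epsilon$. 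For the toast, note that $\C\cong\R^{2}$ and that $\free(\divisp)$ is a standard Borel space on which $\C$ acts freely in a Borel way, so it admits a Borel $\mfD_{2}$-toast by \cite{MR4215747}*{Thm.~5} (augmenting the statement there with item~\eqref{item:toast-layered} as explained in Remark~\ref{rem:layered-regions}). Corollary~\ref{cor:main-corollary} now produces a Borel $\C$-equivariant $\psi:\free(\divisp)\to\holom[\ne 0]$ with $\dm\circ\psi={\rm id}$.

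For the second assertion the setup is identical except that $Y=\divis$ and $Z=\meromnz$, with $\pi=\dm:\meromnz\to\divis$. Here we use that $\meromnz$ is a standard Borel group (Proposition~\ref{prop:meromnz-standard-Borel-algebra} and Corollary~\ref{cor:meromnz-standard-borel-group}) and that $\dm$ is Borel, equivariant, and surjective (Section~\ref{sec:divisors}); that $H=\holomnz$ again acts on $\meromnz$ by multiplication, with $E_{H}$ classified by $\dm$ because two meromorphic functions share a divisor precisely when their ratio is a zero-free entire function; and that the same semidirect product $\holomnz\rtimes_{\tau}\C$ acts on $\meromnz$ in a Borel way by multiplication and shift. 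The class $\mfR=\mfD_{2}$, the $\tau$-family $\unifn$, and hence the $\mfR$-Runge property, are unchanged, since they only involve $H$; and $\C\acts\free(\divis)$ again admits a Borel $\mfD_{2}$-toast. A second application of Corollary~\ref{cor:main-corollary} yields the required $\psi:\free(\divis)\to\meromnz$ with $\dm\circ\psi={\rm id}$.

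The step I expect to demand the most care is the construction of the $\tau$-family on $\holomnz$ and the verification of its properties: the naive seminorm $\lVert h\rVert_{K}=\max_{K}\bigl|\log|h|\bigr|$ does not separate the unimodular constants from the identity, which forces one to work with honest entire logarithms, after which one must check with some attention that the resulting infimum-seminorms still separate points, are complete, and generate the correct Polish topology. Everything else is either a direct citation (Borel $\mfD_{2}$-toasts for free $\R^{2}$-actions, and the Burgess-type Borel selector already built into the proof of Theorem~\ref{thm:main-theorem}) or a short appeal to classical Runge approximation, applied here to logarithms rather than to the functions themselves.
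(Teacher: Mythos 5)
Your proof follows the same overall strategy as the paper's: invoke Corollary~\ref{cor:main-corollary} with \(G=\C\), \(H=\holomnz\) acting by multiplication, \(\mfR=\mfD_2\), a Borel \(\mfD_2\)-toast from \cite{MR4215747}, and the \(\mfR\)-Runge property verified by taking entire logarithms and applying classical Runge. (The paper treats only \(\divis\to\meromnz\) via the corollary and obtains the positive case by restricting to positive divisors, whereas you run the corollary twice; both routes are fine, with the restriction argument being marginally more economical.) The one substantive divergence is your choice of \(\tau\)-family, and it is well taken. The paper uses \(\norm{f}_K=\sup_{w\in K}\bigl|\log|f(w)|\bigr|\), which vanishes identically on every unimodular constant \(e^{i\theta}\); consequently it fails the separation axiom, item~\eqref{item:semi-unif-hausdorff} of Definition~\ref{def:U-family-seminorms}, and does not generate the Polish topology of \(\holomnz\), which is an explicit hypothesis of Lemma~\ref{lem:invariant-uniformizes}. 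Your replacement
\(\norm{h}_K=\inf\bigl\{\max_{z\in K}|g(z)|:g\in\holom,\ e^{g}=h\bigr\}\)
genuinely separates points (the logarithms of \(h\) differ by \(2\pi i\Z\), so a forced near-zero logarithm on an exhaustion pins \(h=1\)), generates the correct topology because \(\exp:\holom\to\holomnz\) is a continuous open surjective homomorphism of Polish groups, and, by abelianity, the induced right uniformity is complete. The Runge verification is also slightly cleaner with your seminorm, since \(\gamma-\gamma_i\) is a witnessing logarithm of \(hh_i^{-1}\) and hence \(\norm{hh_i^{-1}}_{K_i}\le\max_{K_i}|\gamma-\gamma_i|\) directly. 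In short, your proof is correct and, on this point, more careful than the paper's stated argument.
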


\begin{remark}
  In classical complex analysis, the Weierstrass theorem for meromorphic functions follows from the
  corresponding result for entire functions by taking quotients. A signed divisor \(d\) admits a
  canonical decomposition \(d = d^{+} - d^{-}\), \(d^{+}, d^{-} \in \divisp\), which is
  characterized by \(d^{+}\) and \(d^{-}\) having disjoint supports. One can therefore get the
  equivariant Weierstrass theorem for meromorphic functions on the subspace of those divisors
  \(d \in \divis\), for which both \(d^{+}, d^{-}\) are non-periodic, simply by applying the
  equivariant Weierstrass theorem for entire functions.

  In general, a non-periodic \(d \in \divis\) may have periodic decomposition, as can be seen by
  taking \(d\) to be the difference of two periodic positive divisors with incommensurable periods.
  This can be circumvented by taking a decomposition of the form
  \(d = (d^{+} + d') - (d^{-} + d') \) for a suitably chosen positive divisor \(d'\), which can be
  selected in a Borel way for each \(d\). Instead of this, we will construct the right-inverse
  \(\psi : \free(\divis) \to \meromnz\) via a direct application of
  Corollary~\ref{cor:main-corollary}, and obtain the theorem for entire functions by restricting to
  the positive divisors.
\end{remark}

\begin{proof}
  To obtain Theorem~\ref{thm:equivariant-weierstrass}, we apply Corollary~\ref{cor:main-corollary}
  in the following context.  The group \(G\) is the additive group of \(\C\), and the class
  \(\mfR = \mfD_{2}\) consists of compact subsets of \(\C\) which are diffeomorphic to the closed
  unit disk.  Recall that every free Borel \(\C\)-action admits a Borel \(\mfD_{2}\)-toast
  (Section~\ref{sec:toast}).

  Let \(H = \holomnz\) be the multiplicative group of entire functions without zeros, let \(\C \)
  act on \(\holomnz\) via the argument shift, and the semidirect product
  \( H \rtimes G = \holomnz \rtimes \C\) acts on \(Z = \meromnz\) according to the rule
  \[((f, z) \cdot h)(w) = f(w+z)h(w+z).\]
  The family of seminorms \(\unifn = (\norm{\cdot})_{K \in \vietoris{\C}} \) on \(\holomnz\) is
  given by
  \[\begin{aligned}\norm{f}_{K} &= \max\{\log (\sup_{w \in K}|f(w)|) , -\log(\inf_{w \in K}|f(w)|)\}
    \\& = \sup_{w \in K}\big|\log|f(w)|\big|.\end{aligned}\]
We claim that it satisfies the \(\mfD_{2}\)-Runge property.  Indeed, consider pairwise disjoint sets
\(K_{1}, \ldots, K_{m} \in \mfD_{2}\) and nowhere zero entire functions
\(f_{1}, \ldots, f_{m} \in \holomnz\).  Note that \(\C \setminus (K_{1} \cup \cdots \cup K_{m}) \)
is connected.
Since \(\C\) is connected and simply connected, there exist entire functions
\(g_{i}\) such that \(f_{i} = e^{g_{i}}\).  By the Runge approximation theorem, for any
\(\delta > 0\), there exists an entire function \(g\) (in fact, a polynomial) satisfying
\[
\sup_{z \in K_{i}}|g(z) - g_{i}(z)| < \delta
\]
for all \(i\).  Setting \(f = e^{g}\), we observe
that \(\norm{ff^{-1}_{i}}_{K_{i}} < \epsilon\) for some \(\epsilon = \epsilon(\delta, K_{i})\),
where \(\epsilon(\delta, K_{i}) \to 0\) as \(\delta \to 0\).

The group \(\C\) also acts on the space of divisors \(Y = \divis\) by translations, and the map
\(\dm : \meromnz \to \divis\) satisfies \( f_{1} E_{\holomnz} f_{2} \) if and only if
\(\dm(f_{1}) = \dm(f_{2})\). Consequently, Theorem~\ref{thm:main-theorem} guarantees the existence
of the required map \(\psi\) and the proof of Theorem~\ref{thm:equivariant-weierstrass} is complete.
\end{proof}

\subsection{Weiss's theorem for Borel entire functions}
\label{s:weierstrass-liftings}

As mentioned in the introduction, Weiss's result \cite{MR1422707} on the existence
of measurable entire functions readily adapts to the Borel category.

\begin{theorem}
\label{thm:Weiss-Borel}
Given a free Borel action \(\C\acts X\) on a standard Borel space, there exists
a Borel \(\C\)-equivariant map \(\psi:X\to \mathcal{E}\setminus \{\text{constants}\}\).
\end{theorem}

\begin{proof}
Using Theorem~\ref{thm:main-theorem} instead of
Corollary~\ref{cor:main-corollary} in the proof of
Theorem~\ref{thm:equivariant-weierstrass}, one shows that for any free Borel action \(\C \acts X\)
and any equivariant Borel map \(\phi : X \to \divis\) there exists a Borel equivariant lifting
\(\psi : X \to \meromnz\) such that \(\dm \circ \psi = \phi\).  Let \(\mathcal{C} \subseteq X\) be
a Borel set that intersects each orbit in a countable discrete set.  With such \(\mathcal{C}\), we
can associate a Borel equivariant map \(\phi_{\mathcal{C}} : X \to \divisp\) given by
\begin{displaymath}
\phi_{\mathcal{C}}(x)(z) =
\begin{cases}
1 & \textrm{if \((-z) \cdot x \in \mathcal{C}\)}, \\
0 & \textrm{otherwise}. \\
\end{cases}
\end{displaymath}
A Borel equivariant lifting \(\psi : X \to \holom\) is then a Borel entire function.  Furthermore,
if \(\mathcal{C}\) has a non-empty intersection with each orbit, then \(\psi\) is necessarily
non-constant, because it has zeros within each orbit.  This establishes the existence of
non-constant Borel entire functions for any free Borel action \(\C \acts X\), which completes the proof.
\end{proof}

An alternative derivation of Weiss's result arises by considering the trivial one-point space
\(Y = \{*\}\) in the lifting diagram of Theorem~\ref{thm:main-theorem}. In this case, both
\(\phi\) and \(\pi\) degenerate to trivial maps, establishing a bijection between all Borel entire
functions and liftings \(\psi\) in the diagram:
\begin{displaymath}
\begin{tikzcd}[ampersand replacement=\&]
\C \acts X \arrow[r, dashed, "\psi"] \arrow[dr, "\phi"]
\& \C \acts \holom \arrow[d, "\pi"]\\
\& \C \acts \{*\}
\end{tikzcd}
\end{displaymath}

While Theorem~\ref{thm:main-theorem} guarantees the existence of such liftings, it does not
immediately ensure the existence of non-constant
\(\psi : X \to \holom \setminus \{\textrm{constants}\}\). To address this,
recall that the first step of the proof of Theorem~\ref{thm:main-theorem} is to construct
(using the Burgess theorem in the general case) a Borel uniformization \(\delta : X \to Z\)
for the set \(P = X\times Z\). This is then modified to a Borel equivariant uniformization
with the aid of Lemma~\ref{lem:runge-equivariant-uniformization}.
As discussed in Remark~\ref{rem:close-to-uniformization},
if a toast \((\mathcal{C}_n,\lambda_n)_{n}\) for \(G\acts X\) is taken such that
\(K \subseteq \lambda_{0}(c_{0})\), \(c_{0} \in \mathcal{C}_{0}\), and \(\epsilon > 0\), then the
lifting \(\psi\) produced by Theorem~\ref{thm:main-theorem} can be assumed to satisfy
\begin{equation}
\label{eq:psi-epsilon-close}
\norm{\rho\bigl(\delta(c_{0}), \psi(c_{0})\bigr)}_{K} < \epsilon
\end{equation}
for \(c_{0} \in \mathcal{C}_{0}\). We take \(\delta_{Y} : Y = \{*\} \to \holom\) to be any
non-constant function, \(f = \delta_{Y}(*)\), choose \(K = \Dbar\) to be the closed unit disk, and
select \(\epsilon > 0\) sufficiently small such that for any \(h \in \holom\)
\[\max_{z \in \Dbar}|f(z) - h(z)| < \epsilon \implies h \textrm{ is not constant}.\]
By \eqref{eq:psi-epsilon-close}, there exists a lifting \(\psi\) satisfying
\[
  \sup_{z \in \Dbar}|\psi(c_{0})(z) - f(z)| < \epsilon, \quad \textrm{for } c_{0} \in \mathcal{C}_{0}.
\]
If \(\mathcal{C}_{0}\) intersects every orbit of the action \(\C \acts X\) (see
Remark~\ref{rem:cross-section-re-indexing}), then \(\psi : X \to \holom\) has no constants in its
range.

This argument, when expressed directly in terms of Borel entire functions, closely parallels
Weiss's original proof. The argument can be adapted to prove similar results for the other applications
of Theorem~\ref{thm:main-theorem} below.

\subsection{Fractions of entire functions}
\label{sec:fract-entire-funct}

Every meromorphic function can be expressed as a quotient of two entire functions. While this
representation is non-unique even when requiring the divisors of the entire functions to have
disjoint supports, one may ask whether such a representation can be chosen in an equivariant
manner. An application of Theorem~\ref{thm:main-theorem} yields an affirmative answer when we
consider aperiodic meromorphic functions.

\begin{theorem}
  \label{thm:merormophic-quotient-entire}
  Let \(Z\) denote the space of pairs of entire functions with disjoint divisors:
  \[ Z = \{(f_{1},f_{2}) \mid f_{i} \in \holom[\ne 0],\, \supp(\dm(f_{1})) \cap \supp(\dm(f_{2})) =
    \varnothing \}. \] There exists a Borel \(\C\)-equivariant map
  \[ \psi : \free(\meromnz) \to Z, \quad \psi(g) = (\psi_{1}(g), \psi_{2}(g)), \]
  such that \( g = \psi_{1}(g)/\psi_{2}(g) \) for all \( g \in \free(\meromnz) \).
\end{theorem}

\begin{proof}
  The proof strategy mirrors that of Theorem~\ref{thm:equivariant-weierstrass}. We apply
  Corollary~\ref{cor:main-corollary} with the following parameters: \( G = \C \),
  \( \mfR = \mfD_{2} \), and \( H = \holomnz \), equipped with the same family of seminorms
  \( \unifn \). The target space is \( Y = \meromnz \), and \( \pi : Z \to Y \) is the quotient map
  \( \pi(f_{1},f_{2}) = f_{1}/f_{2} \). The group \( G = \C \) acts via argument shifts, while
  \( H = \holomnz \) acts on \( Z \) by multiplication:
  \( h \cdot (f_{1}, f_{2}) = (hf_{1}, hf_{2}) \).

  For any two pairs \((f_{1}, f_{2})\) and \((\tilde{f}_{1}, \tilde{f}_{2})\) in \(Z\) satisfying
  \(f_{1}/f_{2} = \tilde{f}_{1}/\tilde{f}_{2}\), there exists a function
  \(h = f_{1}/\tilde{f}_{1} = f_{2}/\tilde{f}_{2}\) such that
  \( (f_{1}, f_{2}) = h \cdot (\tilde{f}_{1}, \tilde{f}_{2})\). The divisors' disjointness condition
  implies \(h \in \holomnz\), so Corollary~\ref{cor:main-corollary} applies, yielding the desired
  equivariant map \(\psi\).
\end{proof}

The restriction to the free part of \(\meromnz\) in Theorem~\ref{thm:merormophic-quotient-entire}
cannot be removed, as explained in Remark~\ref{rem:yosida-type-normal-meromorphic}.

\subsection{Spaces of labeled discrete subsets}
\label{sec:spaces_of_labeled_discrete_subsets}

Our next application will be a Borel equivariant analogue of the Mittag-Leffler theorem for
meromorphic functions.  Before we can formulate and prove it, we need to establish several
properties of the space of principal parts, which we can think of as labeled discrete subsets of
\(\C\).

Recall that \(\effros{\C}\) denotes the Effros Borel space of closed subsets of \(\C\).  Let
\(\effrosdis{\C} \subseteq \effros{\C}\) denote the subspace of closed discrete subsets of \(\C\),
i.e., those \(F \in \effros{\C}\) that have finite intersection with every compact subset of
\(\C\). This is a Borel subset of \(\effros{\C}\).
Indeed, \(\vietoris{K}\)
is a Borel subset of \(\effros{\C}\) for each \(K \in \vietoris{\C}\),
and the set \(\vietorisfin{K}\) of finite subsets of \(\vietoris{K}\) is Borel\footnote{
In fact, the set of finite subsets of size at most \(n\) is closed,
because a set has at least \((n+1)\)-many points if and only if it intersects a collection of
\((n+1)\)-many pairwise disjoint open sets.}.
The intersection map
\[
\effros{\C}^{2} \ni (F_{1}, F_{2}) \mapsto F_{1} \cap F_{2} \in \effros{\C}
\]
is Borel due to the local compactness of \(\C\)\footnote{More specifically,
let \((V_{n})_{n}\) be a precompact basis for the
topology on \(\C\). For each \(n\), pick a sequence \((\mathcal{U}_{n,m})_{m}\),
\(\mathcal{U}_{n,m} = (U^{i}_{n,m})_{i=1}^{p_{n,m}}\), of finite open covers of \(\overline{V}_{n}\) by
sets \(U_{n,m}^{i}\) satisfying \(\diam (U^{i}_{n,m}) < 1/m\) for all \(1 \le i \le p_{n,m}\).  Then for an open
\(U\), we have \(F_{1} \cap F_{2} \cap U \ne \varnothing \) if and only if there exists \(n\) such
that \(\overline{V}_{n} \subseteq U\) and
\[\forall m\ \exists 1 \le i \le p_{m,n}\ \bigl[ F_{1} \cap U_{n,m}^{i} \ne \varnothing
  \textrm{ and } F_{2} \cap U_{n,m}^{i} \ne \varnothing\bigr].\]
}.
Finally, if \((K_{n})_{n}\) is an exhaustion of \(\C\) by compact sets,
then \(F \in \effrosdis{\C}\) if and
only if \(F \cap K_{n} \in \vietorisfin{K_{n}}\) for all \(n\). Thus,
\(\effrosdis{\C}\) is indeed a Borel subset of \(\effros{\C}\).

We now consider a generalization of \(\effrosdis{\C}\) where points of discrete subsets of \(\C\)
are labeled.  This arises naturally in the context of the space of principal parts of meromorphic
functions, where the set of poles can be viewed as an element of \(\effrosdis{\C}\), and each pole
is labeled by the finite tuple of coefficients determining the corresponding principal part.

One way to define the required space would be as follows.  Let \(Y\) be a standard Borel space.
Choose a Polish topology on \(Y\) compatible with its Borel structure, and consider the space
\(\effros{\C \times Y}\).  The projection \(\proj_{\C} : \C \times Y \to \C\) is continuous, so the
map
\[
\effros{\C\times Y} \ni F \mapsto \overline{\proj_{\C}(F)} \in \effros{\C}
\]
is Borel.  In fact, \(\proj_{\C}(F) \cap U \ne \varnothing\) if and only if
\(F \cap (U \times Y) \ne \varnothing\). Also, note that
\(\overline{\proj_{\C}(F)} = \proj_{\C}(F)\) whenever
\(\overline{\proj_{\C}(F)} \in \effrosdis{\C}\), since elements of \(\effrosdis{\C}\) have no proper
dense subsets.  In particular, the set
\[
\{F \in \effros{\C \times Y} : \proj_{\C}(F) \in \effrosdis{\C}\}
\]
is Borel.  We need to pass to a further subset that consists of those \(F \in \effros{\C\times Y}\)
for which the projection map is injective.  Let \(s_{n} : \effros{\C\times Y} \to \C \times Y\),
\(n \in \N\), be Kuratowski--Ryll-Nardzewski Borel selectors (as in \cite{Kechris}*{12.13}), and
define
\begin{multline*}
  \effrosdis{\C; Y} = \{\varnothing\} \cup \Bigl\{F \in \effros{\C \times Y} :
  \proj_{\C}(F) \in \effrosdis{\C} \textrm{ and } \\
  \forall n\ \forall m\ \bigl[ \proj_{\C}(s_{n}(F)) = \proj_{\C}(s_{m}(F)) \implies s_{n}(F)
  = s_{m}(F) \bigr]\Bigr\}.
\end{multline*}
The condition \(\proj_{\C}(F) \in \effrosdis{\C}\) ensures that
\(\{\proj_{\C}(s_{n}(F))\}_{n} = \proj_{\C}(F)\). Thus, \(F \in \effros{\C \times Y}\) belongs to
\(\effrosdis{\C; Y}\) if and only if \(\proj_{\C}(F) \in \effrosdis{\C}\) and the projection map
\(\proj_{\C} : F \to \C\) is injective.  This allows us to view \(\effrosdis{\C; Y}\) as the
collection of discrete subsets of \(\C\) whose points are labeled by elements of~\(Y\).  Note that
if \(Y = \{*\}\) consists of a single point, then \(\effrosdis{\C; \{*\}}\) is naturally isomorphic
to \(\effrosdis{\C}\).

The Borel structure on \(\effrosdis{\C; Y}\) is independent of the choice of a compatible Polish
topology on \(Y\).  This can be verified directly or deduced from the following alternative
presentation of \(\effrosdis{\C;Y}\).  Let \((K_{n})_{n}\) be an exhaustion of \(\C\) by compact
sets.  Define
\begin{displaymath}
  \begin{aligned}
    \C \lprod_{n} Y
    &= \{(x_{k}, y_{k})_{k < n} \in (\C \times Y)^{n} : \forall k < n\ \forall m <n\
      [ m \ne k \implies x_{k} \ne x_{m}]\},\\
    \C \lprod_{\infty} Y
    &= \bigl\{(x_{k},y_{k})_{k} \in (\C \times Y)^{\infty} : \forall k\  \forall m\
      [ m \ne k \implies x_{k} \ne x_{m}] \textrm{ and } \\
    &\qquad \qquad \qquad \qquad \qquad \qquad \qquad \qquad \qquad
      \forall n\ \exists N\ \forall k \ge N\ [x_{k} \not \in K_{n}] \bigr\}.
  \end{aligned}
\end{displaymath}
Each sequence in \( \C \lprod_{n} Y\) provides an injective enumeration of an \(n\)-element set
\(\{x_{k}\}_{k<n}\), with \(y_{k}\) as the label of \(x_{k}\).  Sequences in
\( \C \lprod_{\infty} Y\) enumerate infinite discrete subsets of \(\C\), and every discrete subset
of \(\C\) is enumerated by some sequence in
\( \C \lprod Y = \C \lprod_{\infty} Y \sqcup \bigl( \bigsqcup_{n} \C \lprod_{n} Y \bigr)\).

Two sequences of the same length, say \((x_{k},y_{k})_{k}\) and \((x'_{k}, y'_{k})_{k}\), encode the
same labeled set if and only if \((x'_{k},y'_{k})_{k}\) is a permutation of \((x_{k},y_{k})_{k}\).
For \(\kappa \in \N \cup \{\infty\}\), let \(S_{\kappa}\) denote the group of permutations of a
\(\kappa\)-element set.  The space of labeled discrete \(\kappa\)-element subsets of \(\C\) can be
identified with the factor space of the action \(S_{\kappa} \acts \C \lprod_{\kappa} Y\) via
coordinate permutations.

The action of \(S_{\kappa}\) on \(\C \lprod_{\kappa} Y\) is concretely classifiable.  To see this,
choose a Borel linear order~\(\prec\) on \(\C\) and let
\(T^{\kappa} \subseteq \C \lprod_{\kappa} Y \) consist of those
\((x_{k},y_{k})_{k<\kappa} \in \C \lprod_{\kappa} Y\) satisfying
\begin{multline*}
  \forall n\ \forall m < \kappa\ \forall k < m\ \bigl[ x_{m} \in K_{n} \implies x_{k} \in K_{n}
  \textrm{ and } \\ x_{k},x_{m} \in K_{n}\setminus K_{n-1} \implies x_{k} \prec x_{m}\bigr].
\end{multline*}
In other words, the sequence \((x_{k},y_{k})_{k < \kappa}\) is in \(T^{\kappa}\) if it lists
elements \(\{x_{k}\}\) of \(K_{0}\) first, followed by those in \(K_{1}\setminus K_{0}\),
\(K_{2}\setminus K_{1}\), etc., with elements in each block ordered by \(\prec\).  The set
\(T^{\kappa}\) is a Borel transversal for \(S_{\kappa} \acts \C \lprod_{\kappa} Y\), so the factor
space \(\C \lprod_{\kappa} Y / E_{S_{\kappa}}\) is standard Borel.

Let \(E_{S}\) be the union of the orbit equivalence relations \(E_{S_{\kappa}}\): for
\(z_{1}, z_{2} \in \C \lprod Y\),
\[z_{1} E_{S} z_{2} \iff \exists \kappa \in \N \cup \{\infty\}\ \bigl[ z_{1},z_{2} \in \C
  \lprod_{\kappa} Y \textrm{ and } z_{1} E_{S_{\kappa}} z_{2} \bigr].\]
The factor space \(\C \lprod Y/E_{S}\) is standard Borel and encodes discrete subsets of \(\C\)
whose points are labeled by elements of \(Y\).

The spaces \(\C \lprod Y/E_{S}\) and \(\effrosdis{\C;Y}\) are Borel isomorphic.  To see this, let
\[
\pi : \C \lprod Y \to \effrosdis{\C;Y}
\]
be the map sending a sequence \((x_{k},y_{k})_{k < \kappa} \in \C \lprod Y\) to the set
\(\{(x_{k},y_{k})\}_{k < \kappa} \in \effros{\C \times Y}\).  Since for any open
\(U \subseteq \C \times Y\) (in fact, for any Borel \(U \subseteq \C \times Y\)), the set
\[\{(x_{k},y_{k})_{k < \kappa} \in \C \lprod_{\kappa} Y : \exists k < \kappa\ (x_{k},y_{k}) \in
  U\}\]
is Borel, the map \(\pi\) is also Borel.  Moreover, \(\pi : \C \lprod Y \to \effrosdis{\C;Y}\) is
surjective, and \(\pi((x_{k},y_{k})_{k < \kappa}) = \pi((x'_{k}, y'_{k})_{k < \kappa'})\) if and
only if \((x_{k},y_{k})_{k < \kappa}E_{S} (x'_{k},y'_{k})_{k < \kappa'}\).  Thus, \(\pi\) factors
into a Borel injection from \(\C \lprod Y/E_{S}\) onto \(\effrosdis{\C;Y}\), which must be an
isomorphism between the standard Borel spaces~\cite{Kechris}*{15.2}.

\subsection{Principal parts and the Mittag-Leffler theorem}
\label{sec:mitt-leffl}

We next introduce the space \(\ppart \), which describes the principal parts of meromorphic
functions, and establish some of its basic properties.  Near a pole \(w\) of order \(m\), a
meromorphic function \(f\) admits an expansion of the form
\[f(z) = \frac{c_{1}}{z-w} + \frac{c_{2}}{(z-w)^{2}} + \cdots + \frac{c_{m}}{(z-w)^{m}} +
  \sum_{n=0}^{\infty}a_{n}(z-w)^{n} = p_{w}(z) + \sum_{n=0}^{\infty}a_{n}(z-w)^{n},\]
where \(p_{w}(z)\) is the \emph{principal part} at \(w\), encoded by the \(m\)-tuple
\((c_{1}, \ldots, c_{m})\).  Let \( \C^{<\infty} = \bigsqcup_{n = 1}^{\infty} \C^{n}\) denote the
space of finite non-empty sequences of complex numbers.  The \emph{space of principal parts}
\(\ppart = \effrosdis{\C; \C^{<\infty}}\) consists of discrete subsets of \(\C\) labeled by finite
sequences of complex numbers.

Let \(\ppm : \merom \to \ppart\) be the map associating each meromorphic function \(f\) with the
labeled set of principal parts at its poles.  This map is Borel.  To see this, it is best to view
\(\ppart = \effrosdis{\C; \C^{<\infty}}\) as the quotient space of \(\C \lprod \C^{<\infty}\), as
described at the end of Section~\ref{sec:spaces_of_labeled_discrete_subsets}.  Consider the set
\(A\) of pairs \((f, (w_{k}, y_{k})_{k})\), where \(f\) is a meromorphic function and
\((w_{k}, y_{k})_{k} \) is a finite or infinite sequence with pairwise distinct \(w_{k} \in \C\) and
labels \(y_{k} \in \C^{<\infty}\) such that \((w_{k})_{k}\) enumerates the poles of \(f\) and
\(y_{k} \in \C^{<\infty}\) encodes the principal part of \(f\) at \(w_{k}\).  To show that \(\ppm\)
is Borel, it suffices to verify that \(A\) is Borel.  For \((w_{k},y_{k})\) with
\(y_{k} = (c^{k}_{1}, \ldots, c^{k}_{m_{k}}) \in \C^{<\infty}\), define
\[p_{k}(z) = \frac{c^{k}_{1}}{z-w_{k}} + \cdots + \frac{c^{k}_{m_{k}}}{(z-w_{k})^{m_{k}}}.\]
The condition \(f(n\Dbar) \subseteq \C\) is equivalent to \(f\) having no poles in \(n\Dbar\), which
is an open condition in \(\tau_{\merom}\).  Finally, \((f, (w_{k}, y_{k})_{k}) \in A\) if and only
if for each \(n\)
\[ \bigl(f - \sum_{\mathclap{w_{k} \in n\Dbar}} p_{k} \bigr)(n\Dbar) \subseteq \C,\]
which is Borel since the algebraic operations in \(\merom\) are Borel by
Proposition~\ref{prop:meromnz-standard-Borel-algebra}.

Mittag-Leffler's theorem on the existence of meromorphic functions with prescribed principal parts
implies that \(\ppm : \merom \to \ppart\) is surjective.  We summarize these results as follows:
\begin{proposition}
  \label{prop:dvs-and-ppm-are-Borel-surjective}
  The map \(\ppm : \merom \to \ppart\) is a Borel surjection.
\end{proposition}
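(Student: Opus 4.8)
\emph{Plan.} The statement has two halves --- that $\ppm$ is Borel and that it is onto --- and I would treat them separately.

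\emph{Surjectivity.} This is exactly the classical Mittag-Leffler theorem, so the plan is merely to recall its proof in the present coordinates. A point of $\ppart = \effrosdis{\C;\C^{<\infty}}$ is a discrete set $\{w_k\}_k \subseteq \C$ together with labels $y_k = (c^k_1,\dots,c^k_{m_k})$; set $p_k(z) = \sum_{j=1}^{m_k} c^k_j (z-w_k)^{-j}$. If $\{w_k\}$ is finite (including empty), then $f = \sum_k p_k$ already has $\ppm(f)$ equal to the prescribed data, so assume the set is infinite. Fix a base point $a \in \C$ avoiding every $w_k$ (possible, as a discrete set is not all of $\C$); each $p_k$ is holomorphic on $\{|z-a| < |w_k-a|\}$, and I would let $q_k$ be a Taylor section of $p_k$ about $a$ of degree large enough that $\sup_{|z-a|\le |w_k-a|/2}|p_k(z)-q_k(z)| \le 2^{-k}$, which a Cauchy-estimate bound makes achievable. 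Then $f = \sum_k (p_k - q_k)$ converges locally uniformly on $\C\setminus\{w_k\}_k$, is therefore meromorphic with poles exactly at the $w_k$, and near each $w_k$ the term $p_k$ carries the whole singular part while the remaining terms are holomorphic there; hence $\ppm(f)$ is the given data. I expect no difficulty here.

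\emph{Borelness.} The verification has in effect already been carried out in the discussion preceding the statement (Section~\ref{sec:spaces_of_labeled_discrete_subsets} and the paragraph above): presenting $\ppart = \effrosdis{\C;\C^{<\infty}}$ as a quotient of $\C \lprod \C^{<\infty}$, the set
\[
  A = \bigl\{(f,(w_k,y_k)_k) : (w_k)_k \text{ enumerates the poles of } f,\ y_k \text{ encodes the principal part of } f \text{ at } w_k\bigr\}
\]
was shown to be a Borel subset of $\merom \times (\C \lprod \C^{<\infty})$, the two ingredients being Proposition~\ref{prop:meromnz-standard-Borel-algebra} (the algebraic operations on $\merom$ are Borel) and the fact that $\{f : f \text{ has no pole in } n\Dbar\}$ is open in $\tau_{\merom}$. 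What remains is to extract an honest Borel function. The plan is to build a Borel enumeration of $\ppm(f)$: first one observes that $f \mapsto \{\text{poles of } f\} \in \effrosdis{\C}$ is Borel, since for a closed disk $\overline D$ the condition that $f$ has a pole in $\overline D$ is simply $\sup_{\overline D}|f| = +\infty$, which is Borel in $f$ (a countable supremum of continuous evaluations over a dense rational subset); then one applies Kuratowski--Ryll-Nardzewski selectors $\sigma_n : \effrosdis{\C} \to \C$ to this closed discrete set; and finally one notes that the coefficients of the principal part of $f$ at a prescribed point depend in a Borel way on $f$ and the point, being determined by the requirement that subtracting the corresponding finite tail of $f$ leave a function holomorphic near that point, a condition expressible through the Borel algebraic structure of $\merom$. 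Composing, $f \mapsto \bigl(\sigma_n(\text{poles}(f)),\ \text{coefficients of } f \text{ at } \sigma_n(\text{poles}(f))\bigr)_n$ is a Borel map $\merom \to \C \lprod \C^{<\infty}$, and $\ppm$ is its composition with the Borel quotient map onto $\ppart$, hence Borel.

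\emph{Main obstacle.} The one genuinely delicate point is this last extraction step: each fiber $A_f$ is an entire $E_S$-class --- an uncountable $S_\infty$-orbit when $f$ has infinitely many poles --- so one cannot simply project $A$ down to $\merom$ and expect Borelness, and must instead exhibit a Borel selection by hand as above. Everything else is either a direct appeal to the preceding section or classical complex analysis.
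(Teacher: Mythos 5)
Your proof is correct in substance, but you take a somewhat longer route for the Borelness half than the paper does, and the ``main obstacle'' you flag is real but admits a simpler resolution than the one you build.

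For surjectivity you reprove the classical Mittag-Leffler theorem; the paper just cites it, and your argument (Taylor sections of $p_k$ about a base point $a$) is a standard, correct variant. For Borelness, you write down the same set
\[
  A = \{(f,(w_k,y_k)_k) : (w_k)_k \text{ enumerates the poles of } f,\ y_k \text{ encodes the principal part at } w_k\}
\]
as the paper does, and you correctly observe that it is Borel. The paper then asserts that ``it suffices to verify that $A$ is Borel'' and stops. The implicit reason is that $\ppart$ was already identified, in Section~\ref{sec:spaces_of_labeled_discrete_subsets}, with the quotient $(\C\lprod\C^{<\infty})/E_S$, and $E_S$ was shown to have a Borel transversal $T$ there. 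With $\ppart$ identified with $T$ via a Borel isomorphism, the graph of $\ppm$ is exactly $A \cap (\merom\times T)$, which is Borel, and a function between standard Borel spaces with Borel graph is Borel. So there is no need to exhibit a selection by hand: intersecting $A$ with the transversal already does the job. Your ``main obstacle'' correctly notes that one cannot treat $A$ itself as a graph, but the fix is to restrict to $T$, not to build a bespoke lifting.

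That said, your explicit lift $\merom \to \C\lprod\C^{<\infty}$ is a valid alternative, and it has some pedagogical merit in making the Borelness visible without invoking the transversal. It does have a few unaddressed details: the Kuratowski--Ryll-Nardzewski selectors $\sigma_n$ applied to a closed discrete set produce a sequence with repetitions (and are only defined for non-empty sets), so one has to pass to first occurrences to obtain an injective enumeration, treat the empty and finite cases separately, and verify that the resulting enumeration of an infinite pole set satisfies the escape-to-infinity condition built into the definition of $\C\lprod_\infty Y$. All of this is routine since each compact set meets only finitely many poles, but it should be said. The characterization ``$f$ has a pole in $\overline D$ iff $\sup_{\overline D}|f|=+\infty$'' and its Borelness are fine. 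In short: both routes work; the paper's is shorter because the transversal machinery was already in place, and yours requires a bit more bookkeeping in the selection step than you indicate.
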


We are now ready to state and prove the equivariant Mittag-Leffler theorem.
\begin{theorem}
  \label{thm:mittag-leffler}
  There exists a Borel \(\C\)-equivariant map \(\psi : \free(\ppart) \to \merom\) that is a
  right-inverse to \(\ppm\).
\end{theorem}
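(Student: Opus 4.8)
The plan is to deduce Theorem~\ref{thm:mittag-leffler} from Corollary~\ref{cor:main-corollary} (equivalently, Theorem~\ref{thm:main-theorem} with $X=\free(\ppart)$ and $\phi={\rm id}$), following the same template as the proof of the equivariant Weierstrass theorem (Theorem~\ref{thm:equivariant-weierstrass}). Take $G=\C$ acting on every space by argument shifts, $Y=\ppart$, $Z=\merom$, and $\pi=\ppm:\merom\to\ppart$, which is an equivariant Borel surjection by Proposition~\ref{prop:dvs-and-ppm-are-Borel-surjective}. For the Polish group $H$ I use the additive group $\holom$ of entire functions (a separable Fr\'echet space), acting on $\merom$ by $h\cdot g=g+h$. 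Since two meromorphic functions have identical principal parts at all their poles precisely when their difference is entire, the orbit equivalence relation $E_H$ on $\merom$ coincides with the fiber relation of $\ppm$; in particular it is Borel, since $\ppm$ is. The group $\C$ acts on $\holom$ by argument shifts $\tau$, continuously and by additive automorphisms, and the associated Polish semidirect product $\holom\rtimes_\tau\C$ acts on $\merom$ via $((f,z)\cdot g)(w)=f(w+z)+g(w+z)$; this action is Borel and restricts to the given $\C$- and $\holom$-actions by Proposition~\ref{prop:meromnz-standard-Borel-algebra}. Finally, take $\mfR=\mfD_2$, the compact subsets of $\C$ diffeomorphic to the closed disk, recalling from Section~\ref{sec:toast} that every free Borel $\C$-action admits a Borel $\mfD_2$-toast.

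For the $\tau$-family of seminorms on $H=\holom$ I use the standard ones, $\norm{f}_K=\sup_{w\in K}|f(w)|$; these generate the Fr\'echet topology of $\holom$, and the axioms of a $\tau$-family (Definition~\ref{def:U-family-seminorms}) are routine (monotonicity in $K$, the identity $\norm{\tau^g(f)}_K=\norm{f}_{Kg}$, separation of points, and completeness, which is just completeness of $\holom$). The only substantial point is the $\mfD_2$-Runge property (Definition~\ref{def:weak-runge-property}), and here the additive setting makes matters even simpler than in the Weierstrass case, as no passage through logarithms is needed. Given pairwise disjoint $K_1,\ldots,K_m\in\mfD_2$, entire functions $h_1,\ldots,h_m$, and $\epsilon>0$: each $K_i$ has connected complement, hence is polynomially convex, and since the $K_i$ are pairwise disjoint so is their union, so $\C\setminus(K_1\cup\cdots\cup K_m)$ is connected. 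The function equal to $h_i$ on a small neighborhood of each $K_i$ is holomorphic on a neighborhood of $K_1\cup\cdots\cup K_m$, so the classical Runge theorem produces a polynomial---in particular an entire function---$h$ with $\sup_{w\in K_i}|h(w)-h_i(w)|<\epsilon$ for all $i$; in the additive group $\holom$ this reads $\norm{h h_i^{-1}}_{K_i}<\epsilon$. Thus $\unifn$ has the $\mfD_2$-Runge property, and Corollary~\ref{cor:main-corollary} yields the desired Borel $\C$-equivariant $\psi:\free(\ppart)\to\merom$ with $\ppm\circ\psi={\rm id}_{\free(\ppart)}$.

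I do not anticipate a real obstacle: the machinery of Theorem~\ref{thm:main-theorem} is designed precisely to reduce such statements to the choice of the kernel group $H$ together with a classical Runge approximation. What requires care is only the verification that the framework's hypotheses literally hold---most notably that $E_H$ on $\merom$ is Borel, so that Lemma~\ref{lem:invariant-uniformizes} supplies a Borel $a_Z$-family $\unifm$ of pseudometrics on $\merom$, and that the semidirect-product action is Borel and compatible with the given actions. Both follow from the measurability properties of $\merom$ and $\ppart$ established in Sections~\ref{sec:merom-entire-functions}, \ref{sec:spaces_of_labeled_discrete_subsets} and~\ref{sec:mitt-leffl}.
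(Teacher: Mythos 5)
Your proof is correct and follows essentially the same route as the paper's: the same choices of $G=\C$, $H=\holom$ with the additive action, $Z=\merom$, $Y=\ppart$, $\pi=\ppm$, $\mfR=\mfD_2$, sup-norm seminorms, and semidirect-product action, concluding via Corollary~\ref{cor:main-corollary}. You merely spell out a few details the paper leaves implicit, in particular the verification of the $\mfD_2$-Runge property from the classical Runge theorem and the identification of $E_H$ with the fibers of $\ppm$.
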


\begin{proof}
  Just as in the proof of Theorem~\ref{thm:equivariant-weierstrass}, we apply
  Corollary~\ref{cor:main-corollary} in the following context.  The group \(G\) is the additive
  group of \(\C\), and the class \(\mfR = \mfD_{2}\) consists of compact subsets of \(\C\)
  diffeomorphic to the unit disk. Note that every free Borel \(\C\)-action admits a Borel
  \(\mfD_{2}\)-toast (Section~\ref{sec:toast}).

  We next let \(H = \holom\) denote the additive group of entire functions, and let \(\C \) act on
  \(\holom\) via the argument shift.  Define \(Z\) to be the space of meromorphic
  functions~\(\merom\). The group \(\holom\) acts on \(Z\) additively,
  \( (h \cdot f)(w) = f(w) + h(w)\).  The semidirect product \(H \rtimes G = \holom \rtimes \C\)
  acts on \(Z\) as
  \[((h,z) \cdot f)(w) = f(w+z) + h(w+z).\]
  The family of seminorms \(\unifn = (\norm{\cdot}_{K})_{K \in \vietoris{\C}}\) on \(\holom\) is
  given by \(\norm{f}_{K} = \sup_{z \in K} |f(z)|\).  It satisfies the \(\mfD_{2}\)-Runge property
  in view of the standard Runge theorem.  The space \(Y = \ppart\) is the space of principal parts
  and \(\pi = \ppm\).  The equivariant Mittag-Leffler theorem is then an instance of
  Corollary~\ref{cor:main-corollary}.
\end{proof}

\subsection{Luzin spaces and LF-spaces}
\label{sec:lusin-space}

Our next application uses the notion and basic properties of Luzin spaces.  A \emph{Luzin space} is
a Hausdorff topological space \((X,\tau)\) whose topology can be refined into a Polish topology.  A
systematic treatment of Luzin spaces can be found in
\cite{schwartzRadonMeasuresArbitrary1973}*{Ch.~II}.  A key property for our purposes is that the
Borel \(\sigma\)-algebras of Luzin spaces are standard.  Specifically, if \((X,\tau)\) is a Luzin
space and \((X,\tau')\) is a Polish refinement of \(\tau\), then
\(\borel_{\tau} = \borel_{\tau'}\)~\cite{schwartzRadonMeasuresArbitrary1973}*{p.~101}, where
\(\borel_{\tau}\) and \(\borel_{\tau'}\) are the \(\sigma\)-algebras generated by the open sets in
the corresponding topologies.

As observed in \cite{schwartzRadonMeasuresArbitrary1973}*{p.~90}, most of the separable topological
spaces encountered in analysis are Luzin.  Recall that a \emph{Fr\'echet space} is a completely
metrizable locally convex topological vector space.  An \emph{LF-space} is a locally convex
topological vector space \(E\) that admits an increasing and exhaustive sequence of subspaces,
\(E_{0} \subseteq E_{1} \subseteq \cdots \subseteq E = \bigcup_{n} E_{n}\) , where each \(E_{n}\) is
a Fr\'echet space in the induced topology, and the topology of \(E\) coincides with the inductive
limit topology of \((E_{n})_{n}\)~\cite{nariciTopologicalVectorSpaces2010}*{p.~428}.  (Sometimes the
term strict LF-space is used instead, while for general LF-spaces, the requirement on \(E_{n}\) is
relaxed to being Fr\'echet in a topology coarser than the induced one.)

\begin{theorem}(Schwartz \cite{schwartzRadonMeasuresArbitrary1973}*{p.~112 Thm.~7})
  \label{thm:schwartz}
  If \(E\) and \(F\) are separable LF-spaces, then \(\mathcal{L}_{c}(E,F)\)---the space of
  continuous linear operators \(E \to F\) equipped with the compact-open topology---is a Luzin
  space.
\end{theorem}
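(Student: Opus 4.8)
\emph{Overview of the plan.} The plan is to peel off the inductive-limit structure of $E$ and $F$ one layer at a time, reducing the theorem to a single equicontinuous ``building block'' — namely $\mathcal{L}_c(E,B)$ with $E$ a separable Fréchet space and $B$ a separable Banach space — and then to assemble the result using the stability of Luzin spaces under closed subspaces, countable products, and countable increasing unions. Throughout I use that $\mathcal{L}_c(E,F)$ is Hausdorff (because $F$ is), so all of these operations make sense.

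\emph{The two reductions.} Write $E=\bigcup_n E_n$ with $E_n$ Fréchet. By the Dieudonné--Schwartz theorem, every compact subset of a strict inductive limit lies compactly inside some step $E_n$; hence the restriction maps $T\mapsto(T|_{E_n})_n$ identify $\mathcal{L}_c(E,F)$ homeomorphically with the set of compatible families inside $\prod_n\mathcal{L}_c(E_n,F)$, which is closed there, so it suffices to treat $E$ separable Fréchet. Now write $F=\bigcup_m F_m$ with $F_m$ Fréchet (these steps are separable since a strict $LF$-space is separable iff its steps are). Because $E$ is Fréchet, hence Baire, Grothendieck's factorization theorem forces every $T\in\mathcal{L}(E,F)$ into some $F_m$; since the inductive limit is strict, $F_m$ carries the subspace topology, and therefore $\mathcal{L}_c(E,F)=\bigcup_m\mathcal{L}_c(E,F_m)$ is an \emph{increasing} countable union of subspaces, each carrying its intrinsic compact-open topology. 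Realizing the Fréchet space $F_m$ as the projective limit of its local separable Banach spaces $F_{m,(k)}$ identifies $\mathcal{L}_c(E,F_m)$ with a closed subspace of $\prod_k\mathcal{L}_c(E,F_{m,(k)})$. Granting the lemma below (a countable increasing union of Luzin subspaces of a Hausdorff space is Luzin), it remains to prove: for $E$ separable Fréchet and $B$ separable Banach, $\mathcal{L}_c(E,B)$ is Luzin.

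\emph{The Fréchet--Banach block.} Fix an increasing fundamental sequence of seminorms $(p_j)_j$ on $E$ and a countable dense $\mathbb{Q}$-linear subspace $D\subseteq E$ (replace $\mathbb{Q}$ by $\mathbb{Q}[\mathrm{i}]$ in the complex case). Every continuous $T$ obeys $\|Tx\|_B\le N\,p_j(x)$ for some $j,N$, so $\mathcal{L}(E,B)=\bigcup_d Y_d$ is the increasing union of the sets $Y_d=\{T:\|Tx\|_B\le d\,p_d(x)\text{ for all }x\}$. Each $Y_d$ is equicontinuous, so an Ascoli argument (valid because $E$ is metrizable) shows that on $Y_d$ the compact-open topology coincides with the topology of pointwise convergence on $D$; thus $T\mapsto(Td)_{d\in D}$ embeds $Y_d$ homeomorphically onto a \emph{closed} subset of the Polish space $B^D$ — closed because it is cut out by the $\mathbb{Q}$-linearity relations among points of $D$ together with the bounds $\|b_d\|_B\le d\,p_d(d)$, and any such family extends uniquely, by uniform continuity, to an element of $Y_d$. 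Hence each $Y_d$ is Polish, and $\mathcal{L}_c(E,B)$ is a countable increasing union of Polish subspaces. A countable increasing union $X=\bigcup_d X_d$ of Polish subspaces of a Hausdorff space is Luzin: each $X_{d-1}$ is $G_\delta$ in $X_d$ by Alexandrov's theorem, so $X_d\setminus X_{d-1}$ is Borel in the Polish space $X_d$ and hence admits a finer Polish topology refining the ambient one, and the topological sum of these over the partition $X=\bigsqcup_d(X_d\setminus X_{d-1})$ is a Polish topology finer than the original. The same bookkeeping upgrades this to ``increasing union of Luzin subspaces is Luzin'' once one knows that a Luzin subspace of a Luzin space is Borel in it — which follows by applying the Luzin--Souslin theorem to the finer Polish topologies involved — and that is the input used to collapse the target's $LF$-structure in the second reduction.

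\emph{The main obstacle.} The genuine difficulty is that $\mathcal{L}_c(E,F)$ is, in general, \emph{not metrizable}: already $\mathcal{L}_c(\ell^2,\mathbb{R})$ is $\ell^2$ endowed with a topology strictly between its weak and norm topologies, with no countable base of neighborhoods of $0$. So there is no hope of writing down a metric, and the entire reduction scheme is designed precisely so that Polishness is produced only at the one equicontinuous Fréchet--Banach layer, with every inductive-limit gluing absorbed by the closure/product/increasing-union stability of Luzin spaces. The two places demanding care are therefore the Ascoli identification of topologies on the equicontinuous pieces $Y_d$ (and the verification that the image in $B^D$ is closed), and the ``increasing union of Luzin subspaces is Luzin'' lemma, which is where the descriptive-set-theoretic content (via Luzin--Souslin) actually enters.
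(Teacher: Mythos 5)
The paper does not prove this theorem at all: it is cited verbatim from Schwartz's monograph (the paper's remark following the statement merely explains how the special LF-space case falls out of Schwartz's more general hypotheses, with further attributions to Vidossich and Lohman for the separability of the steps, and to Mankiewicz for an alternative route via his classification of separable LF-spaces). So there is no proof in the paper to compare against; you have supplied an argument where the authors supply a reference.

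That said, your proof sketch is sound and follows what is essentially the natural reduction scheme — very likely close in spirit to Schwartz's own, since the tools (Dieudonn\'e--Schwartz localization of compact sets, Grothendieck factorization through Baire-ness of the Fr\'echet source, projective-limit presentation of the Fr\'echet target, Ascoli on equicontinuous slices, and the closure properties of Luzin spaces under closed sets, countable products, Borel subsets, and countable disjoint sums) are exactly the standard machinery available in that setting. The two places that carry genuine content and deserve to be written out in full if this were to appear as a proof are the ones you flagged yourself: (i) the verification that the image of an equicontinuous slice $Y_d$ under $T\mapsto (Td)_{d\in D}$ is closed in $B^D$ (your reasoning — $\mathbb{Q}$-linearity relations plus the Lipschitz bound cut out a closed set whose elements extend uniquely to $Y_d$ — is correct, provided you also record that $\mathbb{Q}$-linearity on a dense set plus continuity upgrades to full scalar linearity), and (ii) the lemma that a countable increasing union of Luzin subspaces of a Hausdorff space is Luzin, where the Luzin--Souslin step showing $X_{n-1}$ is Borel in $X_n$ uses that the inclusion is a Borel injection between standard Borel spaces, not merely a continuous one (continuity may fail after refining topologies independently on the two sides). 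Both of these are standard, and your overall structure is correct. For the published paper, though, it is enough — and arguably preferable — to cite Schwartz as the authors do, since this theorem is peripheral to their main argument.
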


\begin{remark}
  Theorem~\ref{thm:schwartz} is stated in~\cite{schwartzRadonMeasuresArbitrary1973}*{p.~112 Thm.~7}
  in a more general form, where \(E\) is assumed to be the inductive limit of a sequence of
  separable Fr\'echet spaces \(E_{n}\) satisfying:
  \begin{enumerate}[leftmargin=1cm, label={\sf \arabic*}., ref={\sf \arabic*}]
  \item \(E_{n}\) increases with \(n\), and
  \item every compact subset of \(E\) is a compact subset of \(E_{n}\), for some \(n\).
  \end{enumerate}
  When \(E\) is a separable LF-space, these conditions are automatically satisfied
  by~\cite{nariciTopologicalVectorSpaces2010}*{Thm.~12.1.7(c)}.  Additionally, the separability of
  \(E\) implies the separability of the Fr\'echet subspaces
  \(E_{n}\)~\cite{vidossichCharacterizationSeparabilityLF1968} (see
  also~\cite{lohmanSeparabilityLinearTopological1974} for a more general argument).  Furthermore,
  the conditions on \(F\) can be relaxed to requiring that \(F\) is a Hausdorff topological vector
  space that is the countable union of images (under linear continuous maps) of separable Fr\'echet
  spaces.

  For the purposes of this work, the generality of Theorem~\ref{thm:schwartz} is sufficient.
\end{remark}

\begin{corollary}
  \label{cor:lf-dual-lf-standard}
  The Borel \(\sigma\)-algebras of separable LF-spaces and their weak* duals are standard.
\end{corollary}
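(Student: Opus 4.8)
The plan is to deduce both assertions from Theorem~\ref{thm:schwartz} together with the fact recalled above that the Borel \(\sigma\)-algebra of a Luzin space is standard. Recall that a Hausdorff space is Luzin precisely when its topology admits \emph{some} finer Polish topology; the key elementary remark I will use is that a Polish topology finer than a given topology is also finer than every coarser topology on the same underlying set, so Luzin-ness propagates to coarser Hausdorff topologies.

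\emph{The weak* dual.} Let \(E\) be a separable LF-space over the scalar field \(\mathbb K\in\{\R,\C\}\), and regard its continuous dual \(E'\) as the set \(\mathcal L(E,\mathbb K)\). Since \(\mathbb K\) is a separable Fréchet space, hence trivially a separable LF-space, Theorem~\ref{thm:schwartz} shows that \((E',\tau_{co})\)—the dual with the compact-open topology—is a Luzin space; fix a finer Polish topology \(\tau_P\) on it. Now the weak* topology \(\tau_{w^*}\) is coarser than \(\tau_{co}\), because each weak* subbasic set \(\{\phi\in E' : |\phi(x_0)-c|<r\}\) is compact-open, being determined by the compact singleton \(\{x_0\}\subseteq E\). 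Hence \(\tau_P\) is also a finer Polish topology for \(\tau_{w^*}\). Since \((E',\tau_{w^*})\) is Hausdorff (points of \(E\) separate functionals), it is Luzin, so its Borel \(\sigma\)-algebra is standard; in fact \(\borel_{\tau_{w^*}}=\borel_{\tau_{co}}=\borel_{\tau_P}\).

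\emph{The LF-space itself.} Write \(E=\bigcup_n E_n\) for an increasing exhaustion by Fréchet subspaces, each carrying the topology induced from \(E\); separability of \(E\) forces every \(E_n\) to be separable, hence Polish. Being complete, \(E_n\) is closed in the Hausdorff topological vector space \(E\). Put \(C_0=E_0\) and \(C_n=E_n\setminus E_{n-1}\) for \(n\ge 1\): then \(C_n\) is relatively open in the Polish space \(E_n\), hence Polish, and \(\{C_n\}_n\) partitions \(E\). Give \(E\) the disjoint-union topology \(\tau'\) associated with this partition, so that a set is \(\tau'\)-open iff its trace on each \(C_n\) is open in \(C_n\). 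Then \((E,\tau')\) is a countable topological sum of Polish spaces, hence Polish, and \(\tau'\) refines the LF-topology, since the trace on each \(C_n\) of an LF-open set is open. Thus \(E\) is Luzin and \(\borel_E\) is standard. (Alternatively one may argue directly: each \(C_n\) is a Borel subset of \(E\) whose induced Borel structure from the Polish \(E_n\) is standard, and a countable Borel partition into standard pieces makes the ambient space standard.)

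\emph{Where the work is.} There is no deep obstacle beyond Theorem~\ref{thm:schwartz}. In the dual case the point requiring care is the comparison \(\tau_{w^*}\subseteq\tau_{co}\) and the observation that the Polish refinement produced by Schwartz's theorem descends to the coarser weak* topology; in the LF-space case the only thing to verify is that the successive pieces \(E_n\) are Borel—indeed closed—in \(E\), which is what legitimizes the disjoint-union topology.
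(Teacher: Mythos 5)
Your argument for the weak* dual is essentially the paper's: regard \(E'\) as \(\mathcal L(E,\mathbb K)\), invoke Theorem~\ref{thm:schwartz} to get a Polish refinement of the compact-open topology, observe that the weak* topology is coarser, and conclude via the quoted fact that Luzin-ness (and hence standardness of the Borel \(\sigma\)-algebra) descends along the identity to the coarser Hausdorff topology.

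For the LF-space \(E\) itself, however, you take a genuinely different and more elementary route. The paper treats \(E\) and \(E'\) uniformly by also applying Schwartz's theorem to \(\mathcal L(\mathbb K,E)\) (which is canonically \(E\), and on which the pointwise-convergence topology is coarser than the compact-open one). You instead bypass Theorem~\ref{thm:schwartz} entirely: using that each \(E_n\) is a closed Polish subspace of \(E\) (closed because complete in the Hausdorff space \(E\); Polish because separable Fréchet, and separability of \(E\) passes to the \(E_n\)), you partition \(E\) into the relatively open, hence Polish, pieces \(C_n = E_n\setminus E_{n-1}\), give \(E\) the disjoint-union topology \(\tau'\), and check that \(\tau'\) is a Polish refinement of the LF-topology because each LF-open \(U\) satisfies \(U\cap E_n\) open in \(E_n\), whence \(U\cap C_n\) open in \(C_n\). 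This is a correct argument, slightly more self-contained than the paper's for this half of the statement; what Schwartz's theorem buys the paper is uniformity of treatment, while your decomposition argument is more hands-on and makes visible exactly which structural facts about strict LF-spaces are being used (closedness and separability of the Fréchet steps). Both are fine; just note that your parenthetical shortcut \say{a countable Borel partition into standard pieces makes the ambient space standard} implicitly uses that the Borel structure on each \(C_n\) induced from the ambient LF-topology agrees with the one from the Fréchet topology on \(E_n\)—true here because \(E_n\) carries the induced topology, but worth stating.
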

\begin{proof}
  Let \(F\) be the field of scalars.  An LF-space \(E\) and its dual \(E^{*}\) with the weak*
  topology can be viewed as spaces of linear operators \(\mathcal{L}(F,E)\) and \(\mathcal{L}(E,F)\)
  endowed with the pointwise convergence topology.  The topology of pointwise convergence is coarser
  than the compact-open topology, yet it remains Hausdorff; the Hahn–Banach theorem is used to
  verify this for \(\mathcal{L}(E,F)\).  Theorem~\ref{thm:schwartz} implies that
  \(\mathcal{L}(F,E)\) and \(\mathcal{L}(E,F)\) are Luzin, and thus their Borel \(\sigma\)-algebras
  are standard.
\end{proof}

\begin{remark}
  \label{rem:lf-classification}
  The standardness of the Borel \(\sigma\)-algebras of separable LF-spaces also follows from a
  deeper classification theorem due to Mankiewicz~\cite{mankiewiczTopologicalLipschitzUniform1974},
  which states that, up to homeomorphism, there are only three distinct infinite-dimensional
  separable LF-spaces.
\end{remark}

We note that LF-spaces are typically
non-metrizable~\cite{nariciTopologicalVectorSpaces2010}*{12.1.8}.

\subsection{Continuous and smooth functions}
\label{sec:comp-supp-funct}
Let \(\smooth{\Rd}\) denote the space of infinitely differentiable real-valued functions on~\(\Rd\).
For a multi-index \(\alpha = (\alpha_{1}, \ldots, \alpha_{d})\), let \(\partial^{\alpha}\) stand for
the corresponding partial derivative operator
\(\dfrac{\partial^{\alpha}}{\partial x_{1}^{\alpha_{1}} \cdots \partial x_{d}^{\alpha_{d}}}\). For a
compact set \(K \in \vietoris{\Rd}\), define the sup-norm of the \(\alpha\)th derivative over \(K\)
as \(\norm{f}_{\alpha,K} = \sup_{x \in K}|\partial^{\alpha}f(x)|\).  The family of seminorms
\(\norm{\cdot}_{\alpha, K}\) endows \(\smooth{\Rd}\) with the structure of a separable Fr\'echet
space~\cite{trevesTopologicalVectorSpaces2006}*{p.~85}.

The space of continuous functions on \(\Rd\) is denoted by \(C(\Rd)\). The family of seminorms
\(\| f \|_K = \sup_{x \in K} |f(x)|\), indexed by compact subsets \(K \subset \Rd\), endows
\(C(\Rd)\) with a Fr\'echet space structure.

Let \(\contcs{\Rd}\) denote the space of compactly supported continuous functions on~\(\Rd\).
Consider an exhaustion \((K_{n})_{n}\) of \(\Rd\) by a sequence of compact sets. Then
\(\contcs{\Rd}\) can be expressed as the increasing union of spaces
\[\{f \in \contcs{\Rd} : \supp f \subseteq K_{n}\},\]
each of which is a Banach space with respect to the sup-norm. Equipped with the inductive limit
topology, \(\contcs{\Rd}\) becomes an LF-space~\cite{trevesTopologicalVectorSpaces2006}*{p.~131}.

Let \(\smoothcs{\Rd}\) denote the space of compactly supported smooth functions on \(\Rd\), also
known as the \emph{space of test functions}.  For a compact set \(K \subseteq \Rd\), the subspace
\(\{ f \in \smoothcs{\Rd} : \supp (f) \subseteq K\}\) is closed in \(\smooth{\Rd}\) and thus
inherits the structure of a separable Fr\'echet space.  The space of test functions, equipped with
the inductive limit topology of the union
\[
\bigcup_{n}\{f \in \smoothcs{\Rd} : \supp(f) \subseteq K_{n}\},
\]
is also an LF-space~\cite{trevesTopologicalVectorSpaces2006}*{p.~132}.  The inductive topologies on
\(\contcs{\Rd}\) and \(\smoothcs{\Rd}\) are not
metrizable~\cite{nariciTopologicalVectorSpaces2010}*{12.1.8}, but they are
separable~\cite{nariciTopologicalVectorSpaces2010}*{12.109}. Consequently, their Borel
\(\sigma\)-algebras are standard by Corollary~\ref{cor:lf-dual-lf-standard}.  Furthermore, the
inclusion \(\smoothcs{\Rd} \subseteq \contcs{\Rd}\) is continuous, and \(\smoothcs{\Rd}\) is
therefore a Borel subset of \(\contcs{\Rd}\).

\subsection[d-bar problem]{\texorpdfstring{\(\bar{\partial}\)-problem}{d-bar problem}}
\label{sec:d-bar-problem}

We now discuss the existence of Borel equivariant inverses to the \(\bar{\partial}\) map.  Let
\(\smoothCC\) denote the space of complex-valued smooth functions on \(\C\).  We can identify
\(\smoothCC\) with \(\smooth{\C} \times \smooth{\C}\) through the mapping
\((u,v) \mapsto u + \im v\), where \(\smooth{\C}\) represents the space of real-valued smooth
functions on \(\C\), as discussed in Section~\ref{sec:comp-supp-funct}.  The
\(\bar{\partial}\)-operator on this space is defined as
\(\frac{\partial}{\partial \bar{z}} = \frac{1}{2}(\frac{\partial}{\partial x} + \im
\frac{\partial}{\partial y})\), i.e.,
\[\frac{\partial}{\partial \bar{z}}(u + \im v) = \frac{1}{2}\Bigl(\frac{\partial u}{\partial x} -
  \frac{\partial v}{\partial y}\Bigr) + \frac{1}{2}\im \Bigl(\frac{\partial u}{\partial y} +
  \frac{\partial v}{\partial x}\Bigr).\]
In particular, the kernel of \(\bar{\partial}\) consists of the functions that satisfy the
Cauchy--Riemann equations, allowing us to identify it with the space \(\holom\) of entire functions.
Moreover, the topology induced on \(\holom\) by \(\smoothCC\) is precisely the topology of uniform
convergence on compact sets.

The \(\bar{\partial}\)-problem on the space \(\smoothCC\) involves finding a function
\(f \in \smoothCC\) that satisfies the equation \(\bar{\partial} f = g\) for a given
\(g \in \smoothCC\).  It is known that
\[
\bar{\partial} : \smoothCC \to \smoothCC
\]
is surjective, ensuring that a solution to the \(\bar{\partial}\)-problem exists for any smooth function \(g\).
This follows from the fact that linear partial differential operators with constant coefficients are
surjective as maps of the space \(\distrib{\C}\) of distributions into itself
(\cite{hormanderAnalysisLinearPartial2003}*{Cor.~3.6.2}), together with Weyl's lemma, which implies
that distributional solutions \(\bar\partial u=f\) with \(f\) smooth are themselves smooth
(\cite{hormanderAnalysisLinearPartial2003}*{Cor.~4.1.2}).

Corollary~\ref{cor:main-corollary} shows that, on the free part \(\free(\smoothCC)\), solutions to
the d-bar equation can be constructed in a Borel and equivariant manner.  More precisely, let
\(G = \C\) and let \(H = \holom\) be the group of entire functions.  The class \(\mfR\) is defined,
as before, to be the collection \(\mfD_{2}\) of compact sets of \(\C\) diffeomorphic to the unit
disk. The action of \(\C \) on \( \holom\) is via the argument shift, and the semidirect product
\(\holom \rtimes \C \) acts on the space of smooth functions \(\smoothCC\) as
\[
((f,z) \cdot h)(w) = f(w+z) + h(w+z).
\]
The family of seminorms \(\unifn\) with the
\(\mfD_{2}\)-Runge property is given, as earlier, by \(\norm{f}_{K} = \sup_{z \in K}|f(z)|\).  Let
\(Y\) also denote the space \(\smoothCC\) of complex-valued smooth functions on \(\C\), with the
same action \(\C \acts Y\) by the argument shift and let \(\pi\) be the \(\bar{\partial}\) operator.
Corollary~\ref{cor:main-corollary} applies and yields the following result.

\begin{theorem}
  \label{thm:d-bar-problem}
  There exists a Borel equivariant map \[\psi : \free(\smoothCC) \to \smoothCC\]
  which is a right-inverse to \(\bar{\partial}\).
\end{theorem}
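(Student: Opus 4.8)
The plan is to verify the hypotheses of Corollary~\ref{cor:main-corollary} for the data already assembled in the paragraph preceding the statement: \(G=\C\) acting by translations, \(H=\holom\) the additive group of entire functions with its argument-shift action \(\tau\), \(Z=Y=\smoothCC\), \(\pi=\bar{\partial}\), the class \(\mfR=\mfD_{2}\) of compact subsets of \(\C\) diffeomorphic to the closed disk, the semidirect product action \(((h,z)\cdot f)(w)=f(w+z)+h(w+z)\), and the \(\tau\)-family \(\unifn=(\norm{\cdot}_{K})_{K\in\vietoris{\C}}\) given by \(\norm{f}_{K}=\sup_{z\in K}|f(z)|\). First I would record the structural and measurability facts. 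The space \(\smoothCC\cong\smooth{\C}\times\smooth{\C}\) is a separable Fréchet space, so its Borel \(\sigma\)-algebra is standard (Section~\ref{sec:comp-supp-funct}, Corollary~\ref{cor:lf-dual-lf-standard}); the operator \(\bar{\partial}\) is a continuous linear map \(\smoothCC\to\smoothCC\), hence Borel; it is surjective by the recalled combination of Hörmander's surjectivity theorem and Weyl's lemma; and it commutes with translations since it has constant coefficients, so \(\pi\) is a Borel \(\C\)-equivariant surjection.

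Next I would check the \(H\)-data. The additive action \(\holom\acts\smoothCC\) is continuous (hence Borel), and its orbit equivalence relation satisfies \(f_{1}E_{H}f_{2}\iff f_{1}-f_{2}\in\ker\bar{\partial}=\holom\iff\bar{\partial}f_{1}=\bar{\partial}f_{2}\), so \(E_{H}\) is classified by \(\pi\); the shift \(\tau:\C\acts\holom\) is a continuous action by automorphisms, and the semidirect product \(\holom\rtimes\C\) acts continuously, a fortiori in a Borel way, on \(\smoothCC\), restricting to the two given actions. The class \(\mfD_{2}\) is cofinal in \(\vietoris{\C}\) (every compact set lies in a large closed disk) and translation-invariant. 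The seminorm family \(\unifn\) is a \(\tau\)-family in the sense of Definition~\ref{def:U-family-seminorms}: monotonicity in \(K\) is immediate; the shift identity \(\norm{\tau^{g}(f)}_{K}=\sup_{w\in K}|f(w+g)|=\sup_{w\in K+g}|f(w)|=\norm{f}_{K+g}\) holds; the family separates points of \(\holom\); and it is complete, since \(\unifn\) generates the Fréchet topology of uniform convergence on compact sets on \(\holom\), which is complete. In particular \(\unifn\) generates the Polish topology on \(H=\holom\), as required in Section~\ref{sec:equivariant-borel-liftings-semiproduct}.

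The crux is the \(\mfD_{2}\)-Runge property of Definition~\ref{def:weak-runge-property}. Given pairwise disjoint \(K_{1},\dots,K_{m}\in\mfD_{2}\), entire functions \(h_{1},\dots,h_{m}\in\holom\), and \(\epsilon>0\), note that each \(K_{i}\) has connected complement in \(\C\) (being diffeomorphic to a closed disk), and that a finite disjoint union of compact sets with connected complements again has connected complement; hence \(\C\setminus(K_{1}\cup\cdots\cup K_{m})\) is connected. Choosing disjoint open neighborhoods \(U_{i}\supseteq K_{i}\), the function equal to \(h_{i}\) on each \(U_{i}\) is holomorphic on the neighborhood \(\bigcup_{i}U_{i}\) of \(K_{1}\cup\cdots\cup K_{m}\), so the classical Runge theorem produces a polynomial (in particular an entire function) \(h\) with \(\sup_{K_{1}\cup\cdots\cup K_{m}}|h-h_{i}|<\epsilon\), i.e.\ \(\norm{hh_{i}^{-1}}_{K_{i}}<\epsilon\) for all \(i\), where \(hh_{i}^{-1}\) is \(h-h_{i}\) in the additive group \(\holom\). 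This is precisely the \(\mfD_{2}\)-Runge property. Finally, every free Borel \(\C\)-action admits a Borel \(\mfD_{2}\)-toast (Section~\ref{sec:toast}, essentially \cite{MR4215747}*{Thm.~5}), in particular the argument-shift action on \(\free(\smoothCC)\). Thus all hypotheses of Corollary~\ref{cor:main-corollary} hold, and it yields a Borel \(\C\)-equivariant map \(\psi:\free(\smoothCC)\to\smoothCC\) with \(\bar{\partial}\circ\psi=\mathrm{id}_{\free(\smoothCC)}\), which is the assertion.

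I do not expect a genuine obstacle here: the design of the framework in Section~\ref{sec:equivariant-borel-liftings} is precisely that the only truly analytic inputs — surjectivity of \(\bar{\partial}\) and the classical Runge theorem — enter as black boxes, and the rest is the routine verification above. The mildly delicate points are (a) the standard-Borel structure on the non-metrizable ambient spaces, which is supplied by the Luzin/LF-space results of Section~\ref{sec:lusin-space}, and (b) checking that a finite disjoint union of disk-like compact sets still has connected complement, so that classical Runge applies to the piecewise-defined holomorphic data.
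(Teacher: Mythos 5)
Your proposal is correct and follows essentially the same route as the paper: the paper establishes Theorem~\ref{thm:d-bar-problem} by setting up exactly this data (\(G=\C\), \(H=\holom\), \(\mfR=\mfD_{2}\), the argument-shift semidirect product action, the sup-norm seminorms) and invoking Corollary~\ref{cor:main-corollary}, with the surjectivity of \(\bar\partial\) and the classical Runge theorem entering as the two analytic black boxes. You merely spell out the routine verifications that the paper leaves implicit.
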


The corresponding statement holds true for the Laplacian on \(\Rd\) (as will be discussed in
Section~\ref{sec:poisson-equation}) as well as for any other elliptic partial differential operator
\(P\) with constant coefficients.  We will not pursue the details here, but the key analytic results
are surjectivity of the operator \(P\) on the space of non-periodic smooth functions
(\cite{hormanderAnalysisLinearPartial2003}*{Cor.~3.6.2} and
\cite{hormanderAnalysisLinearPartial2003}*{Cor.~4.1.2}) and the Lax--Malgrange approximation theorem
for \(\ker P\) (\cite{hormanderAnalysisLinearPartial2003}*{Thm.~4.3.1}), which replaces Runge's
theorem.

\subsection{Radon measures and distributions}
\label{sec:radon-meas-distrib}
We will next turn to proving an equivariant version of the Brelot--Weierstrass theorem, which is an
analogue of Weierstrass theorem for subharmonic functions. We begin by introducing the relevant
spaces and maps.

The Riesz representation theorem~\cite{rudinRealComplexAnalysis1987}*{2.14} establishes an
isomorphism between the weak*-dual \(\contcs{\Rd}'\) of the space of compactly supported continuous
functions and the space \(\meas{\Rd}\) of (generalized) \emph{Radon measures} on \(\Rd\) endowed
with the topology of weak convergence (sometimes also called vague convergence), i.e., the weakest
topology under which maps
\[
\meas{\Rd} \ni \mu \mapsto \int_{\Rd} f\, d\mu
\]
are continuous for every compactly supported continuous function \(f\).  A basis of neighborhoods
for a measure \(\mu\) is given by finite collections \(f_{1}, \ldots, f_{n} \in \contcs{\Rd}\) and
\(\epsilon > 0\):
\[U(\mu; f_{1}, \ldots, f_{n}, \epsilon) = \Bigl\{\nu \in \meas{\Rd} : \Bigl| \int f_{i}\, d\mu -
  \int f_{i}\, d\nu \Bigr| < \epsilon \textrm{ for } i = 1, \ldots, n\Bigr\}.\]
This topology makes \(\meas{\Rd}\) a separable, non-metrizable, locally convex topological vector
space, and the weak*-dual to the separable LF-space \(\contcs{\Rd}\).  Hence, its Borel
\(\sigma\)-algebra is standard, as shown by Corollary \ref{cor:lf-dual-lf-standard}.

For a measure \(\mu \in \meas{\Rd}\) and a Borel set \(A \subseteq \Rd\), we say that \(A\) is
\emph{\(\mu\)-null} if \(\mu(A') = 0\) for every Borel subset \(A' \subseteq A\).  Notably, an open
set \(U\) is \(\mu\)-null if and only if \(\int f\, d\mu = 0\) for all \(f \in \contcs{\Rd}\) with
\(\supp f \subseteq U\).  The \emph{support} of a measure \(\mu \in \meas{\Rd}\) is defined as the
closed set \(\supp \mu = \Rd \setminus \bigcup U\), where the union is taken over all open
\(\mu\)-null sets \(U\).

The map \(\meas{\Rd} \ni \mu \mapsto \supp \mu \in \effros{\Rd}\) is Borel.  To verify this,
consider a countable basis of bounded open sets \((U_{n})_{n}\) in \(\Rd\).  For each \(n\), select
a countable family \((f_{n,m})_{m}\) of continuous functions supported on \(U_{n}\) that is dense in
the space \(\{f \in \contcs{\Rd} : \supp f \subseteq U_{n}\}\).  Recall that the Effros Borel
structure is generated by sets of the form \(\{F \in \effros{\Rd} : F \cap U \ne \varnothing\}\),
where \(U \subseteq \Rd\) is open.  Since
\[\supp \mu \cap U \ne \varnothing \iff \exists n\ \exists m\ \Bigl[U_{n} \subseteq U \textrm{ and}
  \int f_{n,m}\, d\mu \ne 0\Bigr], \]
the set \(\{\mu \in \meas{\Rd}: \supp \mu \cap U \ne \varnothing\}\) is open, hence the map
\(\mu \mapsto \supp \mu\) is Borel.

For any bounded Borel set \(A \subseteq \Rd\), the evaluation \(\mu \mapsto \mu(A)\) is Borel.  To
see this, consider a bounded open set \(U\) and define continuous compactly supported functions
\(f_{n}(x) = \min\{1, n\cdot \dist(x, \Rd \setminus U)\}\), where \(\dist\) is any proper metric on
\(\Rd\).  Since \((f_{n})_{n}\) converges monotonically to the characteristic function
\(\bbone_{U}\) of \(U\), the monotone convergence theorem implies
\[ \int_{\Rd} f_{n}\, d\mu \to \int_{\Rd} \bbone_{U}\, d\mu = \mu(U). \]  Thus,
\(\mu \mapsto \mu(U)\) is a pointwise limit of Borel functions and is therefore Borel
\cite{Kechris}*{11.2i}.  The class of Borel subsets \(A\) of \(U\) for which \(\mu \mapsto \mu(A)\)
is Borel includes all open subsets and is closed under complements and countable disjoint unions,
forming a Dynkin system.  By the \(\pi\)-\(\lambda\) theorem (see, for instance,
\cite{Billingsley1995-wf}*{Thm.~3.2}), this class contains all Borel subsets of \(U\).

Let \(\measp{\Rd}\) denote the subspace of \(\meas{\Rd}\) consisting of non-negative functionals,
i.e., \(\phi \in \contcs{\Rd}'\) such that \(\phi(f) \ge 0\) whenever \(f\) is non-negative.  The
Riesz representation theorem identifies \(\measp{\Rd}\) with the space of positive Radon measures on
\(\Rd\).  When restricted to \(\measp{\Rd}\), the weak convergence topology becomes metrizable and,
in fact, Polish (see, for instance,~\cite{kallenbergRandomMeasuresTheory2017}*{Thm.~4.2}).

The weak*-dual of \(\smoothcs{\Rd}\) is known as the space of \emph{distributions} and is denoted by
\(\distrib{\Rd}\).  The continuity of the inclusion \(\smoothcs{\Rd} \hookrightarrow \contcs{\Rd}\)
implies that the restriction
\[
\meas{\Rd} = \contcs{\Rd}' \ni \mu \mapsto \mu|_{\smoothcs{\Rd}} \in \distrib{\Rd}
\]
is continuous.  Since \(\smoothcs{\Rd}\) is dense in \(\contcs{\Rd}\), the restriction map is
injective.  Consequently, \(\meas{\Rd}\) is a Borel subset of \(\distrib{\Rd}\).

\subsection{Harmonic and subharmonic functions}
\label{sec:subh-funct}

Another space relevant to this work is the space of subharmonic functions on \(\Rd\), denoted by
\(\sharm{\Rd}\).  A function \(u : \Rd \to [-\infty, \infty)\) is called \emph{subharmonic} if it is
upper semicontinuous and satisfies, for all \(x\in \Rd\) and \(r > 0\),
\begin{equation}
  \label{eq:subharmonic}
  u(x) \le \frac{1}{\omega(\S^{d-1})}
  \int\limits_{\mathclap{\S^{d-1}}} u(x + r\omega)\, d\omega,
\end{equation}
where \(\omega\) denotes the spherical area measure on the sphere \(\S^{d-1}\).  We exclude the
constant function \(-\infty\) from the class of subharmonic functions.  For an introduction to
subharmonic functions, see \cite{hormanderAnalysisLinearPartial2003}*{Ch.~4}.

Let \(\lloc = \lloc(\Rd)\) denote the space of locally Lebesgue integrable real-valued functions on
\(\Rd\).  Equipped with the seminorms \(\norm{f}_{B_{n}} = \int_{B_{n}} |f|\, d\lambda\), where
\(B_{n}\) is the ball of radius \(n\) centered at the origin, \(\lloc\) forms a separable Fr\'echet
space. Every subharmonic function belongs to \(\lloc\), and distinct elements of \(\sharm{\Rd}\)
correspond to distinct elements of \(\lloc\)~\cite{hormanderAnalysisLinearPartial2003}*{Thm.~4.1.8}.
Thus, \(\sharm{\Rd}\) can be viewed as a subset of \(\lloc\), inheriting a separable metrizable
topology.  Moreover, \(\sharm{\Rd}\) is closed in \(\lloc\), making it a Polish space.  To see this,
consider a sequence \(u_{n} \in \sharm{\Rd}\) converging in \(\lloc\) to some \(u \in \lloc\).  Then
\(u_{n} \to u\) as distributions in \(\distrib{\Rd}\).  Since the Laplacian of any subharmonic
function is non-negative, \(\Delta u_{n} \ge 0\) for all \(n\), and \(\Delta\) is continuous on
\(\distrib{\Rd}\), it follows that \(\Delta u \ge
0\). By~\cite{hormanderAnalysisLinearPartial2003}*{Thm.~4.1.8}, this implies that \(u\) is
subharmonic. Hence, \(\sharm{\Rd}\) is a Polish space under the topology of \(\lloc\) convergence.

A function \(u\) is called \emph{harmonic} if it satisfies \(\Delta u = 0\).  Equivalently, \(u\) is
harmonic if and only if both \(u\) and \(-u\) are subharmonic.  The space of harmonic functions on
\(\Rd\) is denoted by \(\harm{\Rd}\).  On \(\harm{\Rd}\), the \(\lloc\) topology coincides with the
topology of uniform convergence on compact sets, making \(\harm{\Rd}\) a separable Fr\'echet space
and, consequently, a Polish group.

\subsection{Poisson equation}
\label{sec:poisson-equation}

A distinct application of Theorem~\ref{thm:main-theorem} arises from PDE.  Consider the Poisson
equation \(\Delta f = g\) in \(\Rd\), \(d \ge 2\).  Multiple natural spaces of potential
solutions can be considered.  For instance, we can view \(\Delta : \smooth{\Rd} \to \smooth{\Rd}\)
as a map between the spaces of smooth functions or as a map
\(\Delta : \distrib{\Rd} \to \distrib{\Rd}\) between the spaces of distributions.  By a very special
case of the Ehrenpreis--Malgrange theorem
\cite{hormanderAnalysisLinearPartial2003}*{Theorem~7.3.10}, both maps are surjective\footnote{This
  can be also proven directly, following the proof of the Weierstrass theorem.}. Another common
alternative considered in potential theory is to view \(\Delta : \sharm{\Rd} \to \measp{\Rd}\) as a
map between the space of subharmonic functions and (non-negative) Radon measures on \(\Rd\).  The
latter is also surjective (see Hayman--Kennedy \cite{haymanSubharmonicFunctionsVol1976}*{Thm.~4.1}).
In view of Weyl's lemma, the kernel of the Laplacian \(\Delta\) can be identified with the space of
harmonic functions in all these cases.  Consequently, Theorem~\ref{thm:main-theorem} can be applied
for all these choices of the solution space.

Let \(H = \harm{\Rd}\) be the additive group of harmonic functions on \(\Rd\).  It acts on
\(\sharm{\Rd}\) by \((h \cdot f)(z) = f(z) + h(z)\), and \(\Delta f_{1} = \Delta f_{2}\) if and only
if \(f_{2} - f_{1}\) is harmonic.  The family of seminorms on \(H\) is given by
\[\norm{f}_{K} = \sup_{z \in K}|f(z)|\]
and, by the Walsh theorem~\cite{armitageClassicalPotentialTheory2001}*{Cor.~2.6.5}, it satisfies the
\(\mfD_{d}\)-Runge property for the class \(\mfD_{d}\) of compact subsets diffeomorphic to the closed
unit ball.  Corollary~\ref{cor:main-corollary} gives the following.

\begin{theorem}
  \label{thm:equivariant-poisson}
In each of the following cases:
\begin{itemize}[itemsep=-2pt]
\item[{\rm (i)}]\( \Delta\colon \distrib{\Rd} \to \distrib{\Rd} \),

\item[{\rm (ii)}]\(\Delta: \smooth{\Rd} \to \smooth{\Rd}\),

\item[{\rm (iii)}]\(\Delta: \sharm{\Rd} \to \measp{\Rd}\),
\end{itemize}
\noindent
there exists a Borel \(\Rd\)-equivariant
right-inverse to \(\Delta\) on the free part of the corresponding target space.
\end{theorem}

We now turn our attention to the existence of equivariant right-inverses to \(\Delta\) on the
non-free part of \(\measp{\Rd}\).  Suppose that \(\Gamma\) is a proper closed subgroup of \(\Rd\).
Note that \(\Gamma\) is isomorphic to a group of the form \(\Z^{p} \times \R^{q}\), and
consequently, the quotient group \(\Rd/\Gamma\) is isomorphic to \(\R^{d-p-q} \times \T^{p}\),
where \(\T\) denotes the circle group \(\R/\Z\).  Let \(d' = d - p -q\).

Define
\begin{displaymath}
  \begin{aligned}
    Y &= \measp[\Gamma]{\Rd} = \{\mu \in \measp{\Rd} : \stab(\mu) = \Gamma\}, \\
    Z &= \sharm[\Gamma]{\Rd} = \{f \in \sharm{\Rd} : \stab(f) = \Gamma\}.
  \end{aligned}
\end{displaymath}
Let also \(\harm[\Gamma]{\Rd}\) be the group of harmonic functions whose stabilizer
\emph{contains}~\(\Gamma\).  We have the natural action \(\harm[\Gamma]{\Rd} \acts Z\) given by
\((h,f) \mapsto f + h\).  Observe that the argument shift actions of \(\Rd\) on \(Y\) and \(Z\)
induce \emph{free} actions of \(G = \R^{d'} \times \T^{p}\) on these spaces.

\begin{theorem}
  \label{thm:equivariant-poisson-not-free}
  Let \(\Gamma \le \Rd\) be a closed subgroup such that \(\Rd/\Gamma\) is isomorphic to
  \(\R^{d'} \times \T^{p}\).  If \(d' \ge 2\) or, equivalently, \(\dim \Gamma \le d-2\), then there
  exists a Borel equivariant map \(\psi: \measp[\Gamma]{\Rd} \to \sharm[\Gamma]{\Rd}\) such that
  \(\Delta\circ\psi={\rm id}_{\measp[\Gamma]{\Rd}}\).
\end{theorem}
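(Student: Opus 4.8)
The plan is to apply Corollary~\ref{cor:main-corollary} with acting group \(G = \R^{d'}\times\T^{p}\cong\Rd/\Gamma\), which, as observed just above the statement, acts freely on \(Y = \measp[\Gamma]{\Rd}\) and on \(Z = \sharm[\Gamma]{\Rd}\) through the descent of the argument-shift action of \(\Rd\). Both \(Y\) and \(Z\) are standard Borel, being the sets cut out by the Borel condition \(\stab(\cdot) = \Gamma\) inside the standard Borel spaces \(\measp{\Rd}\) and \(\sharm{\Rd}\); the Laplacian restricts to a Borel \(G\)-equivariant surjection \(\pi = \Delta : Z \to Y\), surjectivity being the solvability of the Poisson equation for \(\Gamma\)-periodic data (e.g.\ by Fourier analysis on \(\Rd/\Gamma\) combined with the classical solvability on \(\R^{d'}\)), and one checks that a \(\Gamma\)-periodic solution of \(\Delta f=\mu\) with \(\stab(\mu)=\Gamma\) automatically has \(\stab(f)=\Gamma\). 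The fibers of \(\pi\) are precisely the orbits of the additive Polish group \(H = \harm[\Gamma]{\Rd}\) of \(\Gamma\)-periodic harmonic functions, acting on \(Z\) by \((h,f)\mapsto f+h\); indeed \(E_{H}^{Z}=\{(f_{1},f_{2}):\Delta f_{1}=\Delta f_{2}\}\). The group \(G\) acts on \(H\) by the descended argument shift \(\tau\), a continuous action by automorphisms, and the resulting semidirect product \(H\rtimes_{\tau}G\) acts on \(Z\) compatibly, exactly as in Section~\ref{sec:poisson-equation}.

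Next I would equip \(H\) with the \(\tau\)-family of seminorms \(\unifn = (\norm{\cdot}_{K})_{K\in\vietoris{G}}\), \(\norm{f}_{K} = \sup_{g\in K}|f(g)|\), where \(f\in H\) is viewed as a function on \(G = \Rd/\Gamma\). As in Section~\ref{sec:poisson-equation} this is a right-invariant \(\tau\)-family generating the Polish topology of \(H\) (uniform convergence on compact subsets of \(G\)), and since \(E_{H}^{Z}\) is Borel, Lemma~\ref{lem:invariant-uniformizes} turns \(\unifn\) into a Borel \(a_{Z}\)-family of pseudometrics on \(Z\). For the class of compact sets I would take \(\mfR = \mfD_{d'}\times\T^{p} = \{L\times\T^{p} : L\in\mfD_{d'}\}\), a cofinal \(G\)-invariant subfamily of \(\vietoris{G}\). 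Since every free Borel \(\R^{d'}\)-action admits a Borel \(\mfD_{d'}\)-toast (Section~\ref{sec:toast}), Lemma~\ref{lem:toasts-for-direct-products} (with \(T=\T^{p}\)) furnishes a Borel \(\mfR\)-toast for the free action \(G\acts Y\).

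The heart of the matter is the \(\mfR\)-Runge property for \(\unifn\). Given pairwise disjoint \(K_{1},\dots,K_{m}\in\mfR\), say \(K_{i} = L_{i}\times\T^{p}\) with \(L_{1},\dots,L_{m}\subseteq\R^{d'}\) pairwise disjoint and each diffeomorphic to the closed ball, given \(h_{1},\dots,h_{m}\in H\) and \(\epsilon>0\), one must produce \(h\in H\) with \(\norm{hh_{i}^{-1}}_{K_{i}} = \sup_{K_{i}}|h - h_{i}| < \epsilon\) for every \(i\) (recall that \(H\) is written additively). Read on \(G = \R^{d'}\times\T^{p}\), this is the simultaneous approximation of the \(\Gamma\)-periodic harmonic functions \(h_{i}\) on neighborhoods of the \(K_{i}\) by a single global \(\Gamma\)-periodic harmonic function. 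This is precisely where the hypothesis \(d'\ge 2\) is used: for \(d'\ge 2\) the set \(\R^{d'}\setminus\bigcup_{i}L_{i}\) is connected, hence so is \(G\setminus\bigcup_{i}K_{i} = \bigl(\R^{d'}\setminus\bigcup_{i}L_{i}\bigr)\times\T^{p}\), and this connectedness is exactly the hypothesis under which the Runge-type approximation theorem for periodic harmonic functions established in Appendix~\ref{sec:rung-theor-peri} (a special case of the Lax--Malgrange theorem) supplies the desired \(h\). For \(d'=1\) the complement is disconnected and the approximation genuinely fails, which is the source of the complementary negative result Theorem~\ref{thm:subharmonic-large-stabilizers}.

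Having verified that \(G\acts Y\) admits a Borel \(\mfR\)-toast and that \(\unifn\) satisfies the \(\mfR\)-Runge property, Corollary~\ref{cor:main-corollary} (with \(\free(G\acts Y)=Y\), since \(G\acts Y\) is free) produces a Borel \(G\)-equivariant map \(\psi : \measp[\Gamma]{\Rd}\to\sharm[\Gamma]{\Rd}\) with \(\Delta\circ\psi = {\rm id}_{\measp[\Gamma]{\Rd}}\); and since \(G\)-equivariance and \(\Rd\)-equivariance coincide for these spaces of \(\Gamma\)-periodic objects, \(\psi\) is \(\Rd\)-equivariant, as required. I expect the only real obstacle to be the periodic Runge theorem of Appendix~\ref{sec:rung-theor-peri}---proving it and matching its hypotheses to the connectedness afforded by \(d'\ge 2\); the rest (standardness of \(\measp[\Gamma]{\Rd}\) and \(\sharm[\Gamma]{\Rd}\), Borelness and surjectivity of \(\Delta\) between them, and the \(\tau\)-family axioms for \(\unifn\)) is routine and parallels Section~\ref{sec:poisson-equation}.
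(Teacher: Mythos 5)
Your proposal is correct and follows essentially the same route as the paper: both apply Corollary~\ref{cor:main-corollary} with \(G = \R^{d'}\times\T^p\), \(H = \harm[\Gamma]{\Rd}\cong\harm{G}\), the sup-norm \(\tau\)-family, and \(\mfR = \mfD_{d'}\times\T^p\), obtaining the toast from Lemma~\ref{lem:toasts-for-direct-products} and the \(\mfR\)-Runge property from the periodic Runge theorem of Appendix~\ref{sec:rung-theor-peri} via the connectedness of \(\bigl(\R^{d'}\setminus\bigcup_i L_i\bigr)\times\T^p\) when \(d'\ge 2\). Your additional remarks (automatic equality \(\stab(f)=\Gamma\) for a \(\Gamma\)-periodic solution of \(\Delta f = \mu\) with \(\stab(\mu)=\Gamma\), and the passage from pairwise disjoint \(K_i = L_i\times\T^p\) to pairwise disjoint \(L_i\)) are correct and simply make explicit what the paper leaves tacit.
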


\begin{proof}
  By the dimension of \(\Gamma\) we mean the dimension of the vector space spanned by \(\Gamma\).
  The previous discussion can be adapted to show the existence of such a map \(\psi\) for
  \(d' \ge 2\).  Here, we have \(G = \R^{d'} \times \T^{p}\) and \(H = \harm[\Gamma]{\Rd}\), which
  is naturally isomorphic to \(\harm{G}\).  The class \(\mfR\) comprises the family
  \(\mfD_{d'} \times \T^p\) of compact sets of the form \(K \times \T^p\) for \(K \in \mfD_{d'}\),
  where \(\mfD_{d'}\) is the class of compact subsets of~\(\R^{d'}\) diffeomorphic to the unit ball.
  Note that if \(K\in \mfC_{d'}\)---the class of compact subsets of \(\Rd\) with connected
  complements---then \(K\times \T^{p}\) has connected complement in \(\R^{d'}\times \T^{p}\). We
  need a Runge-type theorem for the harmonic functions in \(H\) for sets in the class
  \(\mfD_{d'} \times \T^{p}\). This result can be derived from the Lax--Malgrange approximation
  theorem, see~\cite{narasimhanAnalysisRealComplex1985}*{Sec.~3.10}.  A simple direct proof is
  presented in Appendix~\ref{sec:rung-theor-peri}.

  Finally, in order to utilize Corollary~\ref{cor:main-corollary}, we need to argue that free
  \(\R^{d'} \times \T^{p}\)-actions admit Borel \(\mfD_{d'} \times \T^{p}\)-toasts.  This was
  established in Lemma~\ref{lem:toasts-for-direct-products}.
\end{proof}

\begin{remark}
  \label{rem:converse-Laplacian}
  Conversely, if \(d' = 0\) or \(d' = 1\) in the statement of
  Theorem~\ref{thm:equivariant-poisson-not-free}, there exist Borel equivariant maps \(\phi\) for
  which the corresponding equivariant Borel liftings \(\psi\) fail to exist. This conclusion will be
  substantiated in Section~\ref{sec:subh-funct-riesz}.
\end{remark}

\begin{remark}
  \label{rem:combine-periodic}
  Theorem~\ref{thm:equivariant-poisson-not-free} guarantees the existence of a Borel equivariant map
  \(\psi_{\Gamma} : \measp[\Gamma]{\Rd} \to \sharm[\Gamma]{\Rd}\) for each subgroup \(\Gamma\) of
  dimension \(\dim \Gamma \le d-2\).  While we do not encounter any apparent obstructions to
  combining these individual maps \(\psi_{\Gamma}\) into a single Borel map \(\psi\) defined on the
  set \(\{\mu : \dim (\stab(\mu)) \le d-2\}\), we do not pursue this direction in the current work.
\end{remark}

\subsection{Dirichlet problem for harmonic functions in half-spaces}
\label{sec:Dirichlet}

Let \(d\ge 1\) and \(\R^{d+1} = \{(x, y)\colon x\in\Rd, y\in\R\}\). We let
\(\R^{d+1}_\pm\) be the half-spaces \(\Rd \times \{\pm y>0\}\).
Given \(f\in C(\Rd)\), consider the Dirichlet problem
\[
\begin{cases}
\Delta h = 0 & {\rm in\ } \R^{d+1}_+, \\
h = f & {\rm on\ } \Rd\times\{0\}.
\end{cases}
\]
The boundary values of \(h\) are understood in the classical sense, that is, \(h\) is assumed to be continuous
in the closed half-space \(\overline{\R}^{d+1}_+ = \{(x, y)\colon y\ge 0\}\), and its boundary values are equal to \(f\).

We denote by \( \harm{\R^{d+1}_+}\) the space of harmonic functions in \( \R^{d+1}_+\) continuous up
to the boundary, and by \( R\colon \mathcal H(\R^{d+1}_+) \to C(\Rd)\) the boundary trace map,
\( (Rh)(x)=h(x, 0)\).  It is well-known that this map is surjective.  Nevanlinna proved this for
\(d=1\) by modifying the Poisson kernel \cite{Nevanlinna1925}; for the general case, see
\cite{FinkelsteinScheinberg}*{Remark~5} and \cite{Gardiner81}\footnote{Streamlined proofs based on
  different ideas can be found in \cite{Arakelyan}, and (only for \(d=1\)) in
  \cite{BHM}*{Proposition~5.3.4}.}.  Next, we note that the linear space
\( \mathcal H(\R^{d+1}_+) \) endowed with the locally uniform topology is Fr\'echet. This follows,
for instance, from the fact that \( \mathcal H(\R^{d+1}_+) \) is a closed subspace of the Fr\'echet
space \( C(\overline{\R}_{+}^{d+1}) \).  We also observe that \( \Rd \) acts continuously on
\( \mathcal H(\R^{d+1}_+) \) and \( C(\Rd) \) by translations in \( x \), and that the map \( R \)
respects this action.

The role of the group \( H \) is played by the additive subgroup
\( \mathcal H_0(\R^{d+1}_+) \triangleleft \mathcal H(\R^{d+1}_+) \) of harmonic functions in
\( \R^{d+1}_+ \) vanishing continuously on the hyperplane \( y=0 \). By the reflection
principle, these functions can be harmonically continued to the whole \( \R^{d+1} \) by
\( h(x, y) = - h(x, -y) \), \( x\in\Rd \), \( y<0\). We call harmonic functions satisfying this
condition \emph{odd-symmetric}.

The group \( \mathcal H_0(\R^{d+1}_+)\) is endowed with the seminorms
\(\unifn = (\norm{\cdot}_{K})_{K \in \vietoris{\Rd}}\)
\begin{displaymath}
  \begin{aligned}
    \norm{f}_{K} &= \sup\{|f(x,y)| : (x,y) \in \tilde{K}\},  \\
    \tilde{K}    &= K \times [-\diam(K), \diam(K)] \in \vietoris{\R^{d+1}},
  \end{aligned}
\end{displaymath}
where \(\tilde{K}\) is a bounded vertical cylinder with base \(K\) and height \(2 \diam(K)\).  The
choice of \(\diam(K)\) ensures that these cylinders form a cofinal family in
\(\vietoris{\R^{d+1}}\), and therefore induce the topology of locally uniform convergence.  This is
a \(\tau\)-family of seminorms in the sense of Definition~\ref{def:U-family-seminorms}, where
\(\tau\) denotes the translation action of \(\Rd\) on \(H\).

Corollary~\ref{cor:main-corollary} is applied to the class \(\mfR = \vietoris{\Rd}\) of all compact
subsets of \(\Rd\).  To check the \(\mathfrak R\)-Runge property, note that if
\(K_{1}, \ldots, K_{m} \in \vietoris{\Rd}\) are pairwise disjoint, then
\(\R^{d+1} \setminus \bigcup_{i} \tilde{K}_{i}\) is connected.  Pick \(h_{i} \in \harm{\R^{d+1}}\),
\(1 \le i \le m\), to be odd-symmetric; the function \(h : \bigcup_{i} \tilde{K}_{i} \to \R\) given
by \(h|_{\tilde{K}_{i}} = h_{i}|_{\tilde{K}_{i}}\) is harmonic on \(\bigcup_{i} \tilde{K}_{i}\).  By
the Walsh Theorem \cite{GardinerHarmonicApprox}*{Theorem~1.7}, for any \(\epsilon > 0\) there is a
harmonic function \(h \in \harm{\R^{d+1}}\) such that
\begin{equation}
  \label{eq:h}
  \norm{h - h_{i}}_{\tilde{K}_{i}} < \epsilon \quad \text{for all } 1 \le i \le m.
\end{equation}
The function \(h\) may not be odd-symmetric and therefore may not correspond to an element of
\( \mathcal{H}_0(\R^{d+1}_+)\). To rectify this, we define
\( h_o(x, y) = \frac{1}{2}\, \bigl( h(x, y) - h(x, -y) \bigr) \) and observe that \( h_o \) is
odd-symmetric. Due to the odd-symmetry of \( h_j \), we have:
\begin{displaymath}
  h_o(x, y) - h_j(x, y) = \frac{1}{2}\, \bigl( (h(x, y) - h_j(x, y)) - (h(x, -y) - h_j(x, -y)) \bigr),
\end{displaymath}
which ensures that Eq.~\eqref{eq:h} also holds for \(h_o\). We conclude that the family of seminorms
\(\unifn\) satisfies the \(\vietoris{\Rd}\)-Runge property.

Applying Corollary~\ref{cor:main-corollary}, we obtain the following result.

\begin{theorem}
\label{thm:Dirichlet}
There is a Borel \(\Rd\)-equivariant map
\(\psi\colon \free(C(\Rd)) \to \harm{\R^{d+1}_+}\) that is a right-inverse to the boundary trace map
\( R \).
\end{theorem}

\subsection{Heat equation}
\label{sec:heat-equation}

Our final application pertains to the existence of equivariant solutions for the inhomogeneous heat
equation
\begin{equation}
  \label{eq:heat-equation}
  \frac{\partial u}{\partial t} - \Delta u = g.
\end{equation}
We consider the heat operator
\((\frac{\partial }{\partial t} - \Delta) : \smooth{\R^{d+1}} \to \smooth{\R^{d+1}}\), \(d \ge 2\),
on the space of smooth functions.  Being a linear partial differential operator with constant
coefficients, it is a surjective map on the space of smooth functions
(\cite{hormanderAnalysisLinearPartial2003}*{Thm.~7.3.10}).  Elements of the kernel of
\((\frac{\partial }{\partial t} - \Delta)\) form an abelian Polish group and are known as
\emph{caloric} functions.

To apply Theorem~\ref{thm:main-theorem}, we set \(Z = \smooth{\R^{d+1}}\), \(G = \R^{d+1}\), and
\(H = \ker (\frac{\partial }{\partial t} - \Delta)\).  As usual, \(G\) acts by the argument shift
and \(H \acts \smooth{\R^{d+1}}\) via \((h,\phi) \mapsto \phi + h\).  The topology induced by
\(\smooth{\R^{d+1}}\) on \(H\) coincides with the (seemingly weaker) topology of uniform convergence
on compact subsets.  Therefore, the family of seminorms on \(H\) can be expressed using the same
formula \(\norm{f}_{K} = \sup_{z \in K}|f(z)|\).

The final component needed for the application of Theorem~\ref{thm:main-theorem} involves selecting
a class of compact sets \(\mfR \subseteq \vietoris{\R^{d+1}}\) such that the action
\(\R^{d+1} \acts H\) satisfies \(\mfR\)-Runge property.  Choosing \(\mfR\) to be the class of
compact sets with connected complements will not suffice, since the Runge-type approximation
property does not hold for caloric functions on such sets.  Instead, we define \(\mfR\) to be the
class of compact \(K \in \vietoris{\R^{d+1}}\) that satisfy the following stronger condition: for
any hyperplane \(P \subseteq \R^{d+1}\) orthogonal to the time axis, the complement
\(P \setminus K\) is connected.  A caloric function on such a set \(K\) can be approximately
extended to a caloric function on all of \(\R^{d+1}\)~\cite{jonesApproximationTheoremRunge1975} (see
also~\cite{diazRungeTheoremSolutions1980} for a complete classification of Runge pairs for the heat
operator).  Borel \(\mfR\)-toasts do exist for all free Borel \(\R^{d+1}\)-action.  The proof is
given in Theorem~\ref{thm:existence-of-slice-filled-toasts} of Appendix~\ref{sec:borel-toasts-heat}.

\begin{theorem}
  \label{thm:equivariant-heat}
  There is a Borel \(\R^{d+1}\)-equivariant map
  \[\psi : \free(\smooth{\R^{d+1}}) \to \smooth{\R^{d+1}}\]
  that is a right-inverse to the heat operator \((\frac{\partial}{\partial t} - \Delta) \).
\end{theorem}

A natural question, similar to the one discussed in Section~\ref{sec:Dirichlet} for the Poisson
equation, is whether the initial value problem
\begin{equation}
  \label{eq:heat-IVP}
  \begin{cases}
    \partial_t u=\Delta u & \text{on } \Rd\times\R^{>0},\\
    u=f&\text{on }\Rd\times\{0\}
  \end{cases}
\end{equation}
admits a Borel equivariant solution. That is, does there exist a Borel map
\[
  \psi: \free(C^\infty(\Rd\times\{0\}))\to C^\infty(\Rd\times\R^{\ge 0})
\]
which is equivariant with respect to \(\Rd\times\{0\}\)-shifts, such that \(u=\psi(f)\) solves the
problem \eqref{eq:heat-IVP}?  There is no obvious Runge property to make use of, and we do not know
what the answer is.

\section{Lack of continuous equivariant inverses}
\label{sec:continuous-inverses}
In Section~\ref{sec:appl-main-theor}, we showed the existence of Borel equivariant right-inverses to
several maps, including \(\dm : \free(\divisp) \to \holom[\ne 0]\), and
\(\bar{\partial} : \free(C^{\infty}(\C,\C)) \to C^{\infty}(\C,\C)\). Both the domains and the
co-domains of these functions have natural Polish topologies.  A considerable strengthening would be
therefore to find \emph{continuous} equivariant inverses.  The purpose of this section is to show
that this is not possible.

\subsection{Divisors of entire functions}
\label{sec:no-cont-equiv-dvs}

Let \(\free(\divisp)\) denote the space of non-periodic positive divisors.  We claim that there are
no continuous equivariant functions \(\xi : \free(\divisp) \to \holom[\ne 0]\) such that
\(\dm(\xi(d)) = d\) for all \(d \in \free(\divisp)\).  Note that a periodic entire function has a
periodic divisor, so any such \(\xi\) would necessarily take values in
\(\free(\holom) = \free(\holom[\ne 0])\).

The easiest obstruction is, perhaps, the fact that \(\free(\divisp)\) has plenty of non-trivial
\(\C\)-invariant compact subsets, whereas \(\free(\holom)\) has no such subspaces.

\begin{lemma}
  \label{lem:compact-invariant-divisors}
  There are (non-empty) compact invariant subsets of \(\free(\divisp)\).
\end{lemma}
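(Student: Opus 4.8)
The goal is to exhibit a non-empty compact subset of $\free(\divisp)$ that is invariant under the $\C$-translation action. The plan is to build such a set as the closure of the orbit of a single well-chosen divisor, or more robustly, as a space of divisors all of whose point configurations are forced to be "spread out" in a uniform quantitative way, so that the set is both closed and pre-compact in the vague topology, and at the same time contains no periodic divisor.

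\medskip

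First I would recall the characterization of (relative) compactness in $\divisp \subseteq \measp{\C}$: a family $\mathcal{A}$ of positive divisors is pre-compact in the vague topology if and only if $\sup_{d \in \mathcal{A}} d(K) < \infty$ for every compact $K \subseteq \C$, i.e. the number of points (with multiplicity) in any fixed bounded region is uniformly bounded. Since $\divisp$ is closed in $\measp{\C}$, a vaguely closed family with this uniform local boundedness is compact. So it suffices to produce a vaguely closed, translation-invariant, uniformly locally bounded family that avoids all periodic divisors. A clean way to get uniform local boundedness together with translation invariance is to fix a separation scale: let $\mathcal{A}$ be the set of all $d \in \divisp$ which are \emph{$1$-separated simple} divisors, meaning $d$ takes values in $\{0,1\}$ and any two distinct points of $\supp d$ are at distance $\geq 1$. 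This $\mathcal{A}$ is manifestly $\C$-invariant; it is vaguely closed because both the condition "values in $\{0,1\}$" (a closed condition cutting out $\divisp$ from $\meas{\C}$ via the $\mu(U_n) \in \mathbb{Z}$ description, intersected with simplicity) and the separation condition survive vague limits — if $d_n \to d$ vaguely and each $d_n$ has points $\geq 1$ apart, then in any bounded region $d$ has at most boundedly many points and no two can collide, so $d$ is again $1$-separated and simple; and it is uniformly locally bounded since a $1$-separated set meets a disk of radius $R$ in at most $O(R^2)$ points.

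\medskip

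Next I would guarantee non-emptiness and, crucially, the absence of periodic divisors from the invariant compact subset actually used. The family $\mathcal{A}$ above does contain periodic divisors (e.g. $\Z^2$ itself, suitably scaled), so I would pass to a suitable non-empty closed invariant subset of $\mathcal{A}$ that omits them — for instance the orbit closure $\overline{\C \cdot d_0}$ of a single \emph{non-periodic} $1$-separated simple divisor $d_0$. Such a $d_0$ exists: take the lattice $\Z^2$ and delete a single point, or better, take an aperiodic Delone-type configuration (e.g. the vertices of a Penrose tiling, or simply a generic perturbation of $\Z^2$ with displacements bounded by $\tfrac{1}{4}$ chosen to break all periods); this $d_0$ is $1$-separated and simple, hence lies in $\mathcal{A}$, and its orbit closure is a non-empty, $\C$-invariant, vaguely closed subset of the compact set $\mathcal{A}$, hence compact. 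It remains to check that $\overline{\C\cdot d_0} \subseteq \free(\divisp)$, i.e. that every divisor in the orbit closure is non-periodic. This is the one point requiring a small argument: I would pick $d_0$ so that it has a marked point $0 \in \supp d_0$ which is an \emph{isolated defect} — concretely, arrange $d_0$ to agree with $\Z^2$ outside the disk of radius $10$, and inside to be an aperiodic pattern with a distinguished local configuration that occurs exactly once. Then any vague limit $d$ of translates $d_0 - w_n$ either (after passing to a subsequence) translates the defect off to infinity, yielding the periodic divisor $\Z^2$ — which we must exclude, so this choice is bad — or keeps a defect in view.

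\medskip

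So the genuinely delicate step, and the main obstacle, is ensuring the orbit closure avoids \emph{all} periodic divisors, not just that $d_0$ itself is non-periodic. The robust fix is to choose $d_0$ to be a $1$-separated simple configuration that is \emph{uniformly aperiodic} in the sense that there is $\epsilon_0 > 0$ with the following property: for every $v \in \C \setminus \{0\}$ there is a bounded region $B_v$ such that the translate $d_0 - v$ differs from $d_0$ on $B_v$ by at least one point, \emph{and this failure of $v$-periodicity is stable under small vague perturbations and under translation} — this is exactly the property of having \emph{uniform pattern frequencies / linear repetitivity with nontrivial finite local complexity and no translational symmetry}, which is classical for, say, the Penrose vertex set or any primitive aperiodic substitution Delone set. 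For such a $d_0$, every element of $\overline{\C\cdot d_0}$ has the same collection of local patterns (local patterns pass to orbit-closure limits), so if some $d \in \overline{\C\cdot d_0}$ had a period $v$, a compactness/pigeonhole argument on patterns would force $d_0$ to be $v$-periodic too, a contradiction. I would therefore cite (or quickly prove) that aperiodic Delone sets of finite local complexity have orbit closures consisting entirely of aperiodic configurations, conclude that $\overline{\C \cdot d_0}$ is the desired non-empty compact $\C$-invariant subset of $\free(\divisp)$, and finish. A lighter-weight alternative, if one wants to avoid tiling theory, is: take $d_0 = \Z^2 \cup (\Z^2 + \tfrac13 s_n)$-type constructions or directly build $d_0$ by placing at each lattice point $m \in \Z^2$ a point displaced by $\theta_m \cdot e$ where $(\theta_m)$ is an aperiodic $\{0,\tfrac14\}$-valued sequence indexed by $\Z^2$ with the property that no translate of the pattern $(\theta_m)$ equals $(\theta_m)$ even "at infinity" — e.g. $\theta_m$ encoding a Sturmian or Toeplitz array with trivial translational symmetry group in every element of its $\Z^2$-subshift orbit closure — and transfer that minimality/aperiodicity statement from the symbolic subshift to the divisor orbit closure via the obvious continuous factor map.
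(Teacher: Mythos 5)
Your plan is correct and reaches the same destination as the paper --- a compact invariant set realized as the orbit closure of a single aperiodic configuration --- but via genuinely different machinery. The paper's construction is a point set that is uniformly almost periodic in the Bohr/transportation-distance sense: for every $\epsilon>0$ the set of $\epsilon$-periods is relatively dense in $\C$. (Explicitly: shift each $m+\im n\in\Z^2$ by $\tfrac12-2^{-\alpha(m,n)-1}$ where $\alpha(m,n)$ is the $2$-adic valuation of $\gcd(m,n)$; vectors in $2^k\Z^2$ are then $2^{-k}$-periods.) Compactness of the orbit closure is then immediate, even in the stronger transportation topology, and absence of periods among limit points follows from a two-line estimate: a genuine period of a limit point is an $\epsilon$-period of the original for every $\epsilon>0$, hence a genuine period of the original, contradicting aperiodicity. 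You instead reach for repetitive aperiodic Delone sets (Penrose vertices, primitive substitution point sets). This works, but the concrete deduction you offer --- ``same local patterns plus a pigeonhole argument forces $v$-periodicity of $d_0$'' --- is not right as phrased: pattern equality does not by itself transport periods. What you actually want, and what the linear repetitivity you cite does deliver, is \emph{minimality} of the hull: if some $d$ in the hull had period $v$, minimality would make $d_0$ a vague limit of translates $d-w_n$, each $v$-periodic, and since ``having period $v$'' is a vaguely closed condition, $d_0$ would inherit the period --- contradiction. You should replace the pigeonhole gesture by this minimality argument. Overall, the paper's route is more elementary and self-contained, while yours leans on aperiodic-order theory; both suffice once the final step is tightened.
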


\begin{proof}
  The simplest way to construct such compact invariant subsets is, arguably, by taking a
  non-periodic almost periodic subset of \(\C\) with respect to the uniform
  distance~\cite{kolbasinaPropertyDiscreteSets2008} (see also
  \cite{favorovAlmostPeriodicDiscrete2010a}).  Recall that given two discrete sets \(\{a_{n}\}_{n}\)
  and \(\{b_{n}\}_{n}\) in \(\C\), the (extended) uniform transportation distance
  \(\dist(\{a_{n}\}_{n}, \{b_{n}\}_{n})\) between them is given by
  \(\inf_{\sigma} \sup_{n} |a_{n} - b_{\sigma(n)}|\), where the infimum is taken over all bijections
  \(\sigma : \N \to \N\).  An element \(z \in \C\) is an \(\epsilon\)-period of \(\{a_{n}\}_{n}\) if
  \(\dist(\{a_{n}\}_{n}, \{a_{n} + z\}_{n}) < \epsilon\).  Finally, \(\{a_{n}\}_{n}\) is
  \emph{uniformly almost periodic} if for any \(\epsilon > 0\) the set of \(\epsilon\)-periods is
  relatively dense in \(\C\). That is, the distance from any point in \(\C\) to the set of
  \(\epsilon\)-periods is uniformly bounded.

  A discrete set \(\{a_{n}\}_{n}\) can be viewed as a \(\{0,1\}\)-valued divisor.  If it is almost
  periodic in the above sense, the closure of its orbit, call it \(K\), is compact even in the much
  stronger topology of uniform convergence on all of \(\C\), and is therefore also compact in the
  topology of uniform convergence on bounded sets.  Furthermore, if \(\{a_{n}\}_{n}\) is not
  periodic then \(K \subseteq \free(\divisp)\), for if \(z\) is a period of some element of~\(K\),
  then for any \(\epsilon > 0\), \(z\) is an \(\epsilon\)-period of \(\{a_{n}\}_{n}\), hence
  \(\{a_{n}\}_{n}\) itself is periodic.

  Here is an explicit example of an almost periodic non-periodic subset \(\C\).  For integers
  \(m,n \in \Z\), let \(\alpha(m,n)\) denote the power of \(2\) in the prime decomposition of
  \(\gcd(m,n)\).  In other words, \(\alpha = \alpha(m,n) \in \N\) satisfies \(m = 2^{\alpha}m_{0}\),
  \(n = 2^{\alpha}n_{0}\), where either \(n_{0}\) or \(m_{0}\) is odd.  For an element
  \(m + \im n \in \Z + \im \Z\), let
  \[g(m+\im n) = m+ \im n + 2^{-1}\sum_{i=1}^{\alpha(m,n)}2^{-i} = m+ \im n + 1/2 -
    2^{-\alpha(m,n)-1}\]
  and set \(Z = \{ g(m+\im n) : m,n \in \Z\}\).  Vectors of the form \((2^{k}m, 2^{k}n)\),
  \(m,n \in \Z\) form \(2^{-k}\)-periods of \(Z\), hence it is almost periodic.
\end{proof}

\begin{lemma}
  \label{lem:compact-invariant-entire-const}
  If \(K \subseteq \holom\) is invariant and compact then \(K \subseteq \{\textrm{constants}\}\).
\end{lemma}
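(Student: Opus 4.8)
The plan is to deduce boundedness of each member of $K$ from compactness, and then invoke Liouville's theorem. First I would observe that for any fixed $w \in \C$, the point-evaluation map $\holom \ni f \mapsto f(w) \in \C$ is continuous: the topology on $\holom$ is that of uniform convergence on compact subsets of $\C$, which in particular refines the topology of pointwise convergence. Hence the image of the compact set $K$ under evaluation at the origin, namely $\{f(0) : f \in K\}$, is a compact and therefore bounded subset of $\C$; fix a constant $M$ with $|f(0)| \le M$ for all $f \in K$. (Equivalently, one could cite Montel's theorem: a subset of $\holom$ is relatively compact precisely when it is locally uniformly bounded, which gives the same single-point bound with $w = 0$.)

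Next I would bring in the invariance hypothesis. Recall that the argument-shift action is $(z\cdot f)(w) = f(w+z)$, so that $(z\cdot f)(0) = f(z)$. If $f \in K$, then by invariance the translate $z\cdot f$ lies in $K$ for every $z \in \C$, and therefore $|f(z)| = |(z\cdot f)(0)| \le M$. Since $z$ was arbitrary, $f$ is a bounded entire function, so Liouville's theorem forces $f$ to be constant. As $f \in K$ was arbitrary, $K \subseteq \{\textrm{constants}\}$, which is the assertion.

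There is essentially no real obstacle in this argument; the single point requiring care is the bookkeeping of the action convention, namely that evaluating a translate $z\cdot f$ at the origin returns the value $f(z)$ of the original function, so that a bound on $\{f(0):f\in K\}$ propagates to a global bound on each orbit. I expect the write-up to be only a few lines.
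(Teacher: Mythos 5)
Your proof is correct and follows essentially the same route as the paper's: continuity of evaluation at the origin, compactness to get a uniform bound on $\{f(0) : f\in K\}$, invariance under argument shifts to upgrade this to a global bound $\sup_{f\in K}\sup_{z\in\C}|f(z)|<\infty$, and Liouville's theorem to conclude.
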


\begin{proof}[Proof]
  Let \( K \subseteq \holom\) be compact and invariant. The evaluation map
  \[
    \holom \ni f \mapsto f(0) \in \C
  \]
  is a continuous function, and therefore, by compactness, \(\sup_{f \in K}|f(0)| \le A\) for some
  \(A\).  Since \(K\) is invariant under the shift of the argument, this is equivalent to
  \(\sup_{f \in K} \sup_{z \in \C} |f(z)| \le A\).  We conclude that all \(f \in K\) are bounded
  entire functions, and therefore must be constant by the Liouville theorem.
\end{proof}

\begin{corollary}
  \label{cor:no-compact-invariant-entire}
  \(\holom \setminus \{\textrm{constants}\}\) has no non-trivial compact invariant subsets.
\end{corollary}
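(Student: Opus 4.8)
The plan is to derive this immediately from Lemma~\ref{lem:compact-invariant-entire-const}; there is essentially nothing more to do. Suppose, toward a contradiction, that $K \subseteq \holom \setminus \{\textrm{constants}\}$ is a non-empty compact invariant subset. Since compactness of $K$ is an intrinsic property of $K$ with its topology, $K$ is also a compact subset of the ambient Fr\'echet space $\holom$, and it remains invariant under the argument-shift action of $\C$. By Lemma~\ref{lem:compact-invariant-entire-const} this forces $K \subseteq \{\textrm{constants}\}$. But by hypothesis $K \cap \{\textrm{constants}\} = \varnothing$, so $K = \varnothing$, contradicting non-emptiness. Hence no such $K$ exists.

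I expect no genuine obstacle here: the whole content is carried by Lemma~\ref{lem:compact-invariant-entire-const}, which in turn rests on Liouville's theorem together with continuity of the evaluation map $f \mapsto f(0)$ on $\holom$ and invariance of $K$ under translations (the latter promoting a pointwise bound $\sup_{f\in K}|f(0)|\le A$ to the uniform estimate $\sup_{f\in K}\sup_{z\in\C}|f(z)|\le A$). The only point worth making explicit in the write-up is that a set which is compact in the subspace topology inherited from $\holom \setminus \{\textrm{constants}\}$ is also compact as a subset of $\holom$, so that the lemma applies verbatim; this is immediate from the open-cover characterisation of compactness.
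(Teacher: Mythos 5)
Your proof is correct and is exactly the one-line deduction the paper intends: the corollary is stated without proof precisely because it follows immediately from Lemma~\ref{lem:compact-invariant-entire-const} in the way you describe.
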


\begin{theorem}
  \label{thm:no-continuous-equivariant}
  There are no continuous equivariant maps
  \(\beta : \free(\divisp) \to \holom \setminus \{\textrm{constants}\}\).  In particular, there are
  no continuous equivariant right-inverses to \(\dm\) defined on all of \(\free(\divisp)\).
\end{theorem}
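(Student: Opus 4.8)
The plan is to play off Lemma~\ref{lem:compact-invariant-divisors} against Corollary~\ref{cor:no-compact-invariant-entire}: the domain $\free(\divisp)$ carries a non-trivial compact $\C$-invariant set, while the target $\holom \setminus \{\textrm{constants}\}$ carries none, and a continuous equivariant map would transport the former into the latter.

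Concretely, I would argue by contradiction. Suppose $\beta : \free(\divisp) \to \holom \setminus \{\textrm{constants}\}$ is continuous and $\C$-equivariant. By Lemma~\ref{lem:compact-invariant-divisors}, fix a non-empty compact $\C$-invariant set $K \subseteq \free(\divisp)$. Then $\beta(K)$ is non-empty; it is compact as the continuous image of a compact set; and it is $\C$-invariant, since equivariance gives $z \cdot \beta(K) = \beta(z \cdot K) = \beta(K)$ for every $z \in \C$. As $\beta(K) \subseteq \holom \setminus \{\textrm{constants}\}$ by hypothesis, this contradicts Corollary~\ref{cor:no-compact-invariant-entire}. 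This proves the first assertion.

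For the final sentence, suppose $\xi : \free(\divisp) \to \holom[\ne 0]$ is a continuous equivariant map with $\dm(\xi(d)) = d$ for all $d$. I claim $\xi$ in fact takes values in $\holom \setminus \{\textrm{constants}\}$: the function $\xi(d)$ is never identically zero, since $\xi$ maps into $\holom[\ne 0]$; and if $\xi(d)$ were a non-zero constant, its divisor $d = \dm(\xi(d))$ would be the empty divisor, whose stabilizer is all of $\C$, contradicting $d \in \free(\divisp)$. Hence $\xi$ is a continuous equivariant map $\free(\divisp) \to \holom \setminus \{\textrm{constants}\}$, which the first part shows cannot exist.

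There is no real obstacle here: the only step warranting a moment's care is the observation that a right-inverse of $\dm$ automatically omits the constants, which rests on the fact that the empty divisor is fully periodic and therefore excluded from the free part. The rest is the routine transport of compactness and invariance across the continuous equivariant map.
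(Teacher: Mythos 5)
Your proof is correct and follows the same route as the paper: transport the compact invariant set from Lemma~\ref{lem:compact-invariant-divisors} via continuity and equivariance of $\beta$, then invoke Corollary~\ref{cor:no-compact-invariant-entire}. Your verification of the ``in particular'' clause (a right-inverse of $\dm$ on $\free(\divisp)$ cannot output constants, since the empty divisor is fixed by all of $\C$) is a mild rephrasing of the paper's remark that periodic entire functions have periodic divisors, so any such $\xi$ lands in $\free(\holom) \subseteq \holom\setminus\{\textrm{constants}\}$.
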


\begin{proof}
  Suppose such a function \(\beta\) did exist. By Lemma~\ref{lem:compact-invariant-divisors}, there
  is a non-empty compact invariant \(K \subseteq \free(\divisp)\).  Its image, \(\beta(K)\), would
  be a non-empty, compact (by the continuity of \(\beta\)), and invariant (by the equivariance of
  \(\beta\)) subset of \(\holom \setminus \{\textrm{constants}\}\), contradicting
  Corollary~\ref{cor:no-compact-invariant-entire}.
\end{proof}

\subsection[\texorpdfstring{\(\bar\partial\)}{d-bar}-problem]
{\texorpdfstring{\(\bar{\partial}\)-problem}{d-bar problem}}
\label{sec:d-bar-problem-continuous}
There are no continuous equivariant right-inverses to the operator
\[
\bar{\partial} : \free(C^{\infty}(\C, \C)) \to C^{\infty}(\C, \C)
\]
either, but this argument requires a little more work.
The reason for the difference is that there are plenty of compact
invariant sets in \(\free(C^\infty(\C,\C))\).

Before we proceed to prove this claim, we record an elementary topological transitivity property in
the space \(C^{\infty}(\C, \C)\times C^{\infty}(\C, \C)\). For brevity, we write simply
\(C^{\infty}\) for \(C^{\infty}(\C, \C)\).

\begin{lemma}
  \label{lem:C-infty-transitive}
  There is a pair \((f_1,f_2)\in C^\infty\times C^\infty\) whose \(\C\)-orbit is dense in
  \(C^\infty\times C^\infty\).
\end{lemma}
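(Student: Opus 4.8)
The plan is to exhibit a single pair $(f_1, f_2) \in C^\infty \times C^\infty$ whose $\C$-orbit under the simultaneous argument-shift action is dense, using a Baire category argument together with the fact that $C^\infty = C^\infty(\C,\C)$ is a separable Fr\'echet space (so $C^\infty \times C^\infty$ is Polish). First I would fix a countable basis $(V_k)_k$ for the topology of $C^\infty \times C^\infty$, where each $V_k$ is a basic neighbourhood determined by a compact set $K_k \subseteq \C$, a derivative order, and an $\epsilon_k > 0$. Denseness of the orbit of $(f_1,f_2)$ is equivalent to the statement that for every $k$ there exists $z \in \C$ with $(f_1(\cdot + z), f_2(\cdot + z)) \in V_k$, i.e. $(f_1, f_2)$ lies in the set
\[
  \mathcal{G}_k = \{(g_1,g_2) : \exists z \in \C,\ (g_1(\cdot+z), g_2(\cdot+z)) \in V_k\} = \bigcup_{z \in \C} (-z)\cdot V_k.
\]
Each $\mathcal{G}_k$ is open (a union of open sets), so it suffices to show each $\mathcal{G}_k$ is dense; then $\bigcap_k \mathcal{G}_k$ is a dense $G_\delta$ by the Baire category theorem, and any point in it has dense orbit.

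To prove $\mathcal{G}_k$ is dense, I would fix an arbitrary basic neighbourhood $W$ of an arbitrary pair $(h_1, h_2)$, say $W$ controlling derivatives up to some order on a compact set $L$ to within $\delta$, and produce $(g_1, g_2) \in W \cap \mathcal{G}_k$. The idea is to take $z$ with $|z|$ large enough that the translate of $K_k$ by $-z$ is disjoint from $L$ (and both sets have connected complement, or more simply can be separated by disjoint polydisks), and then construct $g_i \in C^\infty$ that agrees (to the prescribed accuracy) with $h_i$ near $L$ and with the target function $\varphi_i(\cdot - z)$ near $K_k - z$, where $(\varphi_1, \varphi_2)$ is the centre of $V_k$. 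This is achieved with a smooth partition of unity: pick $\chi \in C^\infty(\C)$ with $\chi \equiv 1$ on a neighbourhood of $L$ and $\chi \equiv 0$ on a neighbourhood of $K_k - z$, and set $g_i = \chi h_i + (1-\chi)\varphi_i(\cdot - z)$. Since $L$ and $K_k - z$ are disjoint compacts, such $\chi$ exists; on $L$ we have $g_i = h_i$ and on $K_k - z$ we have $g_i = \varphi_i(\cdot - z)$, so $g_i \in W$ (exactly, in fact) and $(g_1(\cdot+z), g_2(\cdot+z)) = (\varphi_1, \varphi_2)$ on $K_k$, hence lies in $V_k$; thus $(g_1,g_2) \in \mathcal{G}_k$.

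The main technical point — and essentially the only obstacle — is the verification that the cutoff construction does not disturb the $W$-constraint and that the relevant translate can be made disjoint from $L$; both are routine once one notes that $L$ is compact, so for $|z|$ large the set $K_k - z$ escapes any fixed bounded region. One should double-check that this genuinely uses the product structure: the point is that the \emph{same} shift $z$ must work for both coordinates simultaneously, which it does because the partition-of-unity argument is applied componentwise with a common $\chi$. I expect no serious difficulty; this is the standard ``topological transitivity from a universal cutoff argument'' in the style of Birkhoff's theorem on $\holom$, adapted to the smooth category and to pairs. (For context, this transitivity is exactly what is needed to rule out continuous equivariant right-inverses to $\bar\partial$ in the subsequent argument, since a continuous equivariant section would have to be constant on the closure of such a dense orbit.)
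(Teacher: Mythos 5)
Your proof is correct, but it takes a different route from the paper's. The paper gives an explicit inductive construction: it fixes an exhaustion $(K_n)$ of $\C$, a sequence $(w_n)$ with $K_n$ and $w_n\cdot K_n$ disjoint, a dense sequence $(h_{1,n},h_{2,n})$ in which each pair recurs infinitely often, and then builds $(f_{1,n},f_{2,n})$ by requiring $f_{i,n+1}=f_{i,n}$ on $K_n$ and $f_{i,n+1}=(-w_n)\cdot h_{i,n}$ on $w_n\cdot K_n$; the sequence stabilizes on each $K_m$, hence converges in $C^\infty$, and the recurrence of each $h_{i,n}$ yields a dense orbit for the limit. You instead argue via Baire category: you write down the open sets $\mathcal{G}_k = \bigcup_z (-z)\cdot V_k$, show each $\mathcal{G}_k$ is dense by a single cutoff-gluing step (translating the constraint region $K_k - z$ away from the fixed compact $L$), and conclude that $\bigcap_k \mathcal{G}_k$ is a dense $G_\delta$. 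The two arguments share the same geometric mechanism --- a partition of unity glues prescribed behaviour on two widely-separated compacts, exploiting that the constraints controlling $C^\infty$-topology are all compactly supported --- but they package it differently. Your version is shorter and yields the additional (unused) information that a comeager set of pairs has dense orbits; the paper's version is more constructive and does not require invoking Baire. Both are standard ways of establishing topological transitivity, and either would serve the subsequent proof of Theorem~\ref{thm:no-dbar-inverse}, which only needs the existence of one dense orbit. Two small remarks: the parenthetical about ``connected complement'' is indeed unnecessary (you correctly retreat to separability of disjoint compacts by cutoffs), and your remark that the $W$-constraint is met ``exactly'' holds because $g_i$ coincides with $h_i$ on a full neighbourhood of $L$, so all derivatives also match --- this is worth stating since the $C^\infty$ topology controls derivatives, not just values.
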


\begin{proof}
  Choose a sequence \(K_n\subset \C\) of compact sets and a sequence of complex numbers
  \(|w_n|\ge 2\), such that
  \begin{itemize}[leftmargin=.6cm]
  \item \(K_n\cap \left(w_n\cdot K_n\right)=\varnothing\),
  \item \(K_n\cup \left(w_n\cdot K_n\right)\subseteq K_{n+1}\),
  \item \(\bigcup_{n}K_n=\C\).
  \end{itemize}
  Here and below, \(w\cdot K\) and \(w\cdot f\) denote the translated set \(\{z-w:z\in K\}\) and
  function \(w\cdot f(z)=f(z+w)\), respectively.  We also fix a dense sequence of elements
  \((h_{1,n}, h_{2,n})_n\) in \(C^\infty\times C^\infty\), such that each element of the sequence
  occurs infinitely many times.

  Next, we construct a sequence \(f_n=(f_{1,n},f_{2,n})_n\) which converges to an element
  \((f_1,f_2)\) with dense \(\C\)-orbit.  We start with an arbitrary pair \((f_{1,0},f_{2,0})\).
  Supposing that \((f_{1,n},f_{2,n})\) has been chosen, we pick \(f_{i,n+1}\in C^\infty\),
  \(i=1,2\), with the properties that
  \[
    f_{i,n+1}=
    \begin{cases}
      f_{i,n}& \text{on }K_n\\
      (-w_n)\cdot h_{i,n} & \text{on } w_n \cdot K_n.
    \end{cases}
  \]
  Then, for \(n>m\), \(f_{i,n}=f_{i,m}\) on \(K_m\), and hence the sequence converges to a smooth
  function \(f_i\). It remains to establish that the limiting pair \((f_1,f_2)\) has dense orbits.

  To this end, fix \(K_m\), a natural number \(p\in\N\), and an arbitrary element
  \(h=(h_1,h_2)\in C^\infty\times C^\infty\).  For \(g=(g_1,g_2)\in C^\infty\times C^\infty\), we
  denote by \(\lVert g\rVert_{m,p}\) the norms
  \[
    \lVert g\rVert_{m,p}=\max_{i=1,2}\max_{0\le j\le p}\max_{K_m}|g^{(j)}_i|.
  \]

  We will show that
  \begin{equation}
    \label{eq:dens-wn-f}
    \liminf_{n\to\infty}\lVert w_n\cdot f-h\rVert_{m,p}=0.
  \end{equation}
  Given \(\epsilon>0\), we let \(\mathcal{I}\subset\N\) be all indices \(n\) for which
  \(\lVert h_n-h\rVert_{m,p}<\epsilon\). By the density of the sequence \((h_n)_n\) and by the fact
  that each element occurs infinitely often in the sequence, we conclude that \(\mathcal{I}\) is an
  infinite set. For any index \(n\) we have that
  \[
    \lVert w_n\cdot f - h\rVert_{m,p}\le \lVert w_n\cdot (f-f_{n+1})\rVert_{m,p} + \lVert w_n\cdot
    f_{n+1} - h_{n}\rVert_{m,p} + \lVert h_n - h\rVert_{m,p}.
  \]
  For \(n>m\), we have that \(w_n\cdot f_{n+1}=h_n\) on \(K_n\supseteq K_m\), so the second term
  vanishes. For \(n>m\) with \(n\in \mathcal{I}\), the third term is moreover bounded above by
  \(\epsilon\), so for such indices we get
  \[
    \begin{aligned}
      \lVert w_n\cdot f - g\rVert_{m,p}&\le \lVert w_n\cdot (f-f_{n+1})\rVert_{m,p}+\epsilon \\
                                       & \le \lVert f-f_{n+1}\rVert_{m+1,p}+\epsilon.
    \end{aligned}
  \]
  Taking the limit \(n\to\infty\) along \(\mathcal{I}\), the claim \eqref{eq:dens-wn-f} follows.
\end{proof}

We proceed to the main result of this section.

\begin{theorem}
  \label{thm:no-dbar-inverse}
  There are no continuous equivariant maps
  \[\xi : \free(C^{\infty}(\C, \C)) \to C^{\infty}(\C, \C)\] such that
  \(\bar{\partial}(\xi(f)) = f\) for all \(f \in \free(C^{\infty}(\C, \C))\).
\end{theorem}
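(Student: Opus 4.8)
The plan is to argue by contradiction: suppose a continuous $\C$-equivariant right-inverse $\xi$ to $\bar{\partial}$ on $\free(\smoothCC)$ exists. First I record two preliminary facts. The map $\xi$ sends $\free(\smoothCC)$ into itself, since a period $w_{0}$ of $\xi(f)$ would, after applying $\bar{\partial}$ and using equivariance, be a period of $\bar{\partial}\xi(f)=f$. Next, any $f$ with dense $\C$-orbit in $\smoothCC$ is non-periodic (its orbit is not contained in the proper closed invariant set $\{h:w_{0}\cdot h=h\}$) and is \emph{recurrent} in the strong sense that for every $g\in\smoothCC$ there is $w_{n}\to\infty$ with $w_{n}\cdot f\to g$: the orbit closure is all of $\smoothCC$, and compact subsets of $\smoothCC$ have empty interior, so deleting a bounded chunk $K\cdot f$ of an orbit leaves a dense set. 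By Lemma~\ref{lem:C-infty-transitive} (project a transitive pair onto a coordinate) such an $f_{0}$ exists, and consequently $\free(\smoothCC)$ is a dense $G_{\delta}$ in $\smoothCC$ by Proposition~\ref{prop:free-part-g-delta}.

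The contradiction is to come from the non-solvability of $\bar{\partial}L=g$ in \emph{doubly-periodic} $L$. Fix the lattice $\Lambda=\Z\oplus\Z\im$ and a $\Lambda$-periodic $g\in\smoothCC$ with $\int_{\C/\Lambda}g\,d\lambda\neq 0$ --- one may take $g\equiv 1$; there is no $\Lambda$-periodic $L$ with $\bar{\partial}L=g$, for such $L$ would give $\int_{\C/\Lambda}g=\int_{\C/\Lambda}\bar{\partial}L=0$ (the integral of a derivative of a $\Lambda$-periodic function over a fundamental cell vanishes). The plan is to produce a sequence $f_{n}\in\free(\smoothCC)$ with $f_{n}\to g$ in $\smoothCC$ such that $(1\cdot f_{n})_{n}$ and $(\im\cdot f_{n})_{n}$ are reindexings of $(f_{n})_{n}$, and such that $(\xi(f_{n}))_{n}$ converges in $\smoothCC$, say to $L$. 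Then $\bar{\partial}L=g$ by continuity of $\bar{\partial}$; by equivariance $1\cdot\xi(f_{n})=\xi(1\cdot f_{n})$, and since $(1\cdot f_{n})_{n}$ reindexes a convergent sequence it converges to the same limit $L$, so $1\cdot L=L$, and likewise $\im\cdot L=L$. Hence $L$ is $\Lambda$-periodic, contradicting the choice of $g$. (Note that $\bar{\partial}$ being surjective, the analogous obstruction with a singly-periodic target is unavailable --- on a cylinder $\bar{\partial}$ is onto --- so the doubly-periodic lattice is essential. Note also that one cannot invoke continuity of $\xi$ at the limit $g$ itself, since $g\notin\free(\smoothCC)$.)

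The sequence $f_{n}$ would be built by adapting the construction of the transitive point underlying Lemma~\ref{lem:C-infty-transitive}: that construction prescribes the values of the transitive function on ever-larger compact sets placed at a pre-chosen sequence of centres, so one chooses these centres along a $\Lambda$-coherent family and prescribes the values there to approximate $g$, so that translation by $1$ and $\im$ merely permutes the approximating indices. Writing $f_{n}=w_{n}\cdot f_{0}$ one gets $\xi(f_{n})=w_{n}\cdot\xi(f_{0})$, and convergence of $(\xi(f_{n}))_{n}$ would be extracted from the Fréchet--Montel property of $\smoothCC$ once the translates are arranged to probe only a controlled region of the fixed function $\xi(f_{0})$. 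I expect this construction --- simultaneously securing convergence to $g$, the translation-reindexing symmetry, and convergence (not merely subsequential) of $(\xi(f_{n}))_{n}$ within the rigid combinatorics forced by the transitive point --- to be the main obstacle, and it is exactly where the concrete control over the transitive point in Lemma~\ref{lem:C-infty-transitive} must be exploited. A variant, working directly with a transitive pair $(f_{1},f_{2})$, is to use recurrence and continuity to show that the $\C$-orbit closure of $\xi(f_{1})-\xi(f_{2})$ contains $0$ together with infinitely many $\bar{\partial}$-primitives of each fixed nonzero entire function, and to extract a contradiction from the incompatibility of these constraints with the orbit closure of a single function; the doubly-periodic route appears cleaner to formalise.
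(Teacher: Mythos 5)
Your approach is genuinely different from the paper's, and the central obstruction you identify---no $\Lambda$-periodic $\bar\partial$-primitive of a function with nonzero mean on $\C/\Lambda$---is correct. But the plan has a gap you yourself flag, and I do not think it can be filled: you need the sequence $(\xi(f_n))_n$ to converge, and there is no mechanism available to force this. The sequence $(f_n)_n$ converges to $g$, which is deliberately chosen to lie \emph{outside} the free part, so continuity of $\xi$ at the limit is unavailable (you already note this). The Fr\'echet--Montel route requires boundedness of $(\xi(f_n))_n$ as a prerequisite, and this fails: writing $f_n=\lambda_n\cdot f_0$ with $\lambda_n$ enumerating $\Lambda$, one has $\xi(f_n)(0)=\xi(f_0)(\lambda_n)$, and $\xi(f_0)$ is a $\bar\partial$-primitive of something tending to $1$ at infinity, hence of the form $\bar z+R(z)+E(z)$ with $R\to 0$ and $E$ entire. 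For $\bar\lambda_n+E(\lambda_n)$ to stay bounded along the full rank-$2$ lattice, the entire $E$ would have to interpolate $-\bar z$ up to $O(1)$ error on $\Lambda$; but then any subsequential limit $L$ would satisfy $L=\bar z+\widetilde E$ with $\widetilde E$ entire, and the double periodicity $L(z+1)=L(z)$, $L(z+\im)=L(z)$ forces $\widetilde E(z+1)=\widetilde E(z)-1$ and $\widetilde E(z+\im)=\widetilde E(z)+\im$, which is incompatible (for $\widetilde E=az+b$ one gets $a=-1$ from the first relation and $a=-1$ from the second requires $a\im=\im$, contradiction, and the bounded-on-$\C$ condition forces $\widetilde E$ to be affine). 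In other words, the hypothesis \say{$L$ exists} is already self-contradictory, which means you cannot establish it from the assumed existence of $\xi$; the argument never gets off the ground.

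The paper's proof sidesteps this exact difficulty: it first pins $\xi$ down (constancy of $\alpha(f_1,f_2)=\xi(f_1+f_2)-\xi(f_1)-\xi(f_2)$ via Liouville and Birkhoff transitivity, then additivity, then identification with the Cauchy transform $\widetilde\xi$ on compactly supported functions), and then exhibits a sequence $f_n\to(z\mapsto z)$ with $\widetilde\xi(f_n)(0)\to\infty$. The crucial point is that $z\mapsto z$ \emph{does} lie in $\free(\smoothCC)$, so continuity of $\xi$ at the limit is legitimately available and is what yields the contradiction. If you wish to salvage a version of your approach, you would need to arrange the target limit of $(f_n)_n$ to be a non-periodic function, but then the periodic-obstruction argument evaporates; the paper essentially shows that the right thing to do in that regime is to first determine $\xi$ explicitly.
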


\begin{proof}
  Suppose towards a contradiction that \(\xi : \free(C^{\infty}) \to C^{\infty}\) is continuous,
  equivariant, and satisfies \(\bar{\partial}(\xi(f)) = f\) for all \(f \in \free(C^{\infty})\).  We
  note that there is some freedom in the choice of \(\xi\): we may add any Borel map
  \(c:C^\infty\to \C\) which is constant on the orbits of the action \(\C\acts C^\infty\), and
  obtain a new map with the same properties.

  Consider the map \(\alpha(f_{1}, f_{2}) = \xi(f_{1} + f_{2}) - \xi(f_{1}) - \xi(f_{2})\),
  defined for those pairs \((f_{1}, f_{2}) \in \free(C^{\infty})^{2}\) that satisfy
  \(f_{1} + f_{2} \in \free(C^{\infty})\).  Note that \(\alpha\) is continuous and equivariant with
  respect to the diagonal action \(\C \acts \free(C^{\infty}) \times \free(C^{\infty}) \).  Since
  \(\bar{\partial}\) is linear, we have \(\bar{\partial}(\alpha(f_{1}, f_{2})) = 0\), showing that
  \(\alpha(f_{1}, f_{2}) \in \holom\) for all \((f_{1}, f_{2}) \in \dom \alpha\).

  The proof will consist of three main steps:
  \begin{enumerate}[leftmargin=1cm, label={\sf \arabic*}., ref={\sf \arabic*}]
  \item Show that the map \(\alpha\) is constant, so with an appropriate choice of the additive
    constant, \(\xi\) is linear.
  \item Show that the map \(\xi\) has to be given by the Cauchy transform
    \[
      \xi(f)(\zeta) = \frac{1}{2\pi \im} \int_{\C} \frac{f(z)}{z-\zeta} dz \wedge d\bar{z}.
    \]
  \item Finally, we argue that the Cauchy transform cannot be continuously extended to the whole of
    \(\free(C^\infty)\), thus reaching a contradiction.
  \end{enumerate}

  We proceed with this plan. Suppose that \((f_{1}, f_{2}) \in \dom \alpha\) is such that the
  closure \(K\) of its orbit under the diagonal action is compact and satisfies
  \(K \subseteq \dom \alpha\).  Then \(\alpha(K)\) is also compact and shift invariant, hence
  \(\alpha(K) \subseteq \{\textrm{constants}\}\) by Liouville's theorem.  One can take for such
  \(f_{1}, f_{2}\) non-periodic uniformly almost periodic functions and check that such pairs are
  dense in \(\dom \alpha\).  We conclude that \(\alpha(f_{1}, f_{2}) \in \{\textrm{constants}\}\)
  for all \((f_{1}, f_{2}) \in \dom \alpha\).

  Lemma~\ref{lem:C-infty-transitive} implies that the action
  \(\C \acts C^{\infty} \times C^{\infty}\) is topologically transitive.  If \(\omega_{0} \in \C\)
  corresponds to the constant function such that \(\alpha(f_{1}, f_{2}) = \omega_{0}\), then
  \(\alpha(z \cdot f_{1}, z \cdot f_{2}) = z \cdot \alpha(f_{1}, f_{2}) = w_{0}\) for all \(z\).
  Hence, \(\alpha(g_{1}, g_{2}) = \omega_{0}\) for all \(g_{1}, g_{2}\) in the orbit of
  \((f_{1}, f_{2})\) and therefore also for all \((g_{1}, g_{2}) \in \dom \alpha \) by continuity.
  We conclude that \(\alpha\) is identically equal to the constant function \(\omega_{0}\).

  Changing \(\xi\) to \(\xi'(f) = \xi(f) + \omega_{0}\) we may therefore assume without loss of
  generality that \(\xi\) is additive in the sense that
  \(\xi(f_{1} + f_{2}) = \xi(f_{1}) + \xi(f_{2})\) holds whenever all the functions belong to the
  free part \(\free(C^{\infty})\).

  If \(f \in C^{\infty}\) has compact support and \(z_{n} \to \infty\), then \(z_{n} \cdot f \to 0\)
  in the topology of \(C^{\infty}\).  In particular,
  \(\overline{\C \cdot f} = (\C \cdot f) \cup \{0\}\) is compact in \(C^{\infty}\).  Unfortunately,
  \(0 \not \in \free(C^{\infty})\), so \(\xi(0)\) is not defined.  We claim, however, that for any
  compactly supported \(f \in C^{\infty}\) and any sequence \(z_{n} \to \infty\) one has
  \(\xi(z_{n} \cdot f) \to 0\), showing that \(\xi\) can be extended to a continuous function on
  \(\overline{\C \cdot f}\) by setting \(\xi(0) = 0\).  It suffices to show that any
  \(z_{n} \to \infty\) has a subsequence satisfying \(\xi(z_{n_{k}} \cdot f) \to 0\).  Take \(h\) to
  be a non-periodic uniformly almost periodic function on \(\C\).  Then
  \(h, h+ f \in \free(C^{\infty})\).  Passing to a subsequence and using the almost periodicity of
  \(h\), we may ensure that \(\lim_{n} (z_{n} \cdot h)\) exists; let us denote it by
  \(\widetilde{h}\).  Using the established additivity and continuity of \(\xi\), we get
  \[\lim_{n}(\xi(z_{n}\cdot h) + \xi(z_{n}\cdot f)) = \lim_{n}\xi(z_{n}\cdot h + z_{n} \cdot f) =
    \xi(\widetilde{h}) = \lim_{n}\xi(z_{n} \cdot h),\]
  and therefore \(\lim_{n}\xi(z_{n} \cdot f) = 0\), as claimed.

  For any \(f \in C^{\infty}\) with compact support, \(\xi(f)\) is necessarily a bounded function.
  Indeed, \(\C \cdot f\) is precompact, and we have just shown that so is
  \(\C \cdot \xi(f) = \xi(\C \cdot f)\).  The ``evaluation at \(0\)'' map \(f \mapsto f(0)\) is
  continuous on \(C^{\infty}\), hence bounded on \(\overline{\xi(\C \cdot f )}\).  Since the latter
  is shift-invariant, this is equivalent to \(\sup_{h}\sup_{z \in \C} |h(z)| < \infty\), where the
  first supremum is over \(h \in \overline{\xi(\C \cdot f )}\).  Thus, \(\xi(f)\) is a bounded
  function.

  There is a ``natural candidate'' for what the map \(\xi\) could be. Given a compactly supported
  \(f \in C^{\infty}\), let
  \[
    \widetilde{\xi}(f)(\zeta) = \frac{1}{2\pi \im} \int_{\C} \frac{f(z)}{z-\zeta} dz \wedge
    d\bar{z}.
  \]
  One has \(\widetilde{\xi}(f) \in C^{\infty}\) and \(\bar{\partial} (\widetilde{\xi}(f)) = f\)
  (see~\cite{hormanderIntroductionComplexAnalysis1990}*{1.2.1}).  Furthermore, we have that
  \(\lim_{\zeta \to \infty}\widetilde{\xi}(f)(\zeta) = 0\).  Indeed, if
  \(M = \max_{z \in \supp f}|f(z)|\), then
  \begin{equation}
    \label{eq:zero-at-infinite}
    |\widetilde{\xi}(f)(\zeta)| \le \frac{\mu(\supp f)}{2 \pi}  \cdot \frac{M}{\dist(\zeta, \supp
      f)} \to 0 \textrm{ as } \zeta \to \infty.
  \end{equation}
  In particular, \(\widetilde{\xi}(f)\) is also a bounded function.

  Since \(\bar{\partial}(\widetilde{\xi}(f)) = f\) and \(\bar{\partial} (\xi(f)) = f\) for all compactly
  supported \(f\), we conclude that \(\xi(f) - \widetilde{\xi}(f)\) is entire.  Since each of
  \(\xi(f), \widetilde{\xi}(f)\) is bounded, so is their difference
  \(c(f) = \widetilde{\xi}(f) - \xi(f)\) which thus must be constant.

  In fact, \(c(f) \) is identically zero.  Indeed, if \(z_{n} \to \infty\), then
  \[
    c(f) = z_{n}\cdot c(f) = z_{n} \cdot \widetilde{\xi}(f) - z_{n} \cdot \xi(f) = z_{n} \cdot
    \widetilde{\xi}(f) - \xi(z_{n} \cdot f).
  \]
  Since \(\xi(z_{n} \cdot f) \to 0\), we conclude that \(z_{n} \cdot \widetilde{\xi}(f) \to c(f)\)
  whenever \(z_{n} \to \infty\).  However, Eq.~\eqref{eq:zero-at-infinite} gives
  \[
  (z_{n} \cdot \widetilde{\xi}(f))(0) = \widetilde{\xi}(f)(z_{n}) \to 0,
  \]
  hence \(c(f) = 0\). In other words, \(\xi(f) = \widetilde{\xi}(f)\) for all compactly supported \(f\).

  To show that continuous equivariant right-inverse to \(\bar{\partial}\) cannot exist, it therefore
  suffices to verify that \(\widetilde{\xi}\) does not extend continuously to all of
  \(\free(C^{\infty})\).  Let \(f_{n}\) be an element of \(C^{\infty}\) such that
  \begin{enumerate}[leftmargin=1cm, label={\sf \arabic*}., ref={\sf \arabic*}]
  \item \(f_{n}(z) = z\) for all \(z \in n\D\)
  \item \(f_{n}(z) = 0\) for all \(z \in \C \setminus (n+1) \D\)
  \item \(|f_{n}(z)| \le n+1\) for all \(z \in \C\).
  \end{enumerate}

  Clearly \(f_{n}\) converges to the identity map \(z \mapsto z\).  However,
  \(\widetilde{\xi}(f_{n})\) diverges, and in fact, \(\widetilde{\xi}(f_{n})(0) \to \infty\) as
  \(n \to \infty\).  Indeed,
  \begin{displaymath}
    \begin{aligned}
      |2\pi \im\, \widetilde{\xi}(f_{n})(0)|
      &= \Bigl|\int_{n\D}\frac{z}{z} + \int_{(n+1)\D \setminus n \D}
        \frac{f_{n}}{z}\Bigr| \ge \pi n^{2} - \int_{(n+1)\D \setminus n
        \D}\frac{n+1}{|z|} \\
      &\ge \pi n^{2} -
        2(\pi(n+1)^{2} - \pi n^{2}) \\
      &= \pi n^{2} - 4\pi n - 2\pi \to \infty \textrm{ as } n \to \infty.
    \end{aligned}
  \end{displaymath}
  This completes the proof.
\end{proof}

\section{Lack of periodic equivariant inverses}
\label{sec:borel-inverses-1-periodic}

In most of the applications considered so far, we have concentrated on free \(\Rd\)-actions, with
Section~\ref{sec:poisson-equation} being a notable exception. The situation with the periodic parts
of the actions discussed in Section~\ref{sec:appl-main-theor} is more subtle.  In this section we
show that equivariant right-inverses to \(\dm\), \(\Delta\) and \(\bar{\partial}\) generally do not
exist on the periodic parts of their domain.

\subsection{Divisors and entire functions}
\label{sec:divis-merom-funct}

The stabilizer of a divisor under the argument shift action must be a closed subgroup of \(\C\).
Furthermore, since the support of any divisor is a discrete subset of \(\C\), the stabilizer of a
non-trivial divisor is a discrete subgroup of \(\C\), hence isomorphic either to \(\Z\) or to
\(\Z^{2}\).  Let \(\divisp[1]\) be the collection of those non-negative divisors whose stabilizer is
of the form \(\lambda \Z\) for some non-zero \(\lambda \in \C\) and \(\divisp[2]\) be those
non-trivial non-negative divisors that have two independent periods.  We therefore get a partition
\[\divisp = \free(\divisp) \sqcup \divisp[1] \sqcup \divisp[2] \sqcup \{\varnothing\}.\]
Similarly, the stabilizer of a non-constant entire function must also be a discrete subgroup of
\(\C\), which gives us a similar partition of \(\holom\) into
\[\holom = \free(\holom) \sqcup \holom[1] \sqcup \holom[2] \sqcup \{\textrm{constants}\}.\]
Any equivariant right-inverse \(\xi\) to \(\dm\) must respect these partitions and thus satisfy
\(\xi(\free(\divisp)) \subseteq \xi(\free(\holom)) \) and
\(\xi(\divisp[i]) \subseteq \xi(\holom[i])\), \(i = 1,2\).  However, \(\holom[2]\)---the space of
non-constant doubly periodic entire functions---is empty, whereas \(\divisp[2]\) is not.  In
particular, there can't be any equivariant right-inverses to \(\dm\) on \(\divisp[2]\).

The set \(\holom[1]\) is non-empty, and the divisor map
\[
  \dm:\holom[1]\to\divisp[1]
\]
is a Borel surjection. To see this, let \(\holom[1,1]\) and \(\divisp[1,1]\) denote the subspaces of
(non-constant) holomorphic functions and divisors, respectively, which have period \(1\). These are
Polish subspaces of \(\holom[1]\) and \(\divisp[1]\), correspondingly. It suffices to see that the
restriction \(\dm:\holom[1,1]\to\divisp[1,1]\) is surjective. The exponential map
\(w=e^{2\pi \im z}\) transfers the question to surjectivity of the divisor map for analytic
functions in the punctured plane \(\C\setminus\{0\}\), which is classical for arbitrary domains
(\cite{MR1483074}*{Ch.~4}).

A non-measurable equivariant inverse to \(\dm\colon \holom[1]\to\divisp[1]\) can easily be
constructed using the axiom of choice. It turns out, however, that contrary to the free part of the
action, there is no Borel equivariant inverse.

\begin{theorem}
  \label{thm:no-borel-equivariant-inverse-1-periodic}
  There is no Borel equivariant map \(\xi : \divisp[1] \to \holom[1]\) such that \(\dm(\xi(d)) = d\)
  for all \(d \in \divisp[1]\).
\end{theorem}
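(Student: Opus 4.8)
The plan is to exploit a descriptive set-theoretic invariant that distinguishes the two $\C$-spaces $\divisp[1]$ and $\holom[1]$, namely the existence of a Borel transversal for the orbit equivalence relation. The key contrast, advertised in the introduction, is that the argument-shift action of $\C$ on $\holom[1]$ admits a Borel transversal (equivalently, is concretely classifiable), whereas the action of $\C$ on $\divisp[1]$ does not. Once these two facts are in hand, a Borel equivariant map $\xi : \divisp[1] \to \holom[1]$ with $\dm \circ \xi = \mathrm{id}$ is easily ruled out: if $T \subseteq \holom[1]$ is a Borel transversal for $\C \acts \holom[1]$, then $\xi^{-1}(T)$ is a Borel subset of $\divisp[1]$, and by equivariance together with injectivity of $\xi$ on each orbit (which follows because $\dm \circ \xi = \mathrm{id}$ forces $\xi$ to be injective), $\xi^{-1}(T)$ meets each $\C$-orbit in exactly one point, contradicting the non-existence of a Borel transversal for $\C \acts \divisp[1]$.

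So the proof reduces to two lemmas. First, I would show that $\C \acts \holom[1]$ admits a Borel transversal. It suffices to do this on the Polish subspace $\holom[1,1]$ of period-$1$ entire functions, since every orbit in $\holom[1]$ meets $\holom[1,1]$ after rescaling the period, and that rescaling can be chosen in a Borel way (assigning to $f \in \holom[1]$ the primitive period $\lambda(f) \in \C$ with, say, argument in $[0,\pi)$, is Borel, and $f \mapsto f(\lambda(f)\cdot\ )$ lands in $\holom[1,1]$ Borel-equivariantly up to the residual $\C/(\lambda\Z)$-action). For $\holom[1,1]$, the substitution $w = e^{2\pi i z}$ identifies period-$1$ entire functions with holomorphic functions on $\C^\times$, i.e.\ with a Polish space on which the residual action is that of the compact group $\C/\Z \cong \C^\times$ (acting by $w \mapsto e^{2\pi i t}w$); a Borel action of a compact Polish group on a standard Borel space always has a Borel transversal, as recalled in Section~\ref{sec:polish-borel-spaces}. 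Assembling these pieces gives a Borel transversal for $\C \acts \holom[1]$.

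Second, I would show that $\C \acts \divisp[1]$ has no Borel transversal; this is the content of the cited Lemma~\ref{lem:D11-no-transversal} (stated for $\divisp[1,1]$), and I expect this to be the main obstacle, or rather the main ingredient that must be proved separately. The natural route is to exhibit a Borel reduction of a non-concretely-classifiable equivalence relation into $E_\C^{\divisp[1]}$ — concretely, pushing forward the $\Z$-action turning a divisor with period $1$ into a configuration on the cylinder $\C/\Z$, one finds that the orbit equivalence relation is that of the $\C/\Z \cong (\R/\Z)\times\R$-action on the space of discrete configurations in the cylinder, and the $\R$-flow component of this (a vertical translation flow on a space of point configurations with no periodicity in the vertical direction) is a classical example of a free $\R$-flow without a Borel transversal, by an ergodic-theoretic argument (invariant measures exist but no Borel transversal can carry one). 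Alternatively one reduces $E_0$ or the tail equivalence relation into it directly. With both lemmas established, the short argument of the first paragraph completes the proof.
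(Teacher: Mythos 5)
Your high-level strategy is exactly the paper's: show $\C\acts\holom[1,1]$ has a Borel transversal, show $\C\acts\divisp[1,1]$ does not, and pull the transversal back through $\xi$ to reach a contradiction. Your treatment of the second lemma (no transversal for divisors) is in the right spirit, reducing from a shift-type equivalence relation, which is exactly what Lemma~\ref{lem:D11-no-transversal} does with the Bernoulli shift on $2^{\Z}$.

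However, there is a genuine gap in your argument for the first lemma. After passing through $w=e^{2\pi \im z}$, the residual action on period-$1$ entire functions is by the quotient group $\C/\Z$, and you assert that this is a compact group so a Borel transversal comes for free. But $\C/\Z$ is \emph{not} compact: it is isomorphic to $\T\times\R$ (equivalently, $\C^{\times}\cong\T\times\R^{>0}$ is not compact). The $\T$-factor (horizontal shifts) is compact and is handled by the general fact about compact group actions, but the $\R$-factor (vertical shifts, i.e.\ scalings $w\mapsto rw$ in the $\C^{\times}$-picture) is not, and this is where the real work lies. In fact this is not a mere technicality: the analogous statement for $1$-periodic \emph{meromorphic} functions is false — the paper exhibits in Remark~\ref{rem:yosida-type-normal-meromorphic} compact invariant subsets of $\merom[1]$, so $\C\acts\merom[1]$ is not concretely classifiable. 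What saves the entire-function case is genuine complex-analytic input: Lemma~\ref{lem:1-periodic-limit} (a Lindelöf/Liouville maximum-principle argument) shows that for a non-constant $1$-periodic entire $f$, the maximum modulus $M_f(s)$ on horizontal lines tends to $+\infty$ as $s\to+\infty$ or as $s\to-\infty$, which is then used in Proposition~\ref{prop:action-on-H1-smooth} to single out a canonical value of the $\R$-parameter and thereby reduce to the compact $\T$-action. Your proposal omits this step entirely, and without it the first lemma is unproven.
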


We begin with a lemma which is an easy consequence of the Lindelöf maximum principle.  For the
reader's convenience, we include the proof.

\begin{lemma}
  \label{lem:maximum-principle-strip}
  Let \(s < 0 < t\) be reals, \(S_{s,t}\) be the strip \(s < \Im(z) < t\), and
  \(f : \bar{S}_{s,t} \to \C\) be a bounded continuous function holomorphic on \(S_{s,t}\).  If
  \(|f(z)| \le M\) for all \(z \in \partial S_{s,t}\), then \(|f(z)| \le M\) for all
  \(z \in S_{s,t}\).
\end{lemma}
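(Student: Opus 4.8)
The plan is to run a Phragmén–Lindelöf argument, introducing an auxiliary multiplier that decays at the two ends of the strip but is close to $1$ on any fixed compact set, so that the ordinary maximum modulus principle can be applied on a growing family of rectangles. Write $z = x + \im y$ and set $c = \max(s^2,t^2)$. For $\epsilon > 0$ consider $F_\epsilon(z) = f(z)\, e^{-\epsilon z^2}$, which is continuous on $\bar{S}_{s,t}$ and holomorphic on $S_{s,t}$. Since $\Re(z^2) = x^2 - y^2$ and $s \le y \le t$ on $\bar{S}_{s,t}$, we have $|e^{-\epsilon z^2}| = e^{-\epsilon x^2}\,e^{\epsilon y^2} \le e^{\epsilon c}\,e^{-\epsilon x^2}$; in particular this multiplier is bounded on $\bar{S}_{s,t}$ and tends to $0$ as $|x| \to \infty$, uniformly in $y$. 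This symmetric decay at both ends is exactly why $e^{-\epsilon z^2}$, rather than $e^{-\epsilon z}$, is the right choice.

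Next I would apply the maximum modulus principle to $F_\epsilon$ on the rectangles $R_N = \{\,|x| \le N,\ s \le y \le t\,\}$. On the horizontal sides $\{y \in \{s,t\}\}$ the hypothesis $|f| \le M$ on $\partial S_{s,t}$ gives $|F_\epsilon| \le M e^{\epsilon c}$; on the vertical sides $\{x = \pm N\}$, using a bound $|f| \le B$ on all of $S_{s,t}$, we get $|F_\epsilon| \le B e^{\epsilon c}\,e^{-\epsilon N^2}$. Hence $\sup_{R_N}|F_\epsilon| \le \max\{\,M e^{\epsilon c},\ B e^{\epsilon c} e^{-\epsilon N^2}\,\}$, and since $\bigcup_N R_N = \bar{S}_{s,t}$, letting $N \to \infty$ yields $\sup_{S_{s,t}}|F_\epsilon| \le M e^{\epsilon c}$; the second term vanishes in the limit, which also disposes of the degenerate case $M = 0$.

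Finally, fix $z \in S_{s,t}$. Unwinding the multiplier, $|f(z)| = |F_\epsilon(z)|\,e^{\epsilon \Re(z^2)} \le M e^{\epsilon c}\,e^{\epsilon \Re(z^2)}$, and letting $\epsilon \downarrow 0$ gives $|f(z)| \le M$, as desired. I do not anticipate a real obstacle: the only delicate point is ensuring the multiplier decays at both ends of the strip, handled as above. An alternative would be to conformally map $S_{s,t}$ onto a half-plane (or the disk) via a map of the form $z \mapsto e^{\pi \im (z - \im s)/(t-s)}$ and invoke the maximum principle for bounded functions with bounded boundary values there, but the direct estimate seems shortest.
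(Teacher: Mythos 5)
Your proof is correct and follows the same overall strategy as the paper's: a Phragm\'en--Lindel\"of argument with an auxiliary damping factor, followed by the maximum modulus principle on exhausting rectangles $R_N$ and a limit in $\epsilon$. The only real difference is the choice of multiplier. The paper uses $g_\epsilon(z) = f(z)/(z-z_0)^{\epsilon}$ with $z_0 = \im(t+1)$ placed outside the strip so that $|z-z_0| \ge 1$; this gives the clean boundary bound $|g_\epsilon| \le M$ without any extra constant, at the cost of having to choose a branch of $(z-z_0)^{\epsilon}$ cut away from the strip and of invoking boundedness of $f$ to make $|g_\epsilon| \to 0$ at the ends (with only algebraic decay). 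Your Gaussian multiplier $e^{-\epsilon z^2}$ is entire, so there is no branch-cut bookkeeping, and it decays superexponentially, making the vertical-side estimate essentially automatic; the price is the harmless extra factor $e^{\epsilon c}$ on the horizontal sides, which disappears in the $\epsilon \downarrow 0$ limit. Both are standard instantiations of the same idea, and neither has an advantage that matters here; your version is arguably slightly cleaner to write out since it avoids fractional powers.
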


\begin{proof}
  Set \(S = S_{s,t}\) and let \(z_{0} \in \C \setminus \bar{S}\) be such that \(|z-z_{0}| \ge 1\)
  for all \(z \in S\).  For instance, we may take \(z_{0} = \im(t + 1)\).  Given \(\epsilon > 0\),
  let \(g_{\epsilon}(z) = \frac{f(z)}{(z-z_{0})^{\epsilon}}\), where we take the branch cut from
  \(z_{0}\) parallel to the imaginary axes that avoids the strip \(S\).  The function
  \(g_{\epsilon}(z)\) is holomorphic on \(S\) and continuous on \(\bar{S}\).  Furthermore, the
  choice of \(z_{0}\) ensures that \(|g_{\epsilon}(z)| \le M\) for all \(z \in \partial S\).  Since
  \(f\) is bounded by assumption, \(\lim_{|z| \to \infty}|g_{\epsilon}(z)| = 0\).  In particular,
  for a sufficiently large \(R > 0\) and \(S^{R} = S \cap \{z : |\Re(z)| \le R\}\), we have
  \(|g_{\epsilon}(z)| \le M\) for all \(z \in \partial S^{R}\).  Maximum modulus principle applies
  and shows that \(|g_{\epsilon}(z)| \le M\) for all \(z \in S^{R}\) and therefore also for all
  \(z \in S \) as \(R\) can be arbitrarily large.  We conclude that
  \(|f(z)| \le M |z-z_{0}|^{\epsilon}\) for all \(\epsilon > 0\) and all \(z \in S\).  Sending
  \(\epsilon \to 0\) gives the desired \(|f(z)| \le M\) on all of~\(S\).
\end{proof}

\begin{lemma}
  \label{lem:1-periodic-limit}
  Let \(f : \C \to \C\) be a non-constant periodic entire function with period \(z = 1\).  Let
  \[M_{f}(s) = \max\{|f(\im s + t)| : t \in \R\} = \max\{|f(\im s + t)| : t \in [0,1]\}\]
  be the maximum modulus of \(f\) on the line \(\{\im s + t : t \in \R \}\), where \(s \in
  \R\). Then
  \[\textrm{either}\quad \lim_{s \to +\infty} M_{f}(s) = +\infty \quad \textrm{or} \quad
    \lim_{s \to -\infty} M_{f}(s) = +\infty.\]
\end{lemma}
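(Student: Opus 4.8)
The plan is to exploit the periodicity of $f$ to pass to a function on the cylinder $\C\setminus\{0\}$, and then use the behavior of $f$ on horizontal lines together with Lemma~\ref{lem:maximum-principle-strip}. Since $f$ has period $1$, the function $g(w) = f(z)$ where $w = e^{2\pi\im z}$ is well-defined and analytic on the punctured plane $\C\setminus\{0\}$. The line $\{\im s + t : t\in\R\}$ in the $z$-plane maps to the circle $\{|w| = e^{-2\pi s}\}$ in the $w$-plane, so $M_f(s) = \max_{|w| = e^{-2\pi s}}|g(w)|$. Thus the claim becomes: for a non-constant analytic $g$ on $\C\setminus\{0\}$, either $\max_{|w| = r}|g(w)| \to \infty$ as $r \to 0^+$, or $\max_{|w| = r}|g(w)| \to \infty$ as $r \to \infty$.

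To prove this cylinder statement, I would argue by contradiction: suppose both $\limsup$'s are finite, i.e.\ there is a constant $C$ and a sequence $r_n \to 0$ with $\max_{|w|=r_n}|g| \le C$, and a sequence $\rho_n \to \infty$ with $\max_{|w|=\rho_n}|g| \le C$. By the maximum modulus principle applied to the annulus $r_n \le |w| \le \rho_m$ (which is a domain where $g$ is holomorphic, as $0$ is excluded but the annulus does not contain $0$), we get $|g(w)| \le C$ on each such annulus, and letting $n,m\to\infty$ we conclude $|g| \le C$ on all of $\C\setminus\{0\}$. By the Riemann removable singularity theorem, $g$ extends holomorphically across $0$, giving a bounded entire function, hence constant by Liouville — contradicting that $f$, and therefore $g$, is non-constant. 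Actually one must be slightly careful: I only know $\max_{|w|=r_n}|g|$ is bounded along a \emph{sequence} $r_n\to 0$, not for all small $r$; but the maximum modulus argument on the annuli between two such circles still forces the bound everywhere on $\C\setminus\{0\}$, so this is not an obstacle.

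Alternatively — and perhaps more in the spirit of the preceding Lemma~\ref{lem:maximum-principle-strip} — I could work directly in the strip picture without passing to the cylinder: if $M_f$ were bounded, say by $M$, on two sequences $s_n \to +\infty$ and $s'_n \to -\infty$, then applying Lemma~\ref{lem:maximum-principle-strip} to $f$ on the strips $S_{s'_n, s_n}$ gives $|f| \le M$ on the whole plane (taking $n\to\infty$), whence $f$ is constant by Liouville, a contradiction. This version has the advantage of reusing exactly the tool just established. Either way, the only mild subtlety is converting ``bounded along a sequence of lines/circles'' into ``globally bounded'' via the maximum principle on the intermediate strips/annuli; this is routine and is the step I would write out most carefully. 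The rest is immediate.
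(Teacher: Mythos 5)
Your second ``strip'' argument is word-for-word the paper's proof: negate the claim to get sequences $s_n\to-\infty$, $t_n\to+\infty$ with $M_f(s_n),M_f(t_n)\le M$, apply Lemma~\ref{lem:maximum-principle-strip} to the strips $S_{s_n,t_n}$ (on which $f$ is bounded by $1$-periodicity plus compactness of a period), conclude $|f|\le M$ on all of $\C$, and invoke Liouville. Your first ``cylinder'' argument via $w=e^{2\pi\im z}$ is a genuine alternative: it replaces the Phragm\'en--Lindel\"of-type Lemma~\ref{lem:maximum-principle-strip} by the plain maximum modulus principle on compact annuli $\{r_n\le|w|\le\rho_m\}$, then uses the removable singularity theorem at $0$ and Liouville. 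This variant is arguably more elementary --- it avoids the fractional-power/branch-cut device used to prove Lemma~\ref{lem:maximum-principle-strip} --- but the paper's version has the virtue of deploying the tool it just set up, which the authors also reuse in the subharmonic setting (Proposition~\ref{prop:Lindelof}), where no analogue of the exponential conformal change of variables is available. Both arguments are correct; the caveat you flag about converting ``bounded along a sequence of lines/circles'' into ``globally bounded'' is indeed the only point requiring care, and you handle it correctly.
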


\begin{proof}
  Suppose towards a contradiction that there are \(s_{n} \to -\infty\) and \(t_{n} \to +\infty\)
  such that \(M_{f}(s_{n}) \le M\) and \(M_{f}(t_{n}) \le M\) for some \( M \in \R^{\ge 0}\) and all
  \(n\).  Lemma~\ref{lem:maximum-principle-strip} applies to strips \(S_{s_{n},t_{n}}\) and
  functions \(f|_{\bar{S}_{s_{n},t_{n}}}\).  We conclude that \(|f(z)| \le M\) on each string
  \(S_{s_{n}, t_{n}}\) and therefore also on all of \(\C = \bigcup_{n} S_{s_{n},t_{n}}\).
  Liouville's theorem assures that \(f\) must be constant, contrary to the assumption.
\end{proof}

\begin{proposition}
  \label{prop:action-on-H1-smooth}
  The action \(\C \acts \holom[1,1]\) has a Borel transversal, i.e., a Borel set
  \(T \subset \holom[1,1]\) that intersects each orbit in exactly one point.
\end{proposition}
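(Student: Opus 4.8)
The plan is to construct the transversal explicitly from the Fourier expansion of periodic entire functions. Every $f \in \holom[1,1]$ satisfies $f(z+1) = f(z)$, hence descends to a holomorphic function on $\C^{\times}$ and admits a globally convergent expansion $f(z) = \sum_{n \in \Z} a_{n}(f)\, e^{2\pi \im n z}$, with $a_{n}(f) = \int_{0}^{1} f(t)\, e^{-2\pi \im n t}\, dt$. Each map $f \mapsto a_{n}(f)$ is continuous on $\holom$ in the topology of uniform convergence on compacts---it factors through the continuous restriction $f \mapsto f|_{[0,1]}$---and is therefore Borel. A direct computation gives the equivariance
\begin{equation*}
  a_{n}(z \cdot f) = a_{n}(f)\, e^{2\pi \im n z}, \qquad z \in \C,
\end{equation*}
so, writing $z = u + \im v$ with $u = \Re z$ and $v = \Im z$, translation multiplies $|a_{n}(f)|$ by $e^{-2\pi n v}$ and rotates the phase of $a_{n}(f)$ by $2\pi n u$. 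The idea is to distill from these phases and moduli two Borel functions recording, respectively, the real coordinate of a translate modulo the stabilizer, and its imaginary coordinate, and then to take the transversal to be their joint level set.

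The relevant invariant is the Fourier support $S(f) = \{ n \neq 0 : a_{n}(f) \neq 0 \}$. It is non-empty since $f$ is non-constant, and it is translation-invariant because $a_{n}(z \cdot f) = 0 \iff a_{n}(f) = 0$. Moreover, $f$ has period $1/m$ if and only if $S(f) \subseteq m\Z$, so the positive integer $g(f) := \gcd S(f)$ is exactly the one with $\stab(f) = \tfrac{1}{g(f)}\Z$ (recall that in $\holom[1,1] \subseteq \holom[1]$ the stabilizer is a rank-one lattice containing $1$, hence of the form $\tfrac{1}{m}\Z$). I would then choose, in a Borel and translation-invariant way, a finite subset $\{n_{1}, \ldots, n_{k}\} \subseteq S(f)$ with $\gcd(n_{1}, \ldots, n_{k}) = g(f)$---for instance, enumerate $S(f)$ by increasing absolute value, breaking ties by sign, and take the shortest initial segment realizing the gcd---and fix once and for all, for every finite integer tuple, a Bézout vector, so that $\sum_{i} b_{i} n_{i} = g(f)$. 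Setting $\beta_{i}(f) = a_{n_{i}}(f) / |a_{n_{i}}(f)| \in \T$, define the two Borel maps
\begin{equation*}
  \gamma(f) = \prod_{i=1}^{k} \beta_{i}(f)^{b_{i}} \in \T, \qquad
  \sigma(f) = -\frac{1}{2\pi n_{1}} \log |a_{n_{1}}(f)| \in \R.
\end{equation*}
From the displayed equivariance one gets $\beta_{i}(z \cdot f) = \beta_{i}(f)\, e^{2\pi \im n_{i} u}$ and hence
\begin{equation*}
  \gamma(z \cdot f) = \gamma(f)\, e^{2\pi \im\, g(f)\, u}, \qquad \sigma(z \cdot f) = \sigma(f) + v .
\end{equation*}

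The transversal is then $T = \{ f \in \holom[1,1] : \gamma(f) = 1 \text{ and } \sigma(f) = 0 \}$; it is Borel since $\gamma$ and $\sigma$ are. It remains to check that $T$ meets each orbit in exactly one point. Fix $f_{0}$ and put $g = g(f_{0})$, so that $z \cdot f_{0} = z' \cdot f_{0}$ iff $z - z' \in \stab(f_{0}) = \tfrac{1}{g}\Z$. For $z = u + \im v$ we have $z \cdot f_{0} \in T$ iff $\sigma(f_{0}) + v = 0$, which pins down $v = -\sigma(f_{0})$, and $\gamma(f_{0})\, e^{2\pi \im g u} = 1$, which forces $g u$ into a fixed coset of $\Z$, i.e.\ $u$ into a fixed coset of $\tfrac{1}{g}\Z$. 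Thus $z$ is determined modulo $\tfrac{1}{g}\Z = \stab(f_{0})$, so the orbit of $f_{0}$ contains exactly one element of $T$.

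The only point requiring genuine care is that every choice entering the construction ($S(f)$, the tuple $(n_{i})$, the branch conventions) be constant along orbits, and that $\gcd S(f)$ really recovers $\stab(f)$; both reduce to the elementary facts that $S(f)$ is translation-invariant and that a periodic entire function has period $1/m$ exactly when its Fourier coefficients are supported on $m\Z$. Beyond this bookkeeping there is no substantial obstacle---in contrast with the divisor case, where a similar normalization is impossible and no Borel transversal exists.
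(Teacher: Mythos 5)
Your argument is correct, and it takes a genuinely different route from the paper. The paper proceeds via an analytic lemma (Lemma~\ref{lem:1-periodic-limit}) asserting that the maximum modulus $M_f(s) = \max_{t\in[0,1]}|f(\im s + t)|$ blows up as $s\to+\infty$ or $s\to-\infty$, uses this to build, for each threshold $k$, a Borel set $\widetilde T_k$ of functions for which $s=0$ is an extreme point of the sub-level set $\{s : M_f(s)\le k\}$, then appeals to the general fact that Borel actions of compact groups admit Borel transversals to quotient out the residual $\T$-action, resolves the one-versus-two-endpoints ambiguity, and glues over $k$. Your Fourier-coefficient construction replaces all of this with an explicit normalization: the modulus of a single coefficient $a_{n_1}(f)$ pins down the imaginary part of the shift, and the Bézout-weighted phase $\gamma(f)=\prod_i \beta_i(f)^{b_i}$, which transforms by $e^{2\pi\im g(f) u}$, pins down the real part precisely modulo $\stab(f)=\tfrac{1}{g(f)}\Z$. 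I verified the equivariances $a_n(z\cdot f)=a_n(f)e^{2\pi\im n z}$, $\gamma(z\cdot f)=\gamma(f)e^{2\pi\im g(f)u}$, $\sigma(z\cdot f)=\sigma(f)+v$, the fact that $\gcd S(f)$ recovers the stabilizer, the existence of a finite initial segment of your canonical enumeration achieving the gcd (the gcds of initial segments are a non-increasing sequence of positive integers, stabilizing at a divisor of every element of $S(f)$, hence at $\gcd S(f)$), and the Borelness of all the maps involved ($S(f)$, the tuple $(n_i)$, $\gamma$, $\sigma$) — all fine. The two approaches buy different things: the paper's avoids Fourier analysis entirely, instead exploiting the same underlying growth phenomenon via the maximum principle, and so parallels the argument it later uses for periodic subharmonic functions in higher dimensions, where a Fourier-coefficient normalization is less available; your approach is more elementary and self-contained, sidesteps the appeal to transversals for compact group actions, and handles the case $\stab(f)\supsetneq\Z$ (i.e., $g(f)>1$) with no extra bookkeeping, whereas the paper's proof needs the one-versus-two-endpoint case analysis and the gluing step.
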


\begin{proof}
  Consider a \(1\)-periodic \(f \in \holom[1,1]\) and let \(k\) be large enough to ensure that the
  set \(W_{f,k} = \{s \in \R : M_{f}(s) \le k\}\) is non-empty.  Note that \(W_{f,k}\) is
  necessarily closed, whereas Lemma~\ref{lem:1-periodic-limit} guarantees that it must be bounded
  from above or from below, and possibly both. Let's take the set
  \(\widetilde{T}_{k} \subseteq \holom[1,1]\) of \(1\)-periodic entire functions for which
  \(M_{f}(0) \le k\) and \(s = 0\) is either the smallest or the largest element of \(W_{f,k}\).
  This set is Borel.  In fact,
  \(\widetilde{T}_{k} = (\widetilde{T}^{0}_{k} \cap \widetilde{T}^{+}_{k}) \cup
  (\widetilde{T}^{0}_{k} \cap \widetilde{T}^{-}_{k})\), where
  \begin{displaymath}
    \begin{aligned}
      \widetilde{T}^{0}_{k} &= \Bigl\{f \in \holom[1,1] : \forall r \in [0,1] \cap \Q \quad |f(r)| \le k \Bigr\},  \\
      \widetilde{T}^{+}_{k} &= \Bigl\{ f \in \holom[1,1] : \forall n\ \exists m\ \forall s \in
                              \Q^{>1/n}\ \exists r \in [0,1] \cap \Q \quad
                              |f(\im s +r)|> k + \frac{1}{m} \Bigr\},\\
      \widetilde{T}^{-}_{k} &= \Bigl\{ f \in \holom[1,1] : \forall n\ \exists m\ \forall s \in
                              \Q^{< -1/n}\ \exists r \in [0,1] \cap \Q \quad
                              |f(\im s +r)|> k + \frac{1}{m} \Bigr\}.\\
    \end{aligned}
  \end{displaymath}

  The set \(\widetilde{T}_{k}\) is invariant under the real shifts: if \(f \in \widetilde{T}_{k}\)
  then \(z \mapsto f(z + r) \in \widetilde{T}_{k}\), \(r \in \R\).  By \(1\)-periodicity, we have
  the action of \(\T = [0,1]\) on \(\widetilde{T}_{k}\).  Since Borel actions of compact groups
  admit Borel transversals, we can pick a Borel transversal \(S_{k} \subseteq \widetilde{T}_{k}\)
  for the action of \(\T\).  Note that \(S_{k}\) intersects each \(\C\)-orbit of \(f \in W_{f,k}\)
  in one or two points, depending on whether \(W_{f,k}\) is bounded from only one side or from both.
  Choosing in a Borel way either of the two points on those orbits where
  \(|S_{k} \cap (\C \cdot f)| = 2\), gives a Borel transversal \(T_{k} \subseteq S_{k}\) for the
  restriction of the action onto \(Z_{k} = \{f \in \holom[1,1] : W_{f,k} \ne \varnothing\}\).

  It only remains to glue the sets \(T_k\) together into a single Borel transversal by setting
  \(T=\bigcup_{k} \left(T_k\setminus \bigcup_{\ell<k}Z_{\ell}\right)\).
\end{proof}

\begin{remark}
  \label{rem:1-periodic-borel-transversal}
  In fact, the action of \(\C \acts \holom[1]\) on all of \(\holom[1]\) admits a Borel transversal.
  Let \(\alpha : \holom[1,1] \to Y\) be a Borel reduction of \(E_{\C \acts \holom[1,1]}\) to the
  equality on some standard Borel space \(Y\).  There is a Borel
  \(E_{\C \acts \holom[1]}\)-invariant function \(\pi : \holom[1] \to \C\setminus \{0\}\) such that
  \(\pi(f)\) is a generator for the group of periods of \(f\) (see, for
  instance,~\cite{Slutsky}*{Cor.~5.3}).  Note that the function \(\widetilde{f}(z) = f(\pi(f)z) \)
  belongs to \(\holom[1,1]\) and therefore
  \[\widetilde{\alpha} : \holom[1] \to Y \times (\C\setminus \{0\}),\] given by
  \(\widetilde{\alpha}(f) = (\alpha(\widetilde{f}), \pi(f))\), is a reduction from
  \(E_{\C \acts \holom[1]}\)to the equality on \(Y \times (\C\setminus \{0\})\). Thus,
  \(E_{\C \acts \holom[1]}\) is concretely classifiable and therefore admits a Borel transversal.
\end{remark}

The action \(\C \acts \divisp[1,1]\), on the other hand, does not admit Borel transversals.  Indeed,
one can embed the free part of the Bernoulli shift on \(2^{\Z}\) into \(E_{\C \acts \divisp[1,1]}\)
by viewing a sequence \(x \in 2^{\Z}\) as a \(\{0,1\}\)-valued divisor along \(\im \Z\).  For the
benefit of the readers unfamiliar with this type of argument, we carefully spell out the details of
the following standard ideas.

\begin{lemma}
  \label{lem:D11-no-transversal}
  Consider the action \(\Z \acts \divisp[1,1]\), where the generator \(\tau\) acts by
  \(d(z) \mapsto d(z-\im)\).  The action \(\Z\acts \divisp[1,1]\) does not admit a Borel
  transversal.
\end{lemma}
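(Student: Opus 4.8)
The plan is to transport a shift-invariant probability measure onto $\divisp[1,1]$ and then invoke the classical obstruction that a free Borel $\Z$-action carrying an invariant probability measure cannot admit a Borel transversal. First I would fix the copy of the Bernoulli shift: let $\sigma\colon 2^{\Z}\to 2^{\Z}$ be $(\sigma x)(n)=x(n-1)$, let $\mu$ be the uniform Bernoulli measure (which is $\sigma$-invariant and non-atomic), and note that the countable set of $\sigma$-periodic sequences is $\mu$-null, so $\mu$ restricts to a $\sigma$-invariant Borel probability measure on the free part $\free(2^{\Z})$, on which $\sigma$ acts freely. For $x\in\free(2^{\Z})$ I would then define the $\{0,1\}$-valued divisor $d_{x}$ with support $\{m+\im n:m,n\in\Z,\ x(n)=1\}$, i.e.\ $d_{x}(z)=1$ precisely when $z\in\Z+\im\Z$ and $x(\Im z)=1$, and call this map $f$.

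Next come three verifications, all routine except the second. (i) The map $f\colon x\mapsto d_{x}$ is continuous (hence Borel) into $\divisp$ with its vague topology: if $x_{j}\to x$ in $2^{\Z}$ then $x_{j}$ and $x$ eventually agree on any finite window, so $d_{x_{j}}$ and $d_{x}$ eventually agree on any horizontal strip, whence $d_{x_{j}}\to d_{x}$. (ii) $d_{x}\in\divisp[1,1]$: the divisor $d_{x}$ is plainly invariant under the shift by $1$, and a short analysis of its support shows that any period $a+\im b$ of $d_{x}$ forces $b\in\Z$ and $I+b=I$ for $I=\{n:x(n)=1\}$, i.e.\ $x(\,\cdot+b)=x$; aperiodicity of $x$ then gives $b=0$ and $a\in\Z$, so $\stab(d_{x})=\Z\cdot1$. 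This is exactly where passing to $\free(2^{\Z})$ is essential, since a sequence of minimal period $k\ge2$ would produce a divisor lying in $\divisp[2]$ instead. (iii) The map intertwines the two $\Z$-actions, $d_{\sigma x}=\tau\,d_{x}$ with $(\tau d)(z)=d(z-\im)$, which is immediate from comparing values on $\Z+\im\Z$.

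Finally I would push forward and count. The measure $\nu=f_{*}\bigl(\mu|_{\free(2^{\Z})}\bigr)$ is a Borel probability measure on $\divisp[1,1]$, and it is $\tau$-invariant because $\mu$ is $\sigma$-invariant and $f$ is equivariant by (iii). Moreover the $\Z$-action on $\divisp[1,1]$ is free: $\tau^{k}d=d$ forces $\im k\in\stab(d)=\Z\cdot1\subseteq\R$, so $k=0$. Hence if $T\subseteq\divisp[1,1]$ were a Borel transversal, freeness would give a Borel partition $\divisp[1,1]=\bigsqcup_{k\in\Z}\tau^{k}T$ and therefore
\[
  1=\nu\bigl(\divisp[1,1]\bigr)=\sum_{k\in\Z}\nu\bigl(\tau^{k}T\bigr)=\sum_{k\in\Z}\nu(T),
\]
which is either $0$ or $+\infty$ --- a contradiction. (Equivalently, (i)--(iii) exhibit a Borel reduction of $E_{\sigma\acts\free(2^{\Z})}$ into $E_{\Z\acts\divisp[1,1]}$; since $E_{\sigma\acts\free(2^{\Z})}$ is not concretely classifiable --- by the same invariant-measure argument applied upstairs, together with the facts recalled in Section~\ref{sec:polish-borel-spaces} --- neither is $E_{\Z\acts\divisp[1,1]}$, so it admits no Borel transversal.) The only delicate point is the stabilizer computation in (ii): it is what pins the image inside $\divisp[1,1]$ rather than in a larger periodic stratum, and the resulting freeness is what drives the counting contradiction; I expect the remaining steps to be pure bookkeeping.
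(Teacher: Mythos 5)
Your argument is correct and follows the same route as the paper: both proofs are built on the embedding $x\mapsto d_{x}$ of $\free(2^{\Z})$ into $\divisp[1,1]$ combined with the invariant-measure obstruction supplied by the uniform Bernoulli measure. The only difference is cosmetic: the paper first records as a standalone fact that the shift on $\free(2^{\Z})$ admits no Borel transversal and then pulls a putative transversal of $\divisp[1,1]$ back along the equivariant injection, whereas you push the measure forward along that same map and run the counting argument directly on $\divisp[1,1]$; these are dual phrasings of one and the same proof, and all the verifications you flag as the real content (continuity, the stabilizer computation pinning the image inside $\divisp[1,1]$, equivariance, freeness of $\Z\acts\divisp[1,1]$) are exactly the ones the paper checks.
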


\begin{proof}
  Let \(\mathrm{Free}(2^{\Z})\) denote the set of non-periodic binary sequences indexed by~\(\Z\),
  and denote by \(s\) the (left) shift map \((s\cdot x)(i) = x(i-1)\), \(x\in\free(2^\Z)\).  The
  following two items ensure the lack of Borel transversals.
  \begin{enumerate}[leftmargin=1cm, label={\sf \arabic*}., ref={\sf \arabic*}]
  \item\label{item:bernoulli-no-transversal} The action \(s\acts \free(2^{\Z})\) does not admit a
    Borel transversal.
  \item\label{item:bernoulli-embedding} There exists a continuous equivariant injection
    \(\varphi:\mathrm{Free}(2^{\Z})\to \divisp[1,1]\).
  \end{enumerate}
  Any Borel transversal \(T\) for \(\Z \acts \divisp[1,1]\), had it existed, would then pull back to
  a Borel transversal \(\phi^{-1}(T)\) for \(s \acts \free(2^{\Z})\),
  violating~\eqref{item:bernoulli-no-transversal}.

  It thus remains to establish Properties~\eqref{item:bernoulli-no-transversal}
  and~\eqref{item:bernoulli-embedding} above.  Suppose towards a contradiction that there exists a
  Borel transversal \(B\) for \(s \acts \free(2^{\Z})\).  The sets \(B_{n} = s^{n} \cdot B\) are
  pairwise disjoint (by the freeness of the action) and form a partition of \(\free(2^{\Z})\).
  However, the shift map has invariant probability measures.  For instance, the product measure
  \(\prod_{k \in \Z}\mu_{0}\), where \(\mu_{0}\) is the Bernoulli measure on \(\{0,1\}\),
  \(\mu_{0}(0) = 1/2 = \mu_{0}(1)\).  Sets \(B_{n}\) must therefore all have the same measure, which
  implies \(\mu(B_{n}) = 0\) by finiteness of \(\mu\).  The conclusion \(\mu(X) = 0\) leads to a
  contradiction.  The proof of Property~\eqref{item:bernoulli-no-transversal} is complete.

  It remains to establish the existence of the equivariant Borel injection \(\varphi\). Given
  \(x \in 2^{\Z}\), let \(d_{x}\) be the divisor given by
  \begin{displaymath}
    d_{x}(z) =
    \begin{cases}
      x(k) & \textrm{if \(z = k\im + m\) for some \(k,m \in \Z\)}, \\
      0    & \textrm{otherwise}.
    \end{cases}
  \end{displaymath}
  Clearly, \(d_{x} \in \divisp[1,1] \sqcup \divisp[2]\) for all \(x \in 2^{\Z}\) and
  \(d_{x} \in \divisp[1,1]\) precisely when \(x \in \free(2^{\Z})\).  The map \(\phi\) is then given
  by
  \[
    \mathrm{Free}(2^{\Z}) \ni x \mapsto \varphi(x)=d_{x} \in \divisp[1,1].
  \]
  Continuity and injectivity of \(\varphi\) are both clear, and a direct computation verifies that
  \(d_{s\cdot x}=\tau d_x\), so \(\varphi\) is equivariant.
\end{proof}

\begin{proposition}
  \label{prop:action-on-D1plus-not-smooth}
  The action \(\C \acts \divisp[1,1]\) does not have Borel transversals.
\end{proposition}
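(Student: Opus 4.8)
The plan is to deduce this from Lemma~\ref{lem:D11-no-transversal}. Recall from the proof of that lemma the continuous equivariant injection $\varphi : \free(2^{\Z}) \to \divisp[1,1]$ given by $\varphi(x) = d_{x}$, where $d_{x}$ is the $\{0,1\}$-valued $1$-periodic divisor supported on $\im\Z + \Z$ with $d_{x}(k\im + m) = x(k)$; it satisfies $\varphi(s\cdot x) = \tau\,\varphi(x)$. In the lemma this was used to pull back a putative Borel transversal for the $\Z$-action $\tau\acts\divisp[1,1]$, but that direct pull-back does not work for the $\C$-action: a Borel transversal for $\C\acts\divisp[1,1]$ selects only one $\tau$-orbit per $\C$-orbit, and within a single $\C$-orbit $\C/\Z$ the $\tau$-orbits are indexed by the torus $\C/(\Z+\im\Z)$, so it need not meet the image of $\varphi$ at all. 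Instead, I would show that $\varphi$ is in fact a \emph{Borel reduction} from $E_{s\acts\free(2^{\Z})}$ to $E_{\C\acts\divisp[1,1]}$, and then conclude by transfer of concrete classifiability.

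The only point requiring an argument is: for $x, y \in \free(2^{\Z})$, if $d_{x}$ and $d_{y}$ lie in the same $\C$-orbit then $x$ and $y$ lie in the same $s$-orbit (the converse being immediate from $\varphi(s\cdot x) = \tau\,\varphi(x) = \varphi(x)(\,\cdot - \im)$). Suppose $d_{y} = d_{x}(\,\cdot - w)$ with $w = a + \im b \in \C$, so that $\supp(d_{y}) = \supp(d_{x}) + w$. Now $\supp(d_{x})$ is the union of the horizontal lines $k\im + \Z$ over the $k$ with $x(k) = 1$, a nonempty set since $x$ is aperiodic, hence non-constant; translating by $w$ turns such a line into $(k+b)\im + (a + \Z)$. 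Since $\supp(d_{y})$ is again a union of lines of the form $k'\im + \Z$, comparing heights forces $b \in \Z$ (otherwise the set of heights $\{k+b : x(k)=1\}$ would be disjoint from $\Z$, leaving $y$ identically $0$), and then comparing a line at height $k+b$ forces $a + \Z = \Z$, i.e.\ $a \in \Z$. Thus $w \in \Z + \im\Z$, and by the $1$-periodicity of $d_{x}$ we get $d_{y} = d_{x}(\,\cdot - \im b) = \tau^{b}d_{x} = \varphi(s^{b}x)$; injectivity of $\varphi$ gives $y = s^{b}x$. As $\varphi$ is continuous, hence Borel, it is a Borel reduction from $E_{s\acts\free(2^{\Z})}$ to $E_{\C\acts\divisp[1,1]}$.

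To finish, suppose toward a contradiction that $\C\acts\divisp[1,1]$ admits a Borel transversal. By the easy direction of the transversal/concrete-classifier correspondence recalled in Section~\ref{sec:polish-borel-spaces}, $E_{\C\acts\divisp[1,1]}$ is then concretely classifiable, and since concrete classifiability is preserved under Borel reductions, precomposing with $\varphi$ shows $E_{s\acts\free(2^{\Z})}$ is concretely classifiable as well. But $E_{s\acts\free(2^{\Z})}$ is the orbit equivalence relation of a Borel action of the Polish group $\Z$ (indeed a countable Borel equivalence relation), so by Burgess's theorem (or, more directly, by Lusin--Novikov uniformization) it would admit a Borel transversal, contradicting item~\eqref{item:bernoulli-no-transversal} established in the proof of Lemma~\ref{lem:D11-no-transversal}. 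The only non-routine ingredient is the support computation of the second paragraph, and even that is elementary; the main conceptual point is merely that $\varphi$ already witnesses the non-smoothness of the full $\C$-action, not just of its $\Z$-subaction.
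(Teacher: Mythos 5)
Your proof is correct, but it takes a genuinely different route from the paper's. The paper's argument is a one-liner: given a Borel transversal \(T\) for \(\C \acts \divisp[1,1]\), the set \(K\cdot T\) with \(K = \{s+\im t : 0 \le s, t < 1\}\) is a Borel transversal for \(\Z \acts \divisp[1,1]\) (each divisor in \(\divisp[1,1]\) has period group exactly \(\Z\), so the half-open square \(K\) sweeps out exactly one representative per \(\tau\)-orbit from each \(\C\)-orbit representative), directly contradicting Lemma~\ref{lem:D11-no-transversal}. You instead upgrade the map \(\varphi\) from the proof of Lemma~\ref{lem:D11-no-transversal}: by the support computation you show \(\varphi\) is a Borel reduction of the shift relation not merely to \(E_{\Z\acts\divisp[1,1]}\) but to the coarser relation \(E_{\C\acts\divisp[1,1]}\), and then you conclude by transfer of concrete classifiability plus Burgess (or Lusin--Novikov) to recover a transversal for the Bernoulli shift. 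Both arguments are valid. The paper's version is shorter and entirely constructive, sidestepping any appeal to the transversal/classifiability equivalence; yours requires the (elementary but not tautological) support computation \(w \in \Z + \im\Z\) and an extra round trip through concrete classifiability, but it yields the slightly stronger and conceptually cleaner observation that the single map \(\varphi\) already witnesses non-smoothness of the full \(\C\)-action, not only of its \(\Z\)-subaction — your closing remark captures exactly the content added. Your preliminary comment that the naive pull-back \(\varphi^{-1}(T)\) fails is also correct and explains why some additional idea is needed, although the paper's remedy (push \(T\) forward by a square rather than pull it back by \(\varphi\)) differs from yours.
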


\begin{proof}
  Let \(K = \{s + \im t: 0 \le s, t < 1\} \subset \C\).  If \(T \subseteq \divisp[1,1]\) were a
  Borel transversal for \(\C \acts \divisp[1,1]\), then \(K \cdot T\) would be a Borel transversal
  for \(\Z \acts \divisp[1,1]\), which contradicts Lemma~\ref{lem:D11-no-transversal}.
\end{proof}

\begin{proof}[Proof of Theorem~\ref{thm:no-borel-equivariant-inverse-1-periodic}]
  Suppose towards a contradiction that there existed a Borel equivariant map
  \(\xi:\divisp[1]\to\holom[1]\) such that \(\dm(\xi(d))=d\). Since \(\dm\) maps \(\holom[1,1]\)
  onto \(\divisp[1,1]\) and \(\xi\) is equivariant, \(\xi\) maps \(\divisp[1,1]\) to
  \(\holom[1,1]\).

  Let \(T\) be a Borel transversal for \(\holom[1,1]\), whose existence is ensured by
  Proposition~\ref{prop:action-on-H1-smooth}. Then the pre-image \(\xi^{-1}(T)\) is a Borel
  transversal for \(\divisp[1,1]\), contradicting Lemma~\ref{lem:D11-no-transversal}.
\end{proof}

\begin{remark}
  \label{rem:yosida-type-normal-meromorphic}
  We note that, in contrast to Proposition~\ref{prop:action-on-H1-smooth}, the action
  \(\C \acts \merom[1]\) on the space of 1-periodic meromorphic functions is not concretely
  classifiable.  In fact, there are compact invariant subsets of \(\C \acts \merom[1]\).

  Consider a 1-periodic Yosida-type normal meromorphic function\footnote{ This is the class of
    meromorphic functions \(f\) on \(\C\) such that the family of translates \(\{f(z+w):w\in \C\}\)
    is normal with respect to locally uniform convergence and such that no limit function is
    constant. This class was thoroughly studied by Favorov in \cite{MR2403439}.}, which is not
  doubly-periodic.  For instance, let
  \[S_{a, b}(z) = e^{\pi \im (a-b)} \times \frac{\sin\pi (z-a)}{\sin\pi (z-b)},\]
  where \(a, b \in [0,1]\). This function is meromorphic, \(1\)-periodic, and satisfies
  \[
    S_{a, b}(z)=1 + O(1)e^{-2\pi |y|}
  \]
  for \(|y|\ge c>0\).  Hence, the infinite product \[F(z) = \prod_{k\in \Z} S_{a_k, b_k} (z- \im k)\]
  converges very fast to a 1-periodic meromorphic function. If the points \(a_k, b_k\) are uniformly
  separated, \(|a_k-b_k| \ge c >0\) for all \(k \in \Z\), then the function \(F\) is normal, i.e.,
  its orbit is a compact subset of \(\merom\), and each limiting meromorphic function is not
  constant. The freedom in the choice of \(a_k, b_k\) allows us to avoid doubly-periodic signed
  divisors in the limit, and hence, we can ensure that the closure of the orbit of \(F\) is a subset
  of \(\merom[1]\).

  The existence of compact invariant subsets of \(\merom[1]\) guarantees that \(\C \acts \merom[1]\)
  is not concretely classifiable. Moreover, the action admits invariant probability measures, which
  can be obtained via the classical Krylov--Bogolyubov construction.

  These invariant subspaces of \(\merom[1]\) demonstrate that the restriction to the free part in
  Theorem~\ref{thm:merormophic-quotient-entire} cannot be removed. Specifically, let \(Y\) be a
  closed shift-invariant subspace of \(\merom[1,1]\), the space of meromorphic functions with exact
  period group \(\Z\).  If
  \(T \subseteq \holom[1,1]\) is a Borel transversal for \(\C \acts \holom[1,1]\) (which exists by
  Proposition~\ref{prop:action-on-H1-smooth}), then
  \[\{(f_{1}, f_{2}) : f_{1} \in T, f_{2} \in \holom[1,1]\}\]
  is a Borel transversal for the diagonal action
  \[\C \acts \holom[1,1] \times \holom[1,1], \quad w \cdot (f_{1}, f_{2}) = (w \cdot f_{1}, w \cdot
    f_{2}).\]
  However, the action \(\C \acts Y\) admits no Borel transversals. The same argument as in the proof
  of Theorem~\ref{thm:no-borel-equivariant-inverse-1-periodic} shows that no Borel
  \(\C\)-equivariant map \(\psi : Y \to \holom[1,1] \times \holom[1,1]\) can exist.
\end{remark}

\subsection{Subharmonic functions and Riesz measures}\label{sec:subh-funct-riesz}
As shown in Section~\ref{sec:poisson-equation}, if \(\Gamma\) is a closed subgroup of \(\Rd\) with
dimension \(\dim \Gamma \le d-2\), then there exists a Borel equivariant
\(\xi : \measp[\Gamma]{\Rd} \to \sharm[\Gamma]{\Rd} \) that is a right-inverse to \(\Delta\).  We
now show that such equivariant inverses do not exist when \(\dim \Gamma = d-1\).

The group \(\Gamma\) is isomorphic to a group of the form \(\R^{q} \times \Z^{p}\) for some unique
choice of \(p\) and \(q\) satisfying \(p + q \le d\), where \(p+q\) is \(\dim \Gamma\).  Moreover,
there exists a matrix \(B \in \mathrm{GL}_{d}(\R)\) such that
\(\Gamma = B(\R^{q} \times \Z^{p} \times \{0\}^{d-p-q})\).

We aim to prove the following theorem.

\begin{theorem}
  \label{thm:subharmonic-large-stabilizers}
  Suppose that \(\Gamma\) is a closed subgroup of \(\Rd\) of dimension \(\dim \Gamma \ge
  d-1\). There are no Borel equivariant maps \(\xi : \measp[\Gamma]{\Rd} \to \sharm[\Gamma]{\Rd}\)
  satisfying \(\Delta\xi(\mu) = \mu\) for all \(\mu \in \measp[\Gamma]{\Rd}\).  If
  \(\dim \Gamma = d-1\), then the argument shift action \(\Rd \acts \sharm[\Gamma]{\Rd} \) admits a
  Borel transversal. Furthermore, the only subharmonic functions whose stabilizer has dimension
  \(d\) are the constant functions.
\end{theorem}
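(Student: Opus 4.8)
The statement splits into three claims, which I would prove in the order (iii), (ii), (i), since the first two feed the last. For (iii): if $\dim\Gamma=d$ then $\Gamma$ spans $\Rd$ and $\Rd/\Gamma$ is compact, so a $\Gamma$-periodic $u\in\sharm{\Rd}$ descends to an upper semicontinuous function on a compact space, hence is bounded above, hence constant by the Liouville theorem for subharmonic functions. The same remark settles (i) when $\dim\Gamma=d$: if $\Gamma\neq\Rd$ then $\sharm[\Gamma]{\Rd}=\varnothing$ while $\measp[\Gamma]{\Rd}$ is nonempty (e.g.\ a product of a lattice sum of point masses with Lebesgue measure), so no map exists; and if $\Gamma=\Rd$, then $\Delta$ maps $\sharm[\Rd]{\Rd}=\{\text{constants}\}$ into $\{0\}$, whereas $\measp[\Rd]{\Rd}$ contains nonzero multiples of Lebesgue measure, so no right inverse exists. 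So from now on assume $\dim\Gamma=d-1$; fix a splitting $\Rd=W\oplus\R e$ with $W=\operatorname{span}\Gamma$ and $|e|=1$, so that $\Gamma$ is cocompact in $W$ and $\Rd/\Gamma\cong(W/\Gamma)\times\R\cong\T^p\times\R$; note that $\sharm[\Gamma]{\Rd}$ then contains no constants.

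Part (ii) is the analytic core, and I would model it on Proposition~\ref{prop:action-on-H1-smooth}. For $u\in\sharm[\Gamma]{\Rd}$ put $M_u(t)=\sup_{w\in W}u(w+te)$; by $\Gamma$-periodicity this is a supremum over a compact fundamental domain, hence finite. The crucial lemma---the subharmonic analogue of Lemma~\ref{lem:1-periodic-limit}---is the dichotomy $\lim_{t\to+\infty}M_u(t)=+\infty$ or $\lim_{t\to-\infty}M_u(t)=+\infty$. To prove it I would replace Lemma~\ref{lem:maximum-principle-strip} by a slab maximum principle: if $a<b$ and $u\le M$ on the hyperplanes $W+ae$ and $W+be$, then $u\le M$ on the closed slab between them. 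This follows by passing to the quotient by $\Gamma$, where $u$ becomes an upper semicontinuous function on the compact manifold-with-boundary $(W/\Gamma)\times[a,b]$, attains its maximum, and the maximum principle for subharmonic functions forces that maximum onto the boundary (an interior maximum would make $u$ locally constant there, hence constant on the connected interior, hence $\le M$ by upper semicontinuity up to the boundary). If $M_u$ stayed bounded, $\le M$, along some $a_n\to-\infty$ and $b_n\to+\infty$, the slab bound would give $u\le M$ on $\Rd$, so $u$ would be constant by (iii), contradicting that constants lie outside $\sharm[\Gamma]{\Rd}$.

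Granting the dichotomy, the transversal is assembled exactly as in Proposition~\ref{prop:action-on-H1-smooth}. For $k\in\N$ put $W_{u,k}=\{t:M_u(t)<k\}$; this is nonempty for large $k$ and, by the dichotomy, bounded above or below. Translating $u$ along $e$ shifts $W_{u,k}$, so exactly one or two translates of $u$ place $\sup W_{u,k}$ or $\inf W_{u,k}$ at the origin; the set of such normalized $u$ is Borel and invariant under the compact group $W/\Gamma$ (since $M_u$ ignores the $W$-coordinate), so after choosing a Borel transversal for that compact action, disambiguating the at-most-two candidates in a Borel way, and gluing over $k$---precisely the bookkeeping of the cited proof---one gets the desired transversal. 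The only thing to check is that $(u,t)\mapsto M_u(t)$ is Borel, which follows from the formula $u(x)=\lim_{r\to 0^+}\frac{1}{\lambda(B(x,r))}\int_{B(x,r)}u\,d\lambda$ for the upper semicontinuous representative together with monotonicity of the ball averages; the hyperplane case $p=0$, where $u$ is simply a convex function of $t$, is subsumed with the compact-quotient step empty.

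Finally, to finish (i) for $\dim\Gamma=d-1$, I would show $\Rd\acts\measp[\Gamma]{\Rd}$ admits no Borel transversal; then a Borel equivariant $\xi$ with $\Delta\circ\xi=\mathrm{id}$, being injective and orbit-bijective (by equivariance of $\xi$ and of $\Delta$), would pull the transversal of (ii) back to one for $\measp[\Gamma]{\Rd}$. For the non-existence: a linear change of coordinates---harmless here, since $\Delta$ plays no role in this part---reduces to $\Gamma=\Z^p\times\R^q\times\{0\}$, $p+q=d-1$; then for $x\in\free(2^{\Z})$ let $\nu$ be the $\Gamma$-periodic measure $\bigl(\sum_{k\in\Z^p}\delta_k\bigr)\otimes\lambda_{\R^q}$ on $W$, let $\nu_j$ be its translate onto $\{x_d=j\}$, and set $\varphi(x)=\sum_{j\in\Z}x(j)\,\nu_j$. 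As in Lemma~\ref{lem:D11-no-transversal}, $\varphi$ is a continuous injective equivariant map into $\measp[\Gamma]{\Rd}$ (the stabilizer of $\varphi(x)$ is exactly $\Gamma$ when $x$ is aperiodic), so a Borel transversal for the unit-shift $\Z$-action would pull back to a forbidden transversal for $\free(2^{\Z})$; and, as in Proposition~\ref{prop:action-on-D1plus-not-smooth}, a transversal for the full $\Rd$-action would, via a compact fundamental domain for the image of $\Z$ in $\Rd/\Gamma\cong\T^p\times\R$, produce such a $\Z$-transversal. I expect the main obstacle to be the slab maximum principle and the resulting growth dichotomy for $M_u$---it is the compactness of $(W/\Gamma)\times[a,b]$ that makes it work---together with the finicky but routine Borel measurability of $M_u$ in the $L^1_{\mathrm{loc}}$ topology; everything else transcribes the $\divisp[1]$ and $\holom[1]$ arguments.
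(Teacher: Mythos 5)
The overall architecture of your argument matches the paper's, and your route to the slab maximum principle (passing to the compact quotient $(W/\Gamma)\times[a,b]$ and invoking the strong maximum principle there) is a clean alternative to the paper's Phragm\'en--Lindel\"of trick with the auxiliary function $u_\epsilon(x)=u(x)-\epsilon(x_1^2+\cdots+x_{d-1}^2)+\epsilon(d-1)x_d^2$. Your reduction of the non-transversal for $\measp[\Gamma]{\Rd}$ to the Bernoulli shift also mirrors the paper's Lemma~\ref{lem:D11-no-transversal} and Proposition~\ref{prop:action-on-D1plus-not-smooth}. However, there is a genuine gap at the very step you yourself flag as the ``analytic core.''

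Your growth dichotomy ends with: ``the slab bound would give $u\le M$ on $\Rd$, so $u$ would be constant by (iii).'' But (iii) concerns functions whose stabilizer has dimension $d$; here $\stab(u)=\Gamma$ has dimension $d-1$, so (iii) says nothing about $u$. Nor does any Liouville theorem for subharmonic functions apply: in $\R^d$ with $d\ge 3$ there are plenty of non-constant bounded subharmonic functions (e.g.\ $\max\{-|x|^{2-d},-1\}$), so upper-boundedness alone does not force constancy. The compactness device you used for the slab does not rescue this either, because the relevant quotient $\Rd/\Gamma\cong\T^p\times\R$ is non-compact, so $u$ need not attain its supremum. What is actually needed here---and what the paper supplies---is the Hayman--Kennedy estimate: an upper-bounded subharmonic $u$ on $\Rd$ has Riesz mass $\mu=\Delta u$ satisfying $\int_0^\infty \mu(B(0,t))\,t^{1-d}\,dt<\infty$, whereas $(d-1)$-dimensional $\Gamma$-periodicity of a non-zero $\mu$ forces $\mu(B(0,t))\gtrsim t^{d-1}$, a contradiction. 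Without a step of this kind the dichotomy, and hence the existence of a Borel transversal for $\Rd\acts\sharm[\Gamma]{\Rd}$, is not established.

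A smaller slip of the same flavour occurs in your treatment of (iii): ``bounded above, hence constant by the Liouville theorem for subharmonic functions'' is not literally correct in $d\ge 3$. The argument is easily repaired there because $\Rd/\Gamma$ is compact when $\dim\Gamma=d$: $u$ attains its supremum and is then constant by the strong maximum principle. (The paper instead uses Green's formula to show $\Delta u=0$ and concludes via Liouville for harmonic functions.) But it is worth noting that the same fix does not carry over to the dichotomy, which is precisely why the paper's Riesz-mass argument cannot be avoided.
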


\subsubsection{The case \texorpdfstring{\(\dim \Gamma = d-1\)}{dim = d-1}.} Choose a
\((d-1)\)-dimensional vector subspace \(V\) of \(\Rd\) and a \((d-1)\)-dimensional lattice
\(\Lambda\) such that \(\Lambda \subseteq \Gamma \subseteq V\).  For instance, if
\(\Gamma = B(\R^{q} \times \Z^{p} \times \{0\})\), we can take
\[
  V=B (\R^{d-1}\times \{0\})\quad \textrm{and} \quad \Lambda = B(\Z^{d-1}\times \{0\}).
\]
We denote by \(D_0=D_0(\Lambda)\) a fundamental domain of the lattice \(\Lambda\).

Let \(e\) be one of the two (parallel) unit vectors in \(V^\perp\).  Any element in \(\Rd\) is
therefore uniquely represented as \(x+se\), with \(x\in V\) and \(s \in \R\).  Given a function
\(u\in \sharm[\Gamma]{\Rd}\) and \(s \in \R\), define
\begin{equation}
  \label{eq:def-Ms}
  M_u(s)=\sup\big\{u(x+se):x\in V \big\}=\sup\big\{u(x+se):x\in V\cap D_0 \big\},
\end{equation}
where the last equality follows from the \(\Gamma\)-periodicity of \(u\).  Since the closure
\(\overline{V\cap D_0}\) is compact and \(u\) is upper semi-continuous, the supremum is finite for
all~\(s\).  The following lemma is analogous to Lemma~\ref{lem:1-periodic-limit} and is the key to
establishing the existence of Borel transversals of \(\Rd \acts \sharm[\Gamma]{\Rd}\).

\begin{lemma}\label{lem:G-inv-SH-growth}
  Let \(\mathrm{dim}(\Gamma)=d-1\). For any \(u\in \sharm[\Gamma]{\Rd}\) one has
  \begin{equation}
    \label{eq:growth-G-inv-f}
    \lim_{s\to+\infty} M_{u}(s)=+\infty\quad\text{or}
    \quad \lim_{s\to-\infty} M_{u}(s)=+\infty,
  \end{equation}
  not excluding the case that both limits hold simultaneously.
\end{lemma}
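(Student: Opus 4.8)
The plan is to prove that $s\mapsto M_u(s)$ is a \emph{convex} function of $s\in\R$ and then to rule out the degenerate case in which it is bounded above. This parallels the proof of Lemma~\ref{lem:1-periodic-limit}: the role played there by the maximum modulus on horizontal lines together with the Lindel\"of maximum principle is now played by the convexity of the maximum of a subharmonic function over parallel hyperplanes (a ``three-hyperplanes'' principle), and the role of Liouville's theorem is played by the strong maximum principle for subharmonic functions.

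To establish convexity, I would fix $s_1<s<s_2$ with $s=\lambda s_1+(1-\lambda)s_2$, $\lambda\in(0,1)$, and let $\ell$ be the affine function on $[s_1,s_2]$ with $\ell(s_i)=M_u(s_i)$. Viewing $\ell$ as a function on $\Rd$ via $x+te\mapsto\ell(t)$ makes it affine, hence harmonic, so $w(x+te)=u(x+te)-\ell(t)$ is subharmonic on the open slab $\{x+te:x\in V,\ s_1<t<s_2\}$ and satisfies $w\le 0$ on the two bounding hyperplanes $\{t=s_i\}$. Since $\Lambda\subseteq\Gamma\subseteq V$ and both $u$ and $\ell$ are $\Lambda$-invariant, $w$ descends through the Riemannian covering $\Rd\to\Rd/\Lambda$ to a subharmonic function on the \emph{compact} slab $(V/\Lambda)\times[s_1,s_2]\cong\T^{d-1}\times[s_1,s_2]$, nonpositive on its boundary $\T^{d-1}\times\{s_1,s_2\}$. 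The maximum principle then gives $u(x+te)\le\ell(t)$ on the whole slab, and taking the supremum over $x\in V$ at $t=s$ yields $M_u(s)\le\lambda M_u(s_1)+(1-\lambda)M_u(s_2)$, i.e.\ convexity.

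With convexity in hand, each one-sided limit $\lim_{s\to+\infty}M_u(s)$ and $\lim_{s\to-\infty}M_u(s)$ exists in $(-\infty,+\infty]$ (the right derivative of a convex function is monotone), and if neither equals $+\infty$ then $M_u$ is bounded above, hence constant. So it remains to check that $M_u$ cannot be constant. If $M_u\equiv c$, then $u\le c$ on $\Rd$; by upper semicontinuity of $u$ and compactness of $\overline{V\cap D_0}$, the supremum in \eqref{eq:def-Ms} at $s=0$ is attained at some $z_0\in\Rd$, so $u(z_0)=c=\sup_{\Rd}u$. Since $u$ is subharmonic on the connected open set $\Rd$ and attains its supremum at the interior point $z_0$, the strong maximum principle (see \cite{hormanderAnalysisLinearPartial2003}*{Ch.~4}) forces $u$ to be constant; but then $\stab(u)=\Rd$ has dimension $d$, contradicting $\dim\Gamma=d-1$. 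Hence $M_u$ is nonconstant, and one of $\lim_{s\to+\infty}M_u(s)$, $\lim_{s\to-\infty}M_u(s)$ equals $+\infty$, possibly both, which is the assertion.

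The only delicate point is the use of the maximum principle on the a priori unbounded slab $V\times[s_1,s_2]$: this is precisely where the $\Gamma$-periodicity enters, allowing one to pass to the compact quotient $\T^{d-1}\times[s_1,s_2]$ so that the elementary maximum principle applies and no Phragm\'en--Lindel\"of-type growth estimate is required (in contrast to Lemma~\ref{lem:maximum-principle-strip}).
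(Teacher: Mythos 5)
Your proof is correct, and it is a genuinely different (and leaner) route than the paper's. The paper argues by contradiction: assuming both $M_u(-s_j)$ and $M_u(t_j)$ stay bounded, it first invokes a Phragm\'en--Lindel\"of maximum principle for subharmonic functions on unbounded slabs (Proposition~\ref{prop:Lindelof}, which requires introducing a quadratic barrier and letting a parameter $\epsilon\to 0$) to get $u\le M$ globally, and then rules this out via the Hayman--Kennedy estimate $\int_0^\infty \mu(B(0,t))/t^{d-1}\,dt<\infty$ for the Riesz mass of a globally upper-bounded subharmonic function, which it contradicts by a lattice-counting lower bound $\mu(B(0,t))\gtrsim t^{d-1}$. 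You instead prove outright that $s\mapsto M_u(s)$ is convex by comparing $u$ with an affine (hence harmonic) function on a slab and passing to the compact quotient $\T^{d-1}\times[s_1,s_2]$, so only the \emph{elementary} maximum principle on a precompact domain is needed --- exactly the point where $\Gamma$-periodicity replaces a Phragm\'en--Lindel\"of growth estimate. Convexity then reduces the dichotomy to the case $M_u$ constant, which you kill with the strong maximum principle for subharmonic functions (the sup is attained by upper semicontinuity and compactness of $\overline{V\cap D_0}$, and a subharmonic function on the connected space $\Rd$ attaining its global max is constant, contradicting $\dim\Gamma=d-1$). This avoids both the slab Phragm\'en--Lindel\"of lemma and the Riesz-mass growth theorem entirely, and in fact yields the stronger structural statement that $M_u$ is convex, of which the dichotomy is an immediate consequence. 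One cosmetic remark: you state that the one-sided limits of a convex function exist in $(-\infty,+\infty]$, but a priori they lie in $[-\infty,+\infty]$ (e.g.\ $M_u(s)=-s$ for the harmonic function $u(x)=-x_d$); this does not affect your argument, since what you actually use is that a convex function on $\R$ whose two limits are both $<+\infty$ is bounded above and hence constant, which is correct.
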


\begin{remark}
  Note that, in contrast to the planar case, there are plenty of bounded subharmonic functions in
  \(\Rd\) for \(d\ge 3\).  However, the lemma asserts that boundedness, even along
  sequences of affine hyperplanes of the form \[\{x + t_j e: x\in V\}, \quad \{x- s_j e: x\in V\},\]
  is incompatible with \(\Gamma\)-periodicity.
\end{remark}

Before we proceed with the proof, we recall a subharmonic counterpart of
Lemma~\ref{lem:maximum-principle-strip}.
\begin{proposition}\label{prop:Lindelof}
  Let \(u\) be an upper bounded subharmonic function in the domain
  \(D = \{x \in \Rd : |x_d| < 1 \}\), with \(\limsup_{y\to x}u(y)\le 0\) for \(x\in \partial D\).
  Then \(u(x)\le 0\) everywhere on \(D\).
\end{proposition}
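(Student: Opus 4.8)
The plan is to run the classical Phragm\'en--Lindel\"of argument for the slab $D$ against an explicit harmonic barrier. Write $x=(x',x_d)$ with $x'=(x_1,\dots,x_{d-1})\in\R^{d-1}$; we may assume $d\ge 2$, since for $d=1$ the set $D$ is a bounded interval and the conclusion is the elementary maximum principle for convex functions. The barrier I would use is
\[
  v(x)=\cos\!\Big(\tfrac{\pi x_d}{4}\Big)\sum_{j=1}^{d-1}\cosh\!\Big(\tfrac{\pi x_j}{4}\Big).
\]
A one-line computation shows $\Delta v=0$ on all of $\Rd$: the second $x_d$-derivative equals exactly minus the sum of the second $x_j$-derivatives. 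Hence $v$ is harmonic on $D$, and it is continuous and strictly positive on $\overline{D}=\R^{d-1}\times[-1,1]$. The point of the frequency $\pi/4$ is that $|x_d|\le 1$ forces $|\pi x_d/4|\le \pi/4<\pi/2$, so $\cos(\pi x_d/4)\ge 1/\sqrt{2}$ throughout $\overline{D}$; combined with $\cosh\ge 1$ this gives $v>0$ on $\overline{D}$ and, using $\max_j|x_j|\ge |x'|/\sqrt{d-1}$,
\[
  v(x)\ \ge\ \tfrac{1}{\sqrt{2}}\,\cosh\!\Big(\tfrac{\pi}{4}\max_j|x_j|\Big)\ \ge\ \tfrac{1}{2\sqrt{2}}\,\exp\!\Big(\tfrac{\pi|x'|}{4\sqrt{d-1}}\Big).
\]
In particular $v(x)\to+\infty$ as $|x'|\to\infty$, \emph{uniformly} in $x_d\in[-1,1]$. (It is essential not to require $v$ to vanish on $\partial D$: the tempting choice $\cos(\pi x_d/2)$ does vanish there but collapses near the edge of $D$ at infinity, and so fails the uniform-growth property.)

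Granting the barrier, the rest is routine. Put $M=\sup_D u<\infty$, fix $\epsilon>0$, and set $w_\epsilon=u-\epsilon v$; this is subharmonic on $D$ and, since $v>0$, satisfies $w_\epsilon\le M$ there. By the uniform growth of $v$, pick $R\ge 1$ with $v>M/\epsilon$ on $\{x\in\overline{D}:|x|\ge R\}$, so that $w_\epsilon<0$ on $D\setminus B(0,R)$. On the bounded open set $\Omega_R=D\cap B(0,R)$ I would verify $\limsup_{y\to\zeta}w_\epsilon(y)\le 0$ for every $\zeta\in\partial\Omega_R$: every such $\zeta$ lies either in $\partial D$, in which case $\limsup_{y\to\zeta}w_\epsilon(y)\le\limsup_{y\to\zeta}u(y)-\epsilon v(\zeta)\le 0$ by the boundary hypothesis together with the continuity and positivity of $v$; or else in $D$ with $|\zeta|=R$, in which case $w_\epsilon$ is upper semicontinuous at $\zeta$ and $w_\epsilon(\zeta)\le M-\epsilon v(\zeta)<0$. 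The maximum principle for subharmonic functions on the bounded domain $\Omega_R$ then yields $w_\epsilon\le 0$ on $\Omega_R$, and combined with $w_\epsilon<0$ off $B(0,R)$ we obtain $u\le\epsilon v$ on all of $D$. Letting $\epsilon\downarrow 0$ with $x$ fixed gives $u\le 0$ on $D$.

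The only genuinely delicate point is the barrier, and specifically the demand that it grow to $+\infty$ uniformly all the way up to the two bounding hyperplanes. This uniformity is exactly what is needed to control $u$ on the lateral piece $D\cap\partial B(0,R)$ of the boundary of the truncated domain, which approaches $\partial D$ at infinity as $R\to\infty$; it is what forces the subcritical frequency $\pi/4$ (any $\beta<\pi/2$ would do) in $\cos(\beta x_d)$, with the companion $\cosh(\beta x_j)$ then dictated by harmonicity. Everything else is the standard Phragm\'en--Lindel\"of exhaustion and presents no difficulty.
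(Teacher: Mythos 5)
Your proof is correct and proceeds by the same Phragm\'en--Lindel\"of exhaustion as the paper's, the only real difference being the choice of harmonic barrier. You take the classical exponential barrier \(v(x)=\cos(\pi x_d/4)\sum_{j<d}\cosh(\pi x_j/4)\), with the frequency tuned below \(\pi/2\) so that the cosine factor stays uniformly bounded away from zero on the closed slab while the hyperbolic sum forces \(v\to\infty\) uniformly in \(x_d\) as \(|x'|\to\infty\). The paper instead subtracts the quadratic harmonic polynomial \(\epsilon\bigl(|x'|^2-(d-1)x_d^2\bigr)\); this requires no frequency tuning, its harmonicity is a one-line check, and it suffices because polynomial growth in \(|x'|\) already beats a bounded \(u\). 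Your barrier is the ``industrial-strength'' one: it would also handle \(u\) growing like \(o(e^{\alpha|x'|})\) for \(\alpha<\pi/2\), a generality that is not needed here since \(u\) is assumed bounded above. Beyond the barrier, the two arguments agree: truncate to \(D\cap B(0,R)\), verify the boundary inequality separately on \(\partial D\) and on the lateral sphere, apply the classical maximum principle on the bounded domain, and send \(\epsilon\to0\). (The paper allows the slightly cruder boundary constant \((d-1)\epsilon\) rather than \(0\), which disappears in the same limit.)
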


\noindent The proof is a reduction to the classical maximum principle for subharmonic function on
bounded domains (see, for example,~\cite{haymanSubharmonicFunctionsVol1976}*{Thm.~2.4}).

\begin{proof}
  For \(\epsilon>0\), let
  \[
    u_\epsilon(x)=u(x)-\epsilon\left(x_1^2+\cdots+x_{d-1}^2\right) +\epsilon(d-1)x_d^2.
  \]
  Then \(\Delta u_\epsilon=\Delta u\), so the function \(u_\epsilon\) is also subharmonic.  On the
  boundary \(\partial D\), we have the upper bound \(u_\epsilon\le (d-1)\epsilon\) (understood in
  the sense of upper limits), and by taking \(R\) large enough we can ensure that the same upper
  bound holds on the whole of \(\partial (B(0,R)\cap D)\).  Hence, by the classical maximum
  principle we have \(u_\epsilon(x)\le (d-1)\epsilon\) for all \(x\in B(0,R)\cap D\).  Since \(R\)
  can be chosen arbitrarily large, we get \(u_\epsilon(x)\le (d-1)\epsilon\) throughout \(D\).  In
  terms of the original subharmonic function \(u\), this means that
  \[
    u(x)\le (d-1)\epsilon +\epsilon\left(x_1^2+\cdots+x_{d-1}^2\right) -\epsilon(d-1)x_n^2
  \]
  on \(D\), which in particular implies that
  \[
    u(x)=\mathrm{O}(\epsilon) \quad\text{ as }\epsilon\to 0,
  \]
  the error term estimate being uniform on compact subsets.  Letting \(\epsilon\to 0\) we obtain the
  desired conclusion.
\end{proof}

We turn to the proof of Lemma~\ref{lem:G-inv-SH-growth}.

\begin{proof}[Proof of Lemma~\ref{lem:G-inv-SH-growth}]
  For the sake of contradiction, assume that \eqref{eq:growth-G-inv-f} fails.  Then, there exists a
  constant \(M\) such that for some sequences \((s_j)_j\) and \((t_{j})_{j}\) tending to
  \(+\infty\), both \(M_{u}(-s_j)\) and \(M_{u}(t_j)\) are bounded from above by \(M\).  That is to
  say, the upper bound \(u\le M\) holds on the parallel affine hyperplanes
  \[
    \{x-s_je: x\in V\}\quad\text{and}\quad \{x+t_je: x\in V\}, \qquad \textrm{ for } j \in \N.
  \]
  Moreover, for each \(j\) individually, \(u\) is bounded from above in the domain
  \[
    F_j=\left\{x+se: x\in V,\, -s_j<s<t_j\right\}.
  \]
  But then \(u\le M\) on \(F_j\) by Proposition~\ref{prop:Lindelof}, and, since \(F_j\) exhaust
  \(\Rd\), \(u\) is upper bounded by \(M\) globally.  Let \(\mu=\Delta u\) be the Riesz mass of
  \(u\). By \cite{haymanSubharmonicFunctionsVol1976}*{Thm.~3.20}, we have that
  \begin{equation}
    \label{eq:Riesz-bd}
    \int_0^\infty \frac{\mu(B(0,t))}{t^{d-1}} dt<+\infty.
  \end{equation}

  We conclude the proof by showing that \eqref{eq:Riesz-bd} is incompatible with
  \((d-1)\)-periodicity.  Let \(u_1,\ldots,u_{d-1}\) be a linearly independent set of elements of
  \(\Gamma\), and consider the truncated rectangular cylinder
  \[
    Q_R=\biggl\{se+\sum_{j=1}^{d-1}x_ju_j:0\le x_j\le 1,\, |s|\le R\biggr\}.
  \]
  For large enough \(R\), \(\mu(Q_R)\ge c_0>0\), and by the \(\Gamma\)-invariance of \(\mu\), we
  have \(\mu(Q_R + x)=\mu(Q_R)\) for any \(x\) in the lattice generated by
  \(\Lambda=\{u_1,\ldots,u_{d-1}\}\).  Moreover, there exists a constant \(d_0>0\), depending only
  on \(R\) and \(\Lambda\), such that for \(t\) large enough the ball \(B(0,t)\) contains at least
  \(d_0 t^{d-1}\) disjoint translates of \(Q_R\) by elements of \(\Lambda\). Hence,
  \[
  \mu(B(0,t))\ge d_0t^{d-1}\mu(Q_R)\ge c_0d_0 t^{d-1}.
  \]
  But this contradicts the integrability \eqref{eq:Riesz-bd}, and we have reached a contradiction.
\end{proof}

Empowered by Lemma~\ref{lem:G-inv-SH-growth}, the action \(\Rd \acts \sharm[\Gamma]{\Rd}\) can be
seen to admit a Borel transversal, by an argument that parallels
Proposition~\ref{prop:action-on-H1-smooth}.  On the other hand, a construction similar to the one
given in Lemma~\ref{lem:D11-no-transversal} for divisors, shows that
\(\Rd \acts \measp[\Gamma]{\Rd}\) does not.  These two facts let us conclude that there are no
\(\Rd\)-equivariant Borel right-inverses to \(\Delta\) on \(\sharm[\Gamma]{\Rd}\).

\subsubsection{The case \texorpdfstring{\(\dim \Gamma = d\)}{dim = d}}
\noindent It is even simpler to exclude existence of equivariant inverses to \(\Delta\) on
\(\measp[\Gamma]{\Rd}\), when stabilizers have full dimension.  The key observation is the
following.

\begin{lemma}
  \label{lem:non-exist-SH-fulldim}
  Let \(\Gamma\) be a closed subgroup of \(\Rd\) of dimension \(d\).  Then any \(\Gamma\)-invariant
  subharmonic function is constant.
\end{lemma}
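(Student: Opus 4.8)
The plan is to reduce the statement to the strong maximum principle for subharmonic functions, the point being that a full-dimensional stabilizer forces the quotient to be compact. Recall from the discussion preceding Theorem~\ref{thm:equivariant-poisson-not-free} that a closed subgroup $\Gamma \le \Rd$ with $\dim\Gamma = d$ is isomorphic to $\Z^{p}\times\R^{q}$ with $p+q=d$, so that $\Rd/\Gamma$ is isomorphic to $\T^{p}$ and in particular \emph{compact}. Let $u\in\sharm[\Gamma]{\Rd}$. Since $u$ is subharmonic it lies in $\lloc$ (Section~\ref{sec:subh-funct}), so $u\not\equiv-\infty$; being $\Gamma$-periodic and upper semicontinuous, $u$ descends to an upper semicontinuous function $\bar u:\Rd/\Gamma\to[-\infty,\infty)$ on a compact space, which therefore attains its supremum. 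Hence $M:=\sup_{\Rd}u$ is finite and attained, say $u(x_{0})=M$.

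Next I would show that the set $A=\{x\in\Rd : u(x)=M\}$ is clopen in $\Rd$. It is closed because $A=\{u\ge M\}$ and $u$ is upper semicontinuous. To see it is open, fix $x\in A$ and $r>0$; the sub-mean-value inequality over solid balls (obtained from \eqref{eq:subharmonic} by integration in the radius, cf.\ \cite{haymanSubharmonicFunctionsVol1976}) gives
\[
M=u(x)\le\frac{1}{|B(x,r)|}\int_{B(x,r)}u\,d\lambda\le M,
\]
so $\int_{B(x,r)}(M-u)\,d\lambda=0$ with $M-u\ge0$, i.e.\ $u=M$ almost everywhere on $B(x,r)$. If there were $y\in B(x,r)$ with $u(y)<M$, upper semicontinuity would yield a neighbourhood of $y$ on which $u<M$, contradicting $u=M$ a.e.; hence $u\equiv M$ on $B(x,r)$ and $A$ is open.

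Finally, $A$ is a non-empty clopen subset of the connected space $\Rd$, so $A=\Rd$ and $u\equiv M$ is constant. The only step needing any care is the passage from ``$u=M$ almost everywhere on a ball'' to ``$u\equiv M$ on that ball'', which is exactly where upper semicontinuity is used; the rest is the elementary structure theory of closed subgroups of $\Rd$ together with standard facts about subharmonic functions. (Alternatively, one could avoid the maximum-principle argument altogether by invoking equidistribution: the ball averages $|B(0,R)|^{-1}\!\int_{B(0,R)}u\,d\lambda$ converge to the mean $\bar u$ of $u$ over a fundamental domain, while they dominate $u(0)$ by the sub-mean-value inequality, forcing $u\le\bar u$ everywhere, hence $\sup u=\bar u$ and $u\equiv\bar u$ a.e.\ and then everywhere by upper semicontinuity; but the clopen argument above is shorter.)
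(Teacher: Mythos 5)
Your proof is correct, but it takes a genuinely different route from the paper's. The paper first mollifies to reduce to smooth $u$, then applies Green's formula over a period box to see that $\int_D \Delta u\,dm$ vanishes (the boundary integrals cancel by $\Gamma$-periodicity), concludes that the nonnegative Riesz mass $\Delta u$ is zero and $u$ is harmonic, and finally invokes Liouville's theorem for bounded harmonic functions. You instead observe that $\dim\Gamma = d$ makes the quotient $\Rd/\Gamma$ compact, so the upper semicontinuous descent $\bar u$ attains its supremum $M$, and then run the strong maximum principle for subharmonic functions: the set $\{u=M\}$ is closed by upper semicontinuity and open by the solid-ball sub-mean-value inequality (your passage from ``$u=M$ a.e.\ on $B(x,r)$'' to ``$u\equiv M$ on $B(x,r)$'' via upper semicontinuity is exactly the step that needs, and gets, care), so it equals $\Rd$ by connectedness. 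Your argument is more self-contained --- it bypasses both the reduction to the smooth case and the Liouville theorem for harmonic functions --- while the paper's argument has the small advantage of going through the explicit identification $\Delta u = 0$, which fits the surrounding narrative about Riesz masses. Either version is a complete proof.
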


In contrast, \(\measp[\Gamma]{\Rd}\) contains non-zero elements whenever \(\Gamma\) is a proper
subgroup of \(\Rd\).  The proof of Lemma~\ref{lem:non-exist-SH-fulldim} therefore concludes the
proof of Theorem~\ref{thm:subharmonic-large-stabilizers}.

\begin{proof}
  Since any subharmonic function can be approximated by smooth subharmonic functions, it is enough
  to show that any smooth \(\Gamma\)-invariant subharmonic function is constant. If
  \(u\in C^2(\Rd)\), then for any bounded domain \(D\) with regular boundary, Green's formula gives
  \begin{equation}
    \label{eq:Green-on-G-box}
    \int_{D} \Delta u\,dm=\int_{\partial D}\nabla u\cdot {\rm n}\,dS,
  \end{equation}
  where \(\mathrm{d}S\) denotes the surface element and \({\rm n}\) is the outward unit normal to
  \(D\). We apply this with \(D\) being a box
  \[
    D=\biggl\{\sum_{j=1}^d x_ju_j: x\in (0,1)^d\biggr\}
  \]
  with \(\{u_j:1\le j\le d\}\) a set of linearly independent elements of \(\Gamma\). Then, by
  periodicity, the right-hand side of \eqref{eq:Green-on-G-box} vanishes (for any \(k\), the
  integrand coincides on the two faces \(\{\sum_j x_ju_j:x_k=0\}\) and \(\{\sum_j x_ju_j:x_k=1\}\),
  but the direction of the normal is reversed).  Hence \(u\) is harmonic, and, since it is
  \(\Gamma\)-periodic, it is bounded and thus constant by the Liouville's theorem for harmonic
  functions~\cite{nelsonProofLiouvillesTheorem1961}.
\end{proof}

\subsection[Non-existence of the d-bar equation]{Non-existence for the
  \texorpdfstring{\(\bar\partial\)-equation}{Non-existence of the d-bar equation}}
\label{s:d-bar}
\noindent Let \(\smooth[1]{\C,\C}\) denote the space of smooth complex-valued functions on
\(\C\) with a discrete \(1\)-dimensional stabilizer. The goal of this section is to prove
the following result.
\begin{theorem}
  \label{thm:non-exist-dbar}
  There does not exist any equivariant Borel right-inverse to \(\bar\partial\) on
  \(\smooth[1]{\C,\C}\).
\end{theorem}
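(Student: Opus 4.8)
The plan is to argue by contradiction. Suppose $\xi\colon\smooth[1]{\C,\C}\to\smoothCC$ is a Borel $\C$-equivariant right-inverse of $\bar\partial$; equivariance forces $\stab(\xi(\gamma))\supseteq\stab(\gamma)$, so $\xi(\gamma)$ is periodic whenever $\gamma$ is. The mechanism driving the contradiction concerns the $0$-th Fourier coefficient along the period direction. For $\varphi\in\smoothCC$ that is $1$-periodic in the real variable write $A(\varphi)(t)=\int_0^1\varphi(x+it)\,dx$. If $\bar\partial\varphi=\gamma$, then integrating $\tfrac12(\partial_x+i\partial_y)\varphi=\gamma$ over $x\in[0,1]$ annihilates the $\partial_x$-term (by periodicity) and leaves $A(\varphi)'(t)=-2i\,A(\gamma)(t)$, whence
\[
A(\varphi)(t)=A(\varphi)(0)-2i\int_0^t A(\gamma)(s)\,ds .
\]
Thus, for every $\bar\partial$-preimage of a $1$-periodic $\gamma$, the running integral of the $0$-th mode of $\gamma$ appears as a drift in the $0$-th mode of the preimage; if that integral diverges, no preimage can have a law invariant under vertical translations.

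I would then exhibit a vertically stationary ergodic random $\gamma\in\smooth[1]{\C,\C}$ for which $\int_0^t A(\gamma)$ grows. Fix a nonzero $\rho\in\smoothcs{\R}$ with $\rho\ge 0$ and $\supp\rho\subseteq(-\tfrac18,\tfrac18)$, let $(U_n)_{n\in\Z}$ be i.i.d.\ uniform on $[-\tfrac14,\tfrac14]$, and set
\[
\gamma(x+iy)=\Bigl(1+\tfrac12\cos 2\pi x\Bigr)\sum_{n\in\Z}\rho\bigl(y-n-U_n\bigr) .
\]
This is a smooth nonnegative real function, $1$-periodic in $x$ with least period $1$; the summands have pairwise disjoint supports, and a short argument (a hypothetical extra period $a+ib$ would force $a\in\Z$ from the $x$-profile and then $b$ to be a period of $\sum_n\rho(\cdot-n-U_n)$, forcing a nontrivial shift-symmetry of $(U_n)_n$, which has probability $0$) shows that almost surely $\stab(\gamma)=\Z$, so $\gamma\in\smooth[1]{\C,\C}$ a.s. The law of $\gamma$ is invariant under the integer vertical shift $T_i\colon\varphi\mapsto\varphi(\cdot+i)$, since $T_i$ corresponds to the law-preserving shift of the i.i.d.\ sequence, and it is ergodic for $T_i$ because $\gamma$ is a factor of that Bernoulli shift. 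Here $A(\gamma)(y)=\sum_n\rho(y-n-U_n)\ge 0$, and applying the ergodic theorem to the $L^1$ functional $\varphi\mapsto\int_{[0,1]^2}\varphi$ gives
\[
\frac1N\int_0^N A(\gamma)(s)\,ds=\frac1N\sum_{k=0}^{N-1}\int_{[0,1]^2}T_{ik}\gamma\;\longrightarrow\;\mathbb{E}\Bigl[\int_{[0,1]^2}\gamma\Bigr]=\int_{\R}\rho\;=:m>0\quad\text{a.s.}
\]
(If a fully $\C$-invariant law is wanted, one may instead distribute the bumps along a rate-one Poisson process on the line and randomise the horizontal phase by an independent element of $\T$; only vertical invariance and ergodicity are used.)

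Finally I would close the loop. Since $\gamma\in\smooth[1]{\C,\C}$ a.s., $h:=\xi(\gamma)$ is defined a.s., is $1$-periodic in $x$, and satisfies $\bar\partial h=\gamma$; the displayed identity gives $|A(h)(N)|\ge 2\int_0^N A(\gamma)(s)\,ds-|A(h)(0)|\to+\infty$ almost surely as $N\to\infty$. On the other hand $\xi$ is Borel and $T_i$-equivariant and $\mathrm{Law}(\gamma)$ is $T_i$-invariant, so $\mathrm{Law}(h)$ is $T_i$-invariant; since $\varphi\mapsto A(\varphi)(0)=\int_0^1\varphi(x)\,dx$ is continuous on $\smoothCC$ and $A(h)(N)=A(T_{iN}h)(0)$, the law $\nu_0$ of $A(h)(N)$ is the same for all $N\in\Z$. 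Then $\mathbb{P}(|A(h)(N)|\le R)=\nu_0(\{|w|\le R\})$ is independent of $N$ yet, since $\nu_0$ is a probability measure, positive for $R$ large, contradicting $|A(h)(N)|\to\infty$ in probability. Hence no such $\xi$ exists.

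The step I expect to require the most care is the almost-sure identification $\stab(\gamma)=\Z$ — ruling out a second period or a one-parameter symmetry of the random bump sum — together with the routine bookkeeping that places the construction in $\smooth[1]{\C,\C}$ with a $T_i$-invariant, $T_i$-ergodic law and verifies the integrability needed for the ergodic theorem; the Fourier identity and the concluding tightness argument are short. This realises the \say{non-stationary $1$-periodic $f$ with stationary $\bar\partial f$} idea from the introduction: one may equivalently take $f$ to be a $1$-periodic solution of $\bar\partial f=\gamma$ whose $0$-th Fourier coefficient is $-2i\int_0^{\cdot}A(\gamma)$ (so $f$ is non-stationary while $\gamma=\bar\partial f$ is stationary), and note that $\xi(\gamma)-f$ is a $1$-periodic entire function with constant $0$-th Fourier coefficient, so $\xi(\gamma)$ inherits the unbounded drift of $f$.
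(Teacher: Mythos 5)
Your proof is correct and shares the meta-strategy of the paper's argument—exhibit a stationary random \(\gamma\in\smooth[1]{\C,\C}\) for which no stationary \(\bar\partial\)-preimage can exist, then contradict the equivariance of \(\xi\)—but the mechanism is genuinely different and arguably cleaner. The paper fixes a specific non-stationary preimage \(f(x+\im y)=-2\im y+\sin(2\pi x)\nu(y)\) with \(\bar\partial f\) stationary, and then needs a complex-analytic detour (the Lindel\"of maximum principle combined with Liouville's theorem) to show that \(\Psi(\bar\partial f)-f\) reduces to a constant before the stationarity contradiction can be run. You bypass that detour entirely: the identity \(A(\varphi)'(t)=-2\im\,A(\gamma)(t)\) for the zeroth Fourier mode along the period direction holds for \emph{every} \(1\)-periodic solution of \(\bar\partial\varphi=\gamma\), so the linear drift of \(A(h)(N)\) is automatic, with no need to compare against a favoured preimage. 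What this buys is a short, elementary closing argument via tightness of the constant law of \(A(h)(N)\); what it costs is that your \(\gamma\) has to be built so that \(\E\bigl[\int_0^1 A(\gamma)\bigr]>0\) (achieved by the Bernoulli bump sum and disjointness of supports), whereas the paper encodes the drift trivially through the explicit term \(-2\im y\). You correctly flag the remaining care point: verifying \(\stab(\gamma)=\Z\) almost surely. The paper also glosses this (it only asks for a stationary process with ``non-periodic trajectories''), and your sketch—extracting \(a\in\Z\) from the \(x\)-profile \(1+\tfrac12\cos 2\pi x\), then ruling out a vertical period of the bump sum because it would force a nontrivial a.s.-impossible shift-symmetry of the i.i.d.\ sequence \((U_n)\)—is sound and would need to be written out carefully in either version of the proof.
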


\begin{proof}
  Informally speaking, the idea is to start with an appropriate \emph{non-stationary} random
  \(\smooth[1]{\C,\C}\)-function \(f\) such that \(g=\bar\partial f\) is stationary.  If an
  equivariant Borel right-inverse \(\Psi\) to \(\bar\partial\) existed, then \(F=\Psi(g)\) would be
  a \emph{stationary} random function such that \(F-f\) is entire.  But by the ergodic theorem
  combined with \(1\)-periodicity, \(F-f\) can be shown to be a constant, contradicting the
  non-stationarity of \(f\).

  \medskip

  Let us fill in the missing details. We let \(\xi\) be a stationary random element of
  \(\smooth[1]{\C,\C}\). For definiteness, we may take a stationary process \(\nu\) on
  \(\R\) with smooth and non-periodic trajectories, and put
  \[ \xi(x+\im y)=\sin(2\pi x)\nu(y). \]
  Then let \(f(x+\im y)=-2\im y+ \xi(x+\im y)\).  Since \(\bar\partial f=1+\bar\partial \xi\), the
  function \(f\) satisfies the desired properties.  Next, note that \(\partial(F-f)\) is a
  \(1\)-periodic stationary random entire function, and hence constant. The argument is similar to
  the one used in Section~\ref{sec:divis-merom-funct}.  Indeed, by the ergodic theorem, there is a
  constant \(M\) and sequences \(s_j,t_j\to+\infty\) such that \(|\partial(F-f)|\) is upper bounded
  by \(M\) on
  \[
    \left\{x-\im s_j:x\in [0,1]\right\}\cup \left\{x+\im t_j:x\in [0,1]\right\}.
  \]
  By Lindelöf's maximum principle we have that \(|\partial(F-f)(x+\im y)|\le M\) for
  \(y\in [-s_j,t_j]\). But then \(|\partial (F-f)|\) is bounded on \(\C\) and hence
  constant. It thus follows that \(F(z)=f(z)+az+b\), and by \(1\)-periodicity we can further claim
  that \(F(z)=f(z)+b\).

  \medskip

  To finish the proof, we claim that this is a contradiction. Note that there are non-stationary
  random functions which become stationary after adding a constant---take e.g., \(\nu(t)-\nu(0)\)
  with \(\nu\) a stationary process on \(\R\) as above.  However, this is not the case for
  our random function \(f\). By the Birkhoff ergodic theorem, we get that for fixed \(M>0\),
  \[
    \lim_{R\to\infty}\frac{\big|\big\{y\in[-R,R]: \max_{x\in[0,1]}|\xi(x+\im y)|\le
      M\big\}\big|}{2R} =\mathbb{E}\left(\bbone_{\{|\xi(x)|\le M\text{ for
        }x\in[0,1]\}}|\mathcal{I}\right),
  \]
  where \(\mathcal{I}\) is the \(\sigma\)-algebra of invariant events.  In particular, as
  \(M\to\infty\) the right-hand side tends to \(1\) in \(L^1\).  Looking at the corresponding limit
  for \(F\), we have in the same way that
  \begin{equation}
    \label{eq:F-aver}
    \lim_{M\to\infty}\lim_{R\to\infty}\frac{\big|\big\{y\in[-R,R]:
      \max_{x\in[0,1]}|F(x+\im y)|\le M\big\}\big|}{2R}=1
  \end{equation}
  in \(L^1\).  Now, if \(|\xi(x+\im y)|\le M\) and \(|y|>M\), then the reverse triangle inequality
  gives the lower bound \(|f(x+\im y)|\ge |2y|-|\xi|>M\) so that
  \[
    \Big\{\max_{x\in[0,1]}|f(x+\im y)|>M\Big\}\supseteq \Big\{\max_{x\in[0,1]}|\xi(x+\im y)|\le
    M\Big\}\cap\Big\{|y|>M\Big\}.
  \]
  As a consequence, the ergodic averages pertaining to \(f\) satisfy
  \begin{multline*}
    \limsup_{R\to\infty}\frac{\big|\big\{y\in[-R,R]:
      \max_{x\in[0,1]}|f(x+\im y)|\le M\big\}\big|}{2R}\\
    \le 1-\mathbb{E}\left(\bbone_{|\xi(x)|\le M\text{ for }x\in[0,1]}|\mathcal{I}\right).
  \end{multline*}
  Taking finally the limit as \(M\to\infty\), we have the \(L^1\)-convergence
  \begin{equation}
    \label{eq:f-aver}
    \lim_{M\to\infty}\lim_{R\to\infty}\frac{\big|\big\{y\in[-R,R]:
      \max_{x\in[0,1]}|f(x+\im y)|\le M\big\}\big|}{2R}=0.
  \end{equation}
  The conclusions \eqref{eq:F-aver} and \eqref{eq:f-aver} are clearly incompatible with \(F-f\)
  being constant, and hence we have reached a contradiction.
\end{proof}

\section{Non-existence condition for equivariant inverses}
\label{sec:lack-equiv-invers}

In this section, we continue our investigation of the lifting problem.  While
Theorem~\ref{thm:main-theorem} gives sufficient conditions for the existence of Borel liftings, here
we give several examples where liftings do not exist.

The prototype example is the Borel entire function \(F\) generated by the action
\(\C \acts \mathcal E\) by translation of the argument \(f\mapsto f(\cdot +z)\), and the map
\(\pi\colon \mathcal E \to \mathcal E\), \(f\mapsto f'\).  As it turns out, this map \(\pi\) does
not have a Borel equivariant (right) inverse, i.e., \(F\) does not have a Borel primitive.  There
are several other natural maps \(\pi\) without Borel equivariant right inverses.  The general
framework, that covers the derivative, as well as other maps \(\pi\), is given in
Theorem~\ref{thm:sufficient-condition} and uses the classical Birkhoff observation that the action
\(\C \acts \mathcal E\) by translation of the argument has a dense orbit, (i.e., is topologically
transitive) combined with the Baire category argument.

\subsection{Prerequisites}
\label{subsect:Prerequisites}

\subsubsection{The Baire property}
Recall that a subset \(A \subseteq Z\) of a Polish space \(Z\) is \emph{meager} (or of the first
category) if there are nowhere dense sets \(F_n\subseteq Z\), \(n\in\N\), such that
\(A\subseteq \bigcup_n F_n\). The set \(A\) is \emph{comeager} (or residual) if its complement
\(Z\setminus A\) is meager, that is, there exist dense open sets \(U_n\), \(n\in \N\), such that
\(A\supseteq \bigcap_n U_n\).  By the Baire category theorem, every comeager set is dense in \(Z\).

The set \(A \subseteq Z\) has the \emph{Baire property} if there exists an open set \(U\) such that
the set \(A\symd U\) is meager, i.e., \(A\) can be represented in the form \(A=U\symd M\), where
\(U\) is open, and \(M\) is meager.  Note that all open sets and all meager sets have the Baire
property.  We recall several basic facts about the sets with the Baire properties, which we will be
using.

\begin{enumerate}[leftmargin=.6cm, label={\sf \arabic*}., ref={\sf \arabic*}]
\item The class of sets having the Baire property is a \(\sigma\)-algebra containing all open
  sets. In particular, all Borel sets have the Baire property.
\item A somewhat deeper fact is that all analytic sets have the Baire
  property~\cite{Kechris}*{21.6}, \cite{MR1619545}*{Thm.~4.3.2}.  To see this, one needs to observe
  that closed sets possess the Baire property, and then to use the fact that the class of sets
  having the Baire property is closed under the Suslin scheme, and that every analytic set can be
  obtained by applying the Suslin scheme to closed sets.

\item For a set \(A\subseteq Z\) with the Baire property, put
  \[
    U(A) = \bigcup \bigl\{U:U\ {\rm open}, U\setminus A\ {\rm is\ meager} \bigr\}.
  \]
  Then,
  \begin{enumerate}[leftmargin=.8cm]
  \item[{\sf (a)}] \(A \symd U(A)\) is meager;
  \item[{\sf (b)}] the open set \(U(A)\) is \emph{regular}, i.e., it equals the interior of its
    closure;
  \item[{\sf (c)}] \(U(A)\) is the unique regular open set \(U\) such that \(A\symd U\) is meager.
  \end{enumerate}
  For the details, see \cite{Kechris}*{8G}.
\end{enumerate}

\subsubsection{Generic ergodicity}
Let \(G \acts Z\) be a continuous action of a Polish group \(G\) on a Polish space \(Z\).  The
action is called \emph{generically ergodic} if every \(G\)-invariant set in \(Z\) with the Baire
property is either meager, or comeager.

Note that if \(A \subseteq Z\) is a \(G\)-invariant set that has the Baire property, and
\(A = U \symd M\) for a regular open \(U\) and a meager \(M\), then \(U\) is \(G\)-invariant.
Indeed, for \(g\in G\),
\[
  U \symd M = A = gA = gU \symd gM
\]
implies \(U=gU\) since both \(U\) and \(gU\) are regular open, while \(M\) and \(gM\) are both
meager.

Furthermore, generic ergodicity is equivalent to the existence of a dense orbit (aka
\emph{topological transitivity}).  Indeed, if \(G \acts Z\) is generically ergodic and \((U_n)_n\)
is a countable basis of non-empty open sets for the topology of \(Z\), then the \(G\)-invariant set
\(\bigcap_n GU_n\) is comeager and therefore dense, and, in fact, the orbit of each
\(z\in \bigcap_n GU_n\) is dense. Conversely, let \(G \curvearrowright Z\) have a dense
orbit. Denote by \(A\) a \(G\)-invariant set with the Baire property.  Then \(A = U \symd M\), where
\(U\) is regular open and \(M\) is meager.  Then \(U\) is also \(G\)-invariant, so it contains a
dense orbit and is therefore dense (assuming \(U\) is non-empty). Consequently, either
\(U = \varnothing\) and \(A\) is meager, or \(U\) is a dense open set and \(A\) is comeager; in
other words, the action is generically ergodic.

\subsection{Non-existence condition}
\label{subsect:non-exist}
We are now ready to formulate our \emph{non-existence condition} for Borel equivariant liftings.
Here is the setting of our result:

\paragraph{1.}
Let \(G\acts Z\) be a continuous action by a Polish group \(G\) on a Polish space \(Z\), and
\(G\acts Y\) be a Borel action on a standard Borel space \(Y\).

\paragraph{2.}
Let \(\pi: Z\to Y\) a Borel equivariant map.  Our aim is to give conditions under which \(\pi\) does
not have a Borel equivariant right inverse.

\paragraph{3.}
Suppose that \(H\acts Z\) is a free continuous action by a Polish group \(H\), whose orbits are
classified by \(\pi\):
\begin{equation}\label{eq:pi-H}
  \pi (z_1)=\pi(z_2) \Longleftrightarrow z_1 E_H z_2.
\end{equation}

\paragraph{4.}
Let \(\tau : G \acts H\) be a continuous action by automorphisms, and let \(H \rtimes_{\tau} G\) be
the corresponding semidirect product.  We assume that \(H \rtimes_{\tau} G\) acts continuously on
\(Z\), and that this action is compatible with the actions \(G \acts Z\) and \(H \acts Z\).

\bigskip

There are some differences with the setting of the main result,
cf.~Section~\ref{sec:standing_assumptions}. Most notably, \(Z\) is assumed to be a Polish space and
\(H \rtimes_{\tau} G \acts Z\) is assumed to be continuous.

\begin{theorem}
  \label{thm:sufficient-condition}
  If \(G \acts Z\) is generically ergodic while \(\tau:G\acts H\) is not, then there are no Borel
  \(G\)-equivariant inverses to \(Z \to \pi(Z)\).  Moreover, if \(Y' \subseteq Y \cap \pi(Z)\) is a
  Borel \(G\)-invariant set such that \(Z' = \pi^{-1}(Y')\) is comeager in \(Z\), then there are no
  Borel \(G\)-equivariant inverses to \(\pi|_{Z'} : Z' \to Y'\) either.
\end{theorem}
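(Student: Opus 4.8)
The plan is to assume, towards a contradiction, that a Borel $G$-equivariant right inverse exists and to convert it into a Borel $G$-equivariant map $Z'\to H$ that is, in addition, anti-equivariant for the $H$-action; the $H$-anti-equivariance will be exactly what makes the generic-ergodicity hypothesis on $G\acts Z$ bite. I will prove the ``Moreover'' statement; the first assertion is the special case $Y'=\pi(Z)$, $Z'=Z$ (which is trivially comeager in $Z$), with $\psi:\pi(Z)\to Z$ regarded as Borel for the Borel structure $\pi(Z)$ inherits from $Y$ — the composition $z\mapsto\psi(\pi(z))$ is then Borel because $\pi^{-1}$ of a trace of a Borel set of $Y$ equals $\pi^{-1}$ of that Borel set. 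So let $Y'\subseteq Y\cap\pi(Z)$ be Borel and $G$-invariant with $Z'=\pi^{-1}(Y')$ comeager, and suppose $\psi:Y'\to Z'$ is Borel, $G$-equivariant, with $\pi\circ\psi={\rm id}_{Y'}$. Note $Z'$ is $G$-invariant (since $Y'$ is and $\pi$ is equivariant) and $H$-invariant (it is a union of $\pi$-fibers, i.e.\ of $H$-orbits), and it is Borel in $Z$.

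First I would define $\theta:Z'\to H$ by letting $\theta(z)$ be the unique element with $\theta(z)\cdot z=\psi(\pi(z))$; this is well defined since $\pi(z)=\pi(\psi(\pi(z)))$ forces $z$ and $\psi(\pi(z))$ into the same $H$-orbit, and $H\acts Z$ is free. The relation $E_H=\{(z_1,z_2):\pi(z_1)=\pi(z_2)\}$ is Borel, so by Becker--Kechris the map $(z_1,z_2)\mapsto\{h:hz_1=z_2\}\in\effros{H}$ is Borel, and composing with a Kuratowski--Ryll-Nardzewski selector (the sets are singletons here) shows $\theta$ is Borel. A short computation, using equivariance of $\pi,\psi$ and the identity $g\cdot(h\cdot w)=\tau^g(h)\cdot(g\cdot w)$ coming from the semidirect-product compatibility, yields $\theta(g\cdot z)=\tau^g(\theta(z))$ and $\theta(h\cdot z)=\theta(z)h^{-1}$ for all $g\in G$, $h\in H$, $z\in Z'$.

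The key step — and the one I expect to be the main obstacle — is the category estimate: for every nonempty open $W\subseteq H$, the set $\theta^{-1}(W)$ is non-meager in $Z$. Fix a nonempty basic open $W_0\subseteq W$. Since $H$ is Polish, hence Lindelöf, the cover $\{W_0h^{-1}:h\in H\}$ of $H$ has a countable subcover $H=\bigcup_n W_0h_n^{-1}$; the $H$-anti-equivariance of $\theta$ gives $\theta^{-1}(W_0h_n^{-1})=h_n\cdot\theta^{-1}(W_0)$, so $Z'=\theta^{-1}(H)=\bigcup_n h_n\cdot\theta^{-1}(W_0)$. As $Z'$ is comeager in the Polish space $Z$ it is non-meager, so some $h_n\cdot\theta^{-1}(W_0)$ is non-meager, and since $H$ acts by homeomorphisms of $Z$, $\theta^{-1}(W_0)$ — hence $\theta^{-1}(W)$ — is non-meager. (Without the $H$-anti-equivariance one only knows $\theta^{-1}(W)\neq\varnothing$, which is useless, since a nonempty Borel set can be meager; this is precisely why the argument needs the extra structure.) To finish: since $\tau:G\acts H$ is not generically ergodic it is not topologically transitive, so there are nonempty open $U,V\subseteq H$ with $\tau^g(U)\cap V=\varnothing$ for all $g$, i.e.\ $GU\cap V=\varnothing$ where $GU:=\bigcup_g\tau^g(U)$ is open and $G$-invariant. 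Then $\theta^{-1}(GU)$ is a $G$-invariant Borel (so Baire-measurable) subset of $Z$, hence meager or comeager by generic ergodicity of $G\acts Z$; it contains the non-meager set $\theta^{-1}(U)$, so it is comeager, whence $\theta^{-1}(V)\subseteq Z\setminus\theta^{-1}(GU)$ is meager, contradicting the non-meagerness of $\theta^{-1}(V)$.
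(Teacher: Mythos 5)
Your proof is correct, and it takes a genuinely different route from the paper's. The paper argues on the \(Z\)-side: from the hypothetical right inverse \(\xi\colon Y'\to Z'\) it forms the Borel \(G\)-invariant partial transversal \(T=\xi(Y')\), picks a \(G\)-invariant Baire-measurable \(A\subseteq H\) that is neither meager nor comeager, and shows that \(A\cdot T\) and \((H\setminus A)\cdot T\) give a disjoint pair of \(G\)-invariant Borel non-meager sets, contradicting generic ergodicity of \(G\acts Z\). You instead argue on the \(H\)-side: you package the hypothetical inverse into a Borel cocycle-like map \(\theta\colon Z'\to H\), observe its equivariance properties \(\theta(g\cdot z)=\tau^g(\theta(z))\) and \(\theta(h\cdot z)=\theta(z)h^{-1}\), and pull \emph{open} sets of \(H\) back to \(Z\). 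Your non-meagerness step is cleaner than the paper's analogous one: the Lindelöf covering \(Z'=\bigcup_n h_n\cdot\theta^{-1}(W_0)\) together with the fact that each \(h\in H\) acts as a homeomorphism of \(Z\) gives non-meagerness of \(\theta^{-1}(W)\) for every nonempty open \(W\), with no further input. The paper at the corresponding point asserts that the continuous injective orbit map \(H\times T\to Z'\) ``cannot map non-meager sets onto meager ones''; as a general principle about continuous injections (or even bijections) between Polish spaces this is false, so the step as stated needs more justification for the specific map in question. Because you use failure of topological transitivity of \(\tau\) to select \emph{open} \(U,V\subseteq H\) with \(GU\cap V=\varnothing\), rather than an arbitrary non-meager/non-comeager \(G\)-invariant Baire set \(A\subseteq H\), you only ever need to control preimages of open sets, which is precisely what the covering argument delivers; this is the real structural advantage of your version.
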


\begin{proof}
  Suppose that \(\xi : Y' \to Z'\) is Borel \(G\)-equivariant, satisfies \(\pi(\xi(y)) = y\) for all
  \(y \in Y'\), and that \(Z'\) is comeager in \(Z\).  Such a map \(\xi\) is necessarily injective.
  Consider the set \(T=\xi(Y')\). It is Borel by the Luzin--Suslin theorem (\cite{Kechris}*{Thm.\
    15.1}, \cite{MR1619545}*{Prop.~4.5.1}), it is \(G\)-invariant (by the equivariance of \(\xi\)),
  intersects each \(H\)-orbit at most once (by Eq.~\eqref{eq:pi-H}), and its saturation
  \(H \cdot T = \pi^{-1}(Y') = Z'\) is comeager in \(Z\).

  Suppose that the action \(\tau\) is not generically ergodic.  Then there exists a \(G\)-invariant
  Borel set \(A\subset H\) such that both \(A\) and \(H\setminus A\) are non-meager subsets of
  \(H\).  Consider the disjoint subsets \(A\cdot T\) and \((H\setminus A)\cdot T\) of \(Z\). We
  claim that these sets are (i) \(G\)-invariant, (ii) Borel, (iii) non-meager. This contradicts the
  generic ergodicity of the action \(G \acts Z\).  It suffices to check these properties for the set
  \(A\cdot T\).

  The \(G\)-invariance of \(A\cdot T\) follows from the relation \(g(hz)=(\tau^g h)(gz)\) combined
  with the \(G\)-invariance of \(A\) and \(T\).  To check this relation, we note that, by
  assumption, \(G\) and \(H\) act on \(Z\) as follows: \(gz=(e_H, g)z\), \(hz=(h, e_G)z\), where
  \(e_H\) and \(e_G\) are the units in \(H\) and \(G\). Thus,
  \[ g(hz)=g((h, e_G)z)=(e_H, g)(h, e_G)z=(e_H \tau^g h, g)z = (\tau^g h, g)z, \] and therefore,
  \[
    (\tau^g h)(gz) = (\tau^g h, e_G)(e_H, g)z = (\tau^g h, g)z = g(hz).
  \]

  Borelness of \(A\cdot T\) follows from applying the Luzin--Suslin theorem to the continuous
  injective map \(\psi\colon (h, t)\mapsto ht\).

  To see that the set \(A\cdot T\) is non-meager, first note that since \(A\) is non-meager in \(H\)
  and \(H\) is Polish, there exists a non-empty open set \(O\subseteq H\) such that \(A\cap O\) is
  comeager in \(O\).  Then \((A\cap O)\times T\) is comeager in \(O\times T\), and therefore,
  \(A \times T\) cannot be meager in \(H\times T\). It remains to note that
  \(A\cdot T = \psi (A\times T)\), where the map \(\psi\colon A\times T \to Z'\) is continuous and
  injective, so it cannot map non-meager sets onto meager ones. Thus \(A\cdot T\) is non-meager in
  \(Z'=H\cdot T\), and since \(Z'\) is comeager in \(Z\), \(A\cdot T\) remains non-meager in \(Z\).
\end{proof}

\subsection{Applications}
\label{subsect:applications}

Theorem~\ref{thm:sufficient-condition} has multiple applications when \(G = \C\) acts on the space
of entire functions \(Z = \holom\) via the argument shift.  Note that, by Birkhoff's theorem,
\(\C \acts \holom\) has dense orbits and is thus generically ergodic.  In all of the following
examples \(\tau\) is the trivial action (that is, \(\tau^g h = h\) for all \(g\) and \(h\)), and the
semidirect product is really just the direct product \(\C \times H\).  In particular,
\(\tau : \C \acts H\) is automatically not generically ergodic provided that \(H\) is non-trivial.
In the following examples, elements of \(Y\) are functions, and \(\C\) acts on \(Y\) via the
argument shift.  For each of the following maps \(\pi\), there are no Borel equivariant inverses
\(\xi : \pi(Z) \to Z\).
\begin{enumerate}[leftmargin=.6cm, label={\sf \arabic*}., ref={\sf \arabic*}]
\item The derivative \(\frac{d}{dz} : \holom \to \holom\).  Here \(H = \C\) is viewed as the space
  of constant functions and \(H \acts \holom\) by \((w \cdot f)(z) = f(z) + w\).
\item The logarithmic derivative \(L : \holom[\ne 0] \to \merom\), \(f \mapsto f'/f\).  Here \(H\)
  is the multiplicative group \(\C^{\times}\), and \(H \acts \holom[\ne 0]\) by multiplication,
  \((w \cdot f)(z) = wf(z)\).
\item The exponentiation \(\exp : \holom \to \holomnz\).  Here \(H = \Z\) is identified with
  constant functions of the form \(2\pi \im n\), \(n \in \Z\) and \(H \acts \holom\) by
  \((n \cdot f)(z) = f(z) + 2\pi \im n\).
\item The absolute value map \(|\cdot | : \holom \to C(\C)\).  Here \(H\) is the circle \(\T\),
  identified with the multiplicative group of complex numbers of absolute value \(1\).
  \(H \acts \holom\) via \((\alpha \cdot f)(z) = e^{\im \alpha}f(z)\) for \(\alpha \in [0,2\pi)\).
\item The Schwarzian derivative \(S : \holom \setminus \{\textrm{constants}\} \to \merom\), where
  \[Sf = \Bigl(\frac{f''}{f'}\Bigr)' - \frac{1}{2}\Bigl(\frac{f''}{f'}\Bigr)^{2}.\]
  Here \(H\) is the group of M\"obius transformations \(\mathrm{PGL}(2,\C)\) acting by
  post-composition:
  \[f \mapsto \frac{af + b}{cf+d}.\]
\item The spherical derivative
  \((\,\cdot\,)^{\#} : \holom \setminus \{\textrm{constants}\} \to C(\C)\) where
  \[f^{\#} = \frac{2|f'|}{1+|f|^{2}}.\]
  Here \(H\) is the special unitary group \(\mathrm{SU}(2)\) acting on \(\holom\) by
  post-composition:
  \[f \mapsto \frac{\alpha f - \bar{\beta}}{\beta f + \bar{\alpha}} \qquad \alpha, \beta \in \C
    \textrm{ and } |\alpha|^{2} + |\beta|^{2} = 1.\]
\end{enumerate}

An application of Theorem~\ref{thm:sufficient-condition} with a non-trivial action \(\tau\) is given
by the difference operator \(D : \holom \to \holom\) defined by \(D(f)(z) = f(z+1)-f(z)\).  Here
\(H = \holom[1]\) is the space of \(1\)-periodic entire functions, \(H \acts \holom\) via
\((h\cdot f)(z) = f(z) + h(z)\), and \(\tau\) is the argument shift. The action
\(\tau : \C \acts \holom[1]\) has a Borel transversal as was argued in
Remark~\ref{rem:1-periodic-borel-transversal}. Hence, it cannot be generically ergodic by the
following standard fact, whose proof is included for the reader's convenience.

\begin{lemma}
  \label{lem:smooth-not-generically-ergodic}
  Consider a continuous action \(G \acts Z\) of a Polish group \(G\) on a Polish space \(Z\). Assume
  that no orbit of the action is comeager. If the action has a Borel transversal, then it cannot be
  generically ergodic.
\end{lemma}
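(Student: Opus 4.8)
The plan is to derive a contradiction by showing that a Borel transversal forces the existence of a comeager orbit. Suppose $G \acts Z$ has a Borel transversal and is generically ergodic. By the discussion in Section~\ref{sec:polish-borel-spaces}, the existence of a Borel transversal implies that $E_G$ is concretely classifiable; fix a standard Borel space $W$, equipped with a compatible Polish (hence second countable) topology, and a Borel map $f : Z \to W$ with $f(z_1) = f(z_2)$ if and only if $z_1 E_G z_2$. Since $gz \mathbin{E_G} z$ for all $g \in G$, the map $f$ is $G$-invariant.

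The core of the argument is that generic ergodicity forces $f$ to be constant on a comeager set. For each open $V \subseteq W$, the set $f^{-1}(V)$ is Borel and $G$-invariant, so by generic ergodicity it is either meager or comeager. Put
\[
A = \{ w \in W : f^{-1}(V) \text{ is comeager for every open } V \ni w \}.
\]
If $w_1 \neq w_2$ both belonged to $A$, then choosing disjoint open neighborhoods $V_1 \ni w_1$ and $V_2 \ni w_2$ would produce two disjoint comeager sets $f^{-1}(V_1)$ and $f^{-1}(V_2)$, which is impossible in a non-empty Baire space; hence $A$ has at most one element. On the other hand, if $A$ were empty, then every $w \in W$ would admit an open $V_w \ni w$ with $f^{-1}(V_w)$ not comeager, hence meager; by the Lindelöf property of $W$, countably many such $V_{w_n}$ cover $W$, so $Z = \bigcup_n f^{-1}(V_{w_n})$ would be meager, contradicting the Baire category theorem for the Polish space $Z$. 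Therefore $A = \{w_0\}$ is a singleton.

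Taking a countable neighborhood basis $(V_k)_k$ of $w_0$, we get that $f^{-1}(\{w_0\}) = \bigcap_k f^{-1}(V_k)$ is comeager, in particular non-empty. Fix $z_0 \in f^{-1}(\{w_0\})$; since $f$ reduces $E_G$ to equality, $f^{-1}(\{w_0\}) = \{z : f(z) = f(z_0)\} = G \cdot z_0$. Thus $G \cdot z_0$ is a comeager orbit, contradicting the hypothesis that no orbit of the action is comeager.

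The step that needs the most care is the extraction of the singleton $A$: one must combine the meager/comeager dichotomy provided by generic ergodicity, the elementary fact that two comeager subsets of a non-empty Baire space intersect, and the Lindelöf property of $W$ (used only to exclude $A = \varnothing$). The remaining ingredients — passing from a Borel transversal to a Borel reduction $f$, and reading off the comeager orbit from $f^{-1}(\{w_0\})$ — are routine.
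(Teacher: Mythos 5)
Your proof is correct, and it takes a genuinely different route from the paper. The paper works directly from the Borel transversal \(T\): it first disposes of the case of countably many orbits by exhibiting two disjoint non-meager invariant Baire-property sets, and then, for uncountably many orbits, pulls back the Cantor scheme \((N_{s})_{s \in 2^{<\N}}\) through a Borel bijection \(2^{\N} \to T\) to get a tree of \(G\)-invariant analytic sets \(Z_{s} = G\cdot\alpha(N_{s})\), from which generic ergodicity forces a comeager branch and hence a comeager orbit. You instead pass from the transversal to a concrete classification \(f : Z \to W\) (the straightforward direction noted in Section~\ref{sec:polish-borel-spaces}), observe that preimages \(f^{-1}(V)\) of open \(V \subseteq W\) are invariant Borel sets to which the meager/comeager dichotomy applies, and then run a short topological argument on the codomain (Lindelöf plus Hausdorff separation) to isolate a single point \(w_{0}\) whose fiber is comeager; since \(f\) is a reduction, that fiber is one orbit. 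Both arguments reach a comeager orbit and contradict the hypothesis, but yours avoids the case split and the Cantor-scheme induction entirely; in effect it shows that concrete classifiability alone, not the transversal per se, already precludes generic ergodicity when no orbit is comeager. One small point worth stating explicitly is that \(Z\) is implicitly nonempty (otherwise there is nothing to prove), which is where the Baire category argument and the ``two disjoint comeager sets cannot coexist'' step get their force.
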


\begin{proof}
  First, consider a simpler case with countably many orbits. Note that every orbit is an analytic
  set (as the image of \(G\times\{z\}\) under the continuous map \((g, z)\mapsto gz\), \(g\in G\),
  \(z\in Z\), and therefore, has the Baire property).  All the orbits cannot be meager (otherwise,
  \(Z\) becomes meager in itself, which contradicts the Baire category theorem). There could not be
  a unique non-meager orbit---in that case, its complement is meager as a countable union of meager
  sets, and the orbit becomes comeager, which contradicts the assumption. Hence, there are at least
  two non-meager orbits, and we can decompose \(Z=Z_1 \sqcup Z_2\), where the sets \(Z_1\) and
  \(Z_2\) are \(G\)-invariant and have the Baire property.  Each of them contains a non-meager
  subsets, hence, the action cannot be generically ergodic.

  Now, we assume that the action has uncountably many orbits, and let \(T \subseteq Z\) be the Borel
  transversal of the action.  By the assumption, the set \(T\) is uncountable.  Fix a Borel
  bijection \(\alpha : 2^{\N} \to T\) between \(T\) and the Cantor set \(2^{\N}\). For each finite
  string \(s \in 2^{<\N}\), define \(Z_{s} = G \cdot \alpha(N_{s})\), where
  \(N_{s} \subseteq 2^{\N}\) consists of all infinite binary extensions of \(s\). Each \(Z_{s}\) is
  \(G\)-invariant and satisfies \(Z_{s} = Z_{s \concat 0} \sqcup Z_{s \concat 1}\) (the sign
  \say{\( \concat \)} denotes concatenation of finite strings). Furthermore, the sets \(Z_{s}\) are
  analytic (as the images of Borel sets \(G\times N_s\) by the Borel map
  \((g, x) \mapsto g \alpha (x)\), \(g\in G\), \(x\in N_s\)), and therefore possess the Baire
  property.

  Assume towards a contradiction that \(G \acts Z\) is generically ergodic. Since
  \(Z = Z_{0} \sqcup Z_{1}\) and both \(Z_{0}\) and \(Z_{1}\) are \(G\)-invariant with the Baire
  property, one of them must be comeager. Let \(i_{1} \in \{0,1\}\) be such that \(Z_{i_{1}}\) is
  comeager. Proceeding inductively, at each step \(n\) we have
  \(Z_{i_{1}\cdots i_{n}} = Z_{i_{1}\cdots i_{n}0} \sqcup Z_{i_{1}\cdots i_{n}1}\), and we choose
  \(i_{n+1}\) such that \(Z_{i_{1}\cdots i_{n+1}}\) is comeager.

  This yields a sequence \(a = (i_{n})_{n} \in 2^{\N}\) for which each \(Z_{i_{1}\cdots i_{n}}\) is
  \(G\)-invariant and comeager. Consequently, \(\bigcap_{n} Z_{i_{1}\cdots i_{n}}\) is comeager as
  well. However, this intersection equals the single orbit \(G\alpha(a)\), contradicting the
  assumption that no orbit is comeager.
\end{proof}

\subsection{Two remarks}
\label{subsect:two-remarks}

{\sf 1.} All our non-existence applications pertained to the action
\(\C \curvearrowright \mathcal E\) by the argument shift, i.e., to the Borel entire function
\(\C\curvearrowright\free(\mathcal E) \stackrel{\rm id}\to\C\curvearrowright\mathcal E\).  This does
not exclude the possibility that for other Borel measurable entire functions
\(\C\curvearrowright X \stackrel{\varphi}\to\C\curvearrowright\mathcal E\), the same map \(\pi\)
might have a Borel equivariant inverse.  For one, we have the tautological example: If
\(F:X\to \holom\) is a Borel entire function, then the Borel entire function \(F'\) given by
\(x\mapsto F'_x\) evidently admits the Borel entire primitive \(F\). Moreover, a modification of the
original Weiss's construction~\cite{MR1422707} can be used to build Borel entire functions that admit
primitives of all orders.

\medskip\noindent{\sf 2.}  It is also worth mentioning that, for some of the maps \(\pi\) discussed
above, it is possible to formulate a cohomological criterion for the existence of the Borel
equivariant inverse to \(\pi\).  For instance, this can be done when \(\pi = {\rm d}/{\rm d}z\) is
the derivative map. With any Borel entire function \(\varphi\colon X\to\mathcal E\), we associate
the \emph{cocycle} \(\alpha\colon X\times \C \to \C\),
\[
  \alpha_\varphi (x, z) = \int_0^z \varphi_x(w) \, {\rm d}w.
\]
Then, \emph{a Borel entire function \(\varphi\) admits a Borel primitive if and only if
  \(\alpha_\varphi\) is a coboundary}, that is, there is a Borel function \(g\colon X\to \C\) such
that \(\alpha (x, z) = g(z\cdot x) - g(x)\) for all \(x\in X\) and \(z\in \C\). If such \(g\)
exists, then \(g(x)=\varphi_x(0)\), the evaluation of \(\varphi\) at the origin.

\bigskip \bigskip

\appendix
\section*{Appendices}

\section{Growth of Borel continuous functions}
\label{sec:growth-introduction}

The purpose of this appendix is to highlight the differences in the growth rates between measurable
and Borel entire functions by showing that for some free Borel \(\C\)-actions, any nowhere constant
Borel entire function will have arbitrarily fast growth.  In fact, this phenomenon has little to do
with the structure of entire functions, and applies to arbitrary continuous functions.

The main result, namely Theorem~\ref{thm:growth-rate-limsup}, is derived from the following theorem
from~\cite{gaoForcingConstructionsCountable2022}*{Thm.~1.1}.
\begin{theorem}
  \label{thm:gjks}
  Let \(\Zd \acts X'\) be a continuous action on a compact Polish space.  Let also \((S'_{n})_{n}\),
  \(S'_{n} \subseteq X'\), be a sequence of Borel complete sections\footnote{A section is complete
    if it intersects each orbit of the action.} and \((A_{n})_{n}\), \(A_{n} \subseteq \Zd\), be an
  increasing sequence of finite subsets of \(\Zd\) such that \(\bigcup_{n}A_{n} = \Zd\).  Then there
  is \(x \in X'\) such that \((A_{n} \cdot x) \cap S'_{n} \ne \varnothing\) for infinitely many
  \(n\).
\end{theorem}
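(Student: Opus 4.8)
To prove Theorem~\ref{thm:gjks}, the plan is to argue by contradiction, so assume no such $x$ exists. First I would reduce to the case that $\Zd \acts X'$ is minimal: a minimal nonempty closed $\Zd$-invariant set $Y \subseteq X'$ exists by Zorn's lemma, and since $Y$ is invariant the orbit of any $y \in Y$ stays in $Y$ and meets $S'_n$, so each $S'_n \cap Y$ is again a complete section for $\Zd \acts Y$; all other hypotheses restrict to $Y$, and a point of $Y$ witnessing the conclusion for $Y$ also witnesses it for $X'$. So we may assume $X'$ is minimal (and nonempty). Negating the conclusion, for each $x$ there is an $m$ with $(A_n \cdot x) \cap S'_n = \varnothing$ for all $n \ge m$; putting $B_m = \{x : (A_n \cdot x) \cap S'_n = \varnothing \text{ for all } n \ge m\}$, this gives $X' = \bigcup_m B_m$, an increasing union. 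Each $B_m$ is Borel, since the action is continuous (so $\{x : a \cdot x \in S'_n\} = (-a)\cdot S'_n$ is Borel and $B_m = \bigcap_{n \ge m}\bigcap_{a \in A_n}(X' \setminus (-a)\cdot S'_n)$). A nonempty compact Polish space is not meager in itself, so by Baire category some $B_m$ is non-meager; having the Baire property, $B_m$ is comeager in a nonempty open set $U$. Fix this $m$ and $U$.

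Since $X'$ is minimal, $\bigcup_{g \in \Zd} g \cdot U$ is nonempty, open and invariant, hence equals $X'$; by compactness there is a finite $E \subseteq \Zd$ with $X' = \bigcup_{g \in E} g \cdot U$. As $g \cdot B_m$ is comeager in $g \cdot U$ for each $g \in E$, a routine computation with the Baire property shows $C := \bigcup_{g \in E} g \cdot B_m$ is comeager in $X'$. Each translate $h \cdot C$ ($h \in \Zd$) is then comeager, so $\widetilde C := \bigcap_{h \in \Zd} h \cdot C$ is comeager, hence nonempty. Fix $x \in \widetilde C$.

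Now the heart of the proof, a syndeticity contradiction. For every $h \in \Zd$ we have $(-h)\cdot x \in C$, so there is $g_h \in E$ with $\bigl(-(h+g_h)\bigr)\cdot x \in B_m$. Let $T = \{\, h+g_h : h \in \Zd \,\}$; since $h+g_h \in h+E$ we have $\Zd = T-E$, so $T$ meets every translate $p+E$, $p \in \Zd$. Unwinding the definition of $B_m$ (using that $(-t)\cdot x \in B_m$ for $t = h+g_h \in T$): for all $t \in T$ and all $n \ge m$, $\bigl((A_n - t)\cdot x\bigr) \cap S'_n = \varnothing$. Fix $n \ge m$ and, as $S'_n$ is a complete section, pick $w_n$ with $w_n \cdot x \in S'_n$. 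Then no $a \in A_n$, $t \in T$ can satisfy $a - t = w_n$ (that would force $w_n \cdot x = (a-t)\cdot x \notin S'_n$), so $(w_n + T) \cap A_n = \varnothing$. But $w_n + T$ still meets every translate $p+E$, so for each $p \in A_n$ the set $p+E$ is not contained in $A_n$. This is absurd once $A_n \supseteq E \cup \{0\}$, which holds for all large $n$ since $A_n \uparrow \Zd$ (take $p = 0$): contradiction.

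The main obstacle is conceptual. The naive approach — take a generic $x \in U$, use that $S'_m$ is a complete section to get $g$ with $g \cdot x \in S'_m$, then enlarge $n$ until $g \in A_n$ — fails because $S'_m$ and the window $A_n$ carry different indices, and membership in $S'_m$ gives no information about $S'_n$. Passing to a minimal subsystem and intersecting all $\Zd$-translates of $C$ is exactly what repairs this: it yields a single point whose ``avoidance of the sections'' persists along an $E$-syndetic set of shifts, and that is incompatible with finite sets $A_n$ exhausting $\Zd$. Everything else is routine bookkeeping: the Borelness of the $B_m$, the Baire-category estimates showing $C$ and $\widetilde C$ are comeager, and the reduction to a minimal subsystem.
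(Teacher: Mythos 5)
Your proof is correct. Note, however, that the paper does not prove this statement at all: it is quoted verbatim from Gao--Jackson--Krohne--Seward \cite{gaoForcingConstructionsCountable2022}*{Thm.~1.1}, so there is no in-paper argument to compare against. What you have written is a self-contained Baire-category proof.

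The steps all check out: the passage to a minimal subsystem is clean (Zorn gives a nonempty minimal closed invariant \(Y\); the restrictions \(S'_n \cap Y\) are complete sections for the induced action; a witness in \(Y\) serves for \(X'\)); the sets \(B_m\) are Borel since \(\{x : a\cdot x \in S'_n\} = (-a)\cdot S'_n\) and the \(S'_n\) are Borel; Baire category yields a nonempty open \(U\) on which some \(B_m\) is comeager; minimality together with compactness produce a finite \(E\) with \(X' = \bigcup_{g \in E} g\cdot U\), whence \(C = \bigcup_{g\in E} g\cdot B_m\) and \(\widetilde C = \bigcap_h h\cdot C\) are comeager, hence nonempty. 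For \(x \in \widetilde C\) the set \(T = \{h+g_h : h\in\Z^d\}\) is \(E\)-syndetic, and unwinding \(B_m\) along \(T\) gives \((w_n + T)\cap A_n = \varnothing\) for all \(n\ge m\), where \(w_n\cdot x\in S'_n\). Since \(w_n + T\) meets every \(E\)-translate, choosing \(n\ge m\) large enough that \(E\subseteq A_n\) yields a contradiction. One small stylistic remark: the step ``for each \(p\in A_n\) the set \(p+E\) is not contained in \(A_n\)'' actually holds for \emph{all} \(p\in\Z^d\), so the extra requirement \(0 \in A_n\) (to take \(p=0\)) is unnecessary---\(E\subseteq A_n\) alone already contradicts it. The argument is otherwise complete and correct.
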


Let \((\Omega, \norm{\cdot})\) be a separable Banach space and \(C(\Rd, \Omega)\) be the space of
\(\Omega\)-valued continuous functions on \(\Rd\) endowed with the topology of uniform convergence
on compact subsets.  Naturally, we have the argument shift action \(\Rd\acts C(\Rd, \Omega)\).  Let
\(\Rd \acts X\) be a free Borel action.  Given a Borel \(\Rd\)-equivariant map
\(\phi : X \to C(\Rd, \Omega)\) let \(M_{\phi, x} : \R^{> 0} \to \R^{\ge 0}\) be given by
\[M_{\phi, x}(R) = \max_{|r|\le R} \norm{\phi(x)(r)}.\]
We say that \(\phi\) is \emph{everywhere unbounded} if for all \(x \in X\) one has
\(M_{\phi,x}(R) \to +\infty\) as \(R \to +\infty\).  By a \emph{rate function} we mean a
non-decreasing \(f : \R^{\ge 0} \to \R^{\ge 0}\) satisfying \(\lim_{R \to \infty}f(R) = +\infty\).

\begin{theorem}
  \label{thm:growth-rate-limsup}
  For each \(d\), there is a free Borel action \(\Rd \acts X\) such that for any everywhere
  unbounded Borel equivariant \(\phi : X \to C(\Rd, \Omega)\) and any rate function \(f\) there is a
  point \(x \in X\) such that
  \[\limsup_{R \to \infty} \dfrac{M_{\phi, x}(R)}{f(R)} = +\infty.\]
\end{theorem}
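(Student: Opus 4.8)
The plan is to build a specific free Borel action $\Rd \acts X$ out of the marker sequences whose poor shrinking rate is guaranteed by Theorem~\ref{thm:gjks}, and then transfer a hypothetical uniform growth bound on $\phi$ into a lower bound on how fast those markers shrink, contradicting the conclusion of Theorem~\ref{thm:gjks}. First I would reduce the continuous setting to a discrete one: a free Borel $\Rd$-action can be manufactured by suspending a free Borel $\Zd$-action. More precisely, take a free continuous $\Zd$-action on a compact Polish space $X'$ (for instance the free part of the Bernoulli shift $\Zd \acts (2^{\N})^{\Zd}$, which is a $G_\delta$ by Proposition~\ref{prop:free-part-g-delta}), form the suspension space $X = (X' \times \Rd)/\Zd$, and let $\Rd$ act by translation in the $\Rd$-coordinate. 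This is a free Borel $\Rd$-action, and an everywhere unbounded equivariant $\phi : X \to C(\Rd,\Omega)$ restricts, on each fiber, to genuine $\Omega$-valued functions on $\Rd$ whose sup over balls of radius $R$ tends to $\infty$.

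Next I would choose the sections. Since $\phi$ is everywhere unbounded, for each $x$ there is a radius beyond which $M_{\phi,x}$ exceeds any prescribed value; but the point of the argument is that there is \emph{no} uniform such radius. Concretely, for the suspension action, an orbit of $\Zd$ inside a fiber, together with the equivariance of $\phi$, means $\phi(x)(r+v) = \phi(v\cdot x)(r)$ for $v\in\Zd$; so $M_{\phi, x}$ on a ball of radius roughly $|v|$ sees the values $\phi(v\cdot x)(0)$ for lattice points $v$ in that ball. The plan is: given a rate function $f$, pick an increasing exhaustion $A_n \uparrow \Zd$ of finite sets and define the Borel complete sections
\[
  S'_n = \{x \in X' : \norm{\phi(x)(0)} > f(\operatorname{diam} A_n)\}.
\]
These are Borel (the evaluation $x\mapsto \phi(x)(0)$ is Borel and $\norm{\cdot}$ continuous); and they are complete sections \emph{provided one first passes to the part of $X'$ where they meet every orbit}. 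Here is where everywhere unboundedness is used: on the orbit of any $x$, the values $\norm{\phi(v\cdot x)(0)}$ are unbounded over $v\in\Zd$ — otherwise $M_{\phi,x}$ would be bounded — so for every $n$ the orbit eventually meets $S'_n$, hence $S'_n$ is indeed a complete section. Then Theorem~\ref{thm:gjks} supplies $x\in X'$ with $(A_n\cdot x)\cap S'_n \neq \varnothing$ for infinitely many $n$: for such $n$ there is $v\in A_n$ with $\norm{\phi(v\cdot x)(0)} > f(\operatorname{diam} A_n)$, i.e. $M_{\phi,x}(|v|) \geq \norm{\phi(x)(v)} > f(\operatorname{diam} A_n) \geq f(|v|)$ (using monotonicity of $f$ and $|v|\le \operatorname{diam} A_n$). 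Lifting $x$ to a point of the suspension $X$ and taking $R\to\infty$ through the radii $|v|$ arising for infinitely many $n$ gives $\limsup_R M_{\phi,x}(R)/f(R) \geq 1$; an easy bootstrapping (apply the same argument to $2f$, $3f$, \dots, or more cleanly to $f$ itself but replacing the threshold $f(\operatorname{diam}A_n)$ by $n\cdot f(\operatorname{diam}A_n)$ in the definition of $S'_n$) upgrades the $\liminf/\limsup$ to $+\infty$.

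The main obstacle I anticipate is not the growth estimate itself but the interface between $\Rd$ and $\Zd$: one must set up the suspension (or an equivalent marker/cross-section construction, cf.~the Boykin--Jackson construction mentioned in Section~\ref{sec:toast}) so that the sections $S'_n \subseteq X'$ are genuinely Borel \emph{and} complete for the $\Zd$-action, and so that the conclusion "$(A_n\cdot x)\cap S'_n\ne\varnothing$ infinitely often" for a point of $X'$ really does translate into a large value of $M_{\phi,x}$ on the corresponding point of the $\Rd$-space $X$. A subtlety worth flagging: $S'_n$ as defined above need not be complete on all of $X'$ if some orbit is "too small" for $\phi$ to grow — but since the action is free and $\phi$ is everywhere unbounded, unboundedness of $\phi$ along each orbit of $\Zd$ is exactly what rescues completeness, so this is really automatic once stated carefully. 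The remaining bookkeeping — that suspensions of free Borel $\Zd$-actions are free Borel $\Rd$-actions, that evaluation-at-a-point maps on $C(\Rd,\Omega)$ are Borel, and that the $\limsup$ can be pushed to $+\infty$ by a diagonal choice over a countable family of rate functions — is routine.
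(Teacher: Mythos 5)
Your approach is structurally the same as the paper's (suspension of a $\Zd$-action, sections defined by where $\phi$ is large, invocation of Theorem~\ref{thm:gjks}), but the step where you claim the sections $S'_n$ are complete has a genuine gap. You define $S'_n = \{x \in X' : \norm{\phi(x)(0)} > f(\diam A_n)\}$, and argue that completeness is rescued because everywhere unboundedness forces $\norm{\phi(v\cdot x)(0)}$ to be unbounded as $v$ ranges over $\Zd$. This is not what everywhere unboundedness gives you. Everywhere unboundedness says $M_{\phi,x}(R) = \max_{|r|\leq R} \norm{\phi(x)(r)} \to \infty$, a supremum over \emph{all} of $\Rd$; it does not imply that the restriction of $\phi(x)$ to the sublattice $\Zd$ is unbounded. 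A Borel equivariant $\phi$ whose fiber functions vanish on the lattice and develop their large bumps only in the interiors of unit cells would be everywhere unbounded yet miss every $S'_n$ as you have defined it on some (indeed every) $\Zd$-orbit, so the hypothesis of Theorem~\ref{thm:gjks} would fail and no $x$ would be produced. The acknowledgement you give at the end (``unboundedness of $\phi$ along each orbit of $\Zd$ is exactly what rescues completeness'') is circular: unboundedness along the $\Zd$-orbit is precisely the thing that is not given.

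The fix is exactly the $K$-thickening the paper performs. Choose a compact symmetric $K$ (say $[-1/2,1/2]^d$) with $\Zd + K = \Rd$, define $S_n$ on all of $X$ by the condition $\norm{\phi(\cdot)(0)} \geq f(n)$, and set $S'_n = (K\cdot S_n)\cap X'$. Then completeness of $S'_n$ for the $\Zd$-action follows from completeness of $S_n$ for the $\Rd$-action (which \emph{is} what everywhere unboundedness gives) plus $\Zd + K = \Rd$. The price is that a lattice point $a_k x$ in $S'_{n_k}$ is only $\diam K$-close to a genuine point $y_k \in S_{n_k}$; this shows up as the shift $R \mapsto R + \diam K$ in the intermediate estimate, which is then absorbed by first proving $\limsup M_{\phi,x}(R + \diam K)/f(R) \geq 1$ and applying that to the rate function $f_1(R) = Rf(R+\diam K)$ to get $+\infty$. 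Your bootstrapping idea (apply to $nf$, or scale the threshold) is a workable alternative to that last step, but the thickening of the sections is not optional.
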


\begin{proof}
  Let \(K \subseteq \Rd\) be a compact symmetric subset so large that \(\Zd + K = \Rd\), and let
  \(\diam K\) denote its diameter.  For instance, \( K = [-1/2,1/2]^{d}\) with
  \(\diam K = \sqrt{d}\) will do.  Note that \((r + K) \cap \Zd \ne \varnothing\) for any
  \(r \in \Rd\).  It is notationally more convenient to prove the weaker inequality
  \begin{equation}
    \label{eq:rate-ge-one}
    \limsup_{R \to \infty}\dfrac{M_{\phi, x}(R+ \diam K)}{f(R)} \ge 1.
  \end{equation}
  The latter, however, is equivalent to the conclusion of Theorem~\ref{thm:growth-rate-limsup}, for
  Eq.~(\ref{eq:rate-ge-one}) applied to \(f_1(R) = R f(R+ \diam K)\) yields the conclusion of
  Theorem~\ref{thm:growth-rate-limsup} for~\(f\).

  Let \(\Zd \acts X'\) be a free continuous action of \(\Zd\) on some compact Polish space~\(X'\).
  We turn it into a free action of \(\Rd\) on \(X = X' \times [0,1)^{d}\) in the natural way:
  \[r \cdot (x,s) = (\lfloor s+r \rfloor \cdot x, s + r - \lfloor s+r \rfloor),\]
  where \(\lfloor r \rfloor \in \Zd\) is the coordinate-wise integer part of the vector
  \(r \in \Rd\).  Note that~\(X'\) can be identified with the \(\Zd\)-invariant subset
  \(X' \times \{0\} \subseteq X\).

  Fix an everywhere unbounded Borel equivariant \(\phi : X \to C(\Rd, \Omega)\) and a growth rate
  function \(f\). Our goal is to find an \(x \in X\) such that Eq.~(\ref{eq:rate-ge-one}) holds. Let
  \(\Phi : X \to \Omega\) be given by \(\Phi(x) = \phi(x)(0)\) and let
  \[B(R) = \{a \in \Omega : \norm{a} < R\} \subseteq \Omega \]
  denote the open ball of radius \(R\) around the origin. Given \(n \in \N\), set
  \[S_{n} = \Phi^{-1}(\Omega \setminus B(f(n))) = \{ x \in X : \norm{\Phi(x)} \ge f(n)\}. \]
  Everywhere unboundedness of \(\phi\) is equivalent to the assertion that each \(S_{n}\) is
  complete, i.e., it intersects every orbit of the action.  Furthermore, the sets \(S_{n}\) are
  nested, \(S_{n} \supseteq S_{n+1}\), and vanish, \(\bigcap_{n} S_{n} = \varnothing\).  In fact,
  the latter holds in the following stronger sense: for any compact \(L \subseteq \Rd\) and any
  \(x \in X\) there is \(N \in \N\) such that for all \(n \ge N\) one has
  \((L \cdot x) \cap S_{n} = \varnothing\).

  Set \(S_{n}' = (K \cdot S_{n}) \cap X'\) and note that each \(S_{n}'\) is a Borel complete section
  for the action \(\Zd \acts X'\).  One may now apply Theorem~\ref{thm:gjks} to sets \(S'_{n}\) and
  \(A_{n} = \{ a \in \Zd : \norm{a} \le n\}\), \(n \in \N\). It yields an \(x \in X'\) such that
  \((A_{n} \cdot x ) \cap S_{n}' \ne \varnothing\) for infinitely many \(n\).  In other words, there
  are \(n_{k}\), \(a_{k} \in A_{n_{k}}\), \(y_{k}\in S_{n_{k}}\) and \(s_{k} \in K\) such that
  \(\norm{a_{k}} \le n_{k}\) and \(a_{k}x = s_{k}y_{k}\).  Note that
  \begin{displaymath}
    \begin{aligned}
      \norm{\rho(x, y_{k})}
      &\le \norm{\rho(x,a_{k}x)} + \norm{\rho(a_{k}x, y_{k})} \\
      &= \norm{\rho(x,a_{k}x)} + \norm{\rho(s_{k}y_{k}, y_{k})} \\
      &= \norm{a_{k}} + \norm{s_{k}} \le n_{k} + \diam K.
    \end{aligned}
  \end{displaymath}
  In particular,
  \begin{displaymath}
    \begin{aligned}
      M_{\phi, x}(n_{k} + \diam K) &\ge \norm{\phi(x)(\rho(x,y_{k}))}
                                     = \norm{\Phi(\rho(x,y_{k})x)} \\
                                   &=
                                     \norm{\Phi(y_{k})} \ge f(n_{k}),
    \end{aligned}
  \end{displaymath}
  and therefore \(R = n_{k}\), \(k \in \N\), witness that Eq.~(\ref{eq:rate-ge-one}) does indeed
  hold.
\end{proof}

Specializing to entire functions, we get the following corollary.

\begin{corollary}
  \label{cor:borel-entire-unbounded-growth}
  There is a free Borel action \(\C \acts X\) such that for any rate function \(f\) and any Borel
  equivariant \(\phi : X \to \holom \setminus \{\textrm{constants}\}\) there is a point \(x \in X\)
  such that
  \[\limsup_{R \to \infty} \dfrac{M_{\phi, x}(R)}{f(R)} = +\infty.\]
\end{corollary}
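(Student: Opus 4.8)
The plan is to obtain Corollary~\ref{cor:borel-entire-unbounded-growth} as an immediate specialization of Theorem~\ref{thm:growth-rate-limsup}, taking $d = 2$ and $\Omega = \C$ (viewed as a separable real Banach space, and identifying $\R^{2}$ with $\C$). There are really only two small things to check: that a Borel $\C$-equivariant map into the non-constant entire functions is, in particular, a Borel equivariant map into $C(\C,\C)$; and that any such map is automatically everywhere unbounded in the sense of the appendix. Once these are in place the corollary is a verbatim reading of the conclusion of the theorem.

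For the first point I would recall from Section~\ref{sec:merom-entire-functions} that the topology on $\holom$ is the topology of uniform convergence on compact subsets of $\C$, i.e.\ precisely the subspace topology induced from $C(\C,\C)$; moreover $\holom$ is a Borel (indeed closed) subset of $C(\C,\C)$, being the set of locally uniform limits of holomorphic functions, and the argument-shift action of $\C$ on $\holom$ is the restriction of the argument-shift action of $\C \cong \R^{2}$ on $C(\C,\C)$. Hence any Borel $\C$-equivariant $\phi : X \to \holom \setminus \{\textrm{constants}\}$ is also a Borel $\C$-equivariant map $\phi : X \to C(\R^{2}, \C)$.

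For the second point, fix $x \in X$. Then $\phi(x)$ is a non-constant entire function, so by Liouville's theorem it is unbounded on $\C$; since $M_{\phi, x}(R) = \max_{|z| \le R}|\phi(x)(z)|$ is non-decreasing in $R$ with $\sup_{R} M_{\phi, x}(R) = \sup_{z \in \C}|\phi(x)(z)| = +\infty$, we get $M_{\phi, x}(R) \to +\infty$ as $R \to +\infty$, i.e.\ $\phi$ is everywhere unbounded. Now Theorem~\ref{thm:growth-rate-limsup}, applied with $d = 2$ and $\Omega = \C$, produces a single free Borel action $\C \acts X$ — depending on neither $\phi$ nor the rate function — such that for every rate function $f$ and every everywhere unbounded Borel equivariant map into $C(\C,\C)$, in particular for every Borel equivariant $\phi : X \to \holom \setminus \{\textrm{constants}\}$, there is a point $x \in X$ with $\limsup_{R \to \infty} M_{\phi, x}(R)/f(R) = +\infty$, which is exactly the assertion. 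I do not expect any genuine obstacle here: the whole content is transporting an equivariant Borel map into non-constant entire functions to an equivariant Borel map into $C(\R^{2},\C)$ and invoking Liouville to get everywhere unboundedness; the only point to state carefully is that the action $X$ is fixed (the one supplied by the theorem), so a single $X$ works uniformly over all $\phi$ and all rate functions $f$.
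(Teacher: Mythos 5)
Your proposal is correct and follows exactly the paper's own argument: specialize Theorem~\ref{thm:growth-rate-limsup} to $d=2$, $\Omega=\C$ with $\holom\subseteq C(\R^2,\C)$, and observe that Liouville's theorem makes every $\phi$ into $\holom\setminus\{\textrm{constants}\}$ automatically everywhere unbounded. The extra detail you supply (that the inclusion $\holom\hookrightarrow C(\C,\C)$ is a topological embedding compatible with the shift action, and that a single $X$ works uniformly over all $\phi$ and $f$) is a sound elaboration of what the paper states tersely.
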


\begin{proof}
  Apply Theorem~\ref{thm:growth-rate-limsup} to \(\R^{2} = \C\), \(\Omega = \C\) so that
  \(\holom \subseteq C(\R^{2}, \Omega)\), and note that any
  \(\phi: X \to \holom \setminus \{\textrm{constants}\}\) is necessarily everywhere unbounded by
  Liouville's theorem.
\end{proof}

\section{Topologies on meromorphic functions}
\label{topol-divis-merom}

As discussed in Section~\ref{sec:merom-entire-functions}, the space of meromorphic functions
\(\merom\) is typically endowed with the topology \(\tau_{\merom}\) of uniform convergence on
compact subsets of \(\C\) with respect to, say, the spherical metric on the Riemann sphere
\(\Cbar\). However, the algebraic operations on \(\merom\) are not continuous with respect to
\(\tau_{\merom}\).  This appendix aims to demonstrate that there is no Polish group topology on the
multiplicative group \(\meromnz\) of non-zero meromorphic functions.  This result is established in
Corollary~\ref{cor:no-polish-topology}, which proves a slightly more general statement.

\subsection{Dudley's theorem}
\label{sec:dudley-thm}

Our argument relies on a theorem by Dudley~\cite{dudleyContinuityHomomorphisms1961} concerning the
automatic continuity of homomorphisms into groups equipped with norms having linear growth.  We
begin by recalling the relevant definitions.  (Our terminology differs from that used
in~\cite{dudleyContinuityHomomorphisms1961}.) In contrast to
Section~\ref{sec:equivariant-borel-liftings}, all seminorms in this appendix take values in \(\N\).

\begin{definition}
  \label{def:seminorm}
  A \emph{seminorm} on a group \(H\) is a map \(\norm{\cdot} : H \to \N\) such that for all
  \(h, h_{1}, h_{2} \in H\) one has
  \begin{enumerate}[leftmargin=.6cm, label={\sf \arabic*}., ref={\sf \arabic*}]
  \item \(\norm{e_{H}} = 0\);
  \item \(\norm{h} = \norm{h^{-1}}\);
  \item \(\norm{h_{1} h_{2}} \le \norm{h_{1}} + \norm{h_{2}}\).
  \end{enumerate}
\end{definition}

Given a seminorm \(\norm{\cdot}\), its \emph{kernel} is
\(\ker(\norm{\cdot}) = \{h \in H : \norm{h} = 0\}\).  Note that \(\ker(\norm{\cdot})\) is
necessarily a subgroup of \(H\).  A seminorm is a \emph{norm} if its kernel is trivial.  We say that
\(\norm{\cdot}\) has \emph{linear growth} if the inequality \(\norm{h^{n}} \ge \max\{n, \norm{h}\}\)
holds for all \(n \ge 1\) and all \(h \in H \setminus \ker(\norm{\cdot})\).  Since
\(\norm{h^{n}} \le n \norm{h}\) holds for all seminorms, no seminorm can grow faster than linearly
in this sense.  All seminorms considered in this appendix are, in fact, \emph{additive}, meaning
they satisfy the stronger condition \(\norm{h^{n}} = n\norm{h}\) for all \(h \in H\) and
\(n \in \N\), and thus have linear growth.

Given a group \(G\) and a topology \(\tau_{G}\) on it, we say that \(\tau_{G}\) is a \emph{semigroup
  topology} if the multiplication is \(\tau_{G}\)-continuous (with no assumption on the continuity
of the inverse map).

The following theorem is stated in~\cite{dudleyContinuityHomomorphisms1961}*{Thm.~1} for group
topologies on \(G\).  However, continuity of the inverse map is not used in the proof, and we will
make use of the more general statement later on.  For the reader's convenience, the proof of
Dudley's theorem is reproduced below.

\begin{theorem}[Dudley~{\cite{dudleyContinuityHomomorphisms1961}*{Thm.~1}}]
  \label{thm:dudley-theorem}
  Suppose a group \(H\) admits a norm with linear growth.  Let \(G\) be a group equipped with a
  completely metrizable semigroup topology.  Any homomorphism from \(G\) to \(H\) is continuous with
  respect to the discrete topology on \(H\).
\end{theorem}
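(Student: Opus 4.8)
The plan is to argue by contradiction, exploiting the Baire category theorem for the completely metrizable topology on $G$. Suppose $f : G \to H$ is a homomorphism that is not continuous into the discrete topology on $H$; this means $f^{-1}(\{e_H\})$ is not open, hence (since it is a subgroup) not a neighborhood of $e_G$. So there is a sequence $g_n \to e_G$ in $G$ with $f(g_n) \ne e_H$ for all $n$. The idea is to cook up, by a rapid-growth argument, a single element whose image has been forced to have norm larger than every integer — an impossibility.

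First I would use continuity of multiplication and completeness to build, from the sequence $g_n \to e_G$, products that still converge. Concretely: after passing to a subsequence I may assume $g_n$ converges to $e_G$ so fast that for a suitable choice of exponents the "telescoping" infinite products converge. Here is the mechanism. Pick integers $m_n \ge 1$ to be chosen later. Using that squaring (indeed taking $m$-th powers) and multiplication are $\tau_G$-continuous, and that $g_n \to e_G$, one can pass to a subsequence of $(g_n)$ so that the partial products $P_N = g_1^{m_1} g_2^{m_2} \cdots g_N^{m_N}$ form a Cauchy sequence in a compatible complete metric on $G$ — one controls the $n$-th factor $g_n^{m_n}$ to be within $2^{-n}$ of $e_G$ in the metric, which is possible because for fixed $m_n$ the map $g \mapsto g^{m_n}$ is continuous at $e_G$ and sends $e_G$ to $e_G$; so shrink $g_n$ (i.e. go far enough out in the original sequence) accordingly, and also control the metric distance after left-multiplication by the (now fixed) finite product $P_{n-1}$. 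Then $P = \lim_N P_N$ exists in $G$.

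Next I would extract the contradiction from linear growth of the norm on $H$. For each fixed $N$, write $P = P_N \cdot Q_N$ where $Q_N = \lim_{M} g_{N+1}^{m_{N+1}}\cdots g_M^{m_M}$ is itself a convergent product in $G$ (same Cauchy argument applied to the tail). Applying the homomorphism $f$ and the triangle inequality for the norm on $H$,
\begin{displaymath}
  \norm{f(g_n^{m_n})} = \norm{f(g_n)^{m_n}} \le \norm{f(P_{n-1})} + \norm{f(P)} + \norm{f(Q_n)}
\end{displaymath}
after splitting $f(g_n^{m_n}) = f(P_{n-1})^{-1} f(P) f(Q_n)^{-1}$ and using $\norm{h}=\norm{h^{-1}}$. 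Hmm — I need the right-hand side bounded \emph{uniformly in $n$}; that is the crux. The standard trick (this is exactly Dudley's argument) is to not fix the $m_n$ in advance but to choose $m_n$ \emph{recursively and large}, depending on $f(g_n)$: since $f(g_n) \ne e_H$, it lies outside $\ker(\norm{\cdot})$ (the norm is a genuine norm, so its kernel is trivial), so by linear growth $\norm{f(g_n)^{m}} \ge m$ for every $m \ge 1$. Choose the subsequence and the metric-control at stage $n$ \emph{first}, obtaining a bound $C_n$ on $\norm{f(P_{n-1})} + \norm{f(P)} + \norm{f(Q_n)}$ that is forced by the finitely-many already-chosen data together with the (not-yet-used) tail estimates — the point is that $\norm{f(P)}$ and the tail norms are governed by how fast the tail products converge, which we are still free to make as fast as we like. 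Then set $m_n = C_n + 1$. This yields $C_n + 1 \le \norm{f(g_n)^{m_n}} \le C_n$, a contradiction.

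The main obstacle is precisely the bookkeeping in the previous paragraph: arranging the recursive choice so that the bound $C_n$ on the "error terms" $\norm{f(P_{n-1})}+\norm{f(P)}+\norm{f(Q_n)}$ is determined \emph{before} $m_n$ is chosen. This requires organizing the construction so that at each stage one (i) fixes $g_n$ deep enough in the tail that all future metric-distance requirements can be met by taking future factors small, (ii) reads off $C_n$ from the constraint that $\norm{f(Q_n)}$ and $\norm{f(P)}$ will be at most, say, $1$ — which one \emph{can} guarantee by deciding in advance that every future factor $g_k^{m_k}$ will be chosen so close to $e_G$ that $f$ of the whole tail product lands in a prescribed bounded-norm set; but this last point needs care, since $f$ need not be continuous, so "close to $e_G$ in $G$" does \emph{not} a priori mean "small norm in $H$". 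The resolution — and this is the genuinely clever step in Dudley's proof — is that one does not need $\norm{f(Q_n)}$ to be literally small; one only needs it to be \emph{finite and known at stage $n$}, which is automatic once $Q_n$ is a definite element of $G$; the recursion is set up so that $Q_n = P_n^{-1}P$ is determined by data available at stage $n$ together with the single limit point $P$, and one processes the construction in a way (e.g. diagonally, or by first fixing the convergence rate of the whole product and hence $P$, then peeling off finitely many factors) that makes $C_n := \norm{f(P_{n-1})} + \norm{f(P)} + \norm{f(Q_n)}$ a well-defined finite quantity before $m_n$ enters. Once that is set up correctly, $m_n = C_n + 1$ and linear growth close the argument. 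I would present it by first stating the recursion cleanly, then verifying the three bullet-point properties (convergence of $P_N$, the splitting $P = P_{n-1} g_n^{m_n} Q_n$, and the numerical contradiction), keeping the metric estimates to a minimum since they are routine.
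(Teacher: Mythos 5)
Your proposal captures the right flavour (rapid-growth products, linear growth of the norm, Baire-type use of completeness) but has a genuine gap exactly at the point you flag as "the crux," and the fix you sketch does not actually work.

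The circularity you identify is real and fatal to the simple-product approach. You set $m_n = C_n + 1$ where $C_n = \norm{f(P_{n-1})} + \norm{f(P)} + \norm{f(Q_n)}$, but both $P$ and $Q_n$ depend on all future choices $g_k, m_k$ for $k > n$, which in turn depend (through the same recursion) on $C_k$, i.e.\ on $P$. Your proposed resolution — "first fixing the convergence rate of the whole product and hence $P$, then peeling off finitely many factors," or treating $Q_n$ as "a definite element of $G$" at stage $n$ — is not possible: neither $P$ nor $Q_n$ is determined until all exponents are chosen, and controlling metric convergence rates in $G$ gives you no control over $\norm{f(\cdot)}$ in $H$, since $f$ is precisely the map whose continuity is in question. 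There is no stage at which $C_n$ is a known finite number available for defining $m_n$.

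What actually breaks the circularity in Dudley's argument (and in the paper's proof) is a \emph{nested} product, not a product of powers. One forms $h_{m,n} = g_m\bigl(h_{m+1,n}\bigr)^{r_m}$ with the exponent $r_m = m + \sum_{i\le m}\norm{\phi(g_i)}$ applied to the \emph{entire tail}, and crucially $r_m$ depends only on $g_0,\dots,g_m$ — data already fixed at stage $m$, with no reference to the eventual limit. After passing to the limit $h_m = g_m h_{m+1}^{r_m}$, one gets the estimate $\norm{\phi(h_m^{r_{m-1}})}\ge r_{m-1}$ by a dichotomy that your version has no analogue of: if $\phi(h_{m+1})\ne e_H$, linear growth applied to the tail power gives the bound after subtracting $\norm{\phi(g_m)}$; if $\phi(h_{m+1}) = e_H$, then $\phi(h_m) = \phi(g_m)$ and one falls back on $g_m\notin\ker\phi$. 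Iterating shows $\norm{\phi(h_0)}\ge m$ for all $m$, which is the contradiction. Your simple-product $g_1^{m_1}\cdots g_n^{m_n}$ raises only the single factor $g_n$ to a power, so it lacks both the finitary choice of exponents and the tail dichotomy, and cannot be repaired without switching to the nested structure.

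(You also omit the routine final step — passing from "$\ker f$ contains a neighborhood of $e_G$" to openness of $\ker f$ and hence discrete continuity — but that part is standard and not where the difficulty lies.)
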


\begin{proof}
  Let \(\phi : G \to H\) be a homomorphism, \(d\) be a complete metric on \(G\) that generates a
  semigroup topology, and \(\norm{\cdot}\) be a norm on \(H\) with linear growth.  We claim that
  \(\ker \phi = \{g \in G : \phi(g) = e_H\}\) contains a neighborhood of the identity.

  Suppose towards a contradiction that this is not the case.  We construct a sequence
  \((g_{n})_{n} \subseteq G \setminus \ker \phi\) such that for
  \(r_{n} = n + \sum_{i=0}^{n}\norm{\phi(g_{i})}\) and \(h_{m,n} \in G\) given by
  \[h_{m,n} = g_{m}(g_{m+1}(\cdots (g_{n-1}(g_{n})^{r_{n-1}})^{r_{n-2}}\cdots
    )^{r_{m+1}})^{r_{m}},\]
  the sequence \((h_{m,n})_{n=m}^{\infty}\) is \(d\)-Cauchy for each \(m\).  Note that
  \begin{equation}
    \label{eq:hnm-relation}
    h_{m,n} = g_{m}(h_{m+1,n})^{r_{m}}.
  \end{equation}

  Take \(g_{0}\) to be any element of \(G \setminus \ker \phi\).  Suppose that \(g_{m}\),
  \(m \le n\), and therefore also \(r_{m}\), \(h_{m,n}\), have been constructed for all \(m \le n\).
  Consider the function \(\xi_{m} : G \to G\) given by
  \[\xi_{m}(x) = g_{m}(g_{m+1}(\cdots (g_{n-1}(g_{n} \cdot x^{r_{n}})^{r_{n-1}})^{r_{n-2}}\cdots
    )^{r_{m+1}})^{r_{m}}.\]
  This map is continuous by the continuity of the multiplication and \(\xi_{m}(e) = h_{m,n}\) for
  all \(m \le n\).  Therefore, for a small enough neighborhood of the identity \(U \subseteq G\) the
  inequality \(d(\xi_{m}(g), h_{m,n}) < 2^{-n}\) holds for all \(m \le n\) and all \(g \in U\).
  Since \(\ker \phi\) does not contain any neighborhoods of the identity by our assumption, we may
  pick for \(g_{n+1}\) an element of \(U \setminus \ker \phi\).  This ensures that
  \(d(h_{m,n+1}, h_{m,n}) < 2^{-n}\) and \((h_{m,n})_{n=m}^{\infty}\) is thus \(d\)-Cauchy.

  Set \(h_{m} = \lim_{n \to \infty} h_{n,m}\) and note that \(h_{m} = g_{m}h_{m+1}^{r_{m}}\) holds
  for all \(m\) by Eq.~\eqref{eq:hnm-relation} and the continuity of the multiplication.  A priori,
  we have little control as to whether \(h_{m}\)'s are elements of \(\ker \phi\) or not.
  Nonetheless, we claim that
  \begin{equation}
    \label{eq:hmrm-length-bound}
    \norm{\phi(h_{m}^{r_{m-1}})} \ge r_{m-1}
  \end{equation}
  holds for all \(m\).  Indeed, if \(\norm{\phi(h_{m+1})} \ne 0\), then
  \(\norm{\phi (h^{r_{m}}_{m+1})} \ge r_{m} \) by the linearity of the length function and
  \begin{displaymath}
    \begin{aligned}
      \norm{\phi(h^{r_{m-1}}_{m})}
      &\ge \norm{\phi (h_{m})} = \norm{\phi(g_{m}h_{m+1}^{r_{m}})} \\
      &\ge\norm{\phi (h_{m+1}^{r_{m}})} - \norm{\phi(g_{m})} \ge r_{m} - \norm{\phi(g_{m})}
        = r_{m-1} +1 > r_{m-1}.
    \end{aligned}
  \end{displaymath}
  If on the other hand \(\norm{\phi(h_{m+1})} = 0\), then \(\phi(h_{m+1}) = e_{H}\) and so
  \begin{displaymath}
    \begin{aligned}
      \norm{\phi(h_{m}^{r_{m-1}})}
      &= \norm{\phi((g_{m}h_{m+1}^{r_{m}})^{r_{m-1}})} = \norm{\phi(g_{m}^{r_{m-1}})} \ge r_{m-1},
    \end{aligned}
  \end{displaymath}
  where the last inequality relies on the choice of \(g_{m} \in G \setminus \ker \phi \).  This
  justifies Eq.~\eqref{eq:hmrm-length-bound}, which, in particular, implies that \(h_{m}\) must be
  an element of \(G \setminus \ker \phi\).

  Iterating Eq.~\eqref{eq:hmrm-length-bound} yields a contradiction.  Indeed
  \begin{displaymath}
    \begin{aligned}
      \norm{\phi(h_{0})}
      &= \norm{\phi(g_{0}h_{1}^{r_{0}})} \ge \norm{\phi(h_{1}^{r_{0}})} - \norm{\phi(g_{0})}
        \ge \norm{\phi(h_{1})} - \norm{\phi(g_{0})} \\
      &= \norm{\phi(g_{1}h_{2}^{r_{1}})} - \norm{\phi(g_{0})}
        \ge \norm{(h_{2}^{r_{1}})} - \norm{\phi(g_{1})} - \norm{\phi(g_{0})} \\
      &\ge  \norm{\phi(h_{2})} - \norm{\phi(g_{1})} - \norm{\phi(g_{0})}\\
      &= \ \cdots \\[-5pt]
      &= \norm{\phi(g_{m}h_{m+1}^{r_{m}})} - \sum_{i=0}^{m-1}\norm{\phi(g_{i})}
        \ge \norm{\phi(h_{m+1}^{r_{m}})} - \sum_{i=0}^{m}\norm{\phi(g_{i})} \\
      \textrm{Eq.~\eqref{eq:hmrm-length-bound}} &\ge r_{m} - \sum_{i=0}^{m}\norm{\phi(g_{i})} = m
    \end{aligned}
  \end{displaymath}
  is true for all \(m\), i.e., \(\norm{\phi(h_{0})} = \infty\), which is impossible.

  We conclude that \(\ker \phi\) contains a neighborhood of the identity.  From this it is easy to
  see that \(\ker \phi\) must be open.  Recall that \(\ker\phi\) is a subgroup of \(G\).  If
  \(U \subseteq \ker\phi\) is open, then we can write \(\ker\phi = \bigcup_{g} gU\), where the union
  is taken over all \(g \in \ker \phi\).  The topology of the metric \(d\) is only a semigroup
  topology and in a general topological semigroup the translation map \(x \mapsto gx\) is continuous
  but not necessarily open.  However, in our setup \(G\) is a group, so \(x \mapsto g^{-1}x\) is a
  continuous inverse to the former map, hence all translations are homeomorphisms and \(gU\) is thus
  open for all \(g \in G\).  This shows that \(\ker\phi\) is open.

  Finally, given any \(A \subseteq H\), \(\phi^{-1}(A) = \bigcup_{g}g\ker\phi\), where the union is
  over \(g \in \phi^{-1}(A)\), which shows that \(\phi^{-1}(A)\) is open.  We conclude that \(\phi\)
  is continuous with respect to the discrete topology on \(H\).
\end{proof}

\begin{corollary}
  \label{cor:dudley-theorem-open-kernel}
  Suppose \(\norm{\cdot}\) is a seminorm of linear growth on a group \(G\), and
  \(\ker(\norm{\cdot})\) is a normal subgroup of \(G\).  Then \(\ker(\norm{\cdot})\) is open in any
  completely metrizable semigroup topology on \(G\).
\end{corollary}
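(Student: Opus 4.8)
The plan is to reduce this corollary to Dudley's theorem (Theorem~\ref{thm:dudley-theorem}) by quotienting $G$ by $N := \ker(\norm{\cdot})$. Since $N$ is assumed to be normal in $G$, the quotient $G/N$ is a group and the canonical projection $\phi : G \to G/N$ is a group homomorphism; the strategy is to invoke Dudley's theorem with $G/N$ playing the role of the normed target group and $\phi$ the role of the homomorphism out of $G$.

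First I would check that $\norm{\cdot}$ descends to a \emph{norm} of linear growth on $G/N$. Setting $\norm{gN} := \norm{g}$ is well defined: for $n \in N$ the triangle inequality gives $\norm{gn} \le \norm{g} + \norm{n} = \norm{g}$ and, symmetrically, $\norm{g} = \norm{(gn)n^{-1}} \le \norm{gn} + \norm{n^{-1}} = \norm{gn}$, so $\norm{\cdot}$ is constant on each coset of $N$ (this step uses only that $N$ is a subgroup). The three seminorm axioms on $G/N$ are inherited verbatim from those on $G$, and the induced seminorm has trivial kernel, since $\norm{gN} = 0$ forces $g \in N$, i.e.\ $gN$ is the identity coset. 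Linear growth also transfers: if $gN \ne N$ then $g \notin N$, so $\norm{(gN)^{n}} = \norm{g^{n}} \ge \max\{n, \norm{g}\} = \max\{n, \norm{gN}\}$ for all $n \ge 1$.

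Then I would apply Theorem~\ref{thm:dudley-theorem} directly: $G$ carries a completely metrizable semigroup topology by hypothesis, $G/N$ admits a norm of linear growth by the previous step, and $\phi : G \to G/N$ is a homomorphism, so $\phi$ is continuous when $G/N$ is equipped with the discrete topology. Since $\{N\}$ (the identity element of $G/N$) is open in the discrete topology, it follows that $\ker(\norm{\cdot}) = N = \phi^{-1}(\{N\})$ is open in $G$, which is the assertion.

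There is really no hard part here: the corollary is essentially a packaging of Dudley's theorem, and the only thing to verify beyond citing it is the routine passage of the seminorm to a genuine norm of linear growth on the quotient. The hypothesis used in an essential way is the normality of $\ker(\norm{\cdot})$, which is precisely what makes $G/N$ a group and $\phi$ a homomorphism, so that Dudley's theorem becomes applicable in the first place.
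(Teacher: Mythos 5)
Your proof is correct and follows essentially the same route as the paper: quotient $G$ by $N = \ker(\norm{\cdot})$, observe that the seminorm descends to a norm of linear growth on $G/N$, and apply Dudley's theorem (Theorem~\ref{thm:dudley-theorem}) to the quotient homomorphism to conclude that $N$ is the preimage of an open singleton. The only difference is that you spell out the well-definedness of the descended norm and the transfer of linear growth, which the paper summarizes as ``straightforward to verify.''
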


\begin{proof}
  Let \(N = \ker(\norm{\cdot})\) be normal in \(G\) and consider the factor group \(H = G/N\).  Let
  \(\pi : G \to H\) be the quotient map.  It is straightforward to verify that
  \(\norm{g_{1}} = \norm{g_{2}}\) whenever \(g_{1}N = g_{2}N\), allowing us to define a function
  \(\norm{\cdot}_{H} : H \to \N\) via \(\norm{gN}_{H} = \norm{g}\).  Furthermore,
  \(\norm{\cdot}_{H}\) is a norm with linear growth, \(\ker(\norm{\cdot}_{H}) = \{e_{H}\}\).  Let
  \(\tau_{G}\) be a completely metrizable semigroup topology on \(G\).  By
  Theorem~\ref{thm:dudley-theorem}, \(\pi\) is \(\tau_{G}\)-continuous with respect to the discrete
  topology on \(H\).  It follows that \(N = \pi^{-1}(e_{H})\) must be \(\tau_{G}\)-open, as claimed.
\end{proof}

\subsection{Applications to meromorphic functions}
\label{sec:appl-merom-funct}

Here is an example of a linearly growing seminorm relevant to our work.

\begin{exmpl}
  \label{exmpl:divisors-seminorm}
  Let \(\divis\) be the Abelian group of divisors on \(\C\).  For a compact \(K \subseteq \C\),
  define \(\norm{\cdot}_{K} : \divis \to \N\) by \(\norm{d}_{K} = \sum_{z \in K}|d(z)|\).  By the
  definition of a divisor, \(d(z)\) is non-zero for only finitely many \(z \in K\), so the sum is
  well-defined.  One can easily verify that \(\norm{\cdot}_{K}\) is an additive, and thus linearly
  growing, seminorm.  The group \(\ker(\norm{\cdot}_{K})\) consists of divisors \(d\) satisfying
  \(d(z) = 0\) for all \(z \in K\).  Since \(\divis\) is Abelian, all its subgroups are normal.
  Corollary~\ref{cor:dudley-theorem-open-kernel} therefore implies that \(\ker(\norm{\cdot}_{K})\)
  is open in any completely metrizable semigroup topology on \(\divis\).
\end{exmpl}

\begin{exmpl}
  \label{exmpl:meromorphic-seminorm}
  A seminorm on a group \(H\) can be pulled back to a group \(G\) through any homomorphism
  \(G \to H\).  For instance, consider the multiplicative group \(\meromnz\) of non-zero meromorphic
  functions and the homomorphism \(\dm : \meromnz \to \divis\) given by the divisor map.  Given a
  compact set \(K \subseteq \C\), define a seminorm \(\norm{\cdot}'_K\) on \(\meromnz\) by
  \(\norm{f}'_{K} = \norm{\dm(f)}_{K}\) for \(f \in \meromnz\), where \(\norm{\cdot}_{K}\) is the
  seminorm from Example~\ref{exmpl:divisors-seminorm}. Note that \(\ker(\norm{\cdot}'_K)\) consists
  of meromorphic functions with no poles or zeros in \(K\).
  Corollary~\ref{cor:dudley-theorem-open-kernel} shows that \(\ker(\norm{\cdot}'_K)\) is open with
  respect to any completely metrizable semigroup topology on \(\meromnz\).
\end{exmpl}

Given a set \(K\), let \(\freeab(K)\) denote the free Abelian group with generators \(K\).  If
\(K \subseteq K'\), there is a natural surjective homomorphism \(\pi : \freeab(K') \to \freeab(K)\)
defined on the generators by
\[\pi(x) =
  \begin{cases}
    x & \textrm{if } x \in K,\\
    0 & \textrm{otherwise}.
  \end{cases}
\]
The family of compact subsets of \(\C\) is directed under inclusion, allowing us to form the inverse
limit \(\varprojlim \freeab(K)\) of the groups \(\freeab(K)\) as \(K\) ranges over compact subsets
of \(\C\).  Observe that \(\varprojlim \freeab(K)\) is naturally isomorphic to the group of divisors
\(\divis\).  We endow each \(\freeab(K)\) with the discrete topology and let \(\tau_{\divis}\) be
the inverse limit topology on \(\divis\).  For \(K \subseteq \C\), let
\(\pi_{K} : \divis \to \freeab(K)\) be the corresponding projection.

\begin{proposition}
  \label{prop:divis-inductive-topology-automatic-continuity}
  Let \(G\) be a group, \(\tau_{G}\) a completely metrizable semigroup topology on \(G\), and
  \(\alpha : G \to \divis\) a homomorphism.
  \begin{enumerate}[leftmargin=.6cm, label={\sf \arabic*}., ref={\sf \arabic*}]
  \item\label{item:alpha-continuous} The homomorphism \(\alpha\) is
    \((\tau_{G}, \tau_{\divis})\)-continuous.
  \item\label{item:not-separable} If the range of \(\pi_{L} \circ \alpha : G \to \freeab(L)\) is
    uncountable for some compact \(L \subseteq \C\), then \(\tau_{G}\) is not separable.
  \end{enumerate}
\end{proposition}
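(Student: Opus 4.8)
The plan is to prove both parts as consequences of Corollary~\ref{cor:dudley-theorem-open-kernel}, using the fact that each seminorm $\norm{\cdot}_L$ on $\divis$ (from Example~\ref{exmpl:divisors-seminorm}) pulls back along $\alpha$ to a seminorm on $G$ with open kernel. First I would observe that for each compact $L\subseteq\C$ the composite $\pi_L\circ\alpha:G\to\freeab(L)$ is itself a homomorphism, and that $\freeab(L)\cong\varprojlim_{K\subseteq L,\,K\text{ finite}}\freeab(K)$ — more simply, that $\freeab(L)$ is itself a countable (in fact free abelian on the — possibly uncountable — set $L$; careful: $L$ may be uncountable!) discrete group. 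The key point is instead to work finite-point by finite-point: for each finite $F\subseteq\C$, define $\norm{g}_F = \norm{\alpha(g)}_F = \sum_{z\in F}|\alpha(g)(z)|$, an additive (hence linearly growing) seminorm on $G$ whose kernel $N_F = \{g : \alpha(g)(z)=0\ \forall z\in F\}$ is a normal subgroup since $\divis$ is abelian. By Corollary~\ref{cor:dudley-theorem-open-kernel}, $N_F$ is $\tau_G$-open.

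For part~\eqref{item:alpha-continuous}: the inverse limit topology $\tau_\divis$ has a neighbourhood basis at $0$ given by the kernels $\ker\pi_K = \{d : d|_K = 0\}$ as $K$ ranges over compact subsets of $\C$, and since $d|_K=0$ iff $d(z)=0$ for the finitely many $z\in K$ in the support — no, more carefully, $\ker\pi_K$ depends on all of $K$; but a basic open set of $\freeab(K)$ pulled back to $\divis$ is determined by finitely many coordinates, i.e. $\tau_\divis$ is generated by the subgroups $N_F^\divis = \{d : d(z)=0\ \forall z\in F\}$ for $F$ finite. Then $\alpha^{-1}(N_F^\divis) = N_F$ is $\tau_G$-open, and since these $N_F^\divis$ (together with their cosets) form a subbasis for $\tau_\divis$ and $\alpha$ is a homomorphism, $\alpha^{-1}$ of every $\tau_\divis$-open set is $\tau_G$-open. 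Hence $\alpha$ is continuous. I would spell out that a subbasic open set of $\freeab(K)$ is a single point $\{x_0\}$, whose preimage under $\pi_K$ is a coset of $N_F$ with $F$ the (finite) support of $x_0$, and translate accordingly.

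For part~\eqref{item:not-separable}: suppose $\tau_G$ were separable, witnessed by a countable dense $D\subseteq G$. Fix the compact $L$ with $\pi_L\circ\alpha$ having uncountable range. The subgroups $N_F$ for finite $F\subseteq L$ are all $\tau_G$-open, hence so is their common structure; in particular each coset $gN_F$ is open. If two elements $g,g'$ satisfy $g'g^{-1}\notin N_F$ then $gN_F \cap g'N_F = \varnothing$. Now $\pi_L(\alpha(g))$ is determined by the values $\alpha(g)(z)$, $z\in L$; since the range is uncountable, there are uncountably many elements $g_i$, $i\in I$, with pairwise distinct images $\pi_L(\alpha(g_i))$, so for each pair $i\neq j$ there is some $z\in L$ with $\alpha(g_i)(z)\neq\alpha(g_j)(z)$, i.e. $g_ig_j^{-1}\notin N_{\{z\}}\supseteq$ — hmm, that only gives disjointness of cosets of $N_{\{z\}}$ for varying $z$. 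To fix this, enlarge: consider the open subgroup $N = \bigcap\{N_{\{z\}} : z\in \operatorname{supp}(\text{some }\alpha(g_i))\}$? That intersection is over possibly uncountably many $z$ and need not be open. The cleaner route: the cosets $\{gN_F : F\subseteq L \text{ finite}\}$ do not directly give uncountably many disjoint open sets, so instead I would argue via the \emph{inverse limit topology being non-separable}. Actually the right argument: $\pi_L\circ\alpha : G\to\freeab(L)$ is continuous onto an uncountable \emph{discrete} group (part~\eqref{item:alpha-continuous} gives continuity; $\freeab(L)$ discrete in $\tau_\divis$'s projection? no — $\freeab(L)$ carries the discrete topology by fiat in the inverse limit construction). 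A continuous surjection from a separable space onto a discrete space has countable range. Hence if $\pi_L\circ\alpha$ has uncountable range, $G$ is not separable.

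The main obstacle I anticipate is part~\eqref{item:not-separable}, specifically pinning down that $\pi_L\circ\alpha$ is continuous into the \emph{discrete} group $\freeab(L)$ — this requires that the seminorm-kernel $N_F$ argument applies with $F$ ranging over finite subsets of $L$ and that $\ker\pi_L = \bigcap_{z\in L} N_{\{z\}}$ is handled correctly (it is a directed intersection of opens but we only need each $\pi_L^{-1}(\{x\})$ open, which is a coset of $N_{\operatorname{supp}(x)}$ with $\operatorname{supp}(x)$ finite). Once $\pi_L\circ\alpha$ is continuous into a discrete space, "continuous image of separable is separable" plus "separable discrete is countable" closes the argument immediately; the remaining work is just the bookkeeping in part~\eqref{item:alpha-continuous} identifying the subbasis of $\tau_\divis$ with cosets of the open subgroups $N_F$.
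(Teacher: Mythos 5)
Your opening sentence has the right idea---pull back the seminorm $\norm{\cdot}_K$ of Example~\ref{exmpl:divisors-seminorm} along $\alpha$ and apply Corollary~\ref{cor:dudley-theorem-open-kernel}---but the pivot to ``finite-point by finite-point'' is a genuine gap, and it infects both parts. The claim that $\tau_{\divis}$ is generated by the subgroups $N_F^{\divis}=\{d : d(z)=0\ \forall z\in F\}$ with $F$ \emph{finite} is false. The inverse limit topology has neighborhood basis at $0$ given by $\ker\pi_K=\{d : d|_K=0\}$ for $K$ \emph{compact}, and when $K$ is uncountable this is a strictly smaller subgroup than $N_F^{\divis}$ for every finite $F\subseteq K$ (pick a divisor supported on a single point of $K\setminus F$). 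The same error recurs in the aside for part~\eqref{item:not-separable}: $(\pi_L\circ\alpha)^{-1}(\{x\})$ is a coset of $\ker(\pi_L\circ\alpha)$, \emph{not} of $N_{\operatorname{supp}(x)}$; the latter is generally strictly larger, so its cosets do not give the required open preimages, and the disjoint open cover you want does not materialize from the finite-$F$ seminorms alone.

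The repair is exactly your own opening move, which you should not have abandoned: for each compact $K$, set $\norm{g}_K=\sum_{z\in K}\lvert\alpha(g)(z)\rvert$. This is a well-defined $\N$-valued additive seminorm on $G$ precisely because $\alpha(g)$ is a \emph{divisor} (closed discrete support, so only finitely many nonzero terms over the compact $K$); its kernel equals $\ker(\pi_K\circ\alpha)$ and is normal (the target is abelian). Corollary~\ref{cor:dudley-theorem-open-kernel} then makes $\ker(\pi_K\circ\alpha)$ open directly, with no need to pass through finite $F$, so $\pi_K\circ\alpha$ is continuous into the discrete group $\freeab(K)$ for every compact $K$. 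This gives part~\eqref{item:alpha-continuous} by the universal property of the inverse limit topology, and part~\eqref{item:not-separable} follows by your (correct) final observation that a continuous map from a separable space to a discrete space has countable range. This is the paper's argument, and it is what your first sentence set up before the detour.
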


\begin{proof}
  \eqref{item:alpha-continuous}~By the universal property of the inverse limit topology, it suffices
  to show that for each compact \(K \subseteq \C\), the homomorphism
  \[\alpha_{K} = \pi_{K} \circ \alpha : G \to \freeab(K)\] is continuous with respect to the
  discrete topology on \(\freeab(K)\). Define a seminorm \(\norm{\cdot}_{K} : G \to \N\) by
  \(\norm{g}_{K} = \sum_{z \in K} |\alpha(g)(z)|\).  This seminorm is additive, and its kernel
  \(\ker(\norm{\cdot}_{K})\) is normal in \(G\).  By Corollary~\ref{cor:dudley-theorem-open-kernel},
  \(\ker(\norm{\cdot}_{K})\) is \(\tau_{G}\)-open, and thus \(\alpha_{K}\) is continuous.

  \eqref{item:not-separable}~Let \(L\) be such that the range of \(\pi_{L} \circ \alpha\) is
  uncountable.  By item~\eqref{item:alpha-continuous}, \(\pi_{L} \circ \alpha : G \to \freeab(L)\)
  is continuous, and the sets \((\pi_{L} \circ \alpha)^{-1}(z)\) for
  \(z \in \ran (\pi_{L} \circ \alpha) \) are \(\tau_{G}\)-open and pairwise disjoint.  Since
  \(\ran (\pi_{L} \circ \alpha)\) is uncountable, \(\tau_{G}\) cannot be separable and hence is not
  Polish.
\end{proof}

Applying this proposition to the identity map on \(\divis\) and the homomorphism
\(\dm : \meromnz \to \divis\), we obtain the following corollaries.

\begin{corollary}
  \label{cor:divis-inductive-topology-coarsest}
  Any completely metrizable semigroup topology on \(\divis\) is finer than \(\tau_{\divis}\).  The
  homomorphism \(\dm : \meromnz \to \divis \) is \(\tau_{\divis}\)-continuous with respect to any
  completely metrizable semigroup topology \(\tau\) on \(\meromnz\).
\end{corollary}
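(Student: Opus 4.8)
The plan is to obtain both assertions as immediate instances of Proposition~\ref{prop:divis-inductive-topology-automatic-continuity}\eqref{item:alpha-continuous}, so that all the real work --- Dudley's theorem and Corollary~\ref{cor:dudley-theorem-open-kernel} --- has already been absorbed upstream. The only thing left is to feed the proposition the right group, topology, and homomorphism in each of the two cases, and to record why the hypotheses are met.

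For the first statement I would take $G = \divis$, let $\tau_{G}$ be an arbitrary completely metrizable semigroup topology on $\divis$, and let $\alpha = \mathrm{id}_{\divis}$, which is trivially a homomorphism of the Abelian group $\divis$. Proposition~\ref{prop:divis-inductive-topology-automatic-continuity}\eqref{item:alpha-continuous} then says that $\mathrm{id}_{\divis} : (\divis, \tau_{G}) \to (\divis, \tau_{\divis})$ is continuous; unwinding the definition of continuity for the identity map, this is precisely the statement that every $\tau_{\divis}$-open set is $\tau_{G}$-open, i.e.\ that $\tau_{G}$ is finer than $\tau_{\divis}$.

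For the second statement I would take $G = \meromnz$ equipped with its multiplicative group structure, let $\tau$ be any completely metrizable semigroup topology on $\meromnz$, and let $\alpha = \dm : \meromnz \to \divis$. As recorded in Section~\ref{sec:divisors}, $\dm$ satisfies $\dm(f_{1}f_{2}) = \dm(f_{1}) + \dm(f_{2})$, so it is a group homomorphism from the multiplicative group $\meromnz$ to the additive group $\divis$. Applying Proposition~\ref{prop:divis-inductive-topology-automatic-continuity}\eqref{item:alpha-continuous} with $(G, \tau_{G}, \alpha) = (\meromnz, \tau, \dm)$ yields that $\dm : (\meromnz, \tau) \to (\divis, \tau_{\divis})$ is continuous, which is exactly the claim.

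There is no genuine obstacle here: the statement is a corollary in the strict sense. The only points requiring a line of justification are that $\divis$ and $\meromnz$ are groups and that $\mathrm{id}_{\divis}$ and $\dm$ are homomorphisms between them --- all of which are noted in Sections~\ref{sec:merom-entire-functions} and~\ref{sec:divisors} --- together with the elementary observation that continuity of an identity map is synonymous with one topology refining another. If one wanted to be slightly more self-contained, one could alternatively re-derive the first statement directly from Corollary~\ref{cor:dudley-theorem-open-kernel} applied to the seminorms $\norm{\cdot}_{K}$ of Example~\ref{exmpl:divisors-seminorm} (each $\ker(\norm{\cdot}_{K})$ is $\tau_{G}$-open, and these kernels generate the inverse-limit topology $\tau_{\divis}$), but routing everything through the proposition is cleaner.
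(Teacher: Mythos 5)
Your proposal is correct and is exactly the paper's argument: the paper states the corollary follows by "applying this proposition to the identity map on $\divis$ and the homomorphism $\dm : \meromnz \to \divis$," which is precisely what you do. The justifications you give (that $\dm$ is a homomorphism, that continuity of the identity is synonymous with refinement of topologies) are the right supporting remarks, and no further argument is needed.
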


\begin{corollary}
  \label{cor:no-polish-topology}
  There is no Polish semigroup topology on \(\divis\). There is no Polish semigroup topology on
  \(\meromnz\).
\end{corollary}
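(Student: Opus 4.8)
The plan is to read off Corollary~\ref{cor:no-polish-topology} directly from the non-separability half of Proposition~\ref{prop:divis-inductive-topology-automatic-continuity}, using only that a Polish topology is in particular separable and completely metrizable. In both cases the strategy is identical: exhibit a single compact $L\subseteq\C$ for which the relevant homomorphism into $\freeab(L)$ has uncountable range, and then invoke the proposition to conclude that no completely metrizable semigroup topology on the group in question can be separable.

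First I would treat $\divis$. Suppose $\tau$ were a Polish semigroup topology on $\divis$. Apply Proposition~\ref{prop:divis-inductive-topology-automatic-continuity} with $G=\divis$, $\tau_G=\tau$, the homomorphism $\alpha=\mathrm{id}_{\divis}$, and $L=\Dbar$ the closed unit disk. The projection $\pi_L\colon\divis\to\freeab(L)$ is surjective: every element of $\freeab(L)$ is a finitely supported function $L\to\Z$, whose support, being finite and hence discrete, makes it the restriction of an honest divisor. Since $L$ is uncountable, $\freeab(L)$ is uncountable, so $\ran(\pi_L\circ\alpha)=\freeab(L)$ is uncountable. The second part of the proposition then forces $\tau$ to be non-separable, contradicting that $\tau$ is Polish.

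Next I would treat $\meromnz$. Suppose $\tau$ were a Polish semigroup topology on $\meromnz$. Apply Proposition~\ref{prop:divis-inductive-topology-automatic-continuity} with $G=\meromnz$, $\tau_G=\tau$, $\alpha=\dm\colon\meromnz\to\divis$, and again $L=\Dbar$. For each $z\in L$ set $f_z(w)=w-z$; then $f_z\in\meromnz$ and $\dm(f_z)$ is the divisor taking the value $1$ at $z$ and $0$ elsewhere, so $(\pi_L\circ\dm)(f_z)=[z]\in\freeab(L)$, and these elements are pairwise distinct as $z$ ranges over $L$. (Alternatively, surjectivity of $\dm$ via Weierstrass' theorem, recalled in Section~\ref{sec:divisors}, already gives that $\pi_L\circ\dm$ is onto $\freeab(L)$.) In either case $\ran(\pi_L\circ\dm)$ is uncountable, so the proposition again yields non-separability of $\tau$, contradicting Polishness.

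The argument has essentially no hard step: the real content is Dudley's automatic-continuity theorem (Theorem~\ref{thm:dudley-theorem}) and Corollary~\ref{cor:dudley-theorem-open-kernel}, which were already used to establish Proposition~\ref{prop:divis-inductive-topology-automatic-continuity}. The only thing to check here is the (elementary) surjectivity of $\pi_L$, respectively of $\pi_L\circ\dm$, onto the uncountable group $\freeab(\Dbar)$; once that is in place, the uncountable family of pairwise disjoint nonempty $\tau_G$-open sets $(\pi_L\circ\alpha)^{-1}(z)$, $z\in\ran(\pi_L\circ\alpha)$, witnesses non-separability, and both claims of Corollary~\ref{cor:no-polish-topology} follow.
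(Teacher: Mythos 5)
Your proof is correct and follows exactly the same route as the paper: apply item~2 of Proposition~\ref{prop:divis-inductive-topology-automatic-continuity} with $\alpha=\mathrm{id}_{\divis}$ for $\divis$ and $\alpha=\dm$ for $\meromnz$, deducing non-separability and hence non-Polishness. You simply spell out the (easy) uncountability of $\ran(\pi_L\circ\alpha)$ via the choice $L=\Dbar$, which the paper's one-line proof leaves implicit.
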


\begin{proof}
  Since \(\dm : \meromnz \to \divis\) is surjective,
  Proposition~\ref{prop:divis-inductive-topology-automatic-continuity} implies that no semigroup
  topology on \(\divis\) or \(\meromnz\) is separable.
\end{proof}

Recall that a topological algebra is an algebra \(A\) equipped with a topology \(\tau\) that turns
\(A\) into a topological vector space and with respect to which the multiplication on \(A\) is
continuous.  A complete topological algebra is a topological algebra that is complete as a
topological vector space.

Corollary~\ref{cor:no-polish-topology} rules out the existence of Polish topologies on \(\merom\)
that turn it into a topological algebra.  However, it is now easy to show that \(\merom\) is not a
complete topological algebra under any (not necessarily separable) completely metrizable topology.
Recall that a subset \(Z\) of a vector space \(X\) is \emph{absorbing} if for any \(x \in X\) there
is \(r > 0\) such that for all scalars \(|c| \le r\) one has \(cx \in Z\).  In a topological vector
space, all neighborhoods of the origin are absorbing.

\begin{theorem}
  \label{thm:no-completely-metrizable-topological-algebra}
  There is no metrizable topology on \(\merom\) that turns it into a complete topological algebra.
\end{theorem}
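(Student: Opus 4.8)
The plan is to transfer everything to the multiplicative group $\meromnz$ and then invoke the automatic-continuity machinery of Corollary~\ref{cor:dudley-theorem-open-kernel}, exactly as in the proof of Corollary~\ref{cor:no-polish-topology}, but replacing the \emph{separability} obstruction used there by an \emph{absorbency} obstruction, so as to cover arbitrary (not necessarily separable) completely metrizable topologies.

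First I would set up the reduction. Suppose, for contradiction, that $\tau$ is a metrizable topology on $\merom$ making it a complete topological algebra. Being metrizable, $\tau$ is Hausdorff, so $\{0\}$ is $\tau$-closed and $\meromnz = \merom \setminus \{0\}$ is $\tau$-open; since an open subspace of a completely metrizable space is completely metrizable (Alexandrov), $\meromnz$ is completely metrizable in the induced topology. Because $\merom$ has no zero divisors, the $\tau$-continuous multiplication of $\merom$ restricts to a continuous map $\meromnz \times \meromnz \to \meromnz$, so the induced topology is a completely metrizable semigroup topology on the group $\meromnz$ — precisely the hypothesis of Corollary~\ref{cor:dudley-theorem-open-kernel}.

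Next I would invoke the seminorm from Example~\ref{exmpl:meromorphic-seminorm} with $K = \{0\}$: the additive seminorm $\norm{f}'_{\{0\}} = |\ord_0 f|$ has linear growth, and its kernel $N = \ker(\norm{\cdot}'_{\{0\}})$ — the subgroup of meromorphic functions with neither a zero nor a pole at the origin — is normal because $\meromnz$ is abelian. Corollary~\ref{cor:dudley-theorem-open-kernel} then shows that $N$ is $\tau$-open in $\meromnz$, hence $\tau$-open in $\merom$ (as $\meromnz$ is itself $\tau$-open in $\merom$). Now the constant function $1$ lies in $N$, so $N - 1$ is a $\tau$-neighborhood of $0$ in the topological vector space $\merom$ and is therefore absorbing. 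Applying absorbency to $f_0(z) = 1/z \in \merom$, there is $r > 0$ with $c f_0 \in N - 1$, i.e. $1 + c f_0 \in N$, for every $|c| \le r$. But for $0 < |c| \le r$ we have $1 + c f_0 = (z + c)/z$, which has a simple pole at $0$, so $\ord_0(1 + c f_0) = -1 \ne 0$ and $1 + c f_0 \notin N$ — the desired contradiction.

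The only genuinely new ingredient beyond what is already in the appendix is this last step: using the absorbency of neighborhoods of the origin to force a function with a prescribed pole into the ``pole- and zero-free at $0$'' subgroup $N$. The reduction from $\merom$ to $\meromnz$ is the one mildly technical point (openness of $\meromnz$, which needs only that metric topologies are Hausdorff; its complete metrizability via Alexandrov; and continuity of the restricted multiplication, which needs that $\merom$ is a domain); the heavy lifting — Dudley's automatic-continuity theorem, Theorem~\ref{thm:dudley-theorem} — is black-boxed through Corollary~\ref{cor:dudley-theorem-open-kernel}, so no real analytic obstacle remains.
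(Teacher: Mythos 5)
Your argument is correct and essentially identical to the paper's own proof: both pass to the open subgroup $\meromnz$, invoke Corollary~\ref{cor:dudley-theorem-open-kernel} with the seminorm $\norm{\cdot}'_{\{0\}}$ from Example~\ref{exmpl:meromorphic-seminorm} to see that its kernel $N$ is $\tau$-open, and then derive a contradiction from the absorbency of the neighborhood $N-1$ of the origin by testing it against $h(z)=1/z$. The extra details you supply (Hausdorffness of $\tau$, Alexandrov's theorem, and that $\merom$ has no zero divisors) are all implicit in the paper's shorter write-up.
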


\begin{proof}
  Suppose, towards a contradiction, that \(\tau\) is a completely metrizable topology with respect
  to which \(\merom\) is a topological algebra.  The space \(\meromnz = \merom \setminus \{0\} \) is
  an open subset of \(\merom\) and is therefore completely metrizable in the induced topology.  Let
  \(\norm{\cdot}' = \norm{\cdot}'_{\{0\}}\) be the seminorm on \(\meromnz\) from
  Example~\ref{exmpl:meromorphic-seminorm} corresponding to \(K = \{0\}\).  Since \(\tau\) is an
  algebra topology, multiplication is continuous, and Corollary~\ref{cor:dudley-theorem-open-kernel}
  implies that the set \(\ker(\norm{\cdot}')\) is \(\tau\)-open in \(\meromnz\) and hence in
  \(\merom\).

  By construction, \(\ker(\norm{\cdot}')\) is a subset of \(\meromnz\), so
  \(0 \not \in \ker(\norm{\cdot}')\).  However, the constant function \(1 \in \ker(\norm{\cdot}')\),
  and thus \(0 \in \ker(\norm{\cdot}') - 1\).  Since \(\tau\) is a vector space topology,
  translations are \(\tau\)-open, and \(\ker(\norm{\cdot}') - 1\) is therefore an open neighborhood
  of the origin, hence absorbing.  However, if \(h(z) = 1/z\) and \(c \ne 0\), then \(ch + 1\) has a
  pole at the origin, so \(ch \not \in \ker(\norm{\cdot}') - 1\). This is a contradiction.
\end{proof}

Grosse-Erdmann~\cite{grosse-erdmannLocallyConvexTopology1995} noted that, as a consequence of
Arens's extension~\cite{arensLinearTopologicalDivision1947} of the Gelfand--Mazur theorem, there is
no Hausdorff locally convex topology on \(\merom\) that is a field topology.  It was also asked
in~\cite{grosse-erdmannLocallyConvexTopology1995} whether such a topology exists if we drop either
the local convexity or the continuity of the inverse requirement, and, furthermore, whether such a
topology can be metrizable.  Theorem~\ref{thm:no-completely-metrizable-topological-algebra} answers
the latter part of this question in the negative.

\begin{corollary}{\rm (cf.~\cite{grosse-erdmannLocallyConvexTopology1995}*{p.~302})\phantom{}}
  \begin{enumerate}[leftmargin=.6cm, label={\sf \arabic*}., ref={\sf \arabic*}]
  \item There is no completely metrizable vector space and field topology on \(\merom\).
  \item There is no completely metrizable locally convex algebra topology on \(\merom\).
  \end{enumerate}
\end{corollary}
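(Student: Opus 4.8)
The plan is to deduce both statements directly from Theorem~\ref{thm:no-completely-metrizable-topological-algebra}; the work is entirely in unwinding the definitions. First I would record the standard fact that a \emph{completely metrizable} topology on a topological group---in particular, on a topological vector space under addition---is complete in its (two-sided, here abelian) uniformity. Consequently the hypothesis of Theorem~\ref{thm:no-completely-metrizable-topological-algebra}, namely a metrizable topology making $\merom$ a \emph{complete} topological algebra, is automatically met whenever we are handed a completely metrizable topology under which $\merom$ is a topological algebra, i.e.\ a completely metrizable vector space topology with continuous multiplication.

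For part~(1): a \emph{field topology} on $\merom$ in particular makes multiplication continuous, and a \emph{vector space topology} makes $\merom$ a topological vector space, so a topology that is simultaneously a vector space topology and a field topology is an algebra topology in the sense of Section~\ref{sec:appl-merom-funct}. Hence a completely metrizable vector space and field topology would exhibit $\merom$ as a metrizable complete topological algebra, contradicting Theorem~\ref{thm:no-completely-metrizable-topological-algebra}. For part~(2): a \emph{locally convex algebra topology} is a fortiori an algebra topology, so a completely metrizable one again makes $\merom$ a metrizable complete topological algebra---local convexity (and, as in part~(1), continuity of the inverse map) playing no role---and Theorem~\ref{thm:no-completely-metrizable-topological-algebra} again gives a contradiction.

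There is no genuine obstacle beyond this bookkeeping; the substance lies in Theorem~\ref{thm:no-completely-metrizable-topological-algebra} itself, whose proof I would leave untouched: it runs through Dudley's automatic continuity (Theorem~\ref{thm:dudley-theorem}) via Corollary~\ref{cor:dudley-theorem-open-kernel} applied to the zero-and-pole-counting seminorm $\norm{\cdot}'_{\{0\}}$ of Example~\ref{exmpl:meromorphic-seminorm}, forcing the set of meromorphic functions with neither zero nor pole at the origin to be open, and then derives a contradiction from the observation that this set, translated by $-1$, would be an absorbing neighbourhood of $0$ yet misses every $c/z$ with $c \ne 0$. The one point worth a sentence of care is precisely the equivalence invoked above between ``completely metrizable'' and ``metrizable and complete'' for topological groups, which is what lets Theorem~\ref{thm:no-completely-metrizable-topological-algebra}---and the Corollary~\ref{cor:dudley-theorem-open-kernel} it rests on, phrased for completely metrizable semigroup topologies---be applied verbatim.
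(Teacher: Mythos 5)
Your proposal is correct and matches what the paper intends: the corollary is stated immediately after Theorem~\ref{thm:no-completely-metrizable-topological-algebra} with no separate proof, precisely because the reduction you carry out is the content. You correctly note that the only nontrivial translation is ``completely metrizable $\Rightarrow$ complete'' for a metrizable topological group (a standard Baire/$G_\delta$ argument in the Raikov completion), after which both hypotheses subsume the hypothesis of the theorem and the extra structure (continuity of inversion in part~(1), local convexity in part~(2)) is discarded.

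One small remark you might appreciate: the statement of Theorem~\ref{thm:no-completely-metrizable-topological-algebra} reads ``metrizable \ldots complete topological algebra,'' but its proof actually opens by assuming a \emph{completely metrizable} topology making $\merom$ a topological algebra (and that is also the form in which Corollary~\ref{cor:dudley-theorem-open-kernel} is invoked). So you could equally well skip the ``completely metrizable $\Leftrightarrow$ metrizable and complete'' conversion and apply the theorem's proof verbatim to the corollary's hypotheses; either route is sound.
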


Finally, we consider vector space topologies on \(\merom\).  Of course, it would be too much to
expect \(\merom\) not to have any Polish vector space topologies.  Being a vector space with a Hamel
basis of size continuum, \(\merom\) is (abstractly) isomorphic to an infinite-dimensional separable
Banach space and thus admits many distinct Polish topologies.  None of these, however, generate the
Borel \(\sigma\)-algebra of \(\tau_{\merom}\).  We recall that all the field operations on
\(\merom\) are Borel.

\begin{theorem}
  \label{thm:no-vector-space-topology-same-borel}
  There is no Polish vector space topology on \(\merom\) whose Borel \(\sigma\)-algebra equals
  \(\borel{\tau_{\merom}}\).
\end{theorem}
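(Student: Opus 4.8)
The plan is to argue by contradiction. Suppose $\tau$ is a Polish vector space topology on $\merom$ with $\borel_\tau=\borel_{\tau_\merom}$. I will show that $(\merom,\tau)$ is then a complete metrizable topological algebra, which contradicts Theorem~\ref{thm:no-completely-metrizable-topological-algebra}. Since $\tau$ is a vector space topology, addition and scalar multiplication are already $\tau$-continuous, and since $\tau$ is Polish the space $(\merom,\tau)$ is completely metrizable; so the entire argument reduces to proving that the multiplication map $m\colon\merom\times\merom\to\merom$, $m(f,g)=fg$, is $\tau$-continuous.

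First I would record that $m$ is Borel with respect to $\borel_\tau$: by Proposition~\ref{prop:meromnz-standard-Borel-algebra} multiplication is Borel with respect to $\borel_{\tau_\merom}$, and by hypothesis $\borel_\tau=\borel_{\tau_\merom}$. Because $\merom$ is a standard Borel space, the product Borel structure on $\merom\times\merom$ is the Borel $\sigma$-algebra of the product topology, so for fixed $f$ the section $g\mapsto fg$ is Borel $\merom\to\merom$ (sections of a Borel, indeed of a product-measurable, map are Borel). By distributivity this section is an endomorphism of the additive group $(\merom,+)$, which is a Polish group in the topology $\tau$. A Borel map is Baire-measurable, so by the automatic continuity of Baire-measurable homomorphisms between Polish groups (a consequence of Pettis' theorem; see, e.g., \cite{Kechris}), the section $g\mapsto fg$ is $\tau$-continuous; symmetrically so is each section $f\mapsto fg$. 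Hence $m$ is separately $\tau$-continuous.

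Next I would upgrade separate continuity to joint continuity. As $\tau$ is Polish, $(\merom,\tau)$ is a completely metrizable topological vector space, i.e.\ an $F$-space, and in particular metrizable. By the classical theorem---a consequence of the Banach--Steinhaus theorem for $F$-spaces---that a separately continuous bilinear map from an $F$-space times a metrizable topological vector space into a topological vector space is jointly continuous (see, e.g., \cite{nariciTopologicalVectorSpaces2010}), the map $m$ is $\tau$-continuous. Thus $(\merom,+,\cdot,\tau)$ is a complete metrizable topological algebra, contradicting Theorem~\ref{thm:no-completely-metrizable-topological-algebra}, and the proof is complete.

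The step I expect to be the crux is the passage from the Borelness of multiplication to its separate $\tau$-continuity; this is precisely where the hypothesis $\borel_\tau=\borel_{\tau_\merom}$ enters, and it works because $\tau$ makes the \emph{additive} structure a Polish group, so Pettis' automatic continuity theorem applies to each multiplicative section. Once separate continuity is available, the remaining ingredients---joint continuity of separately continuous bilinear maps on $F$-spaces, and the non-existence result of Theorem~\ref{thm:no-completely-metrizable-topological-algebra}---are standard and off-the-shelf.
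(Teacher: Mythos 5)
Your proof is correct, but it takes a genuinely different route from the paper's. The paper's proof is direct and elementary: it writes \(\merom = \bigcup_n M_n\) where \(M_n = \{f \in \merom : z^n f \textrm{ has no pole at } 0\}\) are Borel proper vector subspaces; the Baire category theorem then forces some \(M_{n_0}\) to be non-meager in \(\tau\), but a non-meager subgroup with the Baire property of a Polish group is open (Pettis), and an open vector subspace is absorbing, hence all of \(\merom\) --- contradicting \(1/z^{n_0+1} \notin M_{n_0}\). You instead reduce to Theorem~\ref{thm:no-completely-metrizable-topological-algebra} by showing multiplication would have to be \(\tau\)-continuous, combining automatic continuity of Borel homomorphisms between Polish groups (applied to the multiplicative sections \(g \mapsto fg\)) with the \(F\)-space version of the Banach--Steinhaus theorem to upgrade separate to joint continuity of a bilinear map. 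Both arguments live in the same Baire category/Pettis circle of ideas, and both are correct. Your version uses heavier machinery and depends on Theorem~\ref{thm:no-completely-metrizable-topological-algebra} as a black box, but in exchange it establishes the stronger intermediate fact that any Polish vector space topology compatible with the Borel structure of \(\tau_{\merom}\) would automatically make \(\merom\) a topological \emph{algebra}, thereby exhibiting the theorem as an outright corollary of Theorem~\ref{thm:no-completely-metrizable-topological-algebra} --- a dependency the paper's short self-contained proof does not exploit.
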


\begin{proof}
  Suppose, towards a contradiction, that such a topology \(\tau\) exists.  For \(n \in \N\), define
  \begin{displaymath}
    \begin{aligned}
      M_{n}&=\{f \in \merom : 0 \textrm{ is not a pole of } z^{n}f\} \\
           &= \{0\} \cup \{f \in \meromnz :
             \dm(f)(0) \ge -n\}.
    \end{aligned}
  \end{displaymath}
  Note that \(M_{n}\) is Borel and thus has the Baire property with respect to \(\tau\).  Since
  \(\merom = \bigcup_{n} M_{n}\), the Baire category theorem ensures that \(M_{n_{0}}\) is
  non-meager for some \(n_{0}\).  However, each \(M_{n}\) is a vector subspace of \(\merom\), which
  forces \(\merom = M_{n_{0}}\). This is absurd, as \(1/z^{n_{0}+1} \not \in M_{n_{0}}\).
\end{proof}

\section{Runge's theorem for periodic harmonic functions}
\label{sec:rung-theor-peri}

Let \(\Gamma\) be a discrete subgroup of \(\Rd\), \(d\ge 3\), of dimension at most \(d-2\). We are
given a closed set \(K\subset \Rd\) which is invariant with respect to the action of
\(\Gamma\), such that \(\Rd\setminus K\) is connected.  We assume throughout that
\(K\cap D_0\) is compact, where \(D_0=D_0(\Gamma)\) is a (closed) fundamental domain of \(\Gamma\),
and fix a \(\Gamma\)-invariant neighborhood \(U\) of \(K\).

\begin{theorem}
  \label{thm:periodic-Runge}
  Given a harmonic function \(h\) on \(U\) with \(\stab(h)=\Gamma\) and a number \(\epsilon>0\),
  there exists a harmonic function \(H\) on \(\Rd\) with
  \(\mathrm{stab}(H)\supseteq\Gamma\), such that \(\sup_{x\in K}|H(x)-h(x)|\le \epsilon\).
\end{theorem}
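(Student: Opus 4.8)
The plan is to reduce the periodic Runge approximation on $\R^d$ to a classical Runge-type approximation statement on the quotient manifold $M = \R^d/\Gamma$, which is diffeomorphic to $\R^{d'} \times \T^p$ with $d' = d - \dim\Gamma \ge 2$. A $\Gamma$-invariant harmonic function $h$ on the $\Gamma$-invariant neighborhood $U$ of $K$ descends to a function $\tilde h$ on the open set $\tilde U = U/\Gamma \subseteq M$; since harmonicity is a local condition and the covering map $\R^d \to M$ is a local isometry (for the flat metric inherited from $\R^d$), $\tilde h$ is harmonic for the associated Laplace--Beltrami operator on $M$. Likewise, $K$ descends to a compact set $\tilde K = K/\Gamma$ (compact because $K \cap D_0$ is compact), and the hypothesis that $\R^d \setminus K$ is connected will be used to control the topology of $M \setminus \tilde K$. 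Conversely, a function $H$ on $M$ lifts to a $\Gamma$-invariant function on $\R^d$, and harmonicity is preserved, so $\operatorname{stab}(H) \supseteq \Gamma$ automatically. Thus the theorem is equivalent to: every function harmonic on a neighborhood of the compact $\tilde K \subseteq M$ can be uniformly approximated on $\tilde K$ by functions harmonic on all of $M$.

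First I would set up the quotient picture carefully: identify $M$ with $\R^{d'} \times \T^p$ via the linear change of coordinates $B$ normalizing $\Gamma$, transport the flat metric, and note that the Laplacian on $M$ in these coordinates is $\Delta_{\R^{d'}} + \Delta_{\T^p}$ (a constant-coefficient operator after unwrapping $\T^p$). Then I would invoke the Lax--Malgrange approximation theorem (see \cite{narasimhanAnalysisRealComplex1985}*{Sec.~3.10} or \cite{hormanderAnalysisLinearPartial2003}*{Thm.~4.3.1}): for an elliptic operator $P$ with constant coefficients on $\R^{d'} \times \R^p$, solutions of $Pu = 0$ on a neighborhood of a compact set $L$ can be approximated uniformly on $L$ by global solutions provided $L$ has no "holes" relative to $P$ — precisely, provided no connected component of the complement of $L$ is relatively compact (for the Laplacian, whose fundamental solution has the right decay when $d' + p \ge 2$, which holds here). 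The remaining work is then to verify this no-relatively-compact-component condition for $\tilde K$ in $M$. Here is where the hypothesis $\R^d \setminus K$ connected enters: a relatively compact component $C$ of $M \setminus \tilde K$ would lift to a $\Gamma$-invariant open subset of $\R^d \setminus K$ whose connected components are relatively compact modulo $\Gamma$; since $\Gamma$ has positive codimension $d' \ge 2$ in $\R^d$, such a set cannot exist if $\R^d \setminus K$ is connected and unbounded — I would argue that any component of $\R^d \setminus K$ must be unbounded (as $K \cap D_0$ is compact, so $K$ is "thin at infinity" in the $\Gamma$-directions and also in the transverse directions away from a bounded set), forcing $M \setminus \tilde K$ to have only the one unbounded component.

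Alternatively, and perhaps more self-containedly (the paper promises "a simple direct proof"), I would mimic the classical proof of Runge's theorem: represent $h$ on $K$ via a Newtonian-type potential using a $\Gamma$-periodic Green's function / fundamental solution on $M$ (which exists since $d' \ge 2$ guarantees the heat kernel on $M$ integrates in time, giving a genuine Green's function), write $h(x) = \int_{\partial \Omega} (\text{Poisson-type kernel})$ for a smooth $\Gamma$-invariant domain $\Omega$ with $K \subseteq \Omega$, $\overline{\Omega} \subseteq U$; discretize the boundary integral to approximate $h$ uniformly on $K$ by a finite linear combination of translates of the periodic fundamental solution with poles on $\partial\Omega$ (hence outside $K$); and then use a pole-pushing argument along paths in the connected set $M \setminus \tilde K$ going out to the single end of $M$, killing the singularities in the limit because the fundamental solution decays there. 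The main obstacle in either route is the topological/geometric step — verifying that $M \setminus \tilde K$ has no compact component, i.e., correctly extracting the needed consequence of "$\R^d \setminus K$ connected" together with codimension $\ge 2$ — and, in the direct-proof route, constructing and controlling the periodic fundamental solution (its existence, symmetry, and decay at the end of $M$); I expect this analytic construction, rather than the approximation scheme itself, to be the technically delicate part.
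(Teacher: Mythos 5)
Your proposal offers two routes, and the paper takes something close to your second but not quite. The paper does construct a $\Gamma$-periodic fundamental solution $G_\Gamma$ on $\R^d$ (Lemma~\ref{lem:periodic-fundamental-sol}) --- though by an explicit renormalized periodization of the Newtonian kernel directly on $\R^d$, not by integrating a heat kernel on the quotient --- and does establish a representation formula (Lemma~\ref{lem:representation}). But instead of your pole-pushing scheme, the paper closes with a Hahn--Banach duality argument: one shows that any finite measure $\mu$ on $K\cap D_0$ annihilating $\mathcal{H}_\Gamma(\mathcal{O})$ (for a $\Gamma$-invariant open $\mathcal{O}\supset K$) also annihilates $\mathcal{H}_\Gamma(K)$, by forming $\nu(x)=\int G_\Gamma(x-y)\,d\mu(y)$, observing that $\nu\equiv 0$ off $\mathcal{O}$ (because for such $x$ the map $y\mapsto G_\Gamma(x-y)$ is itself a test function in $\mathcal{H}_\Gamma(\mathcal{O})$), invoking unique continuation and the connectedness of $\R^d\setminus K$ to conclude $\supp\nu\subseteq K$, and finishing with Green's formula against a cutoff. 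Your first route --- transferring to $M=\R^{d'}\times\T^p$ and invoking Lax--Malgrange --- is acknowledged in the paper's own remark as yielding the more general statement, and the ``no relatively compact component'' check you worry about is actually immediate: connectedness of $\R^d\setminus K$ implies connectedness of $M\setminus\tilde K$ (continuous image, using $\pi^{-1}(\tilde K)=K$ by $\Gamma$-invariance), and since $\tilde K$ is compact while $M$ is non-compact, the single component is unbounded --- no ``thin at infinity'' argument is needed. One small inaccuracy: the transported Laplacian on $\R^{d'}\times\T^p$ need not be $\Delta_{\R^{d'}}+\Delta_{\T^p}$ unless $B$ is orthogonal, but it remains a constant-coefficient elliptic operator, which is all Lax--Malgrange requires. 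In short, your Lax--Malgrange route is cleaner but invokes heavier machinery; your direct route has the right opening moves (periodic fundamental solution, representation formula) but the paper finishes with duality rather than pole-pushing, which sidesteps the delicacies of pushing harmonic poles in dimension $d\ge 3$.
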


\begin{remark}
  The above result is true whenever \(\Rd/\Gamma\) is a non-compact manifold, which also
  holds in the case \(\mathrm{dim}(\Gamma)=d-1\). However, the following simple proof does not apply
  (the periodization of the Green function in Lemma~\ref{lem:periodic-fundamental-sol}
  diverges). The more general theorem is a special case of the Lax--Malgrange approximation theorem,
  see Section~3.10 in~\cite{narasimhanAnalysisRealComplex1985}.
\end{remark}

The first step towards Theorem~\ref{thm:periodic-Runge} is the following lemma.
\begin{lemma}
  \label{lem:periodic-fundamental-sol}
  There exists a function \(G_\Gamma\) on \(\Rd\) with \(\mathrm{stab}(G_\Gamma)=\Gamma\),
  such that
  \[
    -\Delta G_\Gamma=\sum_{x\in\Gamma}\delta_x.
  \]
\end{lemma}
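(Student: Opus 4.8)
The plan is to periodize the standard Newtonian kernel over the lattice $\Gamma$. Let $E(x) = c_d |x|^{2-d}$ be the fundamental solution of $-\Delta$ on $\R^d$, so that $-\Delta E = \delta_0$. Since $\dim \Gamma \le d-2$, write $\R^d = V \oplus W$ where $V$ is the linear span of $\Gamma$ (so $\dim V = \dim\Gamma =: k \le d-2$) and $W = V^\perp$. First I would define
\[
  G_\Gamma(x) = \sum_{\gamma \in \Gamma} E(x - \gamma),
\]
and the first task is to prove this sum converges locally uniformly on $\R^d \setminus \Gamma$. For $x$ in a fixed bounded set, the terms with $\gamma$ large behave like $|\gamma|^{2-d}$ in the directions along $V$; more precisely, decomposing $\gamma$ into a lattice of rank $k$, the number of lattice points $\gamma$ with $|\gamma| \approx R$ is $\asymp R^{k-1}\,dR$, so the tail is controlled by $\int^\infty R^{k-1} R^{2-d}\,dR$, which converges precisely because $k \le d-2$ (the exponent $k+1-d \le -1$, and one checks strict convergence when $k \le d-2$). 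This is the one place where the hypothesis $\dim\Gamma \le d-2$ is essential, matching the remark that the $d-1$ case diverges (harmonic sums $\sum R^{-1}$).

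Next I would verify the three asserted properties. Local uniform convergence of the series (away from $\Gamma$) shows $G_\Gamma$ is harmonic on $\R^d \setminus \Gamma$, and near each $\gamma_0 \in \Gamma$ one isolates the single singular term $E(x-\gamma_0)$ (the remaining sum being harmonic near $\gamma_0$), so $-\Delta G_\Gamma = \sum_{\gamma\in\Gamma}\delta_\gamma$ in the distributional sense. For the stabilizer: invariance $G_\Gamma(x - \gamma') = G_\Gamma(x)$ for $\gamma' \in \Gamma$ is immediate from reindexing the sum, so $\stab(G_\Gamma) \supseteq \Gamma$; to get equality, note that the singular support of $G_\Gamma$ is exactly $\Gamma$ (each $\gamma$ is a genuine pole, by the isolation argument), and any translation fixing $G_\Gamma$ must fix this set, hence lie in $\Gamma$ since $\Gamma$ is a subgroup and $0 \in \Gamma$ maps to the translation vector.

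The main obstacle is the convergence estimate and, relatedly, making the "isolate one term" argument clean enough that the distributional Laplacian and the singular-support claims are both rigorous; everything else is bookkeeping. One technical subtlety worth flagging is that $G_\Gamma$ is only defined up to the ambiguity of $\pm$ sign conventions and possible additive normalization, but since we only need a function with the stated Laplacian and stabilizer, no normalization is required. I would also remark in passing that $G_\Gamma$ descends to a function on $\R^d/\Gamma$ with a single isolated singularity, which is the geometric content being used in the proof of Theorem~\ref{thm:periodic-Runge} (Runge-type approximation via fundamental solutions on the quotient manifold), but that connection is not needed to prove the lemma itself.
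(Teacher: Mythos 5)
There is a genuine gap in the convergence argument, and it occurs precisely in the boundary case the lemma needs to cover. You estimate the tail of $\sum_\gamma |x-\gamma|^{2-d}$ by $\int^\infty R^{k-1} R^{2-d}\,dR = \int^\infty R^{k+1-d}\,dR$ and claim convergence because $k+1-d \le -1$. But that integral converges only when the exponent is strictly less than $-1$, i.e.\ when $k \le d-3$. When $\dim\Gamma = k = d-2$ you get $\int^\infty R^{-1}\,dR$, the harmonic tail, which diverges; equivalently, the terms in the series are of size $\sim R^{2-d}$ while the shell at radius $R$ contains $\sim R^{d-3}$ lattice points, giving a divergent sum $\sum_R R^{-1}$. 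So your ansatz $G_\Gamma(x) = \sum_{\gamma\in\Gamma} E(x-\gamma)$ does not define a function in the case $\dim\Gamma = d-2$, which is exactly the codimension-two regime that has to be handled (and is in fact the interesting case for the applications).

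The fix is to renormalize the periodization: set
\[
  G_\Gamma(x) = c_d \Bigl( |x|^{2-d} + \sum_{\gamma\in\Gamma\setminus\{0\}}\bigl(|x-\gamma|^{2-d} - |\gamma|^{2-d}\bigr) \Bigr),
\]
subtracting the constant $|\gamma|^{2-d}$ from each term. For fixed $x$ and large $|\gamma|$ the subtracted difference is $O(|\gamma|^{1-d})$ by the mean value theorem, so the tail of the renormalized sum is controlled by $\int^\infty R^{k-1}R^{1-d}\,dR$, which converges for all $k\le d-2$. The subtracted constants are harmonic (they are constants), so the distributional Laplacian is unchanged and still equals $-(d-2)\omega_d\sum_{\gamma}\delta_\gamma$ up to normalization. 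The trade-off is that $\Gamma$-periodicity is no longer visible by reindexing, since the subtracted constants $|\gamma|^{2-d}$ are not themselves translated; you then need a separate argument that $G_\Gamma(x+a)-G_\Gamma(x) = 0$ for $a\in\Gamma$, which can be done by writing the difference as $\sum_{\gamma\in\Gamma}\bigl(|x-(\gamma-a)|^{2-d} - |x-\gamma|^{2-d}\bigr)$, truncating to $|\gamma|\le R$, observing that all but $O(R^{d-3})$ terms cancel exactly, and estimating the surviving terms by $O(R^{1-d})$ so the partial sums tend to $0$. Your argument for $\stab(G_\Gamma)=\Gamma$ via the singular support is fine and carries over unchanged once the function is correctly constructed.
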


\begin{proof}
  We obtain a fundamental solution \(G(x)\) by \(G(x)=\frac{1}{(d-2)\omega_d}f(x)\) where
  \(\omega_d\) is the volume of the unit ball in \(\Rd\) and where \(f\) is the function
  \[
    f(x)=\frac{1}{|x|^{d-2}}+ \sum_{y\in
      \Gamma\setminus\{0\}}\left(|x-y|^{-(d-2)}-|y|^{-(d-2)}\right).
  \]
  Indeed, in an annular shell of bounded width and inner radius \(r\), there are about \(r^{d-3}\)
  lattice points. Moreover, for fixed \(x\) and \(r\) large, the terms are of order \(r^{-(d-1)}\)
  and hence the series converges absolutely. Thus, \(f(x)\) defines a superharmonic function on
  \(\Rd\) with Newtonian singularities at the lattice points, with
  \[
    -\Delta f=(d-2)\omega_d\sum_{y\in \Gamma}\delta_{y}.
  \]
  It is not immediately clear, however, that \(f\) and \(G\) are \(\Gamma\)-periodic.  To
  investigate this, let \(a\in \Gamma\) and consider the difference \(f(x+a)-f(x)\). A direct
  computation yields, for \(x\notin \Gamma\),
  \begin{align*}
    f(x+a)-f(x)&=\frac{1}{|x+a|^{d-2}}-\frac{1}{|x|^{d-2}} \\
               &\qquad +\sum_{y\in \Gamma\setminus\{0\}}
                 \left(\frac{1}{|x-(y-a)|^{d-2}}-\frac{1}{|x-y|^{d-2}}\right)\\
               &=\sum_{y\in\Gamma}\left(\frac{1}{|x-(y-a)|^{d-2}}-\frac{1}{|x-y|^{d-2}}\right).
  \end{align*}
  We claim that the right-hand side vanishes identically. Indeed, it holds that
  \[
    f(x+a)-f(x)=\sum_{y\in\Gamma\cap B(0,R)}
    \left(\frac{1}{|x-(y-a)|^{d-2}}-\frac{1}{|x-y|^{d-2}}\right)+\mathrm{o}(1)
  \]
  as \(R\to+\infty\), and in the sum on the right-hand side all but \(\mathrm{O}(R^{(d-3)})\) terms
  cancel exactly.  As the remaining terms are of order \(R^{-(d-1)}\), the right-hand side tends to
  \(0\) as \(R\to+\infty\). This shows that \(f(x+a)=f(x)\), so \(G_\Gamma(x) = G(x)\) is
  \(\Gamma\)-invariant.

  It only remains to show that \(G_\Gamma\) is not invariant under any larger subgroup
  \(G'\subset\Rd\). This, however, is immediate from the fact that
  \(\mathrm{stab}(\Delta G)=\Gamma\). This completes the proof.
\end{proof}

With the periodic fundamental solution \(G_\Gamma\) at hand, the necessary periodic approximation
property is standard. First, we establish a representation of smooth functions \(g\) with
\(\mathrm{stab}(g)=\Gamma\) and suitable conditions on the support.  We denote by
\(D_0=D_0(\Gamma)\) the closed fundamental domain of \(\Gamma\).

\begin{lemma}
  \label{lem:representation}
  Assume that \(g\) is smooth and \(\Gamma\)-invariant and that \(\mathrm{supp}(g)\cap D_0\) is
  compact. Then we have
  \begin{equation}
    \label{eq:green}
    g(x)=-\int_{D_{0}}G_\Gamma(x-y)\Delta g(y) dm(y).
  \end{equation}
\end{lemma}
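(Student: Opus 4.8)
The plan is to show that the right-hand side of \eqref{eq:green}, which I denote
\[
  v(x) := -\int_{D_0}G_\Gamma(x-y)\,\Delta g(y)\,dm(y),
\]
is a well-defined smooth function satisfying $\Delta v=\Delta g$, that $v$ is $\Gamma$-invariant, and that $v$ decays transversally to $\mathrm{span}(\Gamma)$; a Liouville-type argument then forces $v=g$, which is exactly \eqref{eq:green}. First I would record two preliminaries. Since $g$ is $\Gamma$-invariant with $\mathrm{supp}(g)\cap D_0$ compact, the integration defining $v$ is really over the compact set $D_0\cap\mathrm{supp}(\Delta g)$; as $G_\Gamma\in L^1_{\mathrm{loc}}(\R^d)$ (it has only Newtonian singularities at $\Gamma$, and these are locally integrable for $d\ge 3$) and $\Delta g$ is bounded, $v$ is a well-defined, locally bounded, measurable function on $\R^d$. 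Secondly, $\int_{D_0}\Delta g\,dm=0$: taking $D_0$ to be a fundamental domain whose boundary faces are identified in pairs by translations from $\Gamma$ (for instance a product of a fundamental parallelepiped of $\Gamma$ with the orthogonal complement of $\mathrm{span}(\Gamma)$), one applies the divergence theorem to $D_0\cap B_N$, with $B_N$ the ball of radius $N$ chosen so large that $g$ vanishes on $D_0\setminus B_N$; the spherical part of the boundary then contributes nothing, and the integrals over paired faces cancel because $\nabla g$ is $\Gamma$-periodic.

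Next I would establish the two algebraic properties. The $\Gamma$-invariance of $v$ is immediate from the $\Gamma$-invariance of $G_\Gamma$ proved in Lemma~\ref{lem:periodic-fundamental-sol}: $v(x+\delta)=v(x)$ for all $\delta\in\Gamma$. For the Laplacian, test against $\phi\in\smoothcs{\R^d}$. Fubini's theorem applies over the compact region of integration and gives $\langle v,\Delta\phi\rangle=-\int_{D_0}\Delta g(y)\,\langle G_\Gamma(\cdot-y),\Delta\phi\rangle\,dm(y)$, and by Lemma~\ref{lem:periodic-fundamental-sol},
\[
  \langle G_\Gamma(\cdot-y),\Delta\phi\rangle=\langle \Delta[G_\Gamma(\cdot-y)],\phi\rangle=-\sum_{\gamma\in\Gamma}\phi(y+\gamma),
\]
a finite sum once restricted to $y$ in the support of $\Delta g$. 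Hence
\[
  \langle v,\Delta\phi\rangle=\sum_{\gamma\in\Gamma}\int_{D_0}\Delta g(y)\,\phi(y+\gamma)\,dm(y)=\sum_{\gamma\in\Gamma}\int_{D_0+\gamma}\Delta g(y')\,\phi(y')\,dm(y')=\int_{\R^d}\Delta g\,\phi\,dm,
\]
using the $\Gamma$-invariance of $\Delta g$ and that the translates $\{D_0+\gamma\}_{\gamma\in\Gamma}$ tile $\R^d$. Thus $\Delta v=\Delta g$ in $\distrib{\R^d}$, and since $\Delta g\in\smooth{\R^d}$, Weyl's lemma upgrades this to $v\in\smooth{\R^d}$ with $\Delta v=\Delta g$ classically. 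Consequently $w:=v-g$ is a $\Gamma$-invariant harmonic function on all of $\R^d$.

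It remains to prove $w\equiv 0$. Since $g$ vanishes on $D_0$ outside a compact set, it suffices (then Liouville's theorem applies, $w$ being harmonic on $\R^d$) to show that $v(x)\to 0$ as $x$ tends to infinity within $D_0$. Write $x=x_\parallel+x_\perp$ with $x_\parallel\in\mathrm{span}(\Gamma)$ and $x_\perp$ in its orthogonal complement; since $D_0$ is bounded in the directions of $\mathrm{span}(\Gamma)$, $|x|\to\infty$ within $D_0$ forces $|x_\perp|\to\infty$. Using $\int_{D_0}\Delta g=0$, fix $y_0\in D_0$ and rewrite
\[
  v(x)=-\int_{D_0}\bigl(G_\Gamma(x-y)-G_\Gamma(x-y_0)\bigr)\Delta g(y)\,dm(y),
\]
so that $|v(x)|$ is bounded by a constant depending only on $g$ and $y_0$ times $\sup_{\zeta}|\nabla G_\Gamma(\zeta)|$, the supremum over $\zeta$ on the segments joining $x-y$ to $x-y_0$ for $y\in D_0\cap\mathrm{supp}(\Delta g)$. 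Differentiating the defining series of $G_\Gamma$ term by term (legitimate, as the resulting series of $O(|\zeta-\gamma|^{-(d-1)})$ terms converges absolutely away from $\Gamma$) gives $|\nabla G_\Gamma(\zeta)|\lesssim\sum_{\gamma\in\Gamma}|\zeta-\gamma|^{-(d-1)}$; comparing this sum with $\int_{\mathrm{span}(\Gamma)}(|\zeta_\perp|^2+|\eta|^2)^{-(d-1)/2}\,d\eta$, which after rescaling equals a constant times $|\zeta_\perp|^{\dim\Gamma-(d-1)}$ and is finite precisely because $\dim\Gamma\le d-2<d-1$, shows $|\nabla G_\Gamma(\zeta)|\to 0$ as $|\zeta_\perp|\to\infty$. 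As every such $\zeta$ has perpendicular part within a bounded distance of $x_\perp$, we get $\sup_\zeta|\nabla G_\Gamma(\zeta)|\to 0$, hence $v(x)\to 0$; thus $w$ is bounded and vanishes at transverse infinity, so Liouville forces $w\equiv 0$. I expect the main obstacle to be precisely this last transverse-decay estimate: it has to combine the mean-zero identity $\int_{D_0}\Delta g=0$ (to trade the kernel $G_\Gamma$ for its gradient) with the dimension hypothesis $\dim\Gamma\le d-2$ (which is exactly what makes the periodized kernel, and its gradient, decay transversally to $\mathrm{span}(\Gamma)$, and is where the remark about the case $\dim\Gamma=d-1$ comes from); the rest is routine potential theory, Fubini, the divergence theorem, and elliptic regularity.
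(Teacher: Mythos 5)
Your proof is correct and follows the same route as the paper's: define $g_0(x)=-\int_{D_0}G_\Gamma(x-y)\,\Delta g(y)\,dm(y)$, show that $g-g_0$ is a $\Gamma$-periodic harmonic function vanishing at transverse infinity, and apply Liouville. The paper states the decay $g_0(x)\to 0$ for $x\to\infty$ in $D_0$ without comment, whereas you correctly identified that this requires the mean-zero identity $\int_{D_0}\Delta g\,dm=0$ (verified via the divergence theorem on $D_0\cap B_N$ with cancellation across paired faces) to trade $G_\Gamma$ for $\nabla G_\Gamma$, whose transverse decay $|\zeta_\perp|^{\dim\Gamma-(d-1)}$ is what actually tends to zero under the hypothesis $\dim\Gamma\le d-2$; without this step the argument would fail at $\dim\Gamma=d-2$, where $G_\Gamma$ itself diverges logarithmically in $|\zeta_\perp|$. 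Your explicit verification of $\Delta v=\Delta g$ via Fubini, the distributional identity from Lemma~\ref{lem:periodic-fundamental-sol}, tiling by $\{D_0+\gamma\}$, and Weyl's lemma also fills in a step the paper treats as immediate.
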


\begin{proof}
  If we denote the right-hand side by \(g_0(x)\), then \(g_0\) is a well-defined function with
  \(\mathrm{stab}(g_0)=\Gamma\), and \(g_0(x)\to 0\) as \(x\to\infty\), \(x\in D_{0}\).  It follows
  that \((g-g_0)(x)\) is a \(\Gamma\)-periodic harmonic function which tends to zero as
  \(x\to\infty\) along \(D_0\). Any such function is bounded, and hence constant by Liouville's
  theorem. Due to the limiting behavior at infinity, we must have \(g=g_0\).
\end{proof}

We turn to the proof of the periodic Runge theorem stated at the beginning of this appendix.

\begin{proof}[Proof of \(\Gamma\)-periodic Runge (Theorem~\ref{thm:periodic-Runge})]
  We follow one of the standard textbook proofs of Runge's theorem, which is based on the
  Hahn-Banach theorem, and can be found for instance in the
  paper~\cite{bagbyUniformHarmonicApproximation1994}.

  Denote by \(C_\Gamma(\Rd)\) the space of continuous functions with stabilizer \(\Gamma\),
  and endow it with the topology of locally uniform convergence in \(D_0\) (that is, the topology
  corresponding to locally uniform convergence on the quotient manifold \(\Rd/\Gamma\)
  where the \(\Gamma\)-invariant functions naturally live).  We use the notation
  \(\mathcal{H}_\Gamma(K)\) for the subspace of \(C_\Gamma(\Rd)\) consisting of functions
  harmonic on some neighborhood of the closed set \(K\). If \(\mathcal{O}\) is open and
  \(\Gamma\)-invariant, we write \(\mathcal{H}_\Gamma(\mathcal{O})\) for those functions in
  \(C_\Gamma(\Rd)\) harmonic on \(\mathcal{O}\).

  It is sufficient to show that for an arbitrarily large open \(\Gamma\)-invariant set
  \(\mathcal{O}\), \(\mathcal{H}_\Gamma(\mathcal{O})\) is dense in \(\mathcal{H}_\Gamma(K)\).  This,
  in turn, will follow if we show that whenever \(\mu\) is a finite signed Borel measure on
  \(K\cap D_0\) whose associated linear functional
  \[
    g\mapsto \int_{D_0}g\, d\mu
  \]
  annihilates \(\mathcal{H}_\Gamma(\mathcal{O})\), then \(\mathcal{H}_\Gamma(K)\) is annihilated as
  well.

  To that end, let \(\mu\) be such a measure and let \(f\in\mathcal{H}_\Gamma(K)\).  Form the
  function
  \begin{equation}
    \label{eq:nu-def}
    \nu(x)=\int_{D_0} G_\Gamma(x-y)\,d\mu(y).
  \end{equation}
  The function \(\nu\) is harmonic outside \(K\) (since for each \(y\), \(x\mapsto G_\Gamma(x-y)\)
  is harmonic outside \(\Gamma\) and since \(\mu\) is supported on \(K\cap D_0\)). Moreover, we
  claim that \(\nu\) vanishes outside \(\mathcal{O}\), and since \(\Rd\setminus K\) is
  connected the unique continuation property of harmonic functions implies that \(\nu\) is in fact
  supported on \(K\).  To see why \(\nu\) vanishes outside \(\mathcal{O}\), note that
  \(x\in\mathcal{O}^c\), \(x-y\in\Gamma\) implies that \(y\in \mathcal{O}^c\) as well, so
  \(y\mapsto G_\Gamma(x-y)\) is an element of \(\mathcal{H}_\Gamma(\mathcal{O})\).  But \(\mu\)
  annihilates \(\mathcal{H}_\Gamma(\mathcal{O})\), so we conclude from the definition
  \eqref{eq:nu-def} that \(\nu=0\) on \(\mathcal{O}^c\).

  Now, for the trick. We let \(N\) be a neighborhood of \(K\) such that \(f\) is smooth and harmonic
  on \(N\), and let \(\varphi\) be a smooth \(\Gamma\)-invariant cut-off function supported in \(N\)
  which equals \(1\) on a neighborhood of \(K\). We have that
  \[
    \int f(x)d\mu(x)=\int f(x)\varphi(x)d\mu(x) =-\int\int G_\Gamma(x-y)\Delta (f\varphi)(y)
    dm(y)d\mu(x)
  \]
  by Lemma~\ref{lem:representation} applied to \(f\varphi\).  Applying Fubini's theorem and Green's
  formula, we get
  \[
    \int f(x)d\mu(x)=-\int \Delta (f\varphi) (y)\nu(y) dm(y).
  \]
  But \(\Delta (f\varphi)\equiv 0\) on a neighborhood of \(K\), while \(\nu=0\) outside
  \(K\). Hence, \(\int f(x)d\mu(x)\) vanishes, and the proof that
  \(\mathcal{H}_\Gamma(\mathcal{O})\) is dense in \(\mathcal{H}_\Gamma(K)\) is complete.

  \medskip

  A standard iterative application of the above version of Runge with respect to an exhaustive
  family \(K_j\) such that \(K_j\cap D_0\) is compact completes the proof (cf.\ the proof
  in~\cite{bagbyUniformHarmonicApproximation1994}).
\end{proof}

\section{Borel toasts for the heat equation}
\label{sec:borel-toasts-heat}

In this appendix, we construct Borel toasts for free \(\R^{d+1}\)-actions with regions that
are Runge domains for the heat equation. While Theorem~\ref{thm:existence-of-slice-filled-toasts}
presents a new result, the exposition is designed for those unfamiliar with toast
constructions---demonstrating, in particular, the construction of toasts for Euclidean space
actions.

\subsection{Topological hulls}
\label{sec:topological-hull}

Let \(K\) be a compact subset of \(\Rd\), \(d\ge 2\).
The \emph{topological hull} of \(K\), denoted
\(\hull(K)\), is defined as
\[
\hull(K) = K \cup U(K),
\]
where \(U(K)\) consists of all bounded connected components of \(\Rd \setminus K\). Intuitively,
\(\hull(K)\) is obtained by filling in all bounded holes of \(K\).

The hull operation satisfies the following properties:
\begin{itemize}
  \item \emph{Monotonicity}: If \(K \subseteq L\), then \(\hull(K) \subseteq \hull(L)\).
  \item \emph{Idempotence}: \(\hull(\hull(K)) = \hull(K)\).
\end{itemize}

The next lemma shows that the hull operation is additive for disjoint compact sets.
Although it looks to us like a standard result, we did not find it in the literature.
However, a related lemma was proven in \cite{BergerEtAlCMP}*{Prop.~C.1}.

\begin{lemma}
  \label{lem:topological-hull-union-two}
  Let \(K_{1}, K_{2} \in \vietoris{\Rd}\) be disjoint compact sets. Then
  \[
  \hull(K_{1} \cup K_{2}) = \hull(K_{1}) \cup \hull(K_{2}).
  \]
\end{lemma}

In the proof we will use some basic notions from algebraic topology, in
particular reduced singular homology (see, e.g.,
\cite{VassilievIntroTopology}*{Ch.~9--10}). For a compact set \(K\subset\Rd\) we put \(U=\Rd\setminus K\).
The reduced singular homology group \(\tilde{H}_{0}(U)\) may be thought of as a free Abelian group with one
generator for each of the (possibly countably many) bounded connected components of \(U\), i.e.,
the unbounded component is ``killed'' by the reduction.

\begin{proof}
  The inclusions \(\hull(K_{i}) \subseteq \hull(K_{1} \cup K_{2})\) for \(i = 1,2\) follow directly
  from the monotonicity of the hull operation. We now prove the reverse inclusion:
  \[ \hull(K_{1} \cup K_{2}) \subseteq \hull(K_{1}) \cup \hull(K_{2}). \]

  Let \(U_{i} = \Rd \setminus K_{i}\) for \(i = 1,2\). Since \(K_{1}\) and \(K_{2}\) are
  disjoint, \(\{U_{1}, U_{2}\}\) forms an open cover of \(\Rd\). Applying the
  Mayer--Vietoris sequence to the reduced singular homology groups of this cover yields:
  \[
    \tilde{H}_{1}(\Rd) \to \tilde{H}_{0}(U_{1} \cap U_{2}) \xrightarrow{(\iota_{1},
      \iota_{2})} \tilde{H}_{0}(U_{1}) \oplus \tilde{H}_{0}(U_{2}) \to \tilde{H}_{0}(\Rd),
  \]
  where \(\iota_{i}\), \(i=1,2\), are induced by the
  inclusions \(U_{1} \cap U_{2} \hookrightarrow U_{i}\).
  Since \(\tilde{H}_{k}(\Rd) = 0\) for all \(k \ge 0\), this sequence reduces to the short
  exact sequence:
  \[
    0 \to \tilde{H}_{0}(U_{1} \cap U_{2}) \xrightarrow{(\iota_{1}, \iota_{2})} \tilde{H}_{0}(U_{1})
    \oplus \tilde{H}_{0}(U_{2}) \to 0,
  \]
  showing that \((\iota_{1},\iota_{2})\) is an isomorphism.

  For each \(i = 1,2\), select a point \(x^{i}_{k}\) in every bounded connected component of
  \(U_{i}\), and a point \(y_{l}\) in every bounded connected component of \(U_{1} \cap U_{2}\). The
  classes \([x^{i}_{k}]\) and \([y_{l}]\) form bases for \(\tilde{H}_{0}(U_{i})\) and
  \(\tilde{H}_{0}(U_{1}\cap U_{2})\), respectively.

  Let \(V_{l}\) denote the bounded component of \(U_{1} \cap U_{2}\) containing \(y_{l}\). There
  exist unique connected components \(W_{1} \subseteq U_{1}\) and \(W_{2} \subseteq U_{2}\) such
  that \(V_{l} = W_{1} \cap W_{2}\). Define:
  \[
    c^{i} =
    \begin{cases}
      [x^{i}_{k}] & \text{if } W_{i} \text{ is bounded and } x^{i}_{k} \in W_{i},\\[4pt]
      0 & \text{if } W_{i} \text{ is unbounded}.
    \end{cases}
  \]
  Under the isomorphism \((\iota_{1},\iota_{2})\), the class \([y_{l}]\) is mapped to
  \((c^{1}, c^{2})\). Injectivity implies at least one \(c^{i}\) is non-zero, meaning at least one
  of \(W_{1}, W_{2}\) is bounded.

  Thus, every bounded connected component of \(U_{1} \cap U_{2}\) is contained in a bounded
  component of \(U_{1}\) or \(U_{2}\). Translating to hulls, we obtain the desired inclusion
  \(\hull(K_{1} \cup K_{2}) \subseteq \hull(K_{1}) \cup \hull(K_{2})\).
\end{proof}

\begin{proposition}
  \label{prop:topological-hull-finite-union}
  Let \(K_{1}, \ldots, K_{n} \in \vietoris{\Rd}\) be pairwise disjoint compact sets. Then the
  topological hull of their union equals the union of their topological hulls:
  \[ \hull\bigl(\bigcup_{i=1}^{n}K_{i}\bigr) = \bigcup_{i=1}^{n}\hull(K_{i}). \]
\end{proposition}

\begin{proof}
  The result follows by induction from Lemma~\ref{lem:topological-hull-union-two}.
\end{proof}

For our construction of Borel toasts for the heat equation, we introduce a variant of the
topological hull where the filling occurs slice-wise along hyperplanes orthogonal to the time axis.

Let \(\R^{d+1}\) be parametrized by coordinates \((x,t)\), where \(x \in \Rd\) and
\(t \in \R\). For each \(t \in \R\), we define the horizontal hyperplane at height \(t\) as
\[ P_{t} = \{(x,t): x \in \Rd\} \cong \Rd. \]
The \emph{topological slice hull} of \(K \in \vietoris{\R^{d+1}} \) is defined as the union of the
topological hulls of all horizontal slices:
\[ \shull(K) = \bigcup_{t \in \R} \hull_{t}(K),\]
where \(\hull_{t}(K)\) denotes the topological hull of \(K\cap P_{t}\) computed within \(P_{t}\). We
say that \(K\) is \emph{slice-wise filled} if \(K = \shull(K)\).

\begin{proposition}
  \label{prop:slice-hull-compact}
  For any \(K \in \vietoris{K}\), the topological slice hull \(\shull(K)\) is compact.
\end{proposition}

\begin{proof}
  For \(M > 0\), let \(B_{M} \) denote the closed \(\ell^{\infty}\) ball of radius \(M\) centered at
  the origin.  Note that \(\shull(B_{M}) = B_{M}\). Pick \(M\) large enough to ensure that
  \(K \subseteq B_{M}\), and therefore \(\shull(K) \subseteq B_{M}\).  In particular, the set
  \(\shull(K)\) is bounded, and it remains to show that it is closed.

  Suppose, towards a contradiction, that there exists a point
  \[ (x_{0},t_{0})\in\overline{\shull(K)}\setminus\shull(K). \]
  Let \(z_{\infty}\in \Rd \setminus B_{M+1}\). Then, for every \(t \in \R\),
  \((z_{\infty},t)\notin\hull_{t}(K)\). Furthermore, perturbation by any \(y \in \Rd\)
  with \(\norm{y}<1\) preserves this property:
  \begin{equation}
    \label{eq:z-stably-outside-hull}
    (z_{\infty}+y,t)\notin\hull_{t}(K) \quad \text{for all } t.
  \end{equation}

  Since \((x_{0}, t_{0}) \notin \shull(K)\), the point lies in the unbounded component of the
  relative complement of \(\hull_{t_{0}}(K)\) within the hyperplane \(P_{t_{0}}\).  Since
  connected open subsets of \(\Rd\) are path-connected, there exists a continuous path
  \[
    \tau : [0,1] \to \Rd, \quad \tau(0) = x_{0}, \quad \tau(1) = z_{\infty},
  \]
  such that \((\tau(r), t_{0}) \notin \hull_{t_{0}}(K)\) for all \(r \in [0,1]\). Lifting \(\tau\)
  to \(\R^{d+1}\) via \(\tau_{t}(r) = (\tau(r), t)\) yields a path \(\tau_{t_{0}}\) from
  \((x_{0}, t_{0})\) to \((z_{\infty}, t_{0})\) avoiding \(K\).

  For \(\epsilon > 0\), define the \(\epsilon\)-neighborhood of \(\tau_t\) as
  \[
    V_{\epsilon} = \{\tau_{t}(r) + (y,0) : r \in [0,1], y \in \Rd, \|y\| <
    \epsilon, |t - t_{0}| < \epsilon\}.
  \]
  Each \(V_{\epsilon}\) is open and contains \((x_{0}, t_{0})\). Since
  \((x_{0}, t_{0})\) lies in \(\overline{\shull(K)}\), there exists a sequence
  \[
    (x_{n}, t_{n}) \in \shull(K) \cap V_{1/n}, \quad n \geq 1,
  \]
  where each \((x_{n}, t_{n})\) can be expressed as
  \[
    (\tau(r_n) + y_{n}, t_{n}) \in \hull_{t_{n}}(K), \quad r_n \in [0,1], \quad
    \|y_{n}\| < 1/n.
  \]

  The translated curve \( [0,1] \ni r \mapsto (\tau(r) + y_{n}, t_{n}) \in P_{t_{n}}\)
  connects the points \((x_{0} + y_{n}, t_{n})\) and
  \((z_{\infty} + y_{n}, t_{n})\) while intersecting \(\hull_{t_{n}}(K)\). Since
  \(\norm{y_{n}} < 1/n \le 1\), the point \((z_{\infty} + y_{n}, t_{n})\) lies in
  the unbounded component of \(P_{t_{n}} \setminus \hull_{t_{n}}(K)\) by
  Eq.~\eqref{eq:z-stably-outside-hull}. Thus, there exists \(r'_{n} \ge r_{n}\) such that
  \[(\tau(r'_{n}) + y_{n}, t_{n}) \in K \cap P_{t_{n}}.\]
  Defining \(u_{n} = \tau(r'_{n}) + y_{n}\), we have \((u_{n}, t_{n}) \in K\), and
  \begin{equation}
    \label{eq:xn-close-to-tau}
    d(u_{n}, \tau) = \inf_{r \in [0,1]} \norm{u_{n} - \tau(r)} \le \norm{y_{n}} < 1/n.
  \end{equation}

  By the compactness of \(K\), we may assume (passing to a subsequence if necessary) that
  \((u_{n}, t_{n}) \to (u_{\infty}, t_{0}) \in K\). From \eqref{eq:xn-close-to-tau}, the
  limit satisfies \(d(u_{\infty}, \tau) = 0\), so \(u_{\infty} = \tau(r_{\infty})\) for
  some \(r_{\infty} \in [0,1]\). This implies that the path \(\tau_{t_{0}}\), connecting
  \((x_{0}, t_{0})\) to \((z_{\infty}, t_{0})\), intersects \(K\) at
  \((u_{\infty}, t_{0})\), contradicting the initial choice of \(\tau\). Therefore,
  \(\shull(K)\) is closed and, being bounded, compact.
\end{proof}

The slice hull operation shares the additivity property with the standard convex hull operation.

\begin{proposition}
  \label{prop:slice-hull-additivity}
  For any finite collection of pairwise disjoint sets
  \(K_{1}, \ldots, K_{n} \in \vietoris{\R^{d+1}}\), we have
  \[\shull\Bigl(\bigcup_{i=1}^{n}K_{i}\Bigr) = \bigcup_{i=1}^{n}\shull(K_{i}).\]
\end{proposition}

\begin{proof}
  The result follows by applying Proposition~\ref{prop:topological-hull-finite-union} to each slice
  \(P_{t}\) and the collection of sets \(K_{i} \cap P_{t}\), \(1 \le i \le n\).
\end{proof}

\subsection{Borel toasts with slice-wise filled regions}
\label{sec:borel-toasts-slice-filled}

Our construction of Borel toasts begins similarly to the approach in Appendix~A
of~\cite{marksBorelCircleSquaring2017}. Given a free \(\R^{d+1}\)-flow on a standard Borel space
\(X\) and a sufficiently rapidly growing sequence of positive reals \((r_n)_n\), one shows that
there exists a sequence of cross-sections \((\mathcal{C}_n)_n\) with the following properties:
\begin{enumerate}[leftmargin=.6cm, label={\sf \arabic*}., ref={\sf \arabic*}]
\item For any two distinct elements \(c_n, c_n' \in \mathcal{C}_n\) one has\footnote{The constants
    \(100\) and \(1000\) are by no means special; they simply ensure sufficient separation between
    elements relative to \(r_n\).}
  \[1000r_{n} > |\rho(c_n, c_n')| > 100r_n;\]
\item\label{item:boykin-jackson} For every \(x \in X\) and any \(\epsilon > 0\), there are
  infinitely many \(n\) for which \(d(x, \mathcal{C}_n) < \epsilon r_n\).
\end{enumerate}
The second condition is particularly crucial and represents a strong requirement. For our purposes,
the sequence \((r_{n})_{n}\) needs to grow fast enough to satisfy
\begin{equation}
  \label{eq:rn-growth-rate}
  r_n > n^{2}2^{n+2}r_{n-1} \quad \text{for all } n.
\end{equation}

By Lemma~2.4 of \cite{Slutsky}, we may further assume that \(\rho(c_n, c_m) \in \Q^{d}\) for
all \(c_m \in \mathcal{C}_m\) and \(c_n \in \mathcal{C}_n\). This ensures
condition~\eqref{item:toast-rational} from Definition~\ref{def:toast} is satisfied. Let \(\mfR\)
denote the family of compact sets \(K \in \vietoris{\R^{d+1}}\) that are slice-wise filled,
meaning \(\shull(K) = K\). Below, we explain the construction of a Borel \(\mfR\)-toast over the
cross-sections \(\mathcal{C}_n\).

Let \(B_{r_{n}}\) denote the cube in \(\R^{d+1}\) centered at the origin with side length
\(2r_{n}\).  Suppose that for each \(n\), the maps
\(\lambda_{n} : \mathcal{C}_{n} \to \vietoris{\R^{d+1}}\) satisfy
\(B_{r_{n}} \subseteq \lambda_{n}(c_{n})\) for all \(c_{n} \in \mathcal{C}_{n}\). Then
item~\eqref{item:boykin-jackson} above implies
conditions~\eqref{item:toast-directed}--\eqref{item:toast-exhaustive} of Definition~\ref{def:toast}.

Condition~\eqref{item:toast-layered} can be later ensured by inserting missing intermediate regions
as discussed in Remark~\ref{rem:layered-regions}. Thus, we focus on constructing maps
\(\lambda_{n} : \mathcal{C}_{n} \to \mfR\) satisfying conditions \eqref{item:toast-disjoint} and
\eqref{item:toast-coherent} of Definition~\ref{def:toast}, while maintaining
\(B_{r_{n}} \subseteq \lambda_{n}(c_{n})\) for every \(c_{n} \in \mathcal{C}_{n}\).

We define \(\kappa_{n} : \mathcal{C}_{n} \to \vietoris{\R^{d+1}}\) as the constant map
\(\kappa_{n}(c_{n}) = B_{r_{n}}\). Due to the lacunarity condition
\(|\rho(c_{n}, c'_{n})| > 100 r_{n}\) on the cross-sections, the regions \(\kappa_{n}(c_{n}) c_{n}\)
for \(c_{n} \in \mathcal{C}_{n}\) are pairwise disjoint. However, these regions may not satisfy the
coherence condition across different levels as in
Definition~\ref{def:toast}\eqref{item:toast-coherent}.

To resolve this, we construct the maps \(\lambda'_{n} : \mathcal{C}_{n} \to \vietoris{\R^{d+1}}\)
inductively. Set \(\lambda'_{0} = \kappa_{0}\) as the base case. For each
\(c_{n} \in \mathcal{C}_{n}\), consider all elements
\(c^{1}_{n_{1}}, \ldots, c^{m}_{n_{m}} \in \bigcup_{i<n}\mathcal{C}_{i}\) such that
\[(\kappa_{n}(c_{n})c_{n}) \cap (\lambda'_{n_{i}}(c^{i}_{n_{i}})c^{i}_{n_{i}}) \ne \varnothing.\]
By the lacunarity of the cross-sections, this collection is finite. We then define
\begin{equation}
  \label{eq:fusion}
  \lambda'_{n}(c_{n}) = \kappa_{n}(c_{n}) \cup \bigcup_{i=1}^{m}(\lambda'_{n_{i}}(c^{i}_{n_{i}}) + \rho(c_{n},c^{i}_{n_{i}})).
\end{equation}
In other words, the region \(R'_{n}(c_{n}) = \lambda'_{n}(c_{n})c_{n}\) is formed by augmenting
\(\kappa_{n}(c_{n})c_{n}\) with every prior region \(R'_{n_{i}}(c^{i}_{n_{i}})\) that intersects
it. This construction ensures the coherence of the \(R'_{n}\) regions as in
Definition~\ref{def:toast}\eqref{item:toast-coherent}.

An inductive argument yields the diameter bound for the \(\ell^{\infty}\) metric:
\begin{equation}
  \label{eq:fusion-diameter-bound}
  \diam(\lambda'_{n}(c_{n})) \le 2\sum_{k=0}^{n}2^{n-k}r_{k} = 2r_{n} + 2\sum_{k=0}^{n-1}2^{n-k}r_{k}.
\end{equation}
Since \(|\rho(c_{n},c'_{n})| > 100r_{n}\), the regions \(R'_{n}(c_{n})\) for
\(c_{n} \in \mathcal{C}_{n}\) remain disjoint as long as \(r_{n} \gg \sum_{k< n}2^{n-k}r_{k}\).

After adding the missing regions to satisfy Definition~\ref{def:toast}\eqref{item:toast-layered} as
per Remark~\ref{rem:layered-regions}, the sequence \((\mathcal{C}_{n}, \lambda'_{n})_{n}\) forms a
Borel toast. However, it may not be an \(\mfR\)-toast. For convenience, we say that the maps
\(\lambda'_{n}\) were obtained by \emph{fusing} the maps \(\kappa_{n}\). We will fuse another family
of maps shortly.

So far, the construction provides limited control over the shape of the resulting regions
\(R'_{n}\). While these regions are connected, they need not be filled---let alone slice-wise filled.

To construct an \(\mfR\)-toast from the sequence \((\mathcal{C}_{n}, \lambda'_{n})_{n}\), we define
\(\lambda_{n}\) by fusing (in the sense described above) the maps
\(c_{n} \mapsto \shull(\lambda'_{n}(c_{n}))\). That is, we first apply the topological slice hull
operation to each set \(\lambda'_{n}(c_{n})\). Observe that \(\lambda'_{n}(c_{n})\) lies within the
box \(B_{s_{n}}\), where \(s_{n} = \sum_{k\le n}2^{n-k}r_{k}\)
(cf.~\eqref{eq:fusion-diameter-bound}). By the monotonicity of \(\shull\) and the fact that cubes
are slice-wise filled, it follows that \(\shull(\lambda'_{n}(c_{n})) \subseteq B_{s_{n}}\). Thus,
for each level \(n\), the sets \(\shull(\lambda'_{n}(c_{n}))\), \(c_{n} \in \mathcal{C}_{n}\),
remain pairwise disjoint. However, taking the \(\shull\) operation may disrupt coherence, as
illustrated in Fig.~\ref{fig:slice-fusion-coherence}.

\begin{figure}
  \def\angle{7}
  \centering
  \begin{tikzpicture}[line cap=round,line join=round,isometric view,rotate around z=5*\angle-45]
    \draw[fill=gray!30,canvas is yz plane at x=0.5] (-1.5,-0.5)     rectangle ++(1,1);
    \draw[fill=gray!15,canvas is xy plane at z=-0.5] (-1.5+1,-0.5-1) rectangle ++(1,1);
    \draw[fill=gray!50,canvas is xz plane at y=-0.5] (-0.5,-0.5) rectangle ++(1,1);

    \draw[canvas is xy plane at z=0,] (0,3) -- (0,-1.5);

    \draw[canvas is xy plane at z= 1.5,fill=white] (-1.5,-1.5) rectangle (1.5,0.5);
    \draw[canvas is xz plane at y=-1.5,fill=white,even odd rule] (-1.5,-1.5) rectangle (1.5,1.5)
          (-0.5,-0.5) rectangle (0.5,0.5);
    \draw[canvas is yz plane at x=-1.5,fill=white] (-1.5,-1.5) rectangle (0.5,1.5);

    \draw[canvas is xy plane at z=0.2, fill=white] (-0.2,-2.5) rectangle (0.2,-1.3);
    \draw[canvas is xz plane at y=-2.5, fill=white] (-0.2,-0.2) rectangle (0.2,0.2);
    \draw[canvas is yz plane at x=-0.2, fill=white] (-2.5,-0.2) rectangle (-1.3,0.2);

    \draw[canvas is xy plane at z=0,->] (0,-2.5) -- (0,-3.8)
         node[anchor=east, xshift=-1mm, yshift=-0.5mm] {\(t\)};
       \end{tikzpicture}
  \caption{The slice hull operation yields regions in general position.}
  \label{fig:slice-fusion-coherence}
\end{figure}

Therefore, we fuse these regions once again to ensure coherence. A diameter estimate for the
resulting regions, similar to the one obtained earlier in~\eqref{eq:fusion-diameter-bound}, gives
\begin{displaymath}
  \begin{aligned}
    \diam(\lambda_{n}(c_{n}))
    &\le 2s_{n} + 2\sum_{k=0}^{n-1}2^{n-k}s_{k} \\
    &= 2r_{n} + 2\sum_{j = 0}^{n-1}2^{n-j}r_{j} +
      2\sum_{k=0}^{n-1}2^{n-k}\sum_{j=0}^{k}2^{k-j}r_{j}\\
    &= 2r_{n} + 2\sum_{j = 0}^{n-1}2^{n-j}r_{j} + 2\sum_{k=0}^{n-1}\sum_{j=0}^{k}2^{n-j}r_{j}\\
    &\le 2r_{n} + 2n2^{n}r_{n-1} + 2 n^{2} 2^{n}r_{n-1} \le 2r_{n} + n^{2}2^{n+2}r_{n-1}
  \end{aligned}
\end{displaymath}
This bound, coupled with the assumption on the growth rate for \((r_{n})_{n}\) in
\eqref{eq:rn-growth-rate} and \(|\rho(c_{n},c'_{n})| > 100r_{n}\), guarantees that the regions
\(R_{n}(c_{n}) = \lambda_{n}(c_{n})c_{n}\) remain pairwise disjoint across each level, thus
satisfying Definition~\ref{def:toast}\eqref{item:toast-disjoint}. Consequently, the sequence
\((\mathcal{C}_{n}, \lambda_{n})_{n}\) forms a toast.

To summarize, the construction of a Borel \(\mfR\)-toast for the \(\R^{d+1}\)-flow from the
sequence of cross-sections \((\mathcal{C}_{n})_{n}\) proceeds as follows:
\[ \boxed{\vphantom{fg}\text{cubes}} \to \boxed{\vphantom{fg}\text{fusion}} \to
  \boxed{\vphantom{fg}\text{slice filling}} \to \boxed{\vphantom{fg}\text{fusion}}\, . \]
We claim that the resulting toast \((\mathcal{C}_{n}, \lambda_{n})_{n}\) is indeed an
\(\mfR\)-toast—that is, the final fusion step preserves the slice-wise filling property. This is not
obvious from the geometric perspective.

Consider a region \(\lambda_{n}(c_{n})\), obtained through the construction in~\eqref{eq:fusion}
by fusing regions \(\shull(\lambda'_{i}(c_{i}))\). We can express \(\lambda_{n}(c_{n})\) as a union
of possibly overlapping regions:
\[
  L_{i} = \shull\bigl(\lambda'_{n_{i}}(c^{i}_{n_{i}})\bigr) + \rho(c_{n},c^{i}_{n_{i}}),\quad 1 \le
  i \le m,
\]
where one of the points \(c^{i}_{n_{i}}\) coincides with \(c_{n}\). Note that
unlike~\eqref{eq:fusion}, where added regions stem from the earlier maps \(\lambda_{n_{i}}\), the
representation \(\lambda_{n}(c_{n}) = \bigcup_{i=1}^{m}L_{i}\) directly uses the regions that are
being fused. This follows inductively, since all regions \(\lambda_{k}\) for \(k < n\) are
themselves unions of fused regions.

Without loss of generality, we may assume no \(L_{i}\) is entirely contained in another \(L_{j}\)
(since removing redundant regions preserves the union \(\bigcup_{i} L_{i} = \lambda_{n}(c_{n})\)).
Because slice hulls commute with translations, we have
\[
  \shull\bigl(\lambda'_{n_{i}}(c^{i}_{n_{i}})\bigr) + \rho(c_{n},c^{i}_{n_{i}}) =
  \shull\bigl(\lambda'_{n_{i}}(c^{i}_{n_{i}}) + \rho(c_{n},c^{i}_{n_{i}})\bigr).
\]
Let \(K_{i} = \lambda'_{n_{i}}(c^{i}_{n_{i}}) + \rho(c_{n},c^{i}_{n_{i}})\) and note that
\(K_{i}c_{n} = R'_{n_{i}}(c^{i}_{n_{i}})\).

We claim the sets \(K_{i}\) are pairwise disjoint. If \(K_{i}\) and \(K_{j}\) intersect, then
\(R'_{n_{i}}(c^{i}_{n_{i}})\) and \(R'_{n_{j}}(c^{j}_{n_{j}})\) must overlap. By the coherence
property of the toast structure \((\mathcal{C}_{n}, \lambda'_{n})_{n}\), one region must contain the
other:
\[
  R'_{n_{i}}(c^{i}_{n_{i}}) \subseteq R'_{n_{j}}(c^{j}_{n_{j}}) \quad \text{or} \quad
  R'_{n_{j}}(c^{j}_{n_{j}}) \subseteq R'_{n_{i}}(c^{i}_{n_{i}}).
\]
Assuming the former holds and using freeness of the action, we deduce
\(K_{i} \subseteq K_{j}\). Applying the monotonicity of the slice hull yields:
\[
  L_{i} = \shull(K_{i}) \subseteq \shull(K_{j}) = L_{j},
\]
contradicting our non-containment assumption. Therefore, the sets \(K_{i}\) must be disjoint.

We now invoke Proposition~\ref{prop:slice-hull-additivity} to establish the equality
\[\shull\Bigl(\bigcup_{i}K_{i}\Bigr) = \bigcup_{i}\shull(K_{i}) = \bigcup_{i}L_{i} =
  \lambda_{n}(c_{n}).\]
Applying the \(\shull\) operation to both sides and utilizing its idempotence yields
\(\shull(\lambda_{n}(c_{n})) = \lambda_{n}(c_{n})\). This demonstrates that
\(\lambda_{n}(c_{n}) \in \mfR\) for all \(c_{n} \in \mathcal{C}_{n}\).

The preceding discussion leads to the following result.

\begin{theorem}
\label{thm:existence-of-slice-filled-toasts}
Let \(\mfR \subseteq \vietoris{\R^{d+1}}\) denote the class of slice-wise filled compact sets. Every
free Borel \(\R^{d+1}\)-flow admits a Borel \(\mfR\)-toast.
\end{theorem}

\bibliography{refs.bib}
\bibliographystyle{plain}

\bigskip
\bigskip

\noindent {Konstantin Slutsky}: Department of Mathematics, Iowa State University, Ames, Iowa, USA
\newline {\tt kslutsky@iastate.edu}

\bigskip

\noindent {Mikhail Sodin}: School of Mathematics, Tel Aviv University, Tel Aviv, Israel \newline
{\tt sodin@tauex.tau.ac.il}

\bigskip

\noindent {Aron Wennman}: Department of Mathematics, KU Leuven, Leuven, Belgium \newline {\tt
  aron.wennman@kuleuven.be}

\end{document}